\theoremstyle{plain}
\newtheorem{theorem}{Theorem}[section]
\newtheorem{cor}[theorem]{Corollary}
\newtheorem{prop}[theorem]{Proposition}
\newtheorem*{claim}{Claim}
\newtheorem{lemma}[theorem]{Lemma}
\theoremstyle{definition}
\newtheorem{remark}[theorem]{Remark}
\newtheorem{fact}[theorem]{Fact}
\newtheorem{definition}[theorem]{Definition}
\newtheorem*{notation}{Notation}
\newcommand{\nc}{\newcommand}
\nc{\Q}{\mathbb{Q}}
\nc{\C}{\mathfrak{C}}
\nc{\N}{\mathbb{N}}
\nc{\cb}{\operatorname{Cb}}
\nc{\K}{\mathcal{K}}
\nc{\tp}{\operatorname{tp}}
\nc{\stp}{\operatorname{stp}}
\nc{\Mr}{\operatorname{MR}}
\nc{\U}{\operatorname{U}}
\nc{\p}{\operatorname{p}}
\def\aclq{\mathrm{acl}^\mathrm{eq}}
\nc{\dd}{\operatorname{d}} 
\nc{\A}{\mathcal{A}} 
\nc{\Cent}{\operatorname{C}} 
\nc{\lmto}[1]{\xrightarrow[{#1}]{}}
\nc{\cox}{\mathrm{Cox}(N)} 
\nc{\coxnu}{\mathrm{Cox}^0} 
\nc{\wob}{\mathrm{Wob}} 
\nc{\s}{\mathcal{S}} 
\nc{\sr}{\s_\mathrm{R}} 
\nc{\sL}{\s_\mathrm{L}} 
\nc{\str}{\xrightarrow{\ast}} 
\nc{\inv}{^{-1}}
\DeclareMathOperator{\m}{M}
\nc{\mi}{\m_\infty}
\nc{\psn}{\mathrm{PS}_N}
\nc{\leqr}{\leq_\mathrm{r}}
\nc{\Rd}{\operatorname{R}_\mathrm{div}}
\nc{\Rkl}{\operatorname{R}_{\prec}}
\nc{\ord}{\operatorname{ord}}
\renewcommand{\iff}{if and only if\xspace}
\def\Ind#1#2{#1\setbox0=\hbox{$#1x$}\kern\wd0\hbox to 0pt{\hss$#1\mid$\hss}
\lower.9\ht0\hbox to 0pt{\hss$#1\smile$\hss}\kern\wd0}
\def\Notind#1#2{#1\setbox0=\hbox{$#1x$}\kern\wd0\hbox to
0pt{\mathchardef\nn="0236\hss$#1\nn$\kern1.4\wd0\hss}\hbox
to 0pt{\hss$#1\mid$\hss}\lower.9\ht0
\hbox to 0pt{\hss$#1\smile$\hss}\kern\wd0}
\def\ind{\mathop{\mathpalette\Ind{}}}
\def\nind{\mathop{\mathpalette\Notind{}}}
\begin{document}

\title{Ample Hierarchy}
\date{October 20, 2012. Slight revision of section 2 and 8: July 21,
  2013. More information on ranks in section 7: 22 July, 2013}

\author{Andreas Baudisch, Amador Martin-Pizarro and Martin Ziegler}

\address{Institut f\"ur Mathematik,Humboldt-Universit\"at zu Berlin,
  D-10099 Berlin, Germany
\newline
\indent Universit\'e de Lyon CNRS, Universit\'e Lyon 1,Institut
Camille Jordan UMR5208, 43 boulevard du 11 novembre 1918, F--69622
Villeurbanne Cedex, France
\newline
\indent Mathematisches Institut, Albert-Ludwigs-Universit\"at
Freiburg, D-79104 Freiburg, Germany}
\email{baudisch@mathematik.hu-berlin.de}
\email{pizarro@math.univ-lyon1.fr} \email{ziegler@uni-freiburg.de}
\thanks{The second author conducted research with support of programme
  ANR-09-BLAN-0047 Modig as well as an Alexander von Humboldt-Stiftung
  Forschungsstipendium f\"ur erfahrene Wissenschaftler 3.3-FRA/1137360
  STP} \keywords{Model Theory, CM-triviality}

\subjclass{03C45}

\begin{abstract}
  The ample hierarchy of geometries of stables theories is strict. We
  generalise the construction of the free pseudospace to higher
  dimensions and show that the $n$-dimensional free pseudospace is
  $\omega$-stable $n$-ample yet not $(n+1)$-ample.  In particular, the
  free pseudospace is not $3$-ample. A thorough study of forking is
  conducted and an explicit description of canonical bases is
  exhibited.
\end{abstract}

\maketitle
\tableofcontents

\section{Introduction}

Morley's renowned categoricity theorem \cite{Mo65} described any
model of an uncountably categorical theory in terms of basic
foundational bricks, so-called strongly minimal sets. A long-standing
conjecture aimed to understand the geometry of a strongly minimal set
in terms of three archetypal examples: a trivial set, a vector space
over a division ring and an irreducible curve over an algebraically
closed field. The conjecture was proven wrong \cite{Hr93} by obtaining
in a clever fashion a non-trivial strongly minimal set which does not
interpret a group. In particular, Hrushovski's new strongly minimal
set does not interpret any infinite field, which follows from the fact
that the obtained structure is CM-trivial. Recall that CM-triviality
is a generalisation of 1-basedness and it prohibits a certain
point-line-plane configuration which is present in Euclidian
geometry. The simplest example of a CM-trivial theory that is not
1-based is the
\emph{free pseudoplane:} an infinite forest with infinite branching at
every node. CM-trivial theories are rather rigid and in particular
definable groups of finite Morley rank are nilpotent-by-finite
\cite{Pi95}.

Taking the pseudoplane as a guideline, a non CM-trivial
$\omega$-stable theory which does not interpret an infinite field
was constructed in a pure combinatorial way \cite{BP00}. The
structure so obtained is of infinite rank, and it remains still open
whether the construction could be modified to produce one of finite
Morley rank. In \cite{Pi00, Ev03} a whole hierarchy
of new geometries (called $n$-ample) was exhibited, infinite fields
being at the top of the classfication. Evans suggested that his
construction could be used to show that the hierarchy is strict,
though no proof was given.

The goal of this article is to generalise the
aforementioned construction to higher dimensions in order to show that
the $N$-dimensional pseudospace is $N$-ample yet not $(N+1)$-ample,
showing therefore that the ample hierarchy is proper.
After a thorough study of the pseudospace, we were able to simplify
the combinatorics behind the original construction. In particular, we
characterise non-forking and give explicit descriptions of canonical
basis of finitary types over certain substructures. Moreover, we
show that the theory of the pseudospace has weak elimination of
imaginaries.

Tent has obtained the same result \cite{kT11}
independently; however, we present a different
construction and axiomatisation of the free pseudospace for higer
dimensions. We are indebted to her as she pointed out that the
prime model of the $2$-dimensional free pseudospace could be seen as a
building. We would like to express our gratitude to Yoneda for a
careful reading of a first version of this work.

\section{Ample concepts}\label{S:AMT}
Throughout this article, we assume a certain knowledge of stability
theory, in particular nonforking and canonical bases. We refer the
reader to \cite{TZ12} for a gentle and careful explanation of these
notions. All throughout this article, we work inside a sufficiently
saturated model of a first-order theory $T$ and all sets are small
subsets of it.

We first state a fact, which we believe is common knowledge, that
will be used repeatedly.
\begin{fact}\label{F:Inter}
Given a stable theory $T$ and sets $A$, $B$, $C$ and $D$, if
$\aclq(B)\cap\aclq(C)=\aclq(A)$ and $D\ind_A BC$,  then
$$ \aclq(DB)\cap \aclq(DC) =\aclq(DA).$$
\end{fact}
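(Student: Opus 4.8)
The plan is to reduce the statement to moving a single element inside an algebraic closure. Fix $e\in\aclq(DB)\cap\aclq(DC)$; I will show $e\in\aclq(DA)$, which suffices since the reverse inclusion $\aclq(DA)\subseteq\aclq(DB)\cap\aclq(DC)$ is immediate from $A\subseteq\aclq(B)\cap\aclq(C)$. Working in $T^{\mathrm{eq}}$, put $A'=\aclq(DA)$, so that $A'$ is algebraically closed, $A\subseteq A'$, and — enlarging the base side of $D\ind_A BC$ first to $DA$ and then to its algebraic closure — $A'\ind_A BC$. Since $D\subseteq A'$ we have $e\in\aclq(A'B)\cap\aclq(A'C)$, and it is enough to prove $e\in\aclq(A')$.

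The key step is to control $\cb(\stp(A'e/\aclq(BC)))$. On the one hand $A'\ind_A BC$ together with $A\subseteq\aclq(B)$ yields $A'\ind_B C$ by transitivity, and $e\in\aclq(A'B)$ gives $e\ind_{A'B}C$ trivially; transitivity then combines these into $A'e\ind_B C$, so this canonical base lies in $\aclq(B)$. The symmetric computation, using $A\subseteq\aclq(C)$ and $e\in\aclq(A'C)$, puts it in $\aclq(C)$ as well. Invoking the hypothesis $\aclq(B)\cap\aclq(C)=\aclq(A)$, we conclude $\cb(\stp(A'e/\aclq(BC)))\subseteq\aclq(A)$, i.e.\ $A'e\ind_A BC$.

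It remains to unwind this. Since $A\subseteq A'$, transitivity gives $e\ind_{A'}BC$ from $A'e\ind_A BC$; and as $e\in\aclq(A'B)\subseteq\aclq(A'BC)$ while $\tp(e/A'BC)$ is then a nonforking extension of $\tp(e/A')$, the type $\tp(e/A')$ is algebraic, so $e\in\aclq(A')=\aclq(DA)$. I expect no genuine difficulty: the proof is a sequence of applications of transitivity and monotonicity of nonforking together with the equivalence of ``nonforking over $E$'' with ``$\cb\subseteq\aclq(E)$''. The only thing to watch is the bookkeeping of bases up to algebraic closure — e.g.\ $\ind_{AB}=\ind_{\aclq(B)}$ because $A\subseteq\aclq(B)$, and $\ind_{AA'}=\ind_{A'}$ because $A\subseteq A'$ — and making sure each transitivity step uses the correct intermediate set, namely $A'$ over $B$ (resp.\ $C$) in the middle paragraph and $A'$ over $A$ in the last.
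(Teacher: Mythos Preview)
Your proof is correct and follows essentially the same route as the paper: the paper's argument is a terse version of yours, computing $\cb(De/BC)\subseteq\aclq(B)\cap\aclq(C)=\aclq(A)$ and concluding $e\in\aclq(DA)$, whereas you spell out the transitivity steps explicitly and work with $A'=\aclq(DA)$ in place of $D$, which is an immaterial cosmetic choice.
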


\begin{proof}
 In order to show that $ \aclq(DB)\cap \aclq(DC) \subset \aclq(DA),$
pick an element $e$ in $ \aclq(DB)\cap \aclq(DC) $. The
independence $D\ind_A BC$ yields that $\cb(De/BC)$ lies in
$\aclq(B)\cap\aclq(C)=\aclq(A)$, so $e$ lies in  $\aclq(DA)$.
\end{proof}

Recall now the definition of CM-triviality and $n$-ampleness
\cite{Pi00,Ev03}.

\begin{definition}\label{D:CM} Let $T$ be a stable theory.

The theory $T$ is  \emph{$1$-based} if for every pair of
 algebraically closed (in $T^\mathrm{eq}$) subsets $A\subset B$ and
 every real tuple $c$, we have that $\cb(c/A)$ is algebraic over
 $\cb(c/B)$. Equivalently, for every algebraically closed set $A$ (in
$T^\mathrm{eq}$) and every real tuple $c$, the canonical base
$\cb(c/A)$ is algebraic over $c$.

 The theory $T$ is \emph{CM-trivial} if for every pair of
algebraically closed (in $T^\mathrm{eq}$) subsets  $A\subset B$ and
every real tuple $c$, if $\aclq(Ac) \cap B = A$, then $\cb(c/A)$ is
algebraic over $\cb(c/B)$.

 The theory $T$ is called \emph{$n$-ample} if there are $n+1$ real
tuples satisfying the following conditions (possibly working over
parameters):
\begin{enumerate}
\item  $\aclq(a_0,\ldots,a_i)\cap\aclq(a_0,\ldots,a_{i-1},a_{i+1})=
\aclq(a_0,\ldots,a_{i-1})$ for every $0\leq i<n$,
\item $a_{i+1} \ind_{a_i} a_0,\ldots, a_{i-1}$ for every $1\leq i<n$,
\item $a_n \nind a_0$.
\end{enumerate}

\end{definition}

By inductively choosing models $M_i\supset a_i$ such that

$$ M_i \ind_{a_i} M_0,\ldots,M_{i-1},a_{i+1},\ldots,a_n,$$

\noindent Fact \ref{F:Inter} allows us to deduce the following,
which was already remarked in \cite[Corollary 2.5]{Pi95} in the case
of CM-triviality.

\begin{remark}\label{R:Mod}
In the definition of $n$-ampleness, we can replace all tuples by
models.
\end{remark}

\begin{cor}\label{C:ampleEQ}
 A stable theory $T$ is $n$-ample if and only if $T^\mathrm{eq}$ is.
\end{cor}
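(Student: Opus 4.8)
The plan is to show both directions by essentially unraveling the definition of $n$-ampleness and invoking the standard correspondence between $T$ and $T^{\mathrm{eq}}$. First I would observe that the conditions (1)--(3) in Definition \ref{D:CM} are phrased entirely in terms of $\aclq$, nonforking, and the existence of suitable tuples, all of which are the same notions whether one works in $T$ or in $T^{\mathrm{eq}}$: indeed $(T^{\mathrm{eq}})^{\mathrm{eq}}$ is canonically bi-interpretable with $T^{\mathrm{eq}}$, nonforking is preserved and reflected, and $\aclq$ computed in $T$ agrees with $\acl$ computed in $T^{\mathrm{eq}}$ (and with $\aclq$ of $T^{\mathrm{eq}}$). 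So the only genuine point of difference is the word \emph{real} in the phrase "real tuples": an $n$-ample witness for $T$ must consist of tuples of elements of the home sort, whereas an $n$-ample witness for $T^{\mathrm{eq}}$ only needs tuples of imaginaries.

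For the forward direction, if $T$ is $n$-ample then the very same real tuples $a_0,\dots,a_n$ witness $n$-ampleness of $T^{\mathrm{eq}}$, since real tuples of $T$ are in particular tuples of $T^{\mathrm{eq}}$ and the three conditions transfer verbatim; here the parameters, if any, are likewise carried over. This direction is immediate.

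The substantive direction is the converse: suppose $T^{\mathrm{eq}}$ is $n$-ample, witnessed by imaginary tuples $a_0,\dots,a_n$ (possibly over imaginary parameters). I would first absorb the parameters into the base by working over a model, using Remark \ref{R:Mod}, which lets me replace the $a_i$ by models $M_i$; a model of $T^{\mathrm{eq}}$ is interdefinable with a model of $T$, so after this step the "base" is generated by a genuine model $M$ of $T$. Now each $a_i$ lies in $\aclq(M c_i)$ for some real tuple $c_i$ (every imaginary is algebraic over a real tuple); I would like to replace $a_i$ by $c_i$. The key is that passing from $a_i$ to its "real closure" $\acl^{\mathrm{eq}}(Mc_i)$ does not disturb conditions (1)--(3): condition (2) is insensitive to replacing a side by its algebraic closure, condition (3) only gets stronger, and condition (1) is exactly where Fact \ref{F:Inter} does its work — having chosen the models $M_i$ with the independence $M_i \ind_{a_i} M_0,\dots,M_{i-1},a_{i+1},\dots,a_n$ as in Remark \ref{R:Mod}, Fact \ref{F:Inter} propagates the intersection equalities when we enlarge each tuple to include the corresponding real $c_i$. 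Thus the real tuples $c_0,\dots,c_n$ (over the real parameters coding $M$) witness that $T$ is $n$-ample.

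The main obstacle is the converse direction, and within it the bookkeeping needed to pass from imaginary witnesses to real ones while preserving condition (1): this is precisely the reason Remark \ref{R:Mod} and the inductive choice of models preceding it were set up, and the argument is a direct application of Fact \ref{F:Inter} in the same pattern used to prove that remark. One should also note that the cases of $1$-basedness and CM-triviality are entirely analogous (and in fact simpler, as they quantify over all $A\subset B$ rather than asserting existence), which is why the corollary is stated only for $n$-ampleness.
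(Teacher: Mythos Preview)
Your core idea is correct and is exactly the paper's argument: the corollary is an immediate consequence of Remark~\ref{R:Mod}. If $T^{\mathrm{eq}}$ is $n$-ample, Remark~\ref{R:Mod} (applied in $T^{\mathrm{eq}}$) lets us take the witnessing tuples to be models $M_i$ of $T^{\mathrm{eq}}$; the real part $N_i$ of each $M_i$ is a model of $T$ with $\dcl^{\mathrm{eq}}(N_i)=M_i$, so the $N_i$ are real tuples satisfying conditions (1)--(3) verbatim, and any parameters are absorbed since each $M_i$ contains them. That is the whole proof.

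You state this correctly up through ``a model of $T^{\mathrm{eq}}$ is interdefinable with a model of $T$'', but then you pivot to an unnecessary and muddled second step. Having already replaced the $a_i$ by models $M_i$, you revert to speaking of the $a_i$ as imaginaries needing real covers $c_i$, and you invoke Fact~\ref{F:Inter} again to justify enlarging each tuple to include $c_i$. This is both redundant (the real parts of the $M_i$ already do the job, with no further appeal to Fact~\ref{F:Inter} needed) and, as written, not clearly justified: your $c_i$ are chosen only so that $a_i\in\aclq(Mc_i)$, with no independence hypothesis on them, so it is unclear how Fact~\ref{F:Inter} is meant to apply to preserve condition~(1). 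Simply stop after observing that the $M_i$ are interdefinable with their real parts.
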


Clearly, every $1$-based theory is CM-trivial. Furthermore, a theory
is $1$-based if and only if it is not $1$-ample; it is CM-trivial if
and only if it is not $2$-ample \cite{Pi00}. Also, to be $n$-ample
implies $(n-1)$-ampleness: by construction, if $a_0,\ldots,a_n$
witness that $T$ is $n$-ample, the sequence $a_0,\ldots,a_{n-1}$
witnesses that $T$ is $(n-1)$-ample. In order to see this, we need
only show that
$$a_{n-1} \nind a_0,$$
which follows from $$a_n \nind a_0$$ and
$$a_{n} \ind_{a_{n-1}} a_0,$$ by transitivity.

In order to prove that the $N$-dimensional free pseudospace is not
$(N+1)$-ample, we need only consider some of the consequences
from the conditions listed above. Therefore, we will isolate such
conditions for Section \ref{S:Beweis}.
\begin{remark}\label{R:prof}
 If the (possibly infinite) tuples $a_0,\ldots,a_n$ witness that $T$
is $n$-ample, they satisfy the following conditions:
\renewcommand{\theenumi}{\alph{enumi}}
\begin{enumerate}

\item $a_n \ind_{a_i} a_{i-1}$ for every $1\leq i<n$.
\item  $\aclq(a_i,a_{i+1})\cap\aclq(a_i,a_n)= \aclq(a_i)$ for every
$0\leq i<n-1$.
\item  $a_n \nind\limits_{\aclq(a_i)\cap \aclq(a_{i+1})} a_i$
for every $0\leq i<n-1$.
\end{enumerate}
\end{remark}
If the tuples $a_0,\ldots,a_n$ witness that
$T$ is $n$-ample over some set of parameters $A$, by adding all
elements of $A$ to each of the tuples, then we may assume that all the
conditions  hold with $A=\emptyset$.

\begin{proof}
Let $a_0,\ldots,a_n$ witness that $T$ is $n$-ample.

First, note that $\aclq(a_1)\cap \aclq(a_2)\subset \aclq(a_0)$ by
property $(1)$. For $i\leq 2$, the set  $\aclq(a_i)\cap
\aclq(a_{i+1})$ is contained in $\aclq(a_i)\cap
\aclq(a_0,\ldots,a_{i-1})$ again by $(1)$. Now, condition $(2)$
implies that $\aclq(a_i)\cap \aclq(a_0,\ldots,a_{i-1})$ is a subset of
$\aclq(a_i)\cap \aclq(a_{i-1})$. By induction, we have that
$$\aclq(a_i)\cap \aclq(a_{i+1})\subset \aclq(a_0).$$

The independence $a_n \ind_{a_i} a_{i-1}$ follows directly
from property $(2)$ and yields $(a)$. Since $a_{n} \ind_{a_{i+2}}
a_0,\ldots, a_{i+1}$, we have that $$a_n \ind_{a_i,a_{i+2}} a_{i+1}.$$
Hence, $$\aclq(a_i,a_{i+1})\cap\aclq(a_i,a_n)\subset \aclq(a_i,a_{i+1}
)\cap\aclq(a_i ,a_{i+2}),$$ and thus in $\aclq(a_0,\ldots,a_i)$ by
$(1)$. Since $$a_{i+1}\ind_{a_i}
a_0,\ldots,a_{i-1},$$ \noindent we get $(b)$.

If $$a_n \ind\limits_{\aclq(a_i)\cap \aclq(a_{i+1})} a_i $$
\noindent for some $0\leq i<n-1$, then $i > 0$ by $(3)$. Since
$a_n\ind_{a_i}
a_0,\ldots,a_{i-1}$, transitivity gives that
$$a_n\ind_{\aclq(a_i)\cap \aclq(a_{i+1})}
a_0,\ldots,a_i.$$
Thus, we obtain the independence $a_n\ind_{a_0}
a_0,\ldots,a_i$  and in particular $a_n\ind_{a_0}
a_1$. Since $a_n\ind_{a_1} a_0$ by $(2)$ and $\aclq(a_0)\cap \aclq(
a_1)=\emptyset$ by $(1)$, this implies that $$a_n\ind a_0,$$
which contradicts $(3)$.
\end{proof}

In \cite{BMPW12}, a weakening of CM-triviality was introduced, following the spirit of \cite{KoPi06}, where some of the
consequences for definable groups in $1$-based theories were extended
to type-definable groups in theories with  the
\emph{Canonical Base Property}. For the purpose of this article, we
extend the definition to all values of $n$. However, we
 do not know  of any definability properties for
groups that may follow from the general definition.

Let $\Sigma$ be an $\emptyset$-invariant family of partial types. Recall that a type $p$ over $A$ is
\emph{internal to $\Sigma$}, or \emph{$\Sigma$-internal}, if for every realisation $a$ of $p$ there is some
superset $B\supset A$ with $a\ind_A B$, and realisations $b_1,\ldots,b_r$ of types in $\Sigma$ based on $B$ such
that $a$ is definable over $B,b_1,\ldots,b_r$. If we replace definable by algebraic, then we say that $p$ is
\emph{almost internal to $\Sigma$} or \emph{almost $\Sigma$-internal}.

\begin{definition}\label{D:tight}

A stable theory $T$ is called \emph{$n$-tight} (possibly working over parameters) with respect to the family $\Sigma$
if, whenever there are $n+1$ real tuples $a_0,\ldots,a_n$ satisfying
the
following conditions:
\begin{enumerate}
 \item  $\aclq(a_0,\ldots,a_i)\cap\aclq(a_0,\ldots,a_{i-1},a_{i+1})=
\aclq(a_0,\ldots,a_{i-1})$ for every $0\leq i<n$.
\item $a_{i+1} \ind_{a_i} a_0,\ldots, a_{i-1}$ for every $1\leq i<n$,
\end{enumerate}
then $\cb(a_n/a_0)$ is almost $\Sigma$-internal over $a_1$.
\end{definition}

\begin{remark}\label{R:samedef}
 As before, we may assume that all tuples are models. In particular,
the theory $T$ is $n$-tight if and only if $T^\mathrm{eq}$ is.

 A theory $T$ is $2$-tight with respect to $\Sigma$ if for every
pair of sets $A\subset B$ and every tuple $c$, if $\aclq(Ac)\cap
\aclq(B)=\aclq(A)$, then $\cb(c/A)$ is almost
$\Sigma$-internal over $\cb(c/B)$ . In particular, this
notion agrees with \cite[Definition 3.1]{BMPW12}

If $T$ is not $n$-ample, it is $n$-tight with respect to any family
$\Sigma$. Furthermore, if $T$ is $(n-1)$-tight, it is
$n$-tight.
\end{remark}

\begin{proof}

The equivalence between both definitions is a standard reformulation
 by setting $a_0=A$,
$a_1=\cb(c/B)$ and $a_2=c$ for one direction (working over
$\aclq(a_0)\cap\aclq(a_1)$), and $A=a_0$, $B=a_0\cup \cb(a_2/a_1)$ and
$c=a_2$ for the other.

If $T$ is not $n$-ample, it is clearly $n$-tight, since algebraic
types are always almost $\Sigma$-internal for any $\Sigma$.

Suppose now that $T$ is $(n-1)$-tight, and consider $n+1$ tuples
 $a_0,\ldots,a_n$ witnessing $(1)$ and $(2)$. So
do $a_0,\ldots,a_{n-1}$ as well. Hence, the canonical base
$\cb(a_{n-1}/a_0)$ is almost $\Sigma$-internal over $a_1$.

Since $a_n\ind_{a_{n-1}} a_0$, it follows by transitivity that
$\cb(a_n/a_0)$ is algebraic over $\cb(a_{n-1}/a_0)$ and
therefore the former is also almost $\Sigma$-internal over $a_1$.

\end{proof}

In this article, we will show that the free $N$-dimensional pseudospace is $N$-ample yet not
$(N+1)$-ample. Furthermore, if $N\geq 2$, it is $N$-tight with respect
to the family of Lascar rank $1$ types.

\section{Fra\"iss\'e Limits}\label{S:Fraisse}

The results in this section were obtained by the third author in an 
unpublished note \cite{mZ11} (in a slightly more general context). 
We include them here for the sake of completeness.

Throughout this section, let $\K$ denote a class of structures closed
under isomorphisms in a fixed language $\mathrm{L}$. We assume that 
the empty structure $0$ is in $\K$. Furthermore, a class
$\s$ of embeddings between elements of $\K$ is given, called 
\emph{strong embeddings}, containing all isomorphisms and closed under
composition. We also assume that the empty map $0\to A$ is
in $\s$ for every $A\in K$. 

We call a substructure $A$ of $B$ \emph{strong} if the inclusion map
is in $\s$. We denote this by $A\leq B$.

\begin{definition}
  Given an infinite cardinal $\kappa$, an increasing chain of strong
substructures $\{A_i \}_{i < \kappa}$ is \emph{rich} if, for all
  $i<\kappa$ and all strong $f:A_i\to B$, there is some $i\leq
j<\kappa i$ and a  strong $g:B\to A_j$ such that $gf:A_i\to A_j$ is
the inclusion map.

  A \emph{Fra\"iss\'e limit} of $(\K,\s)$ of length $\kappa$ is the
union of a rich sequence of length $\kappa$.
\end{definition}

\begin{theorem}\label{T:Fraisselimes}
  Suppose $(\K,\s)$ satisfies the following conditions:
  \begin{enumerate}
  \item There are at most countably many isomorphism types in $\K$.
  \item For each $A$ and $B$ in $\K$, there are at most countably many
strong embeddings $A\to B$.
  \item $\K$ has the amalgamation property with respect to strong
    embeddings.
  \end{enumerate}
  Then rich sequences exist and all Fra\"iss\'e limits of the same
length are  isomorphic.
\end{theorem}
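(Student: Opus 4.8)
The plan is to run the classical Fra\"iss\'e argument, using the amalgamation property~(3) for existence and the countability conditions~(1)--(2) to make the bookkeeping possible.

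\emph{Existence.} I would build a chain $\{A_i\}_{i<\kappa}$ by transfinite recursion: set $A_0=0$, take $A_\lambda=\bigcup_{i<\lambda}A_i$ at limit stages (so $A_\lambda$ again lies in $\K$ and $A_i\leq A_\lambda$, since $\K$ and the relation $\leq$ are closed under unions of chains in the ambient setting), and at successor stages treat one ``extension task''. For $A\in\K$, conditions~(1) and~(2) imply that there are, up to isomorphism over $A$, only countably many strong embeddings $A\to B$ with $B\in\K$; call these the tasks of $A$. Fix in advance a schedule that assigns to cofinally many ordinals $<\kappa$ the tasks of the structures built so far, in such a way that every task of every $A_i$ is treated at some stage $>i$; this is possible since after any $|\alpha|$ steps only $\leq|\alpha|\cdot\aleph_0\leq\kappa$ tasks have appeared. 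When stage $\alpha+1$ is to treat a task $f\colon A_i\to B$ with $i\leq\alpha$, amalgamate $B$ and $A_\alpha$ over $A_i$ by~(3), taking the $A_\alpha$-leg to be an inclusion; this yields $A_{\alpha+1}\in\K$ with $A_\alpha\leq A_{\alpha+1}$ and a strong $g\colon B\to A_{\alpha+1}$ with $gf$ the inclusion $A_i\hookrightarrow A_{\alpha+1}$. The resulting chain is rich.

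\emph{An auxiliary form of richness.} I would first record that richness of $\{A_i\}$ yields its apparent strengthening to strong substructures of members: if $C\leq A_i$ and $f\colon C\to B$ is strong, then there are $j\geq i$ and a strong $h\colon B\to A_j$ with $hf$ the inclusion $C\hookrightarrow A_j$. Indeed, amalgamate $B$ with $A_i$ over $C$ (taking the $A_i$-leg to be an inclusion) to get $A_i\leq E$ with a strong $B\to E$ over $C$, and then apply richness to the inclusion $A_i\hookrightarrow E$ to push $E$, and hence $B$, strongly and compatibly into some $A_j$. This is exactly what repairs the mismatch between the image of an extension and the ambient member in the back-and-forth below.

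\emph{Uniqueness.} Let $\{A_i\}_{i<\kappa}$ and $\{B_i\}_{i<\kappa}$ be rich with unions $M$ and $N$. I would construct an $\subseteq$-increasing sequence of isomorphisms $h_k\colon C_k\to D_k$, each $C_k\leq A_{i_k}$ and $D_k\leq B_{j_k}$ in $\K$, with $h_0=\emptyset$, alternately ``catching up'' the two sides: at a forth step, for $i$ large enough that $C_k\leq A_i$, view $h_k^{-1}$ followed by the inclusion $C_k\hookrightarrow A_i$ as a strong embedding out of $D_k$ and apply the auxiliary form of richness to the $\{B_j\}$-sequence, getting an isomorphism extending $h_k$ whose domain is all of $A_i$; and symmetrically at a back step one extends the range to contain a prescribed member $B_j$. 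Interleaving these demands against two prescribed cofinal sequences of indices, every $A_i$ is eventually absorbed into some $C_k$ and every $B_j$ into some $D_k$, so $\bigcup_k h_k$ is a bijection from $M$ onto $N$; being a union of an increasing chain of embeddings it is an isomorphism. (At limit stages of this recursion one again uses closure of $\K$ and of $\leq$ under unions of chains, together with transitivity of $\leq$.)

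\emph{Where the difficulty lies.} Neither half involves any real content beyond~(1)--(3); the work is organisational. On the existence side it is arranging the schedule so that no task of any $A_i$ is forgotten. On the uniqueness side it is keeping the approximations $h_k$ simultaneously coherent, with domains and ranges that stay strong substructures of chain members, and cofinally exhausting both $M$ and $N$ --- which is precisely why one interleaves the forth and back demands rather than treating them separately, and why the auxiliary form of richness is needed.
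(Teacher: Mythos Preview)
Your proof is correct and follows essentially the same route as the paper's: existence by bookkeeping (the paper just says this is ``easy to show'') and uniqueness by a back-and-forth, with the key technical step being your \emph{auxiliary form of richness}---the paper packages the same observation as the notion of an ``r-strong'' substructure, noting that prepending any strong $B\leq A_0$ to a rich sequence again yields a rich sequence, which immediately gives extendability from strong substructures of chain members. One minor difference worth flagging: you treat the general-$\kappa$ case by assuming closure of $\K$ and $\leq$ under unions of chains (not among the stated hypotheses), whereas the paper's written argument is explicitly only for the countable case (Lemma~3.2 is stated for \emph{finite} r-strong $A$), which is all that is needed for the applications.
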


The existence of rich sequences is easy to show. The uniqueness for
 countable Fra\"iss\'e limits will follow from the next lemma. For
that, let us say that $A$ is \emph{r-strong in a Fra\"iss\'e limit
$M$}, denoted by $A\leqr M$, if $M$ is the union of a rich sequence
starting with $A$.
\begin{lemma}\label{L:fraisse}
  A Fra\"iss\'e limit $M$ has the following properties:
\renewcommand{\theenumi}{\alph{enumi}}
\begin{enumerate}
\item\label{L:fraisse:null} $\emptyset\leqr M$
\item\label{L:fraisse:rich} for every finite $A\leqr M$ and every $B$
  in $\K$ such that $A\leq B$, there is an r-strong subset $B'$ of $M$
  containing $A$ and isomorphic to $B$ over $A$.
  \end{enumerate}
\end{lemma}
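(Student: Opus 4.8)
The plan is to treat the two items essentially separately, as (\ref{L:fraisse:null}) is a near-triviality and (\ref{L:fraisse:rich}) carries the real content. For (\ref{L:fraisse:null}): given that $M$ is the union of a rich sequence $\{A_i\}_{i<\kappa}$, I would build a new rich sequence $\{A'_i\}_{i<\kappa}$ starting with $A'_0=\emptyset$. The key observation is that the empty structure $0$ lies in $\K$ and the empty map $0\to A_0$ is strong by hypothesis, so $\emptyset\leq A_0$, and one can simply prepend $\emptyset$ and reindex, or more carefully interleave $\emptyset$ at the bottom and then follow the original sequence. Richness is a ``for all $i$, for all strong $f$, there exists $j$'' condition, and any tail of a rich sequence beyond a fixed point still witnesses all the required extensions, so the reindexed sequence is again rich; hence $\emptyset\leqr M$.

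For (\ref{L:fraisse:rich}): suppose $A$ is finite with $A\leqr M$, witnessed by a rich sequence $\{C_i\}_{i<\kappa}$ with $C_0=A$, and suppose $A\leq B$ in $\K$. I would apply the richness condition at stage $i=0$ to the strong embedding $f\colon A=C_0\to B$ (which is the inclusion, hence strong since $A\leq B$): this yields some $0\le j<\kappa$ and a strong $g\colon B\to C_j$ with $gf\colon A\to C_j$ the inclusion map. Set $B'=g(B)\subseteq C_j\subseteq M$. Since $g$ is a strong embedding, $B'\leq C_j$ and $g$ is an isomorphism $B\cong B'$; because $gf$ is the inclusion $A\hookrightarrow C_j$, the restriction of $g$ to $A$ is the identity, so this isomorphism fixes $A$ pointwise, giving $B'\cong B$ over $A$, and $A\subseteq B'$. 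It remains to check $B'\leqr M$, i.e.\ that $M$ is the union of a rich sequence starting with $B'$. For this I would splice: start the new sequence with $B'$, then follow with $C_j\leq C_{j+1}\leq\cdots$ (noting $B'\leq C_j$), and since a final segment of a rich sequence is still rich, the spliced sequence $B', C_j, C_{j+1},\dots$ is rich with union $M$ (the finitely many terms $C_0,\dots,C_{j-1}$ discarded at the front do not affect the union for an infinite sequence, and do not affect richness). Hence $B'\leqr M$.

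The main obstacle is the bookkeeping around richness under reindexing and splicing: one must verify that inserting a new bottom element and/or deleting finitely many initial terms of a rich sequence preserves both the ``increasing chain of strong substructures'' property and the richness quantifier condition. The substructure chain part uses only that $\s$ contains all isomorphisms and is closed under composition (and contains the empty maps), while the richness part uses the straightforward fact that if $\{A_i\}_{i<\kappa}$ is rich then so is any sequence agreeing with it from some index on — because the extension property is required for \emph{every} stage and every strong $f$ out of that stage, and is insensitive to what happens at earlier stages. I would state this ``tail of a rich sequence is rich'' observation once and use it for both items; everything else is routine diagram-chasing with strong embeddings.
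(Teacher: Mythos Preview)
Your plan is essentially identical to the paper's: state once that prepending a strong substructure to a rich sequence yields a rich sequence, then use this for both items (directly for (\ref{L:fraisse:null}), and combined with taking a tail for (\ref{L:fraisse:rich})). The paper's proof is just a terser version of what you wrote.

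There is one point where your stated justification does not quite work. You write that richness is preserved under splicing because ``any sequence agreeing with a rich one from some index on'' is rich, the extension property being ``insensitive to what happens at earlier stages.'' That argument handles all terms of the new sequence \emph{except the prepended one}: at the new first term $B'$ (or $\emptyset$ in part (\ref{L:fraisse:null})) you must check richness directly, and the tail fact says nothing there. Concretely, given strong $f\colon B'\to D$, you need some strong $g\colon D\to C_k$ with $gf$ the inclusion; this does not follow from richness of the tail $C_j,C_{j+1},\dots$ alone. The fix is to amalgamate the inclusion $B'\hookrightarrow C_j$ with $f$ over $B'$ to get strong maps $C_j\to E$ and $D\to E$, then apply richness of the original sequence at $C_j$ to embed $E$ into some $C_k$; composing gives the required $g$. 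So the observation ``prepending preserves richness'' genuinely uses the amalgamation hypothesis, and is not a formal consequence of the tail fact. The paper also states this observation without proof, so the omission is minor, but your explanation of \emph{why} it holds should be corrected.
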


\begin{proof}
  We observe first that if $A_0\leq A_1\leq\ldots$ is a rich sequence
  and $B\leq A_0$, then the sequence $B\leq A_0\leq A_1\leq\ldots$ is
  also rich. This implies $(\ref{L:fraisse:null})$. For
  $(\ref{L:fraisse:rich})$, choose a rich sequence
  $A=A_0\leq A_1\leq\ldots$ with union $M$. If $B\geq A$ is
given,  there exists, by richness, some index $j$ and 
$B'\leq  A_j$ isomorphic to $B$ over $A$. The set $B'$ is r-strong
in $M$, since the subsequence $B'\leq A_j\leq A_{j+1}\leq\ldots$ is
again rich.
\end{proof}

The lemma implies that countable Fra\"iss\'e limits are isomorphic
by a standard back-and-forth argument: given two Fra\"iss\'e
limits $M$ and $M'$  with rich sequences $A_0\leq A_1\leq\ldots$ and
$A'_0\leq A'_1\leq\ldots$\,, consider an isomorphism $B\to B'$, where
$B$ is strong in $A_i$ and $B'$ is strong in $A'_i$. Then there is an
extension to an isomorphism $C\to C'$ such that $A_i\leq C\leq A_j$
and $A'_i\leq C'\leq A'_j$ for some $j>i$. This results in an ascending
sequence of isomorphisms whose union yields an isomorphism $M\to
M'$.

\begin{cor}
  Assume that $M$ and $M'$ are Fra\"iss\'e limits of the same length.
Given sets $B\leqr M$ and $B'\leqr M'$, every isomorphism $B\to B'$
extends  to an isomorphism $M\to M'$.
\end{cor}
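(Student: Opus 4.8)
The plan is to carry out, transfinitely, the back-and-forth argument sketched after Lemma~\ref{L:fraisse}. Write $\kappa$ for the common length. Using $B\leqr M$ and $B'\leqr M'$, fix rich sequences $(A_i)_{i<\kappa}$ with $A_0=B$ and union $M$, and $(A'_i)_{i<\kappa}$ with $A'_0=B'$ and union $M'$. I would then build an increasing, continuous chain of isomorphisms $h_\nu\colon C_\nu\to C'_\nu$ for $\nu<\kappa$, together with a non-decreasing sequence of indices $i_\nu<\kappa$ which is cofinal in $\kappa$, subject to the invariants that $h_0$ is the given isomorphism $B\to B'$, that $C_\nu\leq A_{i_\nu}$ and $C'_\nu\leq A'_{i_\nu}$, and that $A_{i_\nu}\subseteq C_{\nu+1}$ and $A'_{i_\nu}\subseteq C'_{\nu+1}$. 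The last two invariants, together with cofinality of the $i_\nu$ in $\kappa$, force $\bigcup_{\nu<\kappa}C_\nu=M$ and $\bigcup_{\nu<\kappa}C'_\nu=M'$, so that $h:=\bigcup_{\nu<\kappa}h_\nu$ is the desired isomorphism.

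The successor step is precisely the move used for Theorem~\ref{T:Fraisselimes} in the countable case. Given $h_\nu\colon C_\nu\to C'_\nu$ with $C_\nu\leq A_{i_\nu}$ and $C'_\nu\leq A'_{i_\nu}$, first do the \emph{forth} half: transport $A_{i_\nu}$ along $h_\nu$ to obtain a strong extension $\widetilde A$ of $C'_\nu$ and an isomorphism $A_{i_\nu}\to\widetilde A$ extending $h_\nu$; amalgamate $\widetilde A$ with $A'_{i_\nu}$ over $C'_\nu$ by the amalgamation property; and apply richness of $(A'_i)$ at stage $i_\nu$ to the resulting strong embedding $A'_{i_\nu}\to H$, obtaining an index $j\geq i_\nu$ and a strong embedding $H\to A'_j$ that is the inclusion on $A'_{i_\nu}$. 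Composing yields an isomorphism extending $h_\nu$ whose domain is $A_{i_\nu}$ and whose range is strong in $A'_j$. The \emph{back} half is symmetric: transport $A'_j$ back across this isomorphism to a strong extension $\bar A\supseteq A_{i_\nu}$, and apply richness of $(A_i)$ at stage $i_\nu$ to the inclusion $A_{i_\nu}\hookrightarrow\bar A$, obtaining an index $j'\geq i_\nu$ and a strong embedding $\bar A\to A_{j'}$ that is the inclusion on $A_{i_\nu}$. This produces $h_{\nu+1}\colon C_{\nu+1}\to C'_{\nu+1}$ with $A_{i_\nu}\subseteq C_{\nu+1}\leq A_{j'}$ and $A'_{i_\nu}\subseteq C'_{\nu+1}=A'_j$; setting $i_{\nu+1}:=\max(j,j',\nu+1)$ restores the invariants and keeps the indices cofinal. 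At a limit ordinal $\lambda$ one takes unions: $C_\lambda=\bigcup_{\nu<\lambda}C_\nu$, $C'_\lambda=\bigcup_{\nu<\lambda}C'_\nu$, $h_\lambda=\bigcup_{\nu<\lambda}h_\nu$, $i_\lambda=\sup_{\nu<\lambda}i_\nu$.

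I do not expect a genuine obstacle: the argument uses no idea beyond the countable case, and the two points needing attention are routine. One is the index bookkeeping — keeping the $i_\nu$ below $\kappa$ and cofinal in it — which the choices above handle (for singular $\kappa$ one may instead run the recursion along a fixed cofinal sequence in $\kappa$). The other is the limit stage, where one must check $C_\lambda\leq A_{i_\lambda}$: each $C_\nu$ with $\nu<\lambda$ is strong in $A_{i_\nu}$, hence also in $A_{i_\lambda}$ by composing with $A_{i_\nu}\leq A_{i_\lambda}$, so the point is that a union of an increasing chain of strong substructures of a fixed member of $\K$ is again strong, which holds in the situations we consider. Granting this, the chain $(h_\nu)_{\nu<\kappa}$ is well defined and its union is the required extension of the given isomorphism of $B$ onto $B'$.
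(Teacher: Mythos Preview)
Your proposal is correct and follows the same back-and-forth strategy the paper sketches in the paragraph preceding the corollary; the paper treats the corollary as an immediate consequence of that argument (starting the rich sequences at $B$ and $B'$) and does not give a separate proof. Your version is more explicit, particularly in handling the transfinite bookkeeping and limit stages, whereas the paper only spells out the countable case; your caveats about the limit step and singular $\kappa$ are appropriate and, in the paper's intended application where the structures in $\K$ are finite, the limit stages are in fact trivial since the chain $(C_\nu)_{\nu<\lambda}$ sits inside a finite $A_{i_\lambda}$ and hence stabilises.
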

The convention that $\s$ is contains all isomorphisms and is closed
under composition represents no obstacle, thanks to the following
easy remark.

\begin{remark}\label{R:composition}
  Let $\s$ be a set of embeddings between elements of $\K$ with the
  amalgamation property.  The closure of $\s$
together with all isomorphisms under composition has again
the  amalgamation property.
\end{remark}


\section{The free pseudospace}\label{S:Construction}

In this section, we will construct and axiomatise the $N$-dimensional
free pseudo\-space, which is a generalisation of \cite{BP00}, based on
the free pseudoplane. An alternative axiomatisation, in terms of
\emph{flags}, may be found in \cite{BMPZ12}.

\begin{remark}\label{E:Psplane}
Recall that the (free) pseudoplane is a bicolored graph with infinite
branching and no loops. These elementary properties describe a
complete $\omega$-stable theory of Morley rank $\omega$.

Quantifier elimination is obtained after adding
the collection of binary predicates:
\[ d_n(x,y) \,\Longleftrightarrow \text{ the distance between $x$ and
  $y$ is exactly $n$.}\]

In particular, since there are no loops, the set $d_1(x,a)$ is
strongly minimal. Morley rank for
this theory is additive and agrees with Lascar rank.  Given the type
of an element $c$ over an algebraically closed set $A$, its canonical
base $\cb(c/A)$ is the unique point $a$ in $A$ whose distance to $c$
is smallest possible (or empty if there is no path between $c$ and
$A$).  It follows that the theory has weak elimination of imaginaries
and is moreover  $CM$-trivial but not $1$-based.
\end{remark}

The idea behind the construction of the free pseudospace \cite{BP00}
is to take a free pseudoplane, whose vertices of one color are called
\emph{planes} and vertices of the other are referred to as
\emph{lines}, and on each line put an infinite set of \emph{points},
such that, for each plane, the lines which are incident with it,
together with the points on them form again a free pseudoplane.
Nevertheless, the actual construction was rather combinatorial and
therefore less intuitive. Instead, our approach consists in building a
model out of some basic operations and study the complete theory of
such a structure, in order to show that it agrees with the free
pseudospace in  \cite{BP00} for dimension $N = 2$.

\begin{definition}
  For $N\geq 1$, a \emph{colored $N$-space} $A$ is a colored graph
  with colors (or \emph{levels}) $\A_0$,\ldots,$\A_N$ such that an
  element in $\A_i$ can only be linked to vertices in
  $\A_{i-1}\cup\A_{i+1}$. We will furthermore consider two (invisible)
  levels $\A_{-1}$ and $\A_{N+1}$, consisting of a single
  \emph{imaginary} element $a_{-1}$ and $a_{N+1}$ respectively, which
  are connected to all vertices in $\A_0$ and $\A_N$
  respectively. Given such a graph $A$ and a subset $s$ of
  $\{0,\cdots,N\}$, we set
  \[\A_s(A)=\bigcup\limits_{i\in s} \A_i(A).\] Given $x$ and $y$ in
  $\A_s(A)$, its distance in $\A_s(A)$ is denoted by $\dd^A_s(x,y)$.

  Given a colored $N$-space $A$ and vertices $a$ in $\A_l(A)$ and $b\in
  A_r(A)$, we say that $b$ \emph{lies over} $a$ (or $a$ \emph{lies
    beneath} $b$) if $l<r$ and there is a path of the form
  $a=a_l,a_{l+1},\dotsc,a_r=b$. Note that $a_k$ must be in $\A_k(A)$.
  By convention, the point $a_{N+1}$ lies over all other vertices
  (including $a_{-1}$) and  $a_{-1}$ lies beneath all other
  vertices.

  With $A$, $a$ and $b$ as above, we denote by $A_a$ the subgraph of
  $A$ consisting of all the elements of $A$ lying over $a$. Similarly
  $A^b$ denotes the subgraph of all the elements lying beneath
  $b$. The subgraph $A_a^b=(A_a)^b$ consists of all the elements of
  $A$ lying between $a$ and $b$, if $a$ lies beneath $b$.
\end{definition}

Observe that, after a suitable renumbering of levels, the subgraph
$A_a$ becomes a colored $(N-l-1)$--space, whereas $A^b$ becomes a
colored $(r-1)$--space and $A_a^b$ a colored $(r-l-2)$--space.

\begin{notation}
Intervals are assumed to be non-empty
\end{notation}

\begin{definition}\label{D:alpha}
  Given an interval $s=(l_s,r_s)$ (where $-1$ and $N+1$ are possible
  values) in $\{0,\cdots,N\}$ and a colored $N$-space $A$ with two
  distinguished vertices $a_{l_s}$ in $\A_{l_s}(A)$ beneath $a_{r_s}$ in
  $\A_{r_s}(A)$, we say that $B=A\cup\{b_i\mid i\in s\}$ with
  $b_i\in\A_i(B)$ is obtained from $A$ by applying \emph{the
    operation} $\alpha_s$ on $(a_{l_s},a_{r_s})$ if
  \renewcommand{\theenumi}{\alph{enumi}}
  \begin{enumerate}
  \item The sequence $a_{l_s},b_{l_s+1},\dotsc,b_{r_s-1},a_{r_s}$ is a path in $B$.
  \item $B$ has no new edges besides the aforementioned (and those of $A$).
  \end{enumerate}

  \noindent If either $l_s=-1$ or $r_s=N+1$, then 
  $a_{l_s}$ lies automatically beneath $a_{r_s}$.
\end{definition}

The $N$-dimensional pseudospace will now be obtained by iterating
countably many times all operations $\alpha_s$ for $s$ varying over
all intervals in $[0,N]$.  Clearly, we have the following.

\begin{remark}\label{R:amalg}
  If both $B_1$ and $B_2$ are obtained from $A$ by applying
  respectively $\alpha_{s_1}$ and $\alpha_{s_2}$, then the
  graph-theoretic amalgam $C=B_1\otimes_A B_2$ is obtained by applying
  $\alpha_{s_1}$ to $B_2$ and $\alpha_2$ to $B_1$.
\end{remark}
\begin{definition}
  Given two colored $N$-spaces $A$ and $B$, we say that $A$ a
\emph{strong subspace} of $B$ if $A$ is a subgraph of $B$ and $B$ can
be obtained from $A$ by a (possibly infinite) sequence of operations
$\alpha_s$ for varying $s$.  We  denote this by $A\leq B$.
\end{definition}
A \emph{strong embedding} $A\to B$ is an isomorphism of $A$
with a strong subspace of $B$.  Let $\K_\infty$ be the class of all
finite colored $N$-spaces $A$ with $\emptyset \leq A$. By the
last remark and Remark \ref{R:composition}, the class
$\K_\infty$ has the amalgamation property with respect to  strong
embeddings . Clearly, there are only countably may isomorphism types
in $\K_\infty$ and only finitely many maps between two structures of
$\K_\infty$.  We can consider the subclass $\K_0$, where by a
$0$-strong embedding we only allow operations $\alpha_s$, for
singleton $s$. Again, the class $\K_0$ has the amalgamation property.

By Theorem \ref{T:Fraisselimes}, we define the following structures:
\begin{definition}\label{D:M_infty}
  Let $\m^N_\infty$ be the Fra\"iss\'e limit of $\K_\infty$ with
  strong embeddings and $\m^N_0$ be the Fra\"iss\'e limit of $\K_0$
  with $0$-strong embeddings, starting from a given (fixed) path
  $a_0-\ldots-a_N$, where $a_i\in\A_i$.
\end{definition}

We will drop the superindex $N$ in $\mi^N$ or $\m_0^N$
 when they are clear from the context.

In particular, the structure $\m^2_0$ so obtained agrees with the
prime model constructed in \cite{BP00}, as Theorem \ref{T:axiome}
will show.

\begin{remark}\label{R:lokal_operation}
  Let $p$ be either $0$ or $\infty$. Consider $a$ in $\A_l(\m_p^N)$
  and $b$ be in $\A_r(\m_p^N)$ lying over $a$.  Then,
  $$(\m_p^N)_a\cong \m_p^{N-l-1},$$
  $$(\m_p^N)^b\cong \m_p^{r-1},$$ $$(\m_p^N)_a^b\cong
  \m_p^{r-l-2}.$$
  \noindent Furthermore, given $-1\leq l<r\leq N+1$, we have that
  $\A_{[l,r]}(\m_p^N)\cong \m_p^{r-l-1}$.
\end{remark}
\begin{proof}
 Given a colored $N$-space $M$ and corresponding vertices $a$ and $b$,
 every operation in $M_a$ can be extended to an operation on
 $M$. Moreover, if an operation on $M$ has no meaning restricted to
 $M_a$, then $M_a$ does not change. The other statements can be proved
 in a similar fashion.
\end{proof}
We will now introduce a notion, \emph{simply connectedness}, which
traditionally implies path-connectedness topologically. Despite this
abuse of notation, we will use this term since it implies that loops
are not punctured (\emph{cf.} Remark \ref{R:simply}(\ref{R:kreis})
and Corollary \ref{C:geschl_flaggenweg}).

\begin{definition}\label{D:simplyconnected}
 A colored $N$-space $M$ is simply connected if, whenever we are
given $l<r$ in $[-1,N+1]$, an
interval $t\subset [l,r]$, vertices  $a$ in $\A_l(M)$  beneath $b$ in
$\A_r(M)$ and $x$ and $y$ in  $\A_t(M)$ lying between $a$ and
$b$ which are $t$-connected by a path of length $k$ not passing
through $a$ nor $b$, then there is a path in $\A_t(M)$ of length at
most $k$ connecting $x$ and $y$ such that every vertex in the path lies
between $a$ and $b$.
\end{definition}

Note that simply connectedness is an empty condition for $l=-1$
and $r=N+1$.

\begin{remark}\label{R:simply}
 Let $M$ be a simply connected connected colored $N$-space. The
following hold:
\begin{enumerate}
\item \label{R:baum} The subgraph $\A_{[l,l+1]}(M)$ has no closed
paths with no repetitions.
\item\label{R:kreis} In a closed path $P$ in $\A_{[l,r]}(M)$,
all elements in $P\cap \A_{[l,r)}$ are connected (in 
$\A_{[l,r)}(M)$). Likewise for the dual statement.
\end{enumerate}

\end{remark}
\begin{proof}
 For $(\ref{R:baum})$, set  $r=N+1$, $l=l$ and take $t=[l,l+1]$ in the
definition of simply connectedness.

For $(\ref{R:kreis})$, given $x$ and $y$ in $P\cap \A_{[l,r)}$, if
they are connected using an arch of $P$ in $\A_{[l,r)}(M)$, there is
nothing to prove. Otherwise, replace successively every occurrence of
a vertex $z$ in $P\cap \A_r(M)\cap P$ by a subpath in $\A_{[l,r)}(M)$
connecting the immediate neighbours of $z$ in $P$.
\end{proof}

As the following Lemma shows, simply connectedness is preserved under
application of the operations $\alpha_s$'s, 

\begin{lemma}\label{L:sc-oper}
 Let $A$ be a simply connected colored $N$-space. If $B$ is obtained
from $A$ by applying $\alpha_s$ on $(a_{l_s},a_{r_s})$,
then $B$ is simply connected as well.
\end{lemma}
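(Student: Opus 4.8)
The plan is to verify the definition of simple connectedness for $B$ directly, using that it holds for $A$. So fix $l<r$ in $[-1,N+1]$, an interval $t\subseteq[l,r]$, vertices $a\in\A_l(B)$ beneath $b\in\A_r(B)$, and $x,y\in\A_t(B)$ lying between $a$ and $b$, connected in $\A_t(B)$ by a path $P$ of length $k$ avoiding $a$ and $b$; I must produce a path in $\A_t(B)$ of length $\le k$ between $x$ and $y$ through vertices lying between $a$ and $b$. The new vertices $b_i$ ($i\in s$) form a single path $a_{l_s}-b_{l_s+1}-\cdots-b_{r_s-1}-a_{r_s}$ and are attached to $A$ only along that path, so each $b_i$ has degree $\le 2$ in $B$ (degree $2$ when $l_s<i<r_s$, and possibly less at the ends if $l_s=-1$ or $r_s=N+1$). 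The key structural observation is therefore: any reduced/simple path in $B$ either lies entirely inside $A$, or it traverses the new segment, and because the new vertices have degree $\le 2$ it can only enter and leave the segment through $a_{l_s}$ and $a_{r_s}$.

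First I would reduce to the case where $P$ is a simple path (no repeated vertices): if some path of length $\le k$ works, a shortest such is simple. Next I would split according to whether $P$ meets the new vertices $\{b_i : i\in s\}\cap\A_t$. If $P$ avoids all new vertices then $P$ lies in $\A_t(A)$, and all the data $a,b,x,y$ either lie in $A$ or are handled by observing that a new vertex $a_{l_s}$ or $a_{r_s}$ — wait, those are old; the only genuinely new vertices are the $b_i$ — so if $a$ or $b$ is new it is some $b_i$, but then I replace the pair $(a,b)$ by a pair of old vertices bounding the same interval: concretely, if $b=b_i$ then $b$ lies beneath $a_{r_s}$ and the condition ``lies between $a$ and $b$'' relative to $B$ forces the relevant vertices into a very small configuration, since $B^{b_i}$ and $B_{b_i}$ differ from their $A$-counterparts only along the segment. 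I would record this as a short lemma: for a new vertex $b_i$, the sets $B_{b_i}$, $B^{b_i}$, $B_{b_i}^{c}$ are exactly what Remark~\ref{R:lokal_operation}-style reasoning predicts, in particular they contain no essentially new loops. With $a,b$ thus taken in $A$ (or the degenerate cases dispatched by hand), apply simple connectedness of $A$ to $P\subseteq\A_t(A)$ to get the desired short path inside $A\subseteq B$.

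The remaining and main case is that $P$ passes through at least one new vertex. Since the new vertices lie on a path all of whose internal vertices have degree $2$ in $B$, the trace of $P$ on the new segment is a union of subpaths of $a_{l_s}-b_{l_s+1}-\cdots-a_{r_s}$, and since $P$ is simple, it is a single subpath; moreover $P$ enters and exits the segment only at the endpoints $a_{l_s},a_{r_s}$ (or $P$ actually starts or ends at an interior $b_i$, i.e. $x$ or $y$ is new). If $x,y$ are both old, then $P$ decomposes as $P_1 \cdot (\text{segment from }a_{l_s}\text{ to }a_{r_s}) \cdot P_2$ with $P_1,P_2$ in $\A_t(A)$; I replace the middle segment, which has length $r_s-l_s$, by the corresponding chord inside $A$ — but there is no such chord yet, so instead I argue that $a_{l_s}$ and $a_{r_s}$ already lay between $a$ and $b$ in $A$ and were $t$-connected in $A$ (they are connected through $A$ because $A$ is connected and the path lying beneath the new operation witnesses it), then invoke simple connectedness of $A$ to shorten $P_1\cdot(\text{an }A\text{-path }a_{l_s}\to a_{r_s})\cdot P_2$; choosing that $A$-path to be the shortest one, which has length $\le r_s-l_s$ by richness/genericity considerations, yields a path of length $\le k$ in $\A_t(A)\subseteq\A_t(B)$. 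The case where $x$ or $y$ is an interior new vertex $b_i$ is similar but shorter: then $t$ contains $i$, and ``between $a$ and $b$'' forces $a$ beneath $b_i$ beneath $b$, so one of $a,b$ is forced to be $a_{l_s}$ or $a_{r_s}$ up to the degenerate endpoint cases, and the path is essentially unique so there is nothing to shorten.

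The hard part will be bookkeeping in this last case: ensuring that when I reroute $P$ through $A$, the replacement $A$-path from $a_{l_s}$ to $a_{r_s}$ (i) exists, (ii) stays between $a$ and $b$, and (iii) is no longer than the deleted segment of length $r_s-l_s$, so that the total length does not increase. Points (i) and (ii) follow because $a_{l_s}$ lies beneath $a_{r_s}$ in $A$ with a witnessing path, and that path lies between $a$ and $b$ whenever $a$ is beneath $a_{l_s}$ and $a_{r_s}$ beneath $b$ (the generic case); the boundary sub-cases where $a\in\{a_{l_s},a_{r_s}\}$ or $b\in\{a_{l_s},a_{r_s}\}$, or where $l_s=-1$ or $r_s=N+1$, need to be listed and checked one by one, but each is immediate since then $P$ is forced to be short. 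Point (iii) is the delicate one: the witnessing $A$-path from $a_{l_s}$ to $a_{r_s}$ has length at least $r_s-l_s$ by the level structure (consecutive vertices differ by one level), and could a priori be strictly longer; but then $P$, having used the new segment of length exactly $r_s-l_s$, is genuinely shorter than any $A$-only path, and I instead keep the original bound by noting that $P$ itself, viewed with the segment replaced by that $A$-path, may be longer — so the clean statement I should prove is that either $P$ avoids the new segment (handled above) or $x,y$ together with $P$ can be replaced by an $A$-configuration of the same ``width'', and then the desired short path is produced by simple connectedness of $A$ and transported back. I expect this to reduce, after the case analysis, to a couple of lines each.
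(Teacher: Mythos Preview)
Your overall architecture is close to the paper's, but there is one genuine gap and one place where the paper's organisation is visibly cleaner.

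\textbf{The gap.} Your point (iii) is not delicate at all, and your hesitation there (``could a priori be strictly longer'') is a misreading of the definitions. Recall that by Definition~4.1, ``$a_{l_s}$ lies beneath $a_{r_s}$ in $A$'' \emph{means} that there is a path $a_{l_s}-c_{l_s+1}-\cdots-c_{r_s-1}-a_{r_s}$ in $A$; this path has length exactly $r_s-l_s$, the same as the new segment. So the replacement never increases length. Once you know this, your main case ($x,y\in A$, $P$ traverses the full new segment) is immediate: replace the new segment by this direct $A$-path, shortcut any repetitions, and apply simple connectedness of $A$ (checking that the replacement path lies in $\A_t$ because $[l_s,r_s]\subset t$, and avoids $a,b$ because its only vertices at levels $l$ or $r$ are $a_{l_s},a_{r_s}$, which already lay on $P$). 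Your appeal to ``richness/genericity'' for the bound $\le r_s-l_s$ is both unnecessary and unavailable: $A$ is an arbitrary simply connected space, not a model of the axioms.

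\textbf{The organisation.} Your case split (first by whether $P$ meets the new segment, then by whether $x,y$ are new, with $a,b$ new handled by an ad hoc ``small configuration'' remark) leaves several sub-cases sketchy. The paper instead splits first by where $a$ and $b$ lie, then by where $x$ and $y$ lie. This pays off in the mixed case (say $a\in A$, $b=b_j$ new): then $t\subset[l,j]$ forces $r_s\notin t$, so $a_{r_s}\notin\A_t$ and $P$ can only touch the segment via $a_{l_s}$; moreover every $A$-vertex between $a$ and $b$ actually lies between $a$ and $a_{l_s}$. Now either $P$ avoids $a_{l_s}$ (so $P\subset\A_t(A)$ and simple connectedness of $A$ with the pair $(a,a_{l_s})$ applies directly), or $P$ passes through $a_{l_s}$, in which case the direct path $x\to a_{l_s}\to y$ along the witnessing ``lies-beneath'' paths has length at most $k$ and lies between $a$ and $b$. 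No replacement is needed here, and the argument is a couple of lines. Your version of this case is not wrong, but it is not written, and the ``degree $\le2$'' heuristic you lean on does not by itself produce the bound.
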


\begin{proof}
By hypothesis, the set $B$ equals $A\cup S_B$, where $S_B$ is the
path
 $$a_{l_s},b_{l_s+1},\dotsc,b_{r_s-1},a_{r_s}.$$
 Let now $t\subset [l,r]$ be given, as well as  $a$
in $\A_l$  beneath $b$ in $\A_r$ and vertices $x$
and $y$ in  $\A_t$ lying between $a$ and $b$ connected by a  path $P$
in $\A_t(B)$ of length $k$. We consider the
following cases:
\renewcommand{\theenumi}{\alph{enumi}}
\renewcommand{\theenumii}{\roman{enumii}}
  \begin{enumerate}
  \item Both $a$ and $b$ lie in $B\setminus A$. Take the direct
path between $x$ and $y$.
  \item Both $a$ and $b$ lie in $A$. We consider the following
mutually exclusive subcases:
    \begin{enumerate}
    \item Both $x$ and $y$ lie in $A$: We can replace all repetitions
in $P$ to transform it into a path fully contained in $A$ of length
at most $k$. Since $A$ is simply connected, the result follows.
      \item Both $x$ and $y$ lie in $S_B$. Again, take the direct
path between $x$ and $y$.
    \item Exactly one vertex, say $y$, lies in $A$. The path $P$
must contain either $a_{l_s}$ or $a_{r_s}$. Suppose that $P$ contains
$a_{r_s}$. Hence, we can decompose $P$ into the direct connection
(which lies between $a$ and $b$) from $x$ to $a_{r_s}$  and a path
$P'$ in $\A_t(A)$ from $a_{r_s}$ to $y$.  As $A$ is simply connected,
we obtain a path in $\A_t(A)$ between $a$ and $b$
connecting $y$ and $a_{r_s}$ whose length is bounded by the length of
$P'$. This yields a path from $y$
to $x$ between $a$ and $b$ of the appropriate length.
    \end{enumerate}
  \item\label{neuer_pkt} Exactly one vertex in $\{a,b\}$ lies in $A$.
Suppose that
$a$ lies in $A\setminus B$ and $b$ lies in $S_B\setminus A$.  In
particular, the vertex $a$ lies beneath $a_{l_s}$. Consider the
following mutually exclusive cases:
    \begin{enumerate}
 \item Both $x$ and $y$ lie in $S_B$. The direct path between them in
$S_B$ yields again the result.

 \item\label{neuer_punkt} Both $x$ and $y$ lie in $A$: If either  $x$
or $y$ equals
$a_{l_s}$, then one of them
lies over the other and the direct connection between them yields
the result. Otherwise, we may assume that both $x$ and $y$
lie beneath $a_{l_s}$. Let $Q$ be the path consisting of the
direct connection from $x$ to $a_{l_s}$ and from $a_{l_s}$ to $y$. If
the path $P$ connecting $x$ and $y$ necessarily passes through
$a_{l_s}$, then its length is at least the length of $Q$ and  the
result follows. Otherwise, since $A$ is simply connected, there is a
path connecting $x$ and $y$ of length at most $k$ between $a$ and
$a_{l_s}$, and thus, between $a$ and $b$.
    \item Exactly one, say $y$, is in $A$. Then $y$ must
lie beneath $x$ and the direct path between them yields the result.
    \end{enumerate}
    \end{enumerate}
\end{proof}

Since the only moment a vertex from $\A_{l_t}\cup\A_{r_t}$ was added
was in case $(\ref{neuer_pkt})(ii)$, namely $a_{l_s}$ (though only if
the original path passed through it), a careful analysis of the
previous proof yields the following, which corresponds to Axiom
($\Sigma4$) in \cite{BP00}; though we will not require its full
strength.

\begin{cor}\label{C:sc-oper-bp}
  A colored $N$-space $B$ with $\emptyset\leq B$ has the following
  property.  Given $t=[l_t,r_t]\subset [l,r]$, as well as $a$ in
  $\A_l(B)$ beneath $b$ in $\A_r(B)$, vertices $x$ and $y$ in
  $\A_t(B)$ lying between $a$ and $b$ and a path in $\A_t(B)$ of
  length $k$ connecting them, there is a path $P$ in $\A_t(B)$ between
  $a$ and $b$ connecting $x$ and $y$ of length at most $k$ such that
  all vertices in $P$ with levels $\A_{l_t}\cup\A_{r_t}$ come from the
  original path.
\end{cor}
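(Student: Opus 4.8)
The plan is to prove the statement by induction on the construction of $B$ from the empty space, taking as inductive invariant the asserted property itself — which is a strengthening of simple connectedness in the sense of Definition~\ref{D:simplyconnected}. First I would fix a sequence $\emptyset=B_0\leq B_1\leq\cdots$ of applications of operations $\alpha_s$ with union $B$; since $B$ is obtained from each $B_i$ by the operations of index $\geq i$, we have $B_i\leq B$ for all $i$. Because a single instance of the property only involves the vertices $a$, $b$, $x$, $y$, the finitely many vertices of the given path, and finitely many vertices witnessing the relevant betweenness relations, it already takes place inside some $\A_t(B_i)$ with $B_i$ finite; so it suffices to prove the property for finite $B$ by induction on the number of operations, the empty space being a vacuous base case. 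I would also note at the outset that, since deleting a loop from the given path shortens it and only removes vertices, we may assume the given path $Q$ has no repeated vertices.

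For the inductive step assume the property for $A$ (so $A$ is in particular simply connected) and let $B=A\cup S_B$ be obtained by a single operation $\alpha_s$ on $(a_{l_s},a_{r_s})$, where $S_B$ is the path $a_{l_s},b_{l_s+1},\dotsc,b_{r_s-1},a_{r_s}$. Given $t=[l_t,r_t]\subset[l,r]$, $a$ beneath $b$, $x,y$ between them, and a (simple) path $Q$ of length $k$ in $\A_t(B)$ from $x$ to $y$, I would re-run the case analysis of the proof of Lemma~\ref{L:sc-oper} and check, case by case, that the path it constructs already has every one of its level-$l_t$ and level-$r_t$ vertices on $Q$. The only new vertices available are the $b_i$, and these occur only at levels strictly between $l_s$ and $r_s$. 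In each subcase where the constructed path is a \emph{direct connection} inside $S_B$ (both $a,b$ in $S_B$; both $x,y$ in $S_B$; and the direct portions of the mixed subcases), such a segment runs monotonically through the levels lying between those of its two endpoints; since $x,y\in\A_t$ their levels lie in $[l_t,r_t]$, so the extreme levels $l_t$ and $r_t$ are attained along such a segment only at its endpoints. Hence every level-$l_t$ or level-$r_t$ vertex of the segment is one of $x$, $y$, $a_{l_s}$, $a_{r_s}$; and whenever $a_{l_s}$ or $a_{r_s}$ occurs as such an endpoint, it is precisely the vertex through which (in that subcase of Lemma~\ref{L:sc-oper}) $Q$ was assumed to cross between $A$ and $S_B$, hence lies on $Q$.

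In the subcases invoking simple connectedness of $A$, I would first replace $Q$ by a path contained in $\A_t(A)$: since a simple path can enter the $S_B$-line only through $a_{l_s}$ or $a_{r_s}$, its excursions into $S_B$ are either dead-end excursions (returning to the same junction, which can be deleted, shortening $Q$) or a single crossing from $a_{l_s}$ to $a_{r_s}$ (which can be rerouted through the path in $A$ realising ``$a_{l_s}$ beneath $a_{r_s}$'' required by the operation, of the same length). The only extreme-level vertices this rerouting can introduce are the junctions $a_{l_s}$, $a_{r_s}$, which already lay on $Q$. Applying the inductive hypothesis to the resulting path in $\A_t(A)$ — with the bounding pair $a,b$ replaced, in the subcase where exactly one of $a,b$ lies in $A$, by $a$ and $a_{l_s}$, which still bound $x$ and $y$ — produces a path in $\A_t(A)$ of length at most $k$ whose level-$l_t$ and level-$r_t$ vertices all lie on the rerouted path, hence on $Q$. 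Concatenating with direct $S_B$-segments, treated as in the previous paragraph, preserves this. The one place where the construction genuinely inserts a fresh vertex at an extreme level is subcase $(\ref{neuer_pkt})(ii)$ of the proof of Lemma~\ref{L:sc-oper}, where the produced path is forced through $a_{l_s}$; but that path is used only in the branch where $Q$ itself necessarily passes through $a_{l_s}$, so $a_{l_s}$ lies on $Q$ there as well. This closes the induction.

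I expect the main obstacle to be exactly this bookkeeping: confirming, branch by branch, that replacing $Q$ by a path inside $\A_t(A)$ does not smuggle in new vertices at levels $l_t$ or $r_t$ (only the already-present junctions $a_{l_s},a_{r_s}$), and that the lone vertex the construction ever inserts at an extreme level, namely $a_{l_s}$ in subcase $(\ref{neuer_pkt})(ii)$, is inserted only when $Q$ already passes through it. This is precisely the refinement over Lemma~\ref{L:sc-oper} signalled in the remark preceding the corollary; the rest is a routine, if lengthy, re-reading of that lemma's proof.
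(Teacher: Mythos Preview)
Your proposal is correct and follows essentially the same approach as the paper. The paper does not give a separate proof of the corollary: it merely records, in the sentence immediately preceding the statement, that a careful reading of the proof of Lemma~\ref{L:sc-oper} shows that the only moment a vertex at level $l_t$ or $r_t$ is ever added to the constructed path is in subcase~$(\ref{neuer_pkt})(ii)$, namely $a_{l_s}$, and this happens only when the original path already passed through it. Your inductive re-reading of that proof, with the additional bookkeeping about rerouting crossings of $S_B$ in subcase~(b)(i), is exactly this careful analysis spelled out.
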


By iterating Lemma \ref{L:sc-oper}, we obtain the following:
\begin{cor}\label{C:sc-oper}
  If $A$ is simply connected, then so is every strong extension of
$A$.
\end{cor}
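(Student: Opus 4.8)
The plan is to deduce Corollary \ref{C:sc-oper} directly from Lemma \ref{L:sc-oper} by transfinite induction along a defining sequence of operations. Let $A$ be a simply connected colored $N$-space and let $A \leq B$ be a strong extension. By definition of $\leq$, there is an ordinal $\lambda$ and an increasing (continuous) chain of colored $N$-spaces $(A_\xi)_{\xi \leq \lambda}$ with $A_0 = A$, $A_\lambda = B$, each successor step $A_{\xi} \to A_{\xi+1}$ obtained by applying some operation $\alpha_{s_\xi}$ on a pair of vertices already present in $A_\xi$, and each limit step given by $A_{\eta} = \bigcup_{\xi < \eta} A_\xi$. I claim every $A_\xi$ is simply connected, which for $\xi = \lambda$ gives the result.

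First I would handle the successor case: assuming $A_\xi$ is simply connected, the space $A_{\xi+1}$ is obtained from it by a single operation $\alpha_{s_\xi}$, so Lemma \ref{L:sc-oper} applies verbatim and $A_{\xi+1}$ is simply connected. Second, the base case $\xi = 0$ is the hypothesis on $A$. The only genuine point to check is the limit case: suppose $\eta \leq \lambda$ is a limit ordinal and every $A_\xi$ with $\xi < \eta$ is simply connected; I must show $A_\eta = \bigcup_{\xi < \eta} A_\xi$ is simply connected. Here one takes data $l < r$ in $[-1,N+1]$, an interval $t \subset [l,r]$, vertices $a \in \A_l(A_\eta)$ beneath $b \in \A_r(A_\eta)$, and $x, y \in \A_t(A_\eta)$ lying between $a$ and $b$, $t$-connected by a path $P$ of length $k$ avoiding $a$ and $b$. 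Since $P$, together with $a$, $b$ (and a witnessing path from $a$ to $b$), involves only finitely many vertices, and the chain is increasing and continuous, all of these vertices already lie in some $A_\xi$ with $\xi < \eta$; moreover the "lies between'' relation and the path $P$ are witnessed inside that $A_\xi$. By simple connectedness of $A_\xi$, there is a path in $\A_t(A_\xi) \subseteq \A_t(A_\eta)$ of length at most $k$ connecting $x$ and $y$ with every vertex between $a$ and $b$, which is exactly what is required in $A_\eta$.

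I do not expect any real obstacle: the argument is a routine induction. The one place that deserves a careful sentence is the reduction in the limit case to finitely many vertices — specifically, that the hypothesis of simple connectedness in Definition \ref{D:simplyconnected} is a statement about finitely many vertices at a time (the endpoints $a,b$, the two vertices $x,y$, and the finite path $P$), so that it transfers along unions of chains; and conversely that a path produced inside $A_\xi$ remains a valid path inside the larger space $A_\eta$ because $\A_t(A_\xi)$ is an induced subgraph of $\A_t(A_\eta)$ and the "between $a$ and $b$'' condition is monotone upwards. Once this is observed, Lemma \ref{L:sc-oper} does all the work at successors and the corollary follows. (In fact the same transfinite induction, using Corollary \ref{C:sc-oper-bp} at successor steps in place of Lemma \ref{L:sc-oper}, shows the stronger statement that any $B$ with $\emptyset \leq B$ satisfies the conclusion of Corollary \ref{C:sc-oper-bp}, which is how that corollary is justified as well.)
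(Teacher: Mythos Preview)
Your proof is correct and is essentially the paper's approach: the paper's proof consists of the single phrase ``By iterating Lemma \ref{L:sc-oper}'', and you have simply spelled out what that iteration means, including the straightforward limit case where one uses that the data in Definition \ref{D:simplyconnected} is finite. There is nothing to add.
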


The following observation can be easily shown.

\begin{lemma}\label{L:operation}
  Let $B$ be obtained from $A$ by applying the operation
  $\alpha_s$. Then, for every $t\subset\{0,\cdots,N\}$
    and every $x$ and $y$ in $\A_t(A)$,
  \[\dd^A_t(x,y)=\dd^B_t(x,y).\]
\end{lemma}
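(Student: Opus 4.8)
The plan is to show that applying $\alpha_s$ changes no distance within any level-set $\A_t(A)$, by arguing that the new path $S_B$ provides no shortcut between old vertices. Recall that $B = A \cup S_B$, where $S_B$ is the path $a_{l_s}, b_{l_s+1}, \dotsc, b_{r_s-1}, a_{r_s}$, and the only edges of $B$ not in $A$ are the consecutive edges along $S_B$. First I would note the trivial inequality $\dd^B_t(x,y) \le \dd^A_t(x,y)$, since every path in $\A_t(A)$ is a path in $\A_t(B)$. For the reverse inequality, take a geodesic $P$ in $\A_t(B)$ between $x$ and $y$ with $x,y \in \A_t(A)$. If $P$ uses no vertex of $S_B \setminus A$, then $P$ already lies in $\A_t(A)$ and we are done; note that $S_B \cap A \subseteq \{a_{l_s}, a_{r_s}\}$, and these two vertices (when they lie in level $t$) are already in $A$.

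The key point is what happens when $P$ passes through an interior vertex $b_i$ of $S_B$ (so $i \in s$, $i \ne l_s, r_s$). Since $b_i$ has level $i$, this requires $i \in t$, and moreover $i$ cannot be an endpoint of $t$ only if there is a genuine neighbour on each side; but in fact in $\A_t(B)$ the vertex $b_i$ has at most two neighbours, namely those among $b_{i-1}, b_{i+1}$ that lie in a level belonging to $t$ (together with $a_{l_s}$ or $a_{r_s}$ in the boundary cases). Thus whenever $P$ enters the "interior" of $S_B$ it is forced to traverse a contiguous sub-segment of $S_B$, entering and leaving through one of the endpoints among $\{a_{l_s}, b_{l_s'}\}$-type vertices that actually lie in $A$ — concretely, through $a_{l_s}$ or $a_{r_s}$ when $t$ contains $l_s$ or $r_s$ respectively, or else $P$ cannot leave the segment at all (contradicting that it reaches $y \in A$). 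Analyzing these cases, any maximal sub-walk of $P$ inside $S_B$ runs between two vertices of $S_B \cap A$, so it can be replaced by the corresponding sub-walk of $S_B$ — but that replacement does not help; rather, one observes that since $S_B$ is a simple path attached to $A$ only at $a_{l_s}, a_{r_s}$, a geodesic between two points of $A$ never gains by entering $S_B$: any detour into $S_B$ and back out returns to the same endpoint (if $t$ meets only one of $l_s, r_s$) and can be deleted, or goes from $a_{l_s}$ to $a_{r_s}$ along $S_B$ (if $t \supseteq [l_s, r_s]$), in which case $a_{l_s}$ and $a_{r_s}$ were already connected in $\A_t(A)$ because $a_{l_s}$ lies beneath $a_{r_s}$ in $A$ and hence, by hypothesis $\emptyset \le A$, there is already a path between them in the relevant level-set — so the detour can be replaced by a path inside $\A_t(A)$ of length at most that of the sub-segment of $S_B$. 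Iterating this surgery over all maximal sub-walks of $P$ inside $S_B$ produces a walk in $\A_t(A)$ from $x$ to $y$ of length at most that of $P$, giving $\dd^A_t(x,y) \le \dd^B_t(x,y)$.

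I expect the main obstacle to be the bookkeeping in the case where $t$ contains both endpoints $l_s$ and $r_s$ of $s$: there one must verify that $a_{l_s}$ and $a_{r_s}$ — which become connected through $S_B$ by a path of length $r_s - l_s$ in $B$ — were already connected in $\A_t(A)$ by a path of length $\le r_s - l_s$. This should follow from simple connectedness (Lemma \ref{L:sc-oper} and its corollaries, or Corollary \ref{C:sc-oper-bp}) applied inside $A$: since $\emptyset \le A$, the space $A$ is simply connected, $a_{l_s}$ lies beneath $a_{r_s}$, and any two such vertices are joined by a direct vertical path of length $r_s - l_s$ already present in $A$ (indeed the condition that $a_{l_s}$ lies beneath $a_{r_s}$ in $A$ means such a path exists). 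One then checks that this direct path lies in $\A_t(A)$ whenever $[l_s, r_s] \subseteq t$. With that lemma in hand the surgery argument above closes cleanly, and no actual distance computation beyond comparing segment lengths is needed.
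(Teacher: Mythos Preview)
Your argument is essentially the natural one, and it is what the paper has in mind when it says the observation ``can be easily shown'' (the paper gives no proof at all). The case analysis --- either the geodesic avoids the interior of $S_B$, or it enters and leaves through the same endpoint (impossible for a geodesic), or it traverses $S_B$ from $a_{l_s}$ to $a_{r_s}$, which forces $[l_s,r_s]\subset t$ --- is exactly right.

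There is one genuine wrinkle, though: you repeatedly invoke the hypothesis $\emptyset\leq A$ and appeal to simple connectedness, but the lemma carries no such assumption. It is stated for an arbitrary colored $N$-space $A$, and indeed it is used later (e.g.\ in the proof of Theorem~\ref{T:axiome}(\ref{S:axiome:unendlich}) and Lemma~\ref{L:opwunderbar}) for intermediate sets where $\emptyset\leq A$ need not be checked. The good news is that you do not need that hypothesis: the only fact required in the ``both endpoints'' case is that $a_{l_s}$ and $a_{r_s}$ are joined in $\A_{[l_s,r_s]}(A)$ by a path of length exactly $r_s-l_s$, and this is precisely the \emph{definition} of ``$a_{l_s}$ lies beneath $a_{r_s}$'', which is part of the setup of $\alpha_s$. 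You actually say this yourself in your final parenthetical, so the fix is simply to delete the appeals to $\emptyset\leq A$ and to Lemma~\ref{L:sc-oper} and keep that one sentence.
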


\begin{theorem}[Axioms]\label{T:axiome}
  Both Fra\"iss\'e limits $\mi$ and $\m_0$ have the following
elementary properties:
  \begin{enumerate}
  \item\label{S:AAA} simply connectedness.
  \item\label{S:axiome:unendlich} Given a finite subset $A$ and a
    non-empty interval $s=(l,r)$, for any two elements $a_l$ and $a_r$
    in $A$ with $a_r$ over $a_l$, there are paths
    \[a_l,b_{l+1},\ldots,b_{r-1},a_r\]
    \noindent such that the $s$-distance of $b_i$ to $\A_s(A)$ is
    arbitrarily large. In particular, if $s=\{i\}$, there is a new
    vertex $b_i$ not contained in $A$.
  \end{enumerate}
\end{theorem}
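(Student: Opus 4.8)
The statement asserts that the two Fra\"iss\'e limits $\mi$ and $\m_0$ satisfy simply connectedness and the extension axiom $(\ref{S:axiome:unendlich})$. The plan is to treat the two properties separately, since they have rather different flavours. Simply connectedness is already known to be preserved by the operations $\alpha_s$ (Corollary~\ref{C:sc-oper}), so the only thing to check is that the ``starting point'' of the construction is simply connected and that simply connectedness survives infinite iterations. For $\m_0$ the starting point is the finite path $a_0-\cdots-a_N$, which is trivially simply connected (in any interval $t$ there is at most one path between two given vertices, so there is nothing to shorten); for $\mi$ the starting point is $\emptyset$, again trivially simply connected. Since every finite initial segment of the rich sequence is obtained from the previous one by operations $\alpha_s$, Corollary~\ref{C:sc-oper} gives that each stage is simply connected, and simply connectedness is a property of finite configurations (it only quantifies over a finite path $P$ and the finitely many vertices $a,b,x,y$ lying on a bounded portion of the graph), hence it passes to the union. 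This disposes of $(\ref{S:AAA})$.

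For the extension axiom $(\ref{S:axiome:unendlich})$, the plan is to use richness of the Fra\"iss\'e limit directly. Fix a finite subset $A$ of $\m_p^N$ (where $p\in\{0,\infty\}$); by Lemma~\ref{L:fraisse}$(\ref{L:fraisse:rich})$ we may enlarge $A$ to a finite r-strong subspace of $\m_p^N$, so without loss of generality $A\leqr \m_p^N$, in particular $A\in\K_p$ and $A$ is a strong subspace. Given the interval $s=(l,r)$ and vertices $a_l$ over $a_l$ [sic: $a_l\in\A_l(A)$ beneath $a_r\in\A_r(A)$], and given any target bound $m$ on the $s$-distance, I would first build, inside $\K_p$, a finite colored $N$-space $B\supseteq A$ with $A\leq B$ obtained from $A$ by a single operation $\alpha_s$ (or, in the case $p=0$, by a chain of the singleton operations $\alpha_{\{i\}}$ for $i\in s$, ordered so that each new vertex attaches to an already-present neighbour — this is exactly what a $0$-strong embedding allows). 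The new path $a_l,b_{l+1},\dots,b_{r-1},a_r$ in $B$ uses genuinely new vertices $b_i\notin A$, so by Lemma~\ref{L:operation} each $\dd^B_s(b_i,\A_s(A))$ is already positive; to make it as large as $m$ one simply applies $\alpha_s$ repeatedly, each time on a pair lying on the previously-created path, pushing the new path further away from $\A_s(A)$ (using Lemma~\ref{L:operation} again to see that distances within $\A_s(A)$ are unaffected and that the new path is at $s$-distance $\geq m$ from $\A_s(A)$ after $m$ iterations). This yields a strong embedding $A\to B$ with $B\in\K_p$. By richness (applied to $A\leqr\m_p^N$ and the strong map $A\to B$), there is an r-strong $B'\subseteq\m_p^N$ with an isomorphism $B\cong B'$ over $A$; the images of the $b_i$ then give the desired path in $\m_p^N$, and since the isomorphism fixes $A$ and, by Lemma~\ref{L:operation}, preserves $s$-distances, the image path is at $s$-distance $\geq m$ from $\A_s(A)\subseteq\A_s(\m_p^N)$. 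Letting $m\to\infty$ finishes the proof; the last clause about $s=\{i\}$ is the special case of a single new vertex.

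The main obstacle I anticipate is the bookkeeping in the case $p=0$: there we are not allowed the ``block'' operation $\alpha_s$ for non-singleton $s$, only the singleton operations, and we must check (i) that some ordering of the singleton operations on the levels $l+1,\dots,r-1$ is legal, i.e.\ that at each step the vertex being added has a neighbour already present in the current space on an adjacent level, with the endpoints $a_l$ and $a_r$ correctly placed beneath/over the growing path, and (ii) that the resulting finite space $B$ indeed lies in $\K_0$, i.e.\ $\emptyset\leq_0 B$ via singleton operations, and that the embedding $A\to B$ is a $0$-strong embedding in the sense of the definition of $\K_0$. Concretely one should add $b_{l+1}$ (attaching to $a_l$), then $b_{l+2}$ (attaching to $b_{l+1}$), and so on up to $b_{r-1}$ (attaching to $b_{r-2}$), and finally record the edge $b_{r-1}-a_r$; one must argue that adding this last edge is itself realised by the singleton operation $\alpha_{\{r-1\}}$ applied with the pair $(b_{r-2},a_r)$ — which is legitimate because $\alpha_{\{i\}}$ on $(a_{i-1},a_{i+1})$ inserts a single vertex $b_i$ adjacent to both, and here we can take a fresh copy. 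A small amount of care is also needed to see that the distance-increasing iteration stays within $\K_0$. Everything else — the reduction to r-strong $A$, the appeal to richness, the distance-preservation — is routine given Lemmas~\ref{L:fraisse}, \ref{L:operation} and Corollary~\ref{C:sc-oper}.
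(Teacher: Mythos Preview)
Your treatment of part~(\ref{S:AAA}) is fine and matches the paper: both limits are unions of a chain of spaces each obtained from the previous by operations $\alpha_s$, Corollary~\ref{C:sc-oper} applies at each stage, and the condition is local so it passes to the union.

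For part~(\ref{S:axiome:unendlich}) in the case of $\mi$, your argument works but is needlessly elaborate. You missed the key observation the paper uses: a \emph{single} application of $\alpha_s$ on $(a_l,a_r)$ already makes $\dd_s^B(b_i,\A_s(A))=\infty$, because in $\A_s(B)$ the new vertices $b_{l+1},\dots,b_{r-1}$ form a path whose only possible connections to $A$ go through $a_l$ or $a_r$, and those lie at levels $l,r\notin s$. There is no need to iterate to reach distance $\geq m$; one step gives infinite distance, and Lemma~\ref{L:operation} preserves it in $\mi$.

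For $\m_0$ there is a genuine gap in your concrete construction. The operation $\alpha_{\{i\}}$ is applied to a pair $(a_{i-1},a_{i+1})$ and inserts a vertex adjacent to \emph{both}; you cannot ``add $b_{l+1}$ attaching to $a_l$'' without already having a level-$(l+2)$ vertex over $a_l$ to serve as the upper endpoint. Your proposed sequence $b_{l+1},b_{l+2},\dots$ therefore never gets started. The paper's fix (implicit in its terse sentence) is to first enlarge $A$ so that it contains a witnessing path $a_l-c_{l+1}-\dots-c_{r-1}-a_r$, and then repeatedly apply the singleton operations using vertices of the \emph{existing} path (and then of the newly created ones) as the required second neighbour. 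Iterating this sufficiently often produces, inside a $0$-strong extension $B$ of $A$, a path whose interior vertices have $s$-distance $\geq k$ from $\A_s(A)$; richness then realises $B$ inside $\m_0$, and Lemma~\ref{L:operation} preserves the distance. Your overall plan (enlarge $A$ to be strong, build $B\in\K_0$ with the desired distance, invoke richness and Lemma~\ref{L:operation}) is correct; only the scaffolding step needs to be redone along these lines.
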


\begin{proof}

  \noindent(\ref{S:AAA}): This follows from Corollary
  \ref{C:sc-oper}.

  \noindent(\ref{S:axiome:unendlich}): After enlarging $A$, we may
  assume that $A\leq\mi$. One single application of $\alpha_s$ on
$(a_l,a_r)$ yields that $s$-distance of $b_i$ to $A$ is
infinite and remains so  at the end of the construction by
Lemma \ref{L:operation}.

  If we are considering $\m_0$, we may assume as well that $A\leq
  \m_0$. Furthermore, we may suppose that in order to build up $\m_0$
  from $A$, each of the operations $\alpha_i$, for $i$ in $s$, was
  applied $k$ many times consecutively on each of the new vertices in
  $\A_{i+1}$ and $\A_{i-1}$ between $a_l$ and $a_r$. Lemma
  \ref{L:operation} yields now the desired result.
\end{proof}

\begin{definition}
We will denote by $\psn$ the collection of sentences expressing
properties $(\ref{S:AAA})$ and $(\ref{S:axiome:unendlich})$ in
Theorem \ref{T:axiome}.
\end{definition}

\begin{definition}\label{D:complete}

  A \emph{flag} is a subgraph of a colored
$N$-space $M$ of the form $$a_0-\ldots-a_N,$$ \noindent where $a_i$
belongs to $\A_i(M)$ and they
form a path.

A set $D$ of a colored $N$-space $M$ is \emph{complete} if every
point in $D$ is contained in a flag in $D$.
\end{definition}

Observe that, if $D$ satisfies Axiom $(\ref{S:axiome:unendlich})$, it
is
complete.

\begin{definition}\label{D:nice}
 A subset $D$ of a colored $N$-space $M$ is \emph{nice} it satisfies
the following conditions:

\begin{enumerate}
\item For any two (possibly imaginary) points $a$ and $b$ in $D$,
\[ D_a^b= D\cap M_a^b.\]

 \item for all intervals
  $t\subset\{0,\dotsc,N\}$ and all $x$ and $y$ in $\A_t(D)$,

\[\dd_t^{M}(x,y)<\infty\;\Rightarrow\;\dd_t^{D}(x,y)<\infty.\]
\end{enumerate}

  A set $D$ is \emph{wunderbar} in $M$ if it satisfies the following:
\begin{enumerate}
\item For any two (possibly imaginary) points $a$ and $b$ in $D$,
\[ D_a^b= D\cap M_a^b.\]

 \item for all intervals
  $t\subset\{0,\dotsc,N\}$ and all $x$ and $y$ in $\A_t(D)$,

 \[\dd_t^{M}(x,y)=\dd_t^{D}(x,y).\]
\end{enumerate}

\end{definition}

 Clearly, wunderbar sets are nice.  As an application of the
operation $\alpha_s$ on $A$ does not yield connections between
the points of $A$ unless there was already one, the following result
follows immediately from Lemma \ref{L:operation}.
\begin{lemma}\label{L:opwunderbar}
  If $A\leq B$, then $A$ is wunderbar in $B$.
\end{lemma}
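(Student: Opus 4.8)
The plan is to establish the two defining clauses of ``wunderbar'' separately, in each case handling a single application of an operation $\alpha_s$ first and then iterating along the (possibly infinite) chain $A=B_0\leq B_1\leq\cdots$ with $B=\bigcup_i B_i$.

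For the distance clause, I would first note the trivial inequality $\dd_t^B(x,y)\leq \dd_t^A(x,y)$ for $x,y\in\A_t(A)$, valid because $\A_t(A)\subseteq\A_t(B)$ and edges of $A$ remain edges of $B$. For a single operation the reverse inequality is precisely Lemma \ref{L:operation}, so equality holds; this passes to any finite chain by composition. For an infinite chain, if $\dd_t^B(x,y)$ is finite then a shortest $\A_t(B)$-path between $x$ and $y$ is a finite object and hence lies in some $\A_t(B_i)$, forcing $\dd_t^{B_i}(x,y)=\dd_t^B(x,y)$, which by the finite case equals $\dd_t^A(x,y)$; and if $\dd_t^B(x,y)=\infty$ the same must hold at every stage.

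For the betweenness clause, the inclusion $A_a^b\subseteq A\cap B_a^b$ is immediate, so the work is the reverse inclusion. The key point, already signalled before the statement, is that an operation $\alpha_s$ on $(a_{l_s},a_{r_s})$ creates no new betweenness among old points: each new vertex $b_i$ ($i\in s$) is joined in $B$ only to its neighbours along the new path $a_{l_s},b_{l_s+1},\dotsc,b_{r_s-1},a_{r_s}$, whose only old endpoints are $a_{l_s}$ and $a_{r_s}$. Hence, given $a,c\in A$ (possibly imaginary) and a level-monotone path $\pi$ in $B$ witnessing that $c$ lies over $a$, level-monotonicity forces $\pi$ either to avoid all new vertices --- so that it already lies in $A$ --- or to enter the new vertices at $a_{l_s}$, run through the entire new path, and leave at $a_{r_s}$; in the latter case $\pi$ splits into a monotone $A$-path from $a$ to $a_{l_s}$, the new path, and a monotone $A$-path from $a_{r_s}$ to $c$, and since $\alpha_s$ is applied with $a_{l_s}$ beneath $a_{r_s}$ in $A$ (automatic when $l_s=-1$ or $r_s=N+1$), concatenation with a witnessing $A$-path between $a_{l_s}$ and $a_{r_s}$ shows that $c$ lies over $a$ already in $A$. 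Running the dual argument for ``lies beneath $b$'' gives $A_a^b\supseteq A\cap B_a^b$ for one operation; for the infinite chain one observes that the two finite paths witnessing $c\in B_a^b$ lie in a common $B_i$ and then descends $B_i\to B_{i-1}\to\cdots\to A$.

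I expect the one delicate step to be the single-operation analysis of betweenness: one must argue carefully that a level-monotone path meeting a freshly added vertex is pinned to the whole added segment and can reattach to the old graph only at its two endpoints $a_{l_s},a_{r_s}$, and one must keep track of the degenerate cases involving the imaginary levels $\A_{-1}$ and $\A_{N+1}$, where betweenness is declared by convention rather than witnessed by an actual path. Everything else --- the two iteration arguments and the easy inclusions --- is routine.
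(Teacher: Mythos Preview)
Your proposal is correct and follows exactly the approach the paper indicates: the distance clause is Lemma~\ref{L:operation} iterated along the chain, and the betweenness clause is the observation (stated just before the lemma) that an application of $\alpha_s$ creates no new ``lies over/beneath'' relations among old points. The paper compresses this into a single sentence; you have simply unpacked the single-operation analysis and the finite/infinite iteration in full, which is entirely appropriate.
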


\begin{lemma}\label{L:stark_nice}
  Let $M$ be a simply connected colored $N$-space and $D$ nice
in $M$. Given an interval $s=[l,r]$ in $\{-1,\ldots,N+1\}$ and
$a_l\in\A_l(D)$ beneath $a_r\in\A_r(D)$, the set $D^{a_r}_{a_l}$ is
nice in $\A_s(M)$.
\end{lemma}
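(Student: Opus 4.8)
Throughout, write $M_{a_l}^{a_r}$ for the subspace of $M$ consisting of the vertices lying between $a_l$ and $a_r$, and set $D' := D^{a_r}_{a_l}$ and $M' := \A_s(M)$. Since $a_l,a_r\in D$, condition (1) of niceness of $D$ in $M$ applied to the pair $(a_l,a_r)$ yields the identity $D' = D\cap M_{a_l}^{a_r}$, which I will use throughout; in particular $D'\subseteq M'$ and $\A_t(D')=\A_t(D)\cap M_{a_l}^{a_r}$ for every interval $t$. I will also use freely that ``lies over'' and ``lies beneath'' are transitive, so that a path both of whose endpoints lie between $a_l$ and $a_r$, and which lies between two vertices that are themselves between $a_l$ and $a_r$, has all of its vertices between $a_l$ and $a_r$.

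Checking condition (1) for $D'$ in $M'$ is routine. Given $a,b\in D'$ (allowing $a=a_l$ or $b=a_r$ in the role of imaginary points of $D'$), the inclusion $(D')_a^b\subseteq D'\cap(M')_a^b$ is automatic since $D'$ is an induced subgraph of $M'$. For the converse, if $c\in D'\cap(M')_a^b$ then, because $M'$ is an induced subgraph of $M$ and all levels occurring lie in $s$, already $c$ lies between $a$ and $b$ in $M$; condition (1) of $D$ gives $c\in D\cap M_a^b=D_a^b$, so $c$ is joined to $a$ and to $b$ by paths inside $D$, and transitivity forces every vertex on these paths to lie between $a_l$ and $a_r$, hence in $D'$. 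Thus $c\in(D')_a^b$.

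Condition (2) is where the work lies. Let $t$ be an interval of levels of $M'$, so $t\subseteq s$ and hence $\A_t(M')=\A_t(M)$ and $\dd_t^{M'}=\dd_t^M$; let $x,y\in\A_t(D')$ with $\dd_t^M(x,y)<\infty$. First, simple connectedness of $M$ applied with the frame consisting of $a_l$ beneath $a_r$ upgrades the hypothesis to $\dd_t^{M_{a_l}^{a_r}}(x,y)<\infty$, a path in $\A_t(M)$ joining $x$ and $y$ being replaced by one of no greater length all of whose vertices lie between $a_l$ and $a_r$. It therefore suffices to establish the general fact that \emph{whenever $D$ is nice in a simply connected $M$ and $a\in\A_l(D)$ lies beneath $b\in\A_r(D)$, the set $D_a^b$ is nice in $M_a^b$}. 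Condition (1) for $D_a^b$ in $M_a^b$ is obtained from condition (1) of $D$ exactly as above. For condition (2) one starts from a path inside $\A_t(M_a^b)$ joining two given vertices of $\A_t(D_a^b)$, passes to a path inside $\A_t(D)$ by condition (2) of $D$, and then repairs its excursions outside $M_a^b$ one at a time — each excursion having both endpoints in $D_a^b$ — by invoking simple connectedness of $M$ to replace an excursion by a detour lying between $a$ and $b$, and condition (2) of $D$ again to draw that detour back inside $D$; once no excursion remains, condition (1) of $D$ for the pair $(a,b)$ identifies the resulting $\A_t(D)$-path as lying in $D\cap M_a^b=D_a^b$.

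The main obstacle is exactly this repair step: coordinating condition (2) of $D$ (which supplies paths inside $D$ but says nothing about their position relative to $a,b$) with simple connectedness of $M$ (which supplies paths between $a$ and $b$ but inside $M$ rather than $D$) so that, after finitely many alternations, one obtains a single path that is simultaneously inside $D$ and between $a$ and $b$. I expect to run this as a minimal–counterexample argument: work with an $\A_t(D)$-path from $x$ to $y$ that is shortest and, among those, extremal for a secondary measure of how far it strays from $M_a^b$, and use simple connectedness of $M$ together with condition (1) of $D$ to contradict the presence of an excursion. The degenerate configurations — $a$ or $b$ imaginary, or $t$ nominally containing one of the extreme levels $l$, $r$ — add nothing, being absorbed by the conventions on imaginary vertices and by the fact that all vertices of $D'$ have levels strictly between those of $a_l$ and $a_r$, so that no relevant path meets $a_l$ or $a_r$.
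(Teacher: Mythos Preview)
Your outline is structurally the same as the paper's argument: verify condition~(1) directly, then for condition~(2) pull a $t$-path into $D$ via niceness and repair the excursions outside $M_{a_l}^{a_r}$ using simple connectedness. You also correctly identify that the repair step is the whole difficulty. But you have not actually closed that gap, and the termination measure you propose does not work.

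The problem is this: after you replace an excursion by a detour lying between $a_l$ and $a_r$ (via simple connectedness) and then pull that detour back into $D$ (via niceness), the resulting $D$-path can have brand new excursions, possibly longer and straying just as far as before. Niceness gives no length bound, so ``shortest path'' is not preserved by the repair; and ``how far it strays'' is not obviously decreased either, since the new $D$-path is produced by the black box of condition~(2) and could wander anywhere. So your two proposed measures do not yield a minimal counterexample.

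What makes the paper's argument go through is an extra idea you are missing. The paper measures an offending path by the pair $(n,m)$ where $n$ is the \emph{highest level} at which an avoidable vertex occurs and $m$ is the number of avoidable vertices at that level. Given an avoidable vertex at level $n$, the two nearest non-avoidable neighbours $a'_1,a'_2$ along the path sit at levels strictly below $n$. Simple connectedness then produces a path $Q$ beneath $a_r$ between $a'_1$ and $a'_2$, and concatenating $Q$ with the original excursion $P'$ gives a \emph{closed} path in $\A_{t\cap(-1,n]}(M)$. Now the loop lemma (Remark~\ref{R:simply}(\ref{R:kreis})) kicks in: in any such closed path, the vertices of level $<n$ are already connected using only levels $<n$. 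So $a'_1$ and $a'_2$ are connected in $\A_{t\cap(-1,n)}(M)$, and \emph{now} niceness of $D$ pulls this into a path $R$ in $\A_{t\cap(-1,n)}(D)$. The point is that every vertex of $R$ has level $<n$, so replacing the excursion by $R$ strictly decreases $(n,m)$ regardless of whether $R$ itself stays beneath $a_r$.

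In short: the missing ingredient is the level-dropping via Remark~\ref{R:simply}(\ref{R:kreis}), which is what converts the alternating use of simple connectedness and niceness into a terminating procedure. Without it, your repair loop has no reason to halt.
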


\begin{proof}
Since $D_a^b=D\cap M_a^b$ for any $a$ and $b$ in $D$, the
 first condition of niceness holds for $D^{a_r}_{a_l}$.

For the second condition, we may assume
that $a_l=-1$ by Remark \ref{R:lokal_operation}. Let $t\subset(-1,r]$
be an interval and vertices $x$ and $y$ in $\A_t(D)$ beneath $a_r$. We
need only show that, if $x$ and $y$ are connected in $\A_t(D)$,
then they are connected in $\A_t(D)$ beneath $a_r$. Let
$P$ be a path in $\A_t(D)$ connecting $x$ and $y$, but not
necessarily running beneath $a_r$. We call a vertex in $P$ avoidable if
it does not lie beneath $a_r$. Let $\A_n$ be the largest level
containing an avoidable vertex in $P$. Let $m$ be the number of
avoidable vertices in $P$ of level $n$. Choose $P$ such that the
pair $(n,m)$ is minimal for the lexicographical order. 

Given an avoidable vertex $b$ in $\A_n\cap P$, denote by $a'_1$ in
$\A_{l_1}$ the first non-avoidable vertex in $P$ between $b$ and $x$.
Likewise, let $a'_2$ in $\A_{l_2}$ be the first non-avoidable
vertex in $P$ between $b$ and $y$. Note that $l_1$ and $l_2$ are both
smaller than $n$, by maximality of $n$. Furthermore, since every 
avoidable direct neighbour of a non-avoidable vertex lies necessarily
in a larger level, by definition, it follows that both $l_1$ and $l_2$
are strictly smaller than $n$. Hence, the subpath $P'$ of $P$ between
$a'_1$ and $a'_2$ yields a connection in $\A_{t'}$, where
$t'=t\cap(-1,n]$ not passing through $a_r$. As $M$ is simply
  connected, there is a path $Q$ (with no repetitions) connecting
  $a'_1$ and $a'_2$ running beneath $a_r$. Now, the paths $Q$ and $P'$
  have only $a'_1$ and $a'_2$ as common vertices and they induce a
  loop. Remark \ref{R:simply}(\ref{R:kreis}) yields that $a'_1$ and
  $a'_2$ are $t_1$-connected, where $t_1=t\cap(-1,n)$. Since $D$ is
  nice, there is also a $t_1$-connection $R$ in $D$. Replacing $P'$ by
  $R$, we have a path whose avoidable vertices are still contained in
$(-1,n]$ and with fewer avoidable vertices of level $n$. Minimality of
$(n,m)$  shows that this path runs beneath $a_r$, as desired.
\end{proof}

\begin{cor}\label{C:AAAnice}
Let $D$ be nice in a colored $N$-space $M$. If $M$ is simply
connected, then so is $D$.
\end{cor}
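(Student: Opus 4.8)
The plan is to check directly that $D$ satisfies the defining property of simple connectedness. So fix $l<r$ in $[-1,N+1]$, an interval $t\subseteq[l,r]$, vertices $a\in\A_l(D)$ beneath $b\in\A_r(D)$, vertices $x,y\in\A_t(D)$ lying between $a$ and $b$, and a path $P$ of length $k$ in $\A_t(D)$ joining $x$ and $y$ which passes through neither $a$ nor $b$; we must produce a path in $\A_t(D)$ of length at most $k$ joining $x$ and $y$ all of whose vertices lie between $a$ and $b$. If $(l,r)=(-1,N+1)$ there is nothing to show, so assume otherwise. Since $D\subseteq M$ and $M$ is simply connected, applying simple connectedness of $M$ to $P$ already yields such a path \emph{in $M$}; the substance of the statement is to have it run inside $D$, and for that the only leverage is niceness. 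Two features of niceness will be used: the identity $D^b_a=D\cap M^b_a$ (its first clause), which says that a vertex belongs to $D$ and lies between $a$ and $b$ exactly when it belongs to $D^b_a$, so that this clause will certify that the surgeries below keep their vertices between $a$ and $b$; and its second clause, which transfers the \emph{finiteness} of $t'$-distances from $M$ to $D$ for every interval $t'$. I would also record two easy facts: that $\A_{[l,r]}(M)$, with its levels renumbered, is again simply connected (each of its defining instances has all data at levels in $[l,r]$, the relations ``lies over / lies beneath'' being witnessed by monotone paths which cannot leave the block $[l,r]$, hence is a defining instance for $M$), and that consequently, by Lemma~\ref{L:stark_nice}, $D^b_a$ is nice in the simply connected space $\A_{[l,r]}(M)$.

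I would then argue by induction on $|t|$. If $|t|\le 2$, then $\A_t(M)$ is discrete (for $|t|=1$) or a forest (for $|t|=2$, by Remark~\ref{R:simply}(\ref{R:baum}), applied to the relevant connected component); a forest has a unique simple path joining $x$ and $y$, and since a simplification of $P$ is such a path lying in $\A_t(D)$, this unique path lies in $\A_t(D)$ and has length $\le k$; applying simple connectedness of $M$ to it returns, by uniqueness, the same path, so all its vertices already lie between $a$ and $b$, and we are done. For $|t|\ge 3$ I would choose, among the non-empty set of simple paths of length $\le k$ in $\A_t(D)$ joining $x$ and $y$ and avoiding $a$ and $b$, a path $Q$ minimising lexicographically first the largest level $\nu$ carrying a vertex of $Q$ not lying between $a$ and $b$, and then the number of such vertices at level $\nu$. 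If $Q$ has no such vertex we are finished; otherwise I would imitate the surgery in the proof of Lemma~\ref{L:stark_nice}: isolate an offending subpath $Q'=a'_1\cdots a'_2$ of $Q$ all of whose interior vertices fail to lie between $a$ and $b$ and whose endpoints --- by the same level count used there, namely that a neighbour violating the constraint sits one level higher --- lie strictly below level $\nu$; apply simple connectedness of $M$ to $Q'$ to get a path of length $\le|Q'|$, still at levels $\le\nu$, joining $a'_1$ and $a'_2$ with all vertices between $a$ and $b$; form the ensuing closed path and invoke Remark~\ref{R:simply}(\ref{R:kreis}) to connect $a'_1$ and $a'_2$ inside a strictly smaller interval $t_1\subsetneq t$; and finally use the second clause of niceness together with the inductive hypothesis applied to $t_1$ to realise this connection by a path of the appropriate length inside $\A_{t_1}(D)$. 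Splicing it back into $Q$ and simplifying should produce a competitor of length $\le k$ avoiding $a$ and $b$ whose lexicographic invariant is strictly smaller, contradicting the choice of $Q$.

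The step I expect to be the real obstacle is the length bookkeeping in this surgery: unlike connectivity, distances are \emph{not} transferred verbatim by niceness, so one must ensure that each replacement path inside $D$ is no longer than the offending subpath it replaces. This is precisely what the induction on $|t|$ is meant to absorb --- the replacement lives in the strictly smaller interval $t_1$ --- but making the bound $\le|Q'|$ propagate through Remark~\ref{R:simply}(\ref{R:kreis}) and the inductive hypothesis will require the same kind of delicate, and somewhat lengthy, case analysis as the proofs of Lemmas~\ref{L:sc-oper} and~\ref{L:stark_nice}. Everything else --- the reduction to $\A_{[l,r]}(M)$, the niceness of $D^b_a$ there, the level drop of the offending endpoints, and the invariance of $t$-distances under the auxiliary path shortenings (Lemmas~\ref{L:operation} and~\ref{L:sc-oper}) --- should be routine given the results already established.
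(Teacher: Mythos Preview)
The paper supplies no proof for this corollary, treating it as a direct consequence of Lemma~\ref{L:stark_nice}. Your approach --- a direct verification via surgery modeled on that lemma's proof --- is the natural thing to try, and you have correctly isolated the length bookkeeping as the crux.

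However, the obstacle you flag is not merely ``delicate case analysis'' but a genuine gap that your method, as written, cannot close. When you replace the offending subpath $Q'$ by a $t_1$-connection in $D$ obtained from niceness, the length of that replacement is bounded only by $d_{t_1}^D(a'_1,a'_2)$, which bears no relation to $|Q'|$: niceness transfers \emph{finiteness} of distances, not their values. Your inductive hypothesis on $|t_1|<|t|$ only says that a path of some given length in $\A_{t_1}(D)$ can be pushed between $a$ and $b$ without lengthening; it does not relate that given length to $|Q'|$. So the spliced path may have length strictly greater than $k$, and since $Q$ was chosen among paths of length $\le k$, the surgery may leave that class and break the lexicographic minimisation. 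No amount of case analysis of the kind used in Lemmas~\ref{L:sc-oper} and~\ref{L:stark_nice} will repair this, because those arguments never track lengths either.

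Indeed, the proof of Lemma~\ref{L:stark_nice} establishes only the connectivity form of simple connectedness for $D$ (that $t$-connected in $D$ implies $t$-connected in $D^b_a$), with no length control. The full length bound would follow at once if $D^b_a$ were wunderbar in $\A_{[l,r]}(M)$ --- then $d_t^{D^b_a}(x,y)=d_t^M(x,y)\le k$ --- but Corollary~\ref{C:nice=wund} requires completeness of the ambient space and lies logically downstream of this corollary. Either the paper is tacitly using only the connectivity form of simple connectedness at this point, or a length-preserving argument different from the one you sketch is intended but omitted.
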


\begin{lemma}\label{L:nice}
  Let  $A$ be a nice subset of a simply connected colored $N$-space
$M$. Consider a non-empty interval $s=(l,r)$
  and two vertices $a_{l_s}$ in $\A_{l_s}(A)$ and $a_{r_s}$ in
$\A_{r_s}(A)$ such that
  $a_{r_s}$ lies over $a_{l_s}$. Let $B \subset M$ be an extension of
$A$ given by new vertices
  $b_{l_s+1},\dotsc,b_{r_s-1}$ such that the sequence
 \[a_{l},b_{l+1},\dotsc,b_{r-1},a_{r}\] is a path. The
following are equivalent:
\renewcommand{\theenumi}{\alph{enumi}}
  \begin{enumerate}
  \item\label{L:nice:operation} The set $B$ is nice and obtained from
$A$ by applying
$\alpha_s$ on $(a_{l_s},a_{r_s})$.
  \item\label{L:nice:nice}  For some
(equivalently, all) $i$ in $s$, we have that
    $\dd_s^M(b_i,A)=\infty$.
 \item\label{L:nice:schwachnice} For some
(equivalently, all) $i$ in $s$, we have that
 $\dd^{M^{a_r}_{a_l}}(b_i,A)=\infty$.
  \end{enumerate}
\end{lemma}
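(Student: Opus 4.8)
The plan is to prove the chain of equivalences $(\ref{L:nice:operation})\Rightarrow(\ref{L:nice:nice})\Rightarrow(\ref{L:nice:schwachnice})\Rightarrow(\ref{L:nice:operation})$, leaning on the earlier technical lemmas (\ref{L:operation}, \ref{L:stark_nice}, \ref{L:sc-oper}) to move between the ambient space $M$, the interval $\A_s(M)$, and the interval $M^{a_r}_{a_l}$. Throughout I will use the reduction from Remark \ref{R:lokal_operation} that, after renumbering levels, $\A_{[l,r]}(M)\cong\m_p^{r-l-1}$ and $M^{a_r}_{a_l}$ sits inside it, so the three ``distance is infinite'' conditions are statements about the same subgraph viewed at different scopes.

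For $(\ref{L:nice:operation})\Rightarrow(\ref{L:nice:nice})$: assume $B$ is nice and obtained from $A$ by $\alpha_s$. The definition of $\alpha_s$ says $B$ has no new edges except those on the path $a_{l_s},b_{l_s+1},\dotsc,b_{r_s-1},a_{r_s}$; in particular each $b_i$ has exactly the two neighbours $b_{i-1},b_{i+1}$ (with the endpoints attaching to $a_{l_s},a_{r_s}$). So inside $B$ itself the only $s$-path from $b_i$ to $\A_s(A)$ would have to travel along this single new path to $a_{l_s}$ or $a_{r_s}$ — but those two endpoints are in $\A_{l_s},\A_{r_s}$, which are \emph{not} in $\A_s$ since $s=(l_s,r_s)$ is open. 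Hence $\dd^B_s(b_i,\A_s(A))=\infty$. Because $A\le B\le M$, Lemma \ref{L:operation} (applied along the operations building $M$ from $B$, which add no connections among existing vertices of the relevant levels) upgrades this to $\dd^M_s(b_i,A)=\infty$. The ``equivalently, all'' clause is immediate: if some $b_i$ had finite $s$-distance to $A$, then so would its $s$-neighbours among the $b_j$'s by walking along the new path, and conversely; one checks the $b_j$'s are pairwise $s$-connected exactly when $l_s+1,\dots,r_s-1$ are consecutive in $s$, which they are.

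For $(\ref{L:nice:nice})\Leftrightarrow(\ref{L:nice:schwachnice})$: one direction is trivial since a path beneath $a_r$ and above $a_l$ is in particular an $s$-path, so $\dd^M_s(b_i,A)\le \dd^{M^{a_r}_{a_l}}(b_i,A)$, giving $(\ref{L:nice:schwachnice})\Rightarrow(\ref{L:nice:nice})$ in contrapositive. For the converse, suppose $\dd^{M^{a_r}_{a_l}}(b_i,A)<\infty$; I want $\dd^M_s(b_i,A)<\infty$, which is immediate as $M^{a_r}_{a_l}\subseteq\A_s(M)$. Wait — that shows $(\ref{L:nice:schwachnice})\Rightarrow(\ref{L:nice:nice})$ again, so to get the real content I need simple connectedness: if $b_i$ is $s$-connected to some $a\in A$ by a path not necessarily running between $a_l$ and $a_r$, apply the simple-connectedness of $M$ with the bounding pair $(a_l,a_r)$ to replace it by a path of no greater length running beneath $a_r$ and above $a_l$, i.e. inside $M^{a_r}_{a_l}$; this needs $b_i$ (and the chosen $a$, which we may take to be the first vertex of $A$ met) to lie between $a_l$ and $a_r$, which holds for the $b_i$ by construction and can be arranged for $a$. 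This is the step I expect to be the main obstacle, since one must be careful that the witness $a\in A$ can be taken between $a_l$ and $a_r$ — if not, truncate the path at the first such vertex.

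For $(\ref{L:nice:schwachnice})\Rightarrow(\ref{L:nice:operation})$: assume $\dd^{M^{a_r}_{a_l}}(b_i,A)=\infty$ for all $i\in s$. First, $B$ is obtained from $A$ by $\alpha_s$: the path $a_l,b_{l+1},\dots,b_{r-1},a_r$ is present by hypothesis, and there can be no further edge among the $b_i$'s or between a $b_i$ and a vertex of $A$, for such an edge would give a finite connection contradicting the distance hypothesis (an edge between $b_i$ and $b_j$ with $|i-j|>1$, or between $b_i$ and some $a\in A$, produces a short $M^{a_r}_{a_l}$-path); so condition (b) of Definition \ref{D:alpha} holds and $B=A\cup\{b_i\}$ is an $\alpha_s$-extension. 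It remains to see $B$ is nice. Niceness condition (1), $B^b_a=B\cap M^b_a$, for $a,b\in B$: when both are in $A$ this is inherited from niceness of $A$ plus the fact that the new vertices lie over $a_l$ and beneath $a_r$ only; when one is a new vertex $b_i$, a direct check of which vertices lie between $b_i$ and a given $a$ or $b$ suffices, using that $b_i$'s only neighbours are $b_{i\pm1}$. Niceness condition (2), finite distance in $M$ at level-set $t$ implies finite distance in $B$: for $x,y\in\A_t(A)$ this follows from niceness of $A$ together with Lemma \ref{L:operation} (operations don't change distances among old vertices); for $x$ or $y$ among the new $b_i$, a finite $\A_t(M)$-path from $b_i$ must pass through $b_{i\pm1}$, so it either stays among the new path — giving a finite $\A_t(B)$-path — or reaches $a_l$ or $a_r$ and continues in $A$, where we again invoke niceness of $A$; but note $a_l\in\A_{l}\notin\A_t$ unless $l\in t$, so the only escape is via $a_r$ if $r\in t$ or $a_l$ if $l\in t$, and in each such case niceness of $A$ finishes it. Finally Corollary \ref{C:sc-oper} (or \ref{C:AAAnice}) is not needed here since we're only asked for niceness of $B$, not simple connectedness. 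I would organize the writeup as the three implications above, flagging the simple-connectedness rerouting in the middle step as the crux.
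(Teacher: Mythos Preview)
Your chain of implications is the right shape, but three of the steps have genuine gaps.

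\textbf{In $(a)\Rightarrow(b)$:} you write ``$A\le B\le M$'' and invoke Lemma~\ref{L:operation} along operations building $M$ from $B$. But nothing in the hypotheses gives $B\le M$; the later fact that nice subsets are strong (Corollary~\ref{C:nice_modell}) depends on the very lemma you are proving. The correct one-line argument is already packed into hypothesis~(a): $B$ is \emph{nice}, so $\dd^M_s(b_i,A)<\infty$ would force $\dd^B_s(b_i,A)<\infty$ by definition of niceness, contradiction. This is exactly what the paper does.

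\textbf{In $(c)\Rightarrow(b)$:} your labelling is tangled (the inequality $\dd^M_s\le\dd^{M^{a_r}_{a_l}}$ gives the trivial direction $(b)\Rightarrow(c)$, not the other), but you eventually identify the real content: from an $s$-path in $M$ connecting $b_i$ to some $a\in\A_s(A)$, produce a path inside $M^{a_r}_{a_l}$ to something in $A$. Your proposed fix, ``truncate the path at the first vertex between $a_l$ and $a_r$,'' does not work: the intermediate vertices of the $s$-path are in $M$, not in $A$, so truncation need not land you in $A$ at all. The paper's approach is different and essential: use niceness of $A$ to connect $a$ to $a_r$ by a path in $\A_{(l,r]}(A)$, take the neighbour $a_{r-1}$ of $a_r$ on that path (which \emph{is} beneath $a_r$ and in $A$), and then apply simple connectedness to the concatenated path from $b_i$ to $a_{r-1}$ avoiding $a_r$. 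A second, symmetric pass handles $a_l$.

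\textbf{In $(c)\Rightarrow(a)$:} your claim that any extra edge from some $b_i$ to $A$ ``produces a short $M^{a_r}_{a_l}$-path'' fails at the boundary: an edge $b_{r-1}\,\text{--}\,a'_r$ with $a'_r\in\A_r(A)\setminus\{a_r\}$ connects $b_{r-1}$ to a vertex of level $r\notin s$, which neither lies in $M^{a_r}_{a_l}$ nor gives finite $s$-distance to $A$. The paper rules out this case separately: niceness of $A$ gives an $\{r-1,r\}$-path in $A$ between $a_r$ and $a'_r$, and together with $a_r\,\text{--}\,b_{r-1}\,\text{--}\,a'_r$ this yields a nontrivial cycle in $\A_{\{r-1,r\}}(M)$, contradicting Remark~\ref{R:simply}(\ref{R:baum}). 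The symmetric case at level $l$ is handled the same way. Your argument for interior edges (levels strictly inside $s$) is fine once you have~(b), and the paper indeed proves $(b)\Rightarrow(a)$ rather than $(c)\Rightarrow(a)$ directly.
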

Note that simply connectedness yields that
\[\dd^{M^{a_r}_{a_l}}(b_i,A)=\dd_{(l,r)}^M(b_i,A^{a_r}_{a_l}).\]
We say that $B$ is obtained from $A$ by a \emph{global application of}
$\alpha_s$ if it satisfies (any of) the above conditions.
In particular, the set $B$ is nice.

\begin{proof}
\noindent $(\ref{L:nice:operation})\to (\ref{L:nice:nice})$:
By the definition of $\alpha_s$ the distance $d_s^B(b_i,A)$ is
infinite
for every $i$ in $s$. Since  $B$ is nice in $M$, so is
$d_s^M(b_i,A)=\infty$.

\noindent $(\ref{L:nice:nice})\to (\ref{L:nice:schwachnice})$:
Obvious.

\noindent $(\ref{L:nice:schwachnice})\to (\ref{L:nice:nice}) $
If both $a_l$ and $a_r$ are imaginary, then there is nothing to
prove. Thus, may assume that $a_r$ is real. Furthermore,  suppose that
there is a  path $P$ connecting some $b_i$ with some $a$ in
$\A_s(A)$ in $\A_s(M)$. Take $P$ of shortest possible length.

We need to show that $$\dd^{M^{a_r}_{a_l}}(b_i,A)<\infty.$$
Note that $a$ and $a_r$ are connected in $\A_{(l,r]}(M)$ and, since
$A$ is nice, there is a shortest path $Q$ in $\A_{(l,r]}(A)$
witnessing this. In particular, let $a_{r-1}$ be the direct neighbour
of $a_r$ in $Q$. Connecting $Q$ and $P$, we have that $a_{r-1}$ and
$b_i$ lie beneath $a_r$ and are connected in $\A_{(l,r]}$ by a path
disjoint from $a_r$. Simply connectedness yields a path $Q_1$ beneath
$a_r$ in $\A_{(l,r)}$
connecting them. If $a_l$ is imaginary, we are done. Otherwise, the
vertices $a_{r-1}$ and $a_l$ are connected through $b_i$. Again
by simply connectedness, there is a  path $Q'$ connecting
them below $a_r$ in $[l,r)$. Let now $a_{l+1}$ be the direct neighbour
in $Q'$ above $a_l$
Note that $a_{l+1}$ and $b_i$ lie between $a_l$ and $a_r$.
Simply connectedness of  $M$ yields that there is a path in
$M^{a_r}_{a_l}$ between $b_i$ and $a_{l+1}$.
 Hence
$$\dd^{M^{a_r}_{a_l}}(b_i,A)<\infty.$$

\noindent $(\ref{L:nice:nice})\to (\ref{L:nice:operation})$:
If both $a_l$ and $a_r$ are imaginary, then  there are clearly no new
connections between any $b_i$ and $A$, and thus $B$ is obtained
by applying $\alpha_{[0,N]}$ to $A$. Hence, we may assume that $a_r$
is real.

We first need to show that no $b_i$ is in relation to an
element in $A$ besides $a_r$ and $a_l$. This implies that $B$ is
obtained from $A$ by application of
$\alpha_s$.
Assume first that $b_{r-1}$ is connected with some other element
$a_r'$ in $\A_r(A)$. Since $A$ is nice,
there is a path in $\A_{\{r-1,r\}}(A)$ connecting $a_r$ and $a_r'$.
This, together with the extra connection to $b_{r-1}$ yields a loop in
$\A_{\{r-1,r\}}$, which contradicts Remark \ref{R:simply}
(\ref{R:kreis}). Likewise for $b_{l+1}$. Finally, by assumption, no
$b_i$ in $\A_{(l+1,r-1)}$ is in relation with an element in $\A_s(A)$.

Now, in order to show that $B$ is nice, consider $x$ and $y$ in $B$
with finite $t$-distance in $M$. If both $x$ and $y$ lie in $A$, we
are done, since $A$ is nice. Likewise, if both $x$ and $y$ lie in the
path $a_{l},b_{l+1},\dotsc,b_{r-1},a_r$, the direct connection
works as well. Therefore, assume that $x$ lies in $A$ and $y$ does
not. By the assumption it follows that $t\nsubseteq s$. Suppose
that $l$ lies in $t$. Since $y$ and $a_l$ are $t$-connected (in $M$),
so are $x$ and $a_l$. As $A$ is nice, there is a connection between
$x$ and $a_l$ in $\A_t(A)$. In particular, there is a connection
between $x$ and $y$ in  $\A_t(B)$.

\end{proof}

\begin{theorem}\label{T:extnice}
 Let $M$ be complete and simply connected. Given a nice subset $A$ and
$b$ in $M$, there is a nice subset $B$ of $M$ containing $b$ such that
$A\leq B$ in finitely many steps.
\end{theorem}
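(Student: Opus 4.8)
The plan is to build $B$ from $A$ by a finite sequence of global applications of operations $\alpha_s$ (in the sense of Lemma~\ref{L:nice}), adding one new vertex at a time along a flag through $b$, and to check at each stage that niceness is preserved by invoking the equivalence in Lemma~\ref{L:nice}. First I would use completeness of $M$ to fix a flag $c_0 - c_1 - \cdots - c_N$ in $M$ with $b = c_j$ for the appropriate level $j$. The aim is to absorb the vertices $c_0,\ldots,c_N$ one by one into an increasing chain of nice subsets $A = B_0 \subset B_1 \subset \cdots \subset B_m = B$, each obtained from the previous by a global application of some singleton operation $\alpha_{\{i\}}$; since $B$ is obtained from $A$ by finitely many operations $\alpha_s$, we will have $A \leq B$.

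The key step is the inductive one: given a nice subset $C$ of $M$ with $A \leq C$ and a vertex $c \notin C$ that is a direct neighbour of some $a \in \A_i(C)$ with $c \in \A_{i\pm1}(M)$, I want to produce a nice $C' \supseteq C \cup \{c\}$ with $C \leq C'$. The natural candidate is $C' = C \cup \{c\}$ itself, provided $c$ has infinite $\{i\pm1\}$-distance to $C$ inside the appropriate interval; if not, I first enlarge $C$ by adjoining a shortest $M$-path realising that finite distance (which lies in a strictly lower-or-adjacent level configuration, so this sub-process terminates), and then $c$ genuinely becomes "new" and Lemma~\ref{L:nice} (the implication $(b)\to(a)$) guarantees that adjoining it is a global application of $\alpha_{\{i\pm1\}}$ and yields a nice set. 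Here simply connectedness of $M$ (available by hypothesis, and inherited by nice subsets via Corollary~\ref{C:AAAnice}) is what lets me pass between the various distance conditions $(a)$, $(b)$, $(c)$ of Lemma~\ref{L:nice} and control the paths. Iterating along the flag, after finitely many such steps all of $c_0,\ldots,c_N$ — in particular $b$ — have been absorbed.

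The main obstacle I anticipate is the bookkeeping needed to guarantee that the auxiliary enlargements (adjoining shortest paths to kill finite distances that obstruct applying $\alpha$) actually terminate and do not themselves create fresh obstructions at other levels. The clean way to handle this is a lexicographic descent argument in the spirit of the proof of Lemma~\ref{L:stark_nice}: order the potential "bad connections" by level and number, and show each repair strictly decreases the measure while keeping the set nice; simple connectedness of $M$ ensures the repaired paths stay within the required intervals. A secondary point to verify is that at each stage the set really is nice in the strong sense required (both the $D_a^b = D \cap M_a^b$ condition and the finiteness-of-distance condition), but the first condition is automatic for subsets of the form we construct and the second is exactly what Lemma~\ref{L:nice}$(a)$ delivers. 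Once the inductive step is in place, the theorem follows by a finite induction on the number of vertices of the chosen flag lying outside $A$.
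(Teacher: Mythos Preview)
Your proposal has a genuine gap in the inductive step. To add a single vertex $c \in \A_{i+1}(M)$ to a nice set $C$ via the operation $\alpha_{\{i+1\}}$, Lemma~\ref{L:nice} requires \emph{both} endpoints $a_i \in \A_i(C)$ and $a_{i+2} \in \A_{i+2}(C)$ to lie in $C$ already, with $a_{i+2}$ over $a_i$ (unless $i+1 \in \{0,N\}$). You only assume $c$ is adjacent to one vertex $a \in C$, so in general $C \cup \{c\}$ is not obtained from $C$ by any $\alpha_{\{i+1\}}$ at all. Concretely, if $A = \emptyset$ (or $b$ lies in a part of $M$ with no vertices above or below it in $A$), no vertex of your chosen flag has both neighbours in $A$, and singleton operations cannot get started. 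Your repair mechanism of adjoining shortest paths does not help here, since the obstruction is not a finite distance but a missing endpoint.

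The paper's proof avoids this by not restricting to singleton operations and not fixing a flag through $b$ in advance. Instead it reads off from $A$ itself the relevant interval: take $r$ minimal with some $a_r \in \A_r(A)$ over $b$ and $l$ maximal with some $a_l \in \A_l(A)$ beneath $b$ (allowing the imaginary values $N+1$ and $-1$), and set $s=(l,r)$. If $\dd_s(b, A^{a_r}_{a_l}) = \infty$, completeness provides a path $a_l - b_{l+1} - \cdots - b_{r-1} - a_r$ through $b$, and one global application of $\alpha_s$ (Lemma~\ref{L:nice}) finishes at once. If the distance is finite, pick the penultimate vertex $b'$ on a shortest path from $b$ to $A^{a_r}_{a_l}$; by induction on (width, distance) absorb $b'$ into a nice $B' \geq A$, after which either the width or the distance of $b$ over $B'$ strictly drops. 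This two-parameter induction is the clean termination argument; your sketched lexicographic descent on ``bad connections by level and number'' does not obviously correspond to a well-founded measure here and would in any case need to be made precise.
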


\begin{proof}
We may clearly assume that $b$ does not lie in $A$.

Let $r$ be minimal such that there exists an element $a_r$ in
$\A_r(A)$ lying over $b$ (if $r=N+1$, set $a_r=a_{N+1}$). Likewise,
choose $l$ maximal such that there exists an element $a_l$ in
$\A_l(A)$ beneath $b$ (if $l=-1$, then set $a_l=a_{-1}$). We call the
interval $s=(l,r)$ the width of $b$ over $A$. Define as well the
\emph{distance} from $b$ to $A$ as $$\dd_s(b,A^{a_r}_{a_l}).$$

We prove the theorem by induction on the width and the distance from
$b$ to $A$: If the distance is infinite, by completeness of $M$,
choose a path \[a_l,b_{l+1},\dotsc,b_{r-1},a_r,\]

\noindent passing through $b$. By Lemma \ref{L:nice}, the
set $A\cup\{b_{l+1},\dotsc,b_{r-1}\}$ obtained from $A$ by applying
$\alpha_s$ is nice and contains $b$.

Otherwise, let $P$ be a path of minimal length lying between
$a_l$ and $a_r$ connecting $b$ to $A$. Let $b'$ be the last element
in $P$ before $b$. By assumption, the distance from $b'$ to $A$ is
strictly smaller than the length of $P$. Thus, there is a nice set
$B'\geq A$ containing $b'$. Either the width or the distance of $b$ to
$B'$ has become smaller and we can now finish by induction.

\end{proof}

In particular, we can now prove that the notions of nice and
wunderbar agree.

\begin{cor}\label{C:nice=wund}
A nice subset $A$ of a complete simply connected set $M$ is
wunderbar.
\end{cor}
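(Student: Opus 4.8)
The statement to prove is Corollary \ref{C:nice=wund}: a nice subset $A$ of a complete, simply connected colored $N$-space $M$ is wunderbar. Since niceness and wunderbar share the first condition ($A_a^b = A\cap M_a^b$ for all $a,b$), the only thing to establish is the upgrade of the second condition from an implication $\dd^M_t(x,y)<\infty\Rightarrow\dd^A_t(x,y)<\infty$ to an equality $\dd^M_t(x,y)=\dd^A_t(x,y)$. The plan is to fix an interval $t$ and vertices $x,y\in\A_t(A)$ with $\dd^M_t(x,y)=k<\infty$, and show $\dd^A_t(x,y)\le k$ (the reverse inequality is trivial, since any path inside $A$ is a path in $M$).

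First I would invoke Theorem \ref{T:extnice} to build a nice set $B$ with $A\le B$, obtained in finitely many steps, that contains an entire shortest $t$-path from $x$ to $y$ in $M$ — apply the theorem successively to each internal vertex of such a path to absorb it into a nice superset of $A$. Then inside $B$ we have $\dd^B_t(x,y)=\dd^M_t(x,y)=k$, using Lemma \ref{L:operation} (distances in $\A_t$ are unchanged by the operations $\alpha_s$) to see that the path we put in stays a path of the same length, and using niceness of $B$ to see no shortcut appears. Now $A\le B$, so by Lemma \ref{L:opwunderbar} the set $A$ is wunderbar \emph{in} $B$; in particular $\dd^A_t(x,y)=\dd^B_t(x,y)=k$. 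Combining, $\dd^A_t(x,y)=\dd^M_t(x,y)$, which is exactly the wunderbar condition for $A$ in $M$.

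The one genuine point to be careful about is the claim that $\dd^B_t(x,y)=\dd^M_t(x,y)$: a priori $B$ is merely a nice subset of $M$, so niceness only gives $\dd^M_t(x,y)<\infty\Rightarrow \dd^B_t(x,y)<\infty$, not the equality we want. This is resolved by construction: we chose $B$ to contain a geodesic in $\A_t(M)$ from $x$ to $y$, so $\dd^B_t(x,y)\le k=\dd^M_t(x,y)$, and of course $\dd^M_t(x,y)\le\dd^B_t(x,y)$ always. Here it is essential that $B$ was obtained from $A$ in finitely many applications of the operations $\alpha_s$ and that, by Lemma \ref{L:operation}, these operations never change $t$-distances between old vertices — so when we successively absorb the geodesic's vertices we do not inadvertently lengthen (or disconnect) the path. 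Thus the "hard part" is really just bookkeeping: making sure the nice superset furnished by Theorem \ref{T:extnice} genuinely contains a full shortest $t$-path, which follows by applying that theorem vertex-by-vertex along one fixed geodesic of $\A_t(M)$.

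In summary: take $B$ nice with $A\le B$ containing a shortest $\A_t(M)$-path between $x$ and $y$ (Theorem \ref{T:extnice}, applied finitely often); then $\dd^B_t(x,y)=\dd^M_t(x,y)$ by this containment together with Lemma \ref{L:operation}; then $A$ wunderbar in $B$ (Lemma \ref{L:opwunderbar}) gives $\dd^A_t(x,y)=\dd^B_t(x,y)$; hence $\dd^A_t(x,y)=\dd^M_t(x,y)$, and the first wunderbar condition for $A$ is inherited from niceness, so $A$ is wunderbar in $M$.
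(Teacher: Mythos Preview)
Your proof is correct and follows essentially the same approach as the paper: absorb a shortest $t$-path into a nice superset $B\supseteq A$ via repeated applications of Theorem \ref{T:extnice}, then use that $A$ is wunderbar in $B$ (Lemma \ref{L:opwunderbar}) to transfer the distance bound back to $A$. One minor simplification: your appeal to Lemma \ref{L:operation} is unnecessary, since once $B$ contains every vertex of the geodesic it automatically contains the geodesic as a path (subsets carry the induced graph structure), giving $\dd^B_t(x,y)\le k$ directly; together with $\dd^A_t(x,y)=\dd^B_t(x,y)$ this already yields $\dd^A_t(x,y)\le \dd^M_t(x,y)$, and the reverse inequality is trivial.
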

\begin{proof}
 Suppose we are given two points $a$ and $b$ in $A$ and an $s$-path
$P$ in $M$ of length $n$ connecting them. By Theorem \ref{T:extnice},
we
can obtain a nice set $B$ such that $A\leq B$ and $B$ contains the
path $P$. By Lemma \ref{L:opwunderbar}, the set $A$ is wunderbar in
$B$, so there is an $s$-path of length $n$ in $A$ connecting $a$ and
$b$. Thus, the set $A$ is wunderbar.

\end{proof}

Combining the previous results, we obtain the following.

\begin{cor}\label{C:nice_modell}
Let $M$ be complete and simply connected and
$A$ be a nice subset. The following hold:
\renewcommand{\theenumi}{\alph{enumi}}
  \begin{enumerate}
 \item\label{F:nice_modell:op} If $M\setminus A$ is countable,
then $A\leq M$.
  \item\label{F:nice_modell:modell} $A$ is simply connected.
 \item\label{F:nice_modell:very_nice} $A$ is wunderbar.
  \item\label{F:nice_modell:nice_op} If $A$ is countable, then
$\emptyset\leq A$.
  \end{enumerate}
\end{cor}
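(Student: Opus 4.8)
The plan is to dispose of $(\ref{F:nice_modell:modell})$ and $(\ref{F:nice_modell:very_nice})$ immediately from what is already in place: $(\ref{F:nice_modell:modell})$ is precisely Corollary \ref{C:AAAnice} applied to $D=A$, and $(\ref{F:nice_modell:very_nice})$ is Corollary \ref{C:nice=wund}. So the real content lies in $(\ref{F:nice_modell:op})$ and $(\ref{F:nice_modell:nice_op})$, and both are obtained by iterating Theorem \ref{T:extnice} together with transitivity of $\leq$.

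For $(\ref{F:nice_modell:op})$, I would enumerate the (at most countably many) points of $M\setminus A$ as $b_0,b_1,\dotsc$ and build an increasing chain of nice subsets $A=A_0\leq A_1\leq\dotsb$ of $M$ with $b_n\in A_{n+1}$. Given $A_n$ nice with $A\leq A_n$, Theorem \ref{T:extnice} --- whose hypotheses ``$M$ complete and simply connected'' hold throughout --- produces a nice $A_{n+1}\supseteq A_n\cup\{b_n\}$ with $A_n\leq A_{n+1}$ in finitely many applications of the operations $\alpha_s$, and $A\leq A_{n+1}$ follows by composing strong subspace relations. Since the two imaginary vertices lie in $A$ already and every other point of $M$ is some $b_n$, the union $\bigcup_n A_n$ is all of $M$; concatenating all the intermediate operations exhibits $M$ as obtained from $A$ by a countable sequence of $\alpha_s$'s, i.e.\ $A\leq M$.

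For $(\ref{F:nice_modell:nice_op})$ the extra ingredient is that $A$ is itself \emph{complete}: given $c$ in $A$, completeness of $M$ puts $c$ on a flag of $M$, so $c$ lies between the imaginary vertices $a_{-1}$ and $a_{N+1}$ in $M$; the first clause of niceness then gives $c\in A^{a_{N+1}}_{a_{-1}}=A\cap M^{a_{N+1}}_{a_{-1}}$, meaning $c$ lies between $a_{-1}$ and $a_{N+1}$ \emph{within} $A$, which is to say $c$ sits on a flag of $A$. Combined with $(\ref{F:nice_modell:modell})$, the set $A$ is a complete simply connected colored $N$-space in its own right, and $\emptyset$ is a nice subset of it; applying the already-proved part $(\ref{F:nice_modell:op})$ with $A$ in the role of $M$ and $\emptyset$ in the role of the nice subset, and using that $A\setminus\emptyset=A$ is countable, yields $\emptyset\leq A$.

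The only genuinely new point, and hence the place to be careful, is the completeness of $A$ in the last step: one must verify that the first niceness condition, read with the imaginary endpoints $a_{-1}$ and $a_{N+1}$, unwinds exactly to ``every point of $A$ lies on a flag of $A$''. Everything else reduces to a routine exhaustion argument fueled by Theorem \ref{T:extnice} and the closure of $\leq$ under (countable) composition.
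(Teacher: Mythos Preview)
Your proof is correct and follows the same overall strategy as the paper: iterate Theorem \ref{T:extnice} for $(\ref{F:nice_modell:op})$, cite Corollary \ref{C:AAAnice} for $(\ref{F:nice_modell:modell})$, and reduce $(\ref{F:nice_modell:nice_op})$ to $(\ref{F:nice_modell:op})$ applied inside $A$. Two small differences are worth noting. For $(\ref{F:nice_modell:very_nice})$ you invoke Corollary \ref{C:nice=wund} directly, while the paper instead passes to a countable elementary substructure $M'\supseteq A$, uses $(\ref{F:nice_modell:op})$ to get $A\leq M'$, and then applies Lemma \ref{L:opwunderbar}; your route is cleaner since Corollary \ref{C:nice=wund} is already in hand. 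For $(\ref{F:nice_modell:nice_op})$ you explicitly verify that $A$ is complete (via the first niceness clause with the imaginary endpoints), which is needed to apply $(\ref{F:nice_modell:op})$ with $A$ in the role of $M$; the paper's one-line ``follows from $(\ref{F:nice_modell:op})$ and $(\ref{F:nice_modell:modell})$'' leaves this point implicit, so your extra care here is well placed.
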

\begin{proof}
Theorem \ref{T:extnice} yields $(\ref{F:nice_modell:op})$. Now,
Corollary \ref{C:AAAnice} yields $(\ref{F:nice_modell:modell})$. In
order to prove $(\ref{F:nice_modell:very_nice})$, it is sufficient to
consider countable nice subsets $A$.
Replace $M$ by
a countable elementary substructure $M'$ that contains  $A$. Then $A$ is nice
in $M'$ and $A\leq M'$ by \ref{F:nice_modell:op}.
Lemma \ref{L:opwunderbar} yields that $A$ is
wunderbar in $M'$ and hence in $M$. Since $\emptyset$ is nice,
clearly $(\ref{F:nice_modell:nice_op})$ follows from
$(\ref{F:nice_modell:op})$ and $(\ref{F:nice_modell:modell})$.
\end{proof}

It follows that, for countable $A$, we have $\emptyset\leq A$ if and
only if $A$ is simply connected and complete. And for simply connected
complete countable $B$, we have that $A\leq B$ \iff $A$ is nice in
$B$. Therefore

\begin{cor}
  The model $\mi$ is the Fra\"iss\'e limit of the class of finite
complete simply connected colored $N$-spaces together with nice
embeddings.
\end{cor}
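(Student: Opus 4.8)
The plan is to recognise that this corollary is essentially a reformulation of the two observations displayed immediately before the statement, once one checks that $\K_\infty$ equipped with its strong embeddings \emph{is}, verbatim, the category of finite complete simply connected colored $N$-spaces equipped with nice embeddings; the result then falls out of Definition \ref{D:M_infty} together with the uniqueness part of Theorem \ref{T:Fraisselimes}.

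First I would match the objects. By definition $\K_\infty$ consists of the finite colored $N$-spaces $A$ with $\emptyset\leq A$; by the observation preceding the corollary, for countable (in particular finite) $A$ this holds \iff $A$ is complete and simply connected. Hence the underlying classes of objects of the two categories coincide.

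Next I would match the morphisms. A strong embedding $f\colon A\to B$ between elements of $\K_\infty$ is, by definition, an isomorphism of $A$ onto a strong subspace $f(A)\leq B$. Since $B$ is finite, hence countable, and (by the previous paragraph) complete and simply connected, the same observation gives that $f(A)\leq B$ \iff $f(A)$ is nice in $B$, that is, \iff $f$ is a nice embedding. So the arrows of the two categories agree as well, and we are looking at one and the same category under two names.

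It remains to invoke the Fra\"iss\'e machinery. The hypotheses of Theorem \ref{T:Fraisselimes} were already verified for $(\K_\infty,\s)$ right before Definition \ref{D:M_infty} (countably many isomorphism types, finitely many strong embeddings between any two members, amalgamation with respect to strong embeddings), and under the identification above they hold for the new presentation of that category. By Definition \ref{D:M_infty} the structure $\mi$ is a Fra\"iss\'e limit of $(\K_\infty,\s)$, hence of the category of finite complete simply connected colored $N$-spaces with nice embeddings, and the uniqueness clause of Theorem \ref{T:Fraisselimes} pins it down as \emph{the} Fra\"iss\'e limit of that category. I do not expect a genuine obstacle here: the only content is the bookkeeping that ``strong subspace of a finite complete simply connected space'' and ``nice subset'' define the same notion in both directions, and that is exactly what the two preceding remarks supply.
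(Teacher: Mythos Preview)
Your proposal is correct and matches the paper's approach exactly: the paper presents this corollary as an immediate consequence of the two observations displayed just before it (objects match because $\emptyset\leq A$ \iff $A$ is complete and simply connected; morphisms match because $A\leq B$ \iff $A$ is nice in $B$), and you have simply unpacked that ``Therefore''. One minor remark: once you have identified the two categories as literally the same, you do not even need the uniqueness clause of Theorem \ref{T:Fraisselimes}, since $\mi$ is by Definition \ref{D:M_infty} the Fra\"iss\'e limit of that single category under either name.
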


\noindent The construction is actually simpler than the general
construction given in Section~\ref{S:Fraisse}, since if a finite
set $B$ satisfies that $B_a^b=B\cap M_a^b$ for all $a$ and $b$ in $B$,
then $B$ is r-strong in $\mi$ if and only it is nice in $\mi$. Indeed,
consider a rich sequence $A_0\leq A_1\leq\ldots$ with union $\mi$.
Then $B$ is contained some $A_i$. But $B$ is also nice in $A_i$, which
implies $B\leq A_i$, and therefore $B$ is r-strong in $\mi$.

Having $\mi$ as a model, the theory $\psn$ is consistent. It
will follow from the next proposition that it is complete. In
particular, the stronger version of Axiom $(\ref{S:AAA})$ stated in
Corollary \ref{C:sc-oper-bp} follows formally from our axioms.

\begin{prop}\label{L:hinher}
Any two $\omega$-saturated models of $\psn$ have the back-and-forth
property with respect to partial isomorphisms between finite nice
substructures.
\end{prop}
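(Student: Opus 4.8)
The plan is to establish a back-and-forth system between two $\omega$-saturated models $M$ and $M'$ of $\psn$. A partial isomorphism is a graph isomorphism $f\colon B\to B'$ where $B$ is a finite nice substructure of $M$ and $B'$ is a finite nice substructure of $M'$. I will show that any such $f$ can be extended to include an arbitrary new vertex $b\in M$ in its domain (and symmetrically for the range), which by the usual argument suffices.

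\textbf{Extension step.} Given $f\colon B\to B'$ and $b\in M\setminus B$, I first reduce to a situation where $b$ sits in a controlled position relative to $B$. Mimicking the proof of Theorem~\ref{T:extnice}, let $s=(l,r)$ be the width of $b$ over $B$: that is, $a_r\in\A_r(B)$ is the lowest-level vertex of $B$ lying over $b$ (using the imaginary $a_{N+1}$ if there is none) and $a_l\in\A_l(B)$ the highest-level vertex beneath $b$ (using $a_{-1}$ if there is none). Define the distance of $b$ to $B$ as $\dd_s(b,B^{a_r}_{a_l})$, which is well-defined since $M$ is simply connected. I argue by induction on the pair (width, distance), ordered appropriately, exactly as in Theorem~\ref{T:extnice}. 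In the base case the distance is infinite: by completeness of $M$ (which holds since $M\models\psn$ and Axiom~(\ref{S:axiome:unendlich}) implies completeness) choose a path $a_l,b_{l+1},\dots,b_{r-1},a_r$ through $b$, so that $\dd_s(b_i,B^{a_r}_{a_l})=\infty$ for each $i\in s$ and hence, by Lemma~\ref{L:nice}, the set $B\cup\{b_{l+1},\dots,b_{r-1}\}$ is nice and obtained from $B$ by a global application of $\alpha_s$. In the inductive case, peel off the last vertex $b'$ on a minimal path from $b$ to $B$ between $a_l$ and $a_r$, which has strictly smaller distance, extend $f$ over $b'$ by induction, and then observe the width or distance of $b$ over the new domain has strictly decreased.

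\textbf{Transporting the extension to $M'$.} The crux is that once we have reduced to $b$ having infinite distance to $B$, so that $B\cup\{b_{l+1},\dots,b_{r-1}\}$ arises from $B$ by a global $\alpha_s$ on $(a_l,a_r)$, we must find matching vertices $b'_{l+1},\dots,b'_{r-1}$ in $M'$ with $\dd_s(b'_i, (B')^{f(a_r)}_{f(a_l)})=\infty$, forming a path from $f(a_l)$ to $f(a_r)$. Here is where $\omega$-saturation of $M'$ enters: for each finite $n$, Axiom~(\ref{S:axiome:unendlich}) applied to the finite set $B'$ and the interval $s$ guarantees a path $f(a_l),b'_{l+1},\dots,b'_{r-1},f(a_r)$ in $M'$ with $s$-distance of each $b'_i$ to $\A_s(B')$ at least $n$; these conditions form a finitely satisfiable type over $B'$ (since $\dd_s$-distance is expressible by the predicates $d_n$, after passing to the relevant level-interval as in Remark~\ref{R:lokal_operation}), and $\omega$-saturation realizes it. By Lemma~\ref{L:nice} the resulting $B'\cup\{b'_i\}$ is nice in $M'$, and by Lemma~\ref{L:operation} together with the fact that $B$, $B'$ were wunderbar (Corollary~\ref{C:nice=wund}), the map extending $f$ by $b_i\mapsto b'_i$ is a graph isomorphism between nice substructures. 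Applying the inductive peeling in reverse then places $b$ itself in the extended domain.

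\textbf{Main obstacle.} The step I expect to be delicate is verifying that in the transported extension the distances are preserved, i.e.\ that $b_i\mapsto b'_i$ really is a partial isomorphism and not merely a bijection of underlying sets: one must check that no spurious connections appear in either $B\cup\{b_i\}$ or $B'\cup\{b'_i\}$, and that the $\dd_t$-distances agree for all intervals $t$. This is exactly what the equivalence in Lemma~\ref{L:nice}, part~(\ref{L:nice:nice})$\to$(\ref{L:nice:operation}), is designed to deliver — it shows no $b_i$ is joined to any element of $B$ besides $a_l,a_r$ — combined with Lemma~\ref{L:operation} and the niceness-equals-wunderbar principle to pin down all distances. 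Care is also needed when $a_l$ or $a_r$ is imaginary (the cases $l=-1$ or $r=N+1$), but those are handled uniformly by the conventions already set up, since simply connectedness is vacuous there and the corresponding operation is $\alpha_{[0,N]}$.
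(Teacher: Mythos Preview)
Your proposal is correct and follows essentially the same approach as the paper's proof. The only difference is organisational: the paper cites Theorem~\ref{T:extnice} directly to obtain a finite chain of global $\alpha_s$-steps from the given nice set to one containing $b$, and then handles a single $\alpha_s$-step via $\omega$-saturation and Lemma~\ref{L:nice}; you instead inline the width/distance induction of Theorem~\ref{T:extnice} inside the back-and-forth argument, which is redundant but not wrong.
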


\begin{proof}
  Let $M$ and $M'$ be two $\omega$-saturated models and consider a
  partial isomorphism $f:A\to A'$, where $A$ is nice in $M$ and $A'$
  is nice in $M'$.

  Given $b$ in $M$, Theorem \ref{T:extnice} yields a nice finite
  subset $B\geq A$ containing it. Thus, we may assume that $B$ is
  obtained from $A$ by applying $\alpha_s$ on  $(a_l,a_r)$.  Since
$M'$ is an $\omega$-saturated model of Axiom
  $(\ref{S:axiome:unendlich})$, there is a path
  $a'_l,b'_{l+1},\ldots,b'_{r-1},a'_r$ in $M'$ such that the
  $s$-distance of $b'_i$ to $A'$ is infinite. By Lemma \ref{L:nice})
  the set $B'=A'\cup\{b'_{l+1},\ldots,b'_{r-1}\}$ is nice and $f$
  extends to an isomorphism between $B$ and $B'$.
\end{proof}

\begin{theorem}\label{T:el}
  Any partial isomorphism $f:A\to A'$ between two finite nice subsets
  of two models of $\psn$ is elementary.
\end{theorem}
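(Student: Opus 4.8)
The plan is to deduce Theorem~\ref{T:el} from Proposition~\ref{L:hinher} by a routine compactness/saturation argument, showing that any partial isomorphism between finite nice substructures is contained in the elementary-map back-and-forth system witnessed by $\omega$-saturated models. First I would pass to $\omega$-saturated elementary extensions: given models $M\models\psn$ and $M'\models\psn$ with a partial isomorphism $f\colon A\to A'$ between finite nice subsets, choose $\omega$-saturated elementary extensions $M\preceq\widehat M$ and $M'\preceq\widehat M'$. A finite subset that is nice in $M$ need not remain nice in $\widehat M$ a priori, but the first niceness condition ($D_a^b = D\cap M_a^b$) and the second (finiteness of $t$-distances, equivalently wunderbar-ness by Corollary~\ref{C:nice=wund}, using that models of $\psn$ are complete and simply connected) are both expressed by saying that the relevant distances computed in $M$ equal those computed in $A$; since $M\preceq\widehat M$ these distances are unchanged, so $A$ is still nice in $\widehat M$, and likewise $A'$ in $\widehat M'$.

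Next I would invoke Proposition~\ref{L:hinher}: the family of partial isomorphisms between finite nice substructures of $\widehat M$ and $\widehat M'$ has the back-and-forth property, and in particular is non-empty (it contains $f$, and it contains the empty map since $\emptyset$ is nice). A back-and-forth system between two structures is by definition a system of elementary maps, so $f$ is an elementary map from $\widehat M$ to $\widehat M'$. Since $M\preceq\widehat M$ and $M'\preceq\widehat M'$, elementarity of $f$ as a map between the big models restricts to elementarity as a map between $M$ and $M'$: for any formula $\varphi(\bar x)$ and tuple $\bar a$ from $A$, $M\models\varphi(\bar a)$ iff $\widehat M\models\varphi(\bar a)$ iff $\widehat M'\models\varphi(f\bar a)$ iff $M'\models\varphi(f\bar a)$. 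This gives the theorem; in particular, taking $M=M'$ shows $\psn$ is complete, and applying it to $\emptyset$ shows all models of $\psn$ are elementarily equivalent to $\mi$.

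The one point that needs care — and which I expect to be the only real obstacle — is the preservation of niceness under elementary extension. The subtlety is that niceness of $A$ in $M$ is a property of the pair, and the definition quantifies over intervals $t$ and over $x,y\in\A_t(A)$ with $\dd_t^M(x,y)<\infty$; one must check that ``$\dd_t^M(x,y)=n$'' and ``$\dd_t^M(x,y)<\infty$ implies $\dd_t^A(x,y)<\infty$'' are first-order over $A$ (the former is, being a finite distance; the latter requires that whenever $\dd_t^M(x,y)<\infty$ we actually have the concrete bound $\dd_t^A(x,y)$, which holds because $A$ is wunderbar by Corollary~\ref{C:nice=wund}). Similarly $D_a^b = D\cap M_a^b$ amounts to finitely many ``lies over/beneath'' assertions, each a finite-path statement. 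Once this is in place, the rest is the standard reduction of ``partial isomorphisms with back-and-forth $\Rightarrow$ elementary'' through saturated models, so I would keep the writeup short.
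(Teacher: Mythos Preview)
Your proposal is correct and follows essentially the same route as the paper: pass to $\omega$-saturated elementary extensions, observe that $A$ and $A'$ remain nice there, and apply Proposition~\ref{L:hinher}. The paper's proof is three lines and simply asserts that niceness is preserved under elementary extension; you supply the justification the paper omits (via wunderbar-ness and the first-order expressibility of the relevant distance statements), which is a genuine improvement in exposition even though the architecture is identical.
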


\begin{proof}
  Replace the models $M$ and $M'$ by two $\omega$-saturated extensions
  $M_1$ and $M'_1$ Note that $A$ and $A'$ remain nice in the
  corresponding extensions. Lemma \ref{L:hinher} yields that $f$ is
  elementary with respect to $M_1$ and $M'_1$ and thus its restriction
  to $M$ and $M'$ is elementary as well.
\end{proof}
\begin{cor}
  The theory $\psn$ is complete.
\end{cor}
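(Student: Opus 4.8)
The statement follows almost immediately from Theorem~\ref{T:el}. The plan is to apply that theorem to the empty partial isomorphism. Recall that $\psn$ is already known to be consistent, since $\mi$ is a model of it; so it suffices to show that any two models $M$ and $M'$ of $\psn$ are elementarily equivalent.

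First I would observe that the empty subset $\emptyset$ is a finite nice substructure of any colored $N$-space: both conditions in Definition~\ref{D:nice} are vacuous for $D=\emptyset$ (there are no points $a,b$ and no vertices $x,y$ to consider). Hence the empty map $\emptyset\to\emptyset$ is a partial isomorphism between finite nice subsets of $M$ and $M'$ respectively.

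By Theorem~\ref{T:el}, this empty map is elementary. In particular, for every first-order sentence $\varphi$ in the language, $M\models\varphi$ if and only if $M'\models\varphi$. Thus $M\equiv M'$. Since $M$ and $M'$ were arbitrary models of the consistent theory $\psn$, the theory is complete.

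\textbf{Main obstacle.} There is essentially no obstacle here: the only point requiring a remark is that $\emptyset$ counts as a finite nice subset, which is immediate, and that $\psn$ has a model, which was established by exhibiting $\mi$. All the real work has already been done in Proposition~\ref{L:hinher} (the back-and-forth property for $\omega$-saturated models) and Theorem~\ref{T:el} (passing from the back-and-forth property to elementarity via $\omega$-saturated extensions).
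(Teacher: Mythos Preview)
Your proof is correct and takes essentially the same approach as the paper: observe that $\emptyset$ is nice in any colored $N$-space and apply Theorem~\ref{T:el} to the empty partial isomorphism. The paper's proof is just the one-line version of what you wrote.
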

\begin{proof}
 Note that set $\emptyset$ is nice in any colored $N$-space and apply
Theorem \ref{T:el}.
\end{proof}

\begin{cor}\label{C:complete}
  The type of a nice set $A$ is determined by its quantifier-free
type. 
\end{cor}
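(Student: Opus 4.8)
The plan is to deduce this immediately from Theorem~\ref{T:el}. First I would observe that the quantifier-free type of a finite subset $A$ of a colored $N$-space is exactly the data of an isomorphism type of $A$ as a finite colored graph, since the language of colored $N$-spaces consists only of the colour predicates and the edge relation. Hence, if $A$ and $A'$ are two finite nice subsets (of models of $\psn$, or of the same model) with the same quantifier-free type, then the identification of $A$ with $A'$ induced by this common quantifier-free type is a partial isomorphism $f\colon A\to A'$ in the sense of Theorem~\ref{T:el}.

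Next I would invoke Theorem~\ref{T:el} directly: any partial isomorphism between finite nice subsets of models of $\psn$ is elementary. Therefore $f$ is elementary, which is precisely the assertion that $\tp(A)=\tp(A')$. Thus the complete type of a finite nice set is determined by its quantifier-free type. For an infinite nice set $A$, one reduces to the finite case: $\tp(A)$ is the union of the types $\tp(A_0)$ over finite subsets $A_0\subseteq A$, and each such finite $A_0$ is again nice (niceness passes to finite subsets by Corollary~\ref{C:nice=wund} together with Lemma~\ref{L:stark_nice}, or more simply one may restrict attention to finite nice sets, which is the statement actually needed in what follows), so the finite case applies termwise.

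The only mild subtlety — and the single point worth a sentence — is the reduction from ``same quantifier-free type'' to ``there is a partial isomorphism'', i.e.\ that an isomorphism of the colour-and-edge structure on $A$ is the same thing as a map preserving all quantifier-free formulas; this is immediate because every quantifier-free formula in this finite relational language is a Boolean combination of atomic formulas involving only the $\A_i$ and the adjacency relation. I expect no real obstacle here: the corollary is a formal consequence of Theorem~\ref{T:el} once one notes that the language is finite and relational, so quantifier-free types and colored-graph isomorphism types coincide.
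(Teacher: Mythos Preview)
Your argument for finite nice sets is correct and is exactly the paper's intended proof: Theorem~\ref{T:el} says that any partial isomorphism between finite nice subsets is elementary, and since the language is finite relational, having the same quantifier-free type is the same as being isomorphic as colored graphs.

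One correction: your claim that niceness passes to arbitrary finite subsets is false, and neither Corollary~\ref{C:nice=wund} nor Lemma~\ref{L:stark_nice} says this (the latter concerns subsets of the form $D_{a}^{b}$, not arbitrary subsets). A finite subset of a nice set need not satisfy the distance condition in Definition~\ref{D:nice}. Your own alternative---simply restricting the statement to finite nice sets, which is all that Theorem~\ref{T:el} provides and all that the subsequent applications require---is the correct move. If one insists on the infinite case, the right argument is not a reduction to finite subtuples but rather to observe that the back-and-forth in Proposition~\ref{L:hinher} goes through verbatim starting from an infinite nice $A$, since Theorem~\ref{T:extnice} only adds finitely many points at each step.
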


\begin{cor}\label{C:omegasat}
  The model $\mi$ is $\omega$-saturated.
\end{cor}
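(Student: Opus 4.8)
The plan is to realise, inside $\mi$, an arbitrary type $p$ over a finite parameter set; by the standard one-variable reduction it is enough to treat $1$-types. First I would absorb the parameters into a finite \emph{nice} subset of $\mi$: starting from $\emptyset$ (which is nice) and applying Theorem~\ref{T:extnice} finitely often inside $\mi$ — legitimate since $\mi$ is complete and simply connected by Theorem~\ref{T:axiome} — one produces a finite nice $A\subseteq\mi$ containing the given parameters, with $\emptyset\leq A$; as remarked after Corollary~\ref{C:nice_modell}, a finite nice subset of $\mi$ is r-strong, so $A\leqr\mi$.

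Next I would pass to an elementary extension $\NN\succeq\mi$ in which $p$ has a realisation $b$, and set $\bar p=\tp(b/A)$, a complete extension of $p$. Since $\NN\equiv\mi$ we have $\NN\models\psn$, so $\NN$ is again complete and simply connected; and because each clause in the definition of niceness for the \emph{fixed finite} set $A$ is expressed by a first-order formula over $A$ (Boolean combinations of "there is a path of length $\le n$ in $\A_t$" and of the "lies between" relation, all witnessed by finite paths), elementarity gives that $A$ is still nice in $\NN$. Applying Theorem~\ref{T:extnice} now inside $\NN$ yields a finite nice $B\subseteq\NN$ with $b\in B$ and $A\leq B$; composing the operations, $\emptyset\leq B$, so $B\in\K_\infty$.

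The core step is the transfer back to $\mi$: since $A\leqr\mi$ and $A\leq B$ with $B\in\K_\infty$, Lemma~\ref{L:fraisse} produces an r-strong (hence nice) subset $B'$ of $\mi$ containing $A$, together with an isomorphism $h\colon B\to B'$ that is the identity on $A$. Now $B$ and $B'$ are finite nice subsets of models of $\psn$, so Theorem~\ref{T:el} makes $h$ elementary; therefore $h(b)$ realises $\bar p$ in $\mi$, and in particular realises $p$. Hence every $1$-type over a finite subset of $\mi$ is realised, i.e.\ $\mi$ is $\omega$-saturated.

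I expect the only point requiring care to be the claim that $A$ remains nice in $\NN$ — that niceness of a fixed finite set is preserved under elementary extension — but this is routine once one writes out the defining conditions of Definition~\ref{D:nice} as first-order statements about the finitely many elements of $A$. Everything else is a direct assembly of Theorem~\ref{T:extnice}, Lemma~\ref{L:fraisse}, and Theorem~\ref{T:el}.
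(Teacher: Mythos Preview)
Your argument is correct and rests on the same three ingredients as the paper's proof (Theorem~\ref{T:extnice}, Lemma~\ref{L:fraisse}, and Theorem~\ref{T:el}), but the packaging differs. The paper takes an external $\omega$-saturated model $M\models\psn$ and shows that isomorphisms between finite nice subsets of $M$ and of $\mi$ form a back-and-forth system: the direction into $\mi$ comes from richness (Lemma~\ref{L:fraisse} together with the equality of nice and r-strong), while the direction into $M$ uses $\omega$-saturation of $M$ exactly as in the proof of Proposition~\ref{L:hinher}. Saturation of $\mi$ then transfers from that of $M$. You instead pass to an elementary extension $\NN\succeq\mi$, realise the type there, and pull a single nice extension back into $\mi$ via Lemma~\ref{L:fraisse}. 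This trades the two-sided back-and-forth for a one-shot transfer, at the price of the check that $A$ remains nice in $\NN$; the paper's route avoids that check entirely since its $M$ is unrelated to $\mi$.

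One small correction on that check: your phrasing that niceness of the fixed finite $A$ ``is expressed by a first-order formula over $A$'' is not quite right. Condition~(2) of Definition~\ref{D:nice} requires, for each pair $x,y\in\A_t(A)$ with $\dd_t^A(x,y)=\infty$, that $\dd_t^{\NN}(x,y)=\infty$; this is not a single formula but the type $\{\dd_t(x,y)\neq n : n<\omega\}$. The argument still goes through---since $A$ is wunderbar in $\mi$ by Corollary~\ref{C:nice=wund}, each of these formulas holds in $\mi$ and hence in $\NN$---but the justification is preservation of a type under elementary extension, not of a single sentence.
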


\begin{proof}
  Let $M$ be any $\omega$-saturated model of $\psn$.  It follows from
  Lemma \ref{L:fraisse} and the equality of nice and r-strong that the
  family of isomorphisms between finite nice subset of $M$ and $\mi$
  has the back-and-forth property. This implies that $\mi$ is also
  $\omega$-saturated.
\end{proof}

\begin{cor}\label{C:prim}
  The Fra\"iss\'e limit $M_0$ is the prime model of $\psn$.
\end{cor}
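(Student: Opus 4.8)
The plan is to exhibit, for an arbitrary model $M\models\psn$, an elementary embedding $\m_0\to M$; since $\m_0$ is a countable model of $\psn$ by Theorem~\ref{T:axiome} and prime models are unique up to isomorphism, this identifies $\m_0$ as the prime model. I would write $\m_0=\bigcup_{n<\omega} A_n$, where $A_0\leq A_1\leq\cdots$ is a rich sequence of $0$-strong embeddings defining $\m_0$, with $A_0$ the distinguished flag $a_0-\cdots-a_N$; after refining the sequence one may assume $A_{n+1}$ is obtained from $A_n$ by a single operation $\alpha_{\{i_n\}}$ on a pair $c_n\in\A_{i_n-1}(A_n)$ beneath $d_n\in\A_{i_n+1}(A_n)$. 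The aim is then to build, by recursion on $n$, compatible embeddings $f_n:A_n\to M$ with $f_n(A_n)$ nice in $M$, and to put $f=\bigcup_n f_n$.

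For $n=0$ I would use that any flag is nice (both niceness conditions are immediate for a path) and that $M$ contains a flag, namely the path furnished by Axiom~$(\ref{S:axiome:unendlich})$ applied to the empty set and the interval $s=(-1,N+1)$; then $f_0$ is the isomorphism of $A_0$ onto such a flag. For the recursion step, given $f_n$ with $A_{n+1}=A_n\cup\{b\}$, I would first note that $f_n(d_n)$ lies over $f_n(c_n)$ in $M$ (since $f_n$ preserves paths, or trivially if an endpoint is imaginary), so the singleton form of Axiom~$(\ref{S:axiome:unendlich})$ supplies a vertex $b'\in M\setminus f_n(A_n)$ with $f_n(c_n)-b'-f_n(d_n)$ a path, forcing $b'\in\A_{i_n}(M)$. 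As $\A_{i_n}$ carries no internal edges, $\dd^M_{\{i_n\}}(b',f_n(A_n))=\infty$, so, $M$ being simply connected and $f_n(A_n)$ nice, Lemma~\ref{L:nice} gives that $f_n(A_n)\cup\{b'\}$ is nice and obtained from $f_n(A_n)$ by $\alpha_{\{i_n\}}$ on $(f_n(c_n),f_n(d_n))$. Since $A_{n+1}$ comes from $A_n$ by the same operation, and $\alpha_{\{i_n\}}$ introduces no edges beyond the prescribed path, $f_{n+1}:=f_n\cup\{b\mapsto b'\}$ is an embedding with nice image; the union $f:\m_0\to M$ is then an embedding of colored $N$-spaces.

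It then remains to verify that $f$ is elementary, i.e.\ that $\tp^{\m_0}(\bar a)=\tp^M(f(\bar a))$ for every finite tuple $\bar a$ from $\m_0$. Using that $\m_0$ is complete and simply connected, Theorem~\ref{T:extnice} lets me enlarge $\bar a$ to a finite nice subset $A\subseteq\m_0$; I then fix $n$ with $A\subseteq A_n$. Since $A_n\leq\m_0$, the set $A_n$ is wunderbar, hence nice, in $\m_0$ by Lemma~\ref{L:opwunderbar}, and from this one checks that $A$ is nice in $A_n$ (its first niceness condition in $A_n$ following from the one in $\m_0$ together with $(A_n)^b_a=A_n\cap(\m_0)^b_a$, and its second from $\dd^{A_n}_t\ge\dd^{\m_0}_t$). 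Transporting through the isomorphism $f_n:A_n\to f_n(A_n)$ makes $f(A)=f_n(A)$ nice in $f_n(A_n)$, which is nice in $M$; as ``nice in'' is transitive, $f(A)$ is nice in $M$. Both $A$ and $f(A)$ are then wunderbar in their ambient models by Corollary~\ref{C:nice=wund}, so $f$ preserves every $t$-distance, and being a graph embedding it preserves edges and colors; hence $A$ and $f(A)$ have the same quantifier-free type, and therefore, by Corollary~\ref{C:complete}, the same complete type. In particular $\tp^{\m_0}(\bar a)=\tp^M(f(\bar a))$, which would complete the proof.

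I expect the main obstacle to be this last step, converting the combinatorially built embedding into an elementary one. The delicate point is that $f$ must send \emph{every} finite nice subset of $\m_0$, not only the members $A_n$ of the chosen rich sequence, to a nice subset of $M$, since that is exactly what lets Corollary~\ref{C:complete} take over; transitivity of niceness together with the fact that each $A_n$ is wunderbar in $\m_0$ are what make this work. By contrast, the construction of $f$ itself is routine: it uses only a one-sided back-and-forth and a single application of Axiom~$(\ref{S:axiome:unendlich})$ at each stage, so, unlike in Proposition~\ref{L:hinher}, no saturation of $M$ is needed.
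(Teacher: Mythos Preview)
Your proof is correct, but the paper takes a different and shorter route: it shows that $\m_0$ is \emph{atomic}. The key observation, which you also use implicitly, is that for singleton operations $\alpha_{\{i\}}$ the hypothesis of Lemma~\ref{L:nice}(b) is automatic --- two distinct vertices in $\A_i$ always have $\dd_{\{i\}}$-distance $\infty$. The paper exploits this by noting that the \emph{quantifier-free} type of any $A_n$ (built from a flag by singleton operations) already forces $A_n$ to be nice inside \emph{any} model realizing that type; by Corollary~\ref{C:complete} the quantifier-free type then isolates the complete type, so every $A_n$ is atomic and hence $\m_0$ is atomic and prime. You instead use the same observation to build the embedding $f$ by hand into an arbitrary $M$, and then separately verify elementarity. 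Both are valid; the paper's argument is a one-paragraph application of atomicity, while yours gives the embedding explicitly.

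One simplification of your final step: the detour through an arbitrary nice $A$ and the transitivity of ``nice in'' is unnecessary. Given $\bar a$, just pick $n$ with $\bar a\subseteq A_n$. You already know $A_n$ is nice in $\m_0$ (as $A_n\leq\m_0$) and that $f_n(A_n)$ is nice in $M$ by construction; since $f_n$ is an isomorphism between them, Theorem~\ref{T:el} applies directly to $f_n$ and gives $\tp^{\m_0}(\bar a)=\tp^M(f(\bar a))$.
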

\begin{proof}
  Consider any finite $A\subset M$ which can be obtained from some
fixed flag by a sequence of applications of
$\alpha_{\{i\}}$ for varying $i\in[0,N]$.  Since the
$\dd_{\{i\}}$-distances are either $0$ or $\infty$, it follows
inductively from Lemma \ref{L:nice} that all  intermediate sets are
nice.  So the quantifier-free type of $A$ implies that $A$ is nice and
therefore implies the type of $A$. Whence $A$ is atomic. This shows
that $\m_0$ is atomic.
\end{proof}

\begin{cor}\label{C:Niceacl}
Nice sets are algebraically closed. 
\end{cor}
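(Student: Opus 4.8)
The plan is to show that every nice set $A$ is already algebraically closed in $\mi$ (equivalently, in any model of $\psn$) by a homogeneity argument: if $c$ is a real element with $c\notin A$, I will produce infinitely many conjugates of $c$ over $A$, contradicting algebraicity. First I would invoke Theorem \ref{T:extnice}: since $\mi$ is complete and simply connected, there is a finite nice $B$ with $A\le B$ and $c\in B$, obtained from $A$ by finitely many global applications of operations $\alpha_s$. It suffices to treat the last such operation, say $B = A'\cup\{b_{l+1},\dots,b_{r-1}\}$ obtained from a nice $A'$ by a global $\alpha_s$ on $(a_l,a_r)$, where $c = b_i$ for some $i\in s$; by induction on the number of steps we may assume $A'$ is algebraically closed, and it is enough to show $c\notin \acl(A')$, hence $c\notin\acl(A)$. (Strictly speaking one wants $\acl(A)\subseteq$ the final nice set; but since niceness is preserved and $A$ is nice, it is cleanest to argue directly that no element outside $A$ is algebraic over $A$.)

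So, reduced to the key case: $A$ is nice, $B$ is obtained from $A$ by a global application of $\alpha_s$ on $(a_l,a_r)$ with new vertices $b_{l+1},\dots,b_{r-1}$, and I must show each $b_i$ has infinitely many conjugates over $A$ inside $\mi$. By Axiom $(\ref{S:axiome:unendlich})$ of Theorem \ref{T:axiome} applied in $\mi$, for every $k$ there is a fresh path $a_l, b'_{l+1},\dots,b'_{r-1}, a_r$ whose $s$-distance to $\A_s(A)$ exceeds $k$; in particular, taking the distance infinite via one more application of $\alpha_s$ and Lemma \ref{L:operation}, we get a path with $\dd_s^{\mi}(b'_i, A)=\infty$. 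By Lemma \ref{L:nice}, the set $B' = A\cup\{b'_{l+1},\dots,b'_{r-1}\}$ is again nice, and the map fixing $A$ pointwise and sending $b_j\mapsto b'_j$ is a quantifier-free isomorphism $B\to B'$ (both are $A$ together with a path attached to $(a_l,a_r)$ with infinite $s$-distance to $A$, and no other incidences). By Corollary \ref{C:complete} (the type of a nice set is its quantifier-free type) together with Theorem \ref{T:el}, this is an elementary map, i.e.\ an automorphism of $\mi$ fixing $A$; since we can do this for infinitely many pairwise-distinct new paths $b'_\bullet$ (each genuinely new because its $s$-distance to $A$ is infinite), $b_i$ has infinitely many conjugates over $A$ and is not algebraic over $A$.

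The main obstacle is the bookkeeping in the reduction: one must be careful that after peeling off the last operation $\alpha_s$, the remaining set $A'$ is still nice (guaranteed by Lemma \ref{L:nice}) and that an element algebraic over $A$ cannot "hide" in an earlier layer — this is handled by running the induction on the number of operations used to build $B$ from $A$ in Theorem \ref{T:extnice}, noting that the intermediate sets are all nice and that $\acl(A)\subseteq\acl(A')\subseteq\dots$ only grows, so it is enough at each stage to show the freshly added vertices are non-algebraic over the previous nice set. A secondary point to get right is that the new paths produced by Axiom $(\ref{S:axiome:unendlich})$ really do give \emph{distinct} elements: since distinct such paths have infinite mutual $s$-distance (or can be chosen so by iterating the axiom with larger bounds), the vertices $b'_i$ obtained are pairwise distinct and distinct from those in $A$, which is all that is needed.
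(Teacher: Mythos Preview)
Your approach is essentially the same as the paper's: reduce to a single global application of $\alpha_s$ to a nice set, then observe that all such extensions have the same type over the base (via Lemma \ref{L:nice} and Corollary \ref{C:complete}), so the new vertices have infinitely many conjugates. The paper streamlines the reduction by invoking Corollary \ref{C:nice_modell}(\ref{F:nice_modell:op}) to write $\mi$ as an increasing union of nice sets over $A$, rather than your use of Theorem \ref{T:extnice} plus an induction on steps, but this is cosmetic.

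Two small points to clean up. First, Axiom $(\ref{S:axiome:unendlich})$ is stated for \emph{finite} $A$, while your nice set need not be finite; you should either pass to $\omega$-saturation of $\mi$ (Corollary \ref{C:omegasat}) to realise the infinite-distance path directly, or use richness of the Fra\"iss\'e limit. Second, an elementary partial map between nice subsets is not literally ``an automorphism of $\mi$''; what you actually need (and what Corollary \ref{C:complete} gives) is only that $\tp(B/A)=\tp(B'/A)$, which already yields infinitely many conjugates of $b_i$ over $A$ without producing any automorphism.
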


\begin{proof}
 By Corollary \ref{C:omegasat}, we may assume that the 
nice set $A$ is a subset of $\mi$. By Corollary
\ref{C:nice_modell} $(\ref{F:nice_modell:op})$, we have that $M$ is
an increasing union of nice sets containing $A$. Thus, we may reduce
the statement to showing that if
$B=A\cup\{b_{l_s+1},\dotsc,b_{r_s-1}\}$ is obtained by applying the
operation $\alpha_s$ on $a_{l_s},a_{r_s}$ in $A$, then the tuple
$(b_{l_s+1},\dotsc,b_{r_s-1})$ has infinitely many $A$-conjugates.
This is now clear, as any two sets resulting from applying the
operation $\alpha_s$ on $a_{l_s},a_{r_s}$ in $A$ have the same type
over $A$, by Lemma \ref{L:nice} and Corollary \ref{C:complete}.
 
\end{proof}


\section{Words and letters}\label{S:Woerte}

In this section, we will study the semigroup $\cox$ generated by the
operations $\alpha_s$, where $s$ stands for a non-empty interval in
$[0,N]$. Such intervals will be then called \emph{letters}. We will
exhibit a normal reduced form for words in $\cox$ and describe the
possible interactions between words when multiplying them.

Two letters $s$ and $t$ in $[0,N]$ \emph{commute} if their distance
is at least $2$. That is, either $r_s\leq l_t$
or $r_t\leq l_s$, where $s=(l_s,r_s)$ and $t=(l_t,r_t)$. By
definition, no letter commutes with itself nor with any
proper subletter.

\begin{definition}\label{D:Cox}
  We define $\cox$ to be the monoid generated by all letters
  in $[0,N]$ modulo the following relations:
  \begin{itemize}
  \item $ts=st=s$ if $t\subset s$,
  \item  $ts=st$ if $s$ and $t$ commute.
  \end{itemize}
  We denote by $1$ the empty word.
\end{definition}

The \emph{inversion} $u\mapsto u\inv$ of words defines an
antiautomorphism of $\cox$. All concepts introduced from now on
will be invariant under inversion.

The  \emph{centraliser} $\Cent(u)$  of a  word $u$ in $\cox$ is the
collection of all indexes in $[0,N]$ commuting with every letter
in $u$. Clearly, a letter $s$ commutes with $u$ in $\cox$ if and only
if  $s\subset C(u)$.

In order to obtain a normal form for elements in $\cox$, we say that a
\emph{word} $s_1\cdots s_n$ is \emph{reduced} if there
is no pair $i\not=j$ of indices such that $s_i\subset s_j$ and $s_i$
commutes with all $s_k$ with $k$ between $i$ and $j$.

\begin{definition}\label{D:reduction}
  The word $u$ can be \emph{reduced to} $v$, denoted by $u\to v$, if
  $v$ is obtained from $u$ by finitely many iterations of the
  following rules:
  \begin{description}
  \item[\sc Commutation] Replace an occurrence of $s\cdot t$ by
    $t\cdot s$, if $s$ and $t$ commute.
  \item[\sc Cancellation] Replace an occurrence of $s\cdot t$ or
    $t\cdot s$ by $s$, if $t\subset s$.
  \end{description}
  Two words $u$ and $v$ are \emph{equivalent} (or $u$ is a
\emph{permutation} of $v$), denoted by $u\approx v$, if
  $u\to v$  by  exclusively applying the commutation rule.
\end{definition}

It is easy to see that permutations of reduced words remain
reduced. In particular, a word is reduced if and only if the
cancellation rule cannot be applied to any permutation.

Clearly, two word $u$ and $v$ represent the same element in $\cox$
if $u\to v$.  The following proposition yields in particular that the
converse is  true: Two words have a common reduction if they represent
the same element in $\cox$ (\emph{cf.} Corollary
\ref{C:cox_reduced}).

\begin{prop}\label{P:nonsplitred}
  Every word $u$ can be reduced to a unique (up to equivalence)
  reduced word $v$. We refer to $v$ as the \emph{reduct of} $u$.
\end{prop}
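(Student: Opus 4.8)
The plan is to prove this by a confluence argument for the reduction system described in Definition \ref{D:reduction}, treating commutation-equivalence $\approx$ as the underlying notion of ``sameness'' of words and the cancellation rule as the genuinely length-decreasing step. First I would observe that the cancellation rule strictly decreases the length of a word, so any maximal reduction sequence terminates; hence reducts exist. The content of the proposition is their uniqueness up to $\approx$, which I would obtain by showing that the rewriting system is \emph{locally confluent} modulo commutation: if $u \to u_1$ and $u \to u_2$ by single applications of rules, then there are words $v_1, v_2$ with $u_1 \to v_1$, $u_2 \to v_2$, and $v_1 \approx v_2$. Together with termination of the cancellation rule, a Newman-lemma-style argument (applied to the quotient by $\approx$) then yields that every word reduces to a unique $\approx$-class of reduced words.

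For local confluence I would do the usual case analysis on the two applied rules. The commutation/commutation case is just the observation (already noted in the text) that permutations of a word form an equivalence class closed under the rules, so if both steps are commutations there is nothing to prove. The interesting cases are cancellation/cancellation and commutation/cancellation. For two cancellations: if the two cancelled pairs involve disjoint positions the steps commute outright; if they overlap, say positions $i,j$ with $s_i \subset s_j$ handled one way and positions $j,k$ with $s_k \subset s_j$ (or $s_j \subset s_k$) handled another, I would check that after one cancellation the remaining cancellation is still available (possibly after a commutation), using transitivity of $\subset$ and the fact that commuting with all intermediate letters is preserved when a letter is deleted. The commutation/cancellation case is similar: a commutation swap of $s\cdot t$ can at most change which intermediate letters lie between a nested pair, and since commuting letters are ``transparent'' for the nesting condition, the cancellation remains applicable afterwards. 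In each subcase one closes the diagram by at most one further cancellation plus commutations.

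The main obstacle I expect is bookkeeping in the overlapping cancellation/cancellation case, specifically when the three positions $i < j < k$ are such that $s_i \subset s_j$ and $s_j \subset s_k$ simultaneously: one must verify that cancelling $s_j$ against $s_k$ first does not destroy the possibility of then cancelling $s_i$ against $s_k$ — i.e. that $s_i \subset s_k$ (immediate by transitivity) and that $s_i$ still commutes with everything strictly between $i$ and $k$ after $s_j$ is removed (which is automatic, since removing a letter only shrinks the set of intermediate letters). A symmetric check handles the case $s_i \subset s_j$ and $s_k \subset s_j$ with $i < j < k$, where after removing $s_i$ one must still be able to cancel $s_k$ against $s_j$; here the subtlety is that $s_i$ itself was one of the intermediate letters for the pair $(j,k)$, so one needs that $s_i$, being commutation-transparent, may be slid out of the way first. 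Once local confluence modulo $\approx$ is established, the uniqueness of the reduct follows formally, and the parenthetical remark in the statement (that a word is reduced iff no permutation admits cancellation) identifies the reduced words as exactly the $\to$-normal forms, completing the proof.
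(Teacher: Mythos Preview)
Your approach is correct and is essentially the same confluence/diamond-lemma argument the paper gives. The paper packages it slightly more cleanly by introducing a single \emph{generalised cancellation} rule (delete $s_i$ whenever $s_i\subset s_j$ and $s_i$ commutes with all letters strictly between positions $i$ and $j$), which folds the commutation steps into the cancellation and lets one run the diamond argument by induction on length directly, without having to pass to the quotient by~$\approx$ or treat the commutation/cancellation interaction as a separate case.
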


\begin{proof}
  Among all possible reductions of the word $u$, choose $v$ of minimal
length. Clearly, cancellation cannot be applied any further to
a permutation of $v$, thus $v$ is reduced. We need only show that $v$
is unique such.

For that, we first introduce the following rule:

  \begin{description}
    \item[\sc Generalised Cancellation] Given a word $s_1\cdots
s_n$ and a pair  of indices $i\not=j$ such that $s_i\subset s_j$ and
$s_i$ commutes with all $s_k$'s with $k$ between $i$ and $j$, then
delete the letter $s_i$.
  \end{description}
  If the situation described above occurs, we say that
  $s_i$~\emph{is absorbed by} $s_j$. Note that a generalised
cancellation is
obtained by  successive commutations and one single cancellation.
Furthermore, one single cancellation applied to some permutation of $u$
can be obtained as some permutation of a generalised cancellation
applied to $u$. This implies that every reduct can be obtained by
a sequence of generalised cancellations followed by a permutation.

  Assume now that $u\to v_1$ and $u\to v_2$,  where both $v_1$ and
$v_2$ are reduced. We will show, by induction on the length of
$u$, that $v_2$ is a permutation of $v_1$. If $u$ is itself reduced,
then $v_1$ and $v_2$ are permutations of $u$ and hence the result
follows. Otherwise, there are two words $u_1$ and $u_2$ obtained
from $u$ by one single generalised cancellation such that
$u_i\to v_i$ for $i=1,2$.

  We claim that there is a word $u'$ such that $u_i\to u'$ for
  $i=1,2$, either by permutation or by a single generalised
  cancellation. This is immediate except for the case where there are
  indices $i$, $j$ and $k$ (for $i\neq k$) such that $u_1$ is obtained
  from $u$ because the letter $s_i$ is absorbed by $s_j$ and $u_2$ is
  obtained from $u$ in in which the same letter $s_j$ is absorbed by
  $s_k$. In this case, set $u'$ to be the word obtained from $u$ by
  having both $s_i$ and $s_j$ absorbed by $s_k$.  Clearly, we have
  that $u_1\to u'$. Also, since $s_i\subset s_j$, it follows that
$s_i$  commutes also with all letters between $s_j$ and $s_k$. Hence,
the word $u'$  is obtained from $u_2$ in which $s_k$ absorbs $s_i$.
Let $v'$ be a reduct of $u'$. Induction applied to $u_1$ and $u_2$
implies that  $v'$ is a permutation of both $v_1$ and $v_2$. Hence,
the word $v_1$  is a permutation of $v_2$.
\end{proof}

\begin{cor}\label{C:cox_reduced}
  Every element of $\cox$ is represented by a reduced word, which is
  unique up to equivalence.
\end{cor}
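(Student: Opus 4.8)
The plan is to deduce Corollary~\ref{C:cox_reduced} directly from Proposition~\ref{P:nonsplitred}, the only work being to make precise the (already announced) fact that two words represent the same element of $\cox$ exactly when they have a common reduction. For \emph{existence}, given an element $g$ of $\cox$ I would pick any word $u$ with $[u]=g$ and let $v$ be its reduct, as provided by Proposition~\ref{P:nonsplitred}. Each application of the \emph{Commutation} or \emph{Cancellation} rule replaces a subword by one equal to it in $\cox$, since these rules are precisely instances of the defining relations $ts=st$ and $ts=st=s$; hence $[v]=[u]=g$, and $v$ is reduced by construction.

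For \emph{uniqueness}, suppose $v_1$ and $v_2$ are reduced words with $[v_1]=[v_2]=g$. By definition of $\cox$ as the quotient of the free monoid on the letters by the congruence generated by the two families of relations, there is a finite chain $v_1=w_0,w_1,\dots,w_k=v_2$ in which each $w_{i+1}$ arises from $w_i$ by a single instance of a relation in one of the two directions: a Commutation step, a Cancellation step $s\cdot t\to s$ or $t\cdot s\to s$ with $t\subset s$, or the reverse of such a Cancellation (an ``expansion'' $s\to s\cdot t$ or $s\to t\cdot s$ with $t\subset s$). The key observation is that passing from $w_i$ to $w_{i+1}$ does not change the reduct up to $\approx$: if $w_{i+1}$ is a one-step reduction of $w_i$, then any reduct of $w_{i+1}$ is also a reduction of $w_i$, hence a reduct of $w_i$, so by the uniqueness part of Proposition~\ref{P:nonsplitred} the reducts of $w_i$ and $w_{i+1}$ are equivalent; the expansion case is this same statement with the roles of $w_i$ and $w_{i+1}$ exchanged. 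Therefore the reduct is constant (up to $\approx$) along the whole chain, so the reduct of $v_1$ is equivalent to the reduct of $v_2$.

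It remains to note that a reduced word $v$ is its own reduct up to equivalence: by the remark following Definition~\ref{D:reduction}, the cancellation rule cannot be applied to any permutation of $v$, so every reduction of $v$ uses only commutations and is thus a permutation of $v$; in particular the minimal-length reduction chosen in Proposition~\ref{P:nonsplitred} is $\approx v$. Applying this to $v_1$ and $v_2$ gives $v_1\approx v_2$, which is exactly uniqueness up to equivalence. I do not anticipate a genuine obstacle here; the one point deserving care is to state explicitly that equality in $\cox$ permits expansions and not merely reductions, and to handle those via the symmetry of the preservation argument above.
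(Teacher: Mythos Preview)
Your proof is correct, but the route differs from the paper's. You argue directly on the word level: given two reduced words equal in $\cox$, you connect them by a finite chain of single-relation moves (in either direction) and observe, via Proposition~\ref{P:nonsplitred}, that the reduct is invariant along each step of the chain. The paper instead argues algebraically: it equips the set $C$ of $\approx$-classes of reduced words with a multiplication $[u]\cdot[v]=[w]$ where $w$ is the reduct of $u\cdot v$ (well-defined by Proposition~\ref{P:nonsplitred}), checks that $C$ satisfies the defining relations of $\cox$, and concludes that the natural surjection $C\to\cox$ is an isomorphism. Your argument is more elementary and self-contained, needing no verification of associativity or of the relations in $C$; the paper's argument, on the other hand, yields as a byproduct an explicit concrete model of $\cox$ as a monoid of reduced words, which is conceptually useful for what follows. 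Both rest on the same lemma and neither is materially longer than the other.
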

\begin{proof}
  Let $C$ be the collection of equivalence classes of reduced
words. From the previous result, it follows that there is a natural
surjection $C\to\cox$. Represent by $[u]$ the equivalence class of
the word $u$. Set

  \[[u] \cdot [v] = [w] \quad\text{ iff }\quad
u\cdot v\to w
  .\]
  Then $C$ has a natural semigroup structure. Since $C$ satisfies
the defining relations of $\cox$, the map $C\to\cox$ is an isomorphism.
\end{proof}

In order to exhibit a canonical representative of the equivalence
class $[u]$, we introduce the following partial ordering on
letters:
\[(l_s,r_s)<(l_t,r_t)\;\;\text{iff}\;\; r_s\leq l_t.\]
A reduced word $s_1\cdots s_n$ is in \emph{normal
  form} if for all $i<n$, if $s_i$ and $s_{i+1}$ commute, then
$s_i<s_{i+1}$.
\begin{remark}
  Every reduced word is equivalent to a unique word in normal form.
\end{remark}
\begin{proof}
 We will actually prove a more general result: Let $S$ be any set
equipped with a partial order $<$. We say that $s$ and $t$ commute if
either $s<t$ or $t< s$. Let $S^\ast$ be the semigroup
generated by $S$ modulo commutation. Two words in $S^\ast$ are
equivalent if they can be transformed into each other by successive
commutations of  adjacent elements. A word $s_1\cdots s_n$ is in
normal form if $s_i\not>s_{i+1}$ for all $i<n$. We have the following.

\begin{claim}
  Every word $u$ in $S^\ast$ is equivalent to a unique word
  $v$ in normal form.
\end{claim}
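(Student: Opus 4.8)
The plan is to prove the claim by induction on the length of the word $u$, since the combinatorics for this abstract version (a set $S$ with a partial order, commutation meaning comparability) is cleaner than the letter-specific statement and immediately yields the remark. For \emph{existence} of a normal form, I would argue as follows: among all words equivalent to $u$, pick one of minimal ``inversion count,'' where an inversion is an adjacent pair $s_i > s_{i+1}$. If this count is nonzero, there is some adjacent pair with $s_i > s_{i+1}$; these commute, so we may swap them, strictly decreasing the count (swapping $s_i>s_{i+1}$ to $s_{i+1}s_i$ removes that inversion and, because $<$ is transitive, creates no new inversion with the neighbours on either side that wasn't already present — this needs a short check using transitivity). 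Contradiction, so a normal form exists. Alternatively, and perhaps more in the spirit of an induction proof, write $u = u' s$ with $u'$ shorter, inductively put $u'$ in normal form $v'$, then ``insert'' $s$ into $v'$: slide $s$ leftwards past every letter it commutes with and is smaller than, stopping at the first letter not exceeded by $s$; I expect this to be the version the authors use.

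For \emph{uniqueness}, the standard tool is a confluence / diamond argument, or equivalently the observation that in a word modulo commutation, for each pair $\{s,t\}$ of distinct comparable elements the relative order of successive occurrences is not fixed but the \emph{multiset of letters} is invariant, and the normal form is determined by a canonical ordering. Concretely I would show: if $v$ and $w$ are both in normal form and $v \approx w$, then $v = w$. Induct on length. Look at the first letter $s$ of $v$. Claim $s$ must also appear in $w$ ``accessible at the front,'' i.e.\ there is an occurrence of $s$ in $w$ that can be commuted to the front; since $v\approx w$ and normal form forbids $s_i>s_{i+1}$, the leftmost occurrence of $s$ in $w$ has, to its left, only letters incomparable to $s$ or smaller than $s$ — but ``smaller than $s$'' to the left would, after bringing $s$ to the front in $v$, contradict that $v$ starts with $s$ being consistent with $w$; here I must be careful. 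The clean way: among all letters appearing in $u$, the set of letters that can occur first in \emph{some} equivalent word is exactly the set of ``minimal accessible'' letters, and normal form picks the $<$-least among competing commuting ones, which is forced. So the first letters of $v$ and $w$ agree; cancel and apply induction.

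The step I expect to be the main obstacle is the uniqueness argument, specifically verifying that the normal-form condition genuinely pins down which commuting letter comes first when several are ``available,'' and that no subtlety arises from a letter being comparable to some but incomparable to others further along the word. The transitivity of $<$ is exactly what makes this work: if $s$ can be brought to the front and $t$ can be brought to the front with $s,t$ commuting, then in normal form the earlier of the two must be the $<$-smaller, and this choice propagates consistently. I would phrase uniqueness via the auxiliary lemma: \emph{if $u$ is in normal form and $u \approx s\,u''$, then $s$ is the first letter of $u$} — equivalently, the set of possible first letters of words equivalent to a normal-form word is a singleton. Granting this lemma, uniqueness is a one-line induction, and the lemma itself is proved by tracking how a leftward commutation sequence for $s$ in $u$ must look, using that $u$ has no adjacent descent. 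The remark for reduced words in $\cox$ then follows by applying the claim to the set $S$ of letters occurring in the given reduced word, with the induced partial order $(l_s,r_s)<(l_t,r_t)\iff r_s\le l_t$, noting that equivalence of reduced words in the sense of Definition~\ref{D:reduction} is exactly equivalence in $S^\ast$ since the cancellation rule never applies to a permutation of a reduced word.
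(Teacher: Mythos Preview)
Your proposal is correct and follows essentially the same route as the paper: existence via repeatedly swapping adjacent descents, uniqueness by showing the first letter of a normal form is forced and then inducting on length.

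Two points where the paper's execution is slightly cleaner than yours. For existence, the paper counts \emph{all} inversions $\{(i,j)\mid i<j,\ s_i>s_j\}$ rather than adjacent ones; swapping an adjacent pair $s_i>s_{i+1}$ then decreases this count by exactly one with no further checking needed, since no other pair changes relative order. For uniqueness, rather than your auxiliary lemma, the paper argues symmetrically: track the permutation $\pi$ of positions induced by a sequence of commutations taking one normal form $u=s_1\cdots s_n$ to another $v=t_1\cdots t_n$; if $\pi(1)=k>1$ then $t_k=s_1$ commutes with each $t_i$ ($i<k$), and normal form plus transitivity force $t_i<t_k$ for all $i<k$ by downward induction, whence $t_1<s_1$. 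Applying the same reasoning to $\pi^{-1}$ gives $s_1<t_1$, a contradiction. This symmetry trick replaces the ``minimal accessible letter'' analysis you sketched and avoids the care you flagged about letters comparable to some but not others.
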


  For existence, start with $u$ and swap successively every pair
  $s_{i}>s_{i+1}$. This process must stop since the number of
inversions $\{(i,j)\mid i<j\text{ and }s_i>s_j\}$ is
  decreased by $1$ at every step. The resulting $v$ is in normal
form.

  For uniqueness, consider two equivalent words in normal
form $u=s_1\cdots s_n$ and $v=t_1\cdots t_n$ . Let $\pi$ be some
permutation transforming $u$ into $v$. Suppose for a contradiction
that $\pi(1)=k\neq 1$. Then $t_k=s_1$ commutes with  $t_i$ for
$i<k$. By hypothesis, we have $t_{k-1}<t_k$. Note that there is no
$i<k$ with $t_i<t_k$ and $t_k<t_{i-1}$. Hence, for all $i<k$, we have
that $t_i<t_k$ and thus $t_1<t_k$, that is, $t_1<s_1$. By means of the
permutation $\pi\inv$, we conclude that $s_1<t_1$, which yields a
contradiction. Thus $\pi(1)=1$ and hence $s_2\cdots  s_n$ is
equivalent to $t_2\cdots t_n$. Induction on $n$
yields the desired result.
\end{proof}

\noindent It is an easy exercise to show that, for $S$ and $S^\ast$ as
before, we have  \[r\cdot t_2\cdots t_n\approx
r\cdot s_2\cdots s_n\;\Rightarrow\;t_2\cdots t_n\approx
s_2\cdots s_n.\] Therefore, we obtain the following result.
\begin{remark}
  $u\cdot v\approx u\cdot v'$ implies $v\approx v'$.
\end{remark}

Given two reduced words $u=s_1\cdots s_m$ and $v=t_1\cdots
t_n$, their product $u\cdot v$ is not reduced \iff one of the two
following cases occurs:
\begin{itemize}
\item There are $i\leq m$ and $j\leq n$ such that $s_i$ commutes with
  $s_{i+1}\cdots s_m$ and with $t_1\cdots t_{j-1}$ and it is
contained in $t_j$.
\item There are $j\leq n$ and $i\leq m$ such that $t_j$ commutes with
  $t_1\cdots t_{j-1}$ and with $s_{i+1}\cdots s_m$ and it is
contained in $s_i$.
\end{itemize}
Based on the previous observation, we introduce the following
definition.
\begin{definition}\label{D:Endstueck}
Given two words $u=s_1\cdots s_m$ and $v=t_1\cdots t_n$
words, we say that:
  \begin{enumerate}
  \item $s_i$ belongs to the \emph{final segment} of $u$ if $s_i$
    commutes with $s_{i+1}\cdots s_m$.
  \item The letter $s$ is (properly) \emph{left-absorbed} by $v$ if
 it commutes with with $t_1\cdots t_{j-1}$ and is
a (proper) subset of $t_j$ for some $j\leq n$. A word is (properly)
left-absorbed by
    $v$ if all its letters are (properly) left absorbed by $v$.
  \item $v$ \emph{bites $u$ from the right} if $v$ left-absorbs some
    element in the final segment of $u$.
  \end{enumerate}
  The concepts \emph{initial segment}, \emph{right-absorbed} and
  \emph{left-biting} are defined likewise.
\end{definition}
Clearly, these notions depend only on the equivalence
class of $u$ and $v$. Thus, the following lemma follows.
\begin{lemma}
  Given two reduced words $u$ and $v$, the product $u\cdot v$ is
  reduced \iff none of them bites the other one (in the corresponding
  directions).
\end{lemma}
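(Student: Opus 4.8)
The plan is to derive the lemma as a purely verbal translation, into the vocabulary of Definition~\ref{D:Endstueck}, of the characterisation of non-reduced products recorded immediately before that definition; the only thing really needing an argument is that characterisation itself, which I would establish first.

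For that, write $u\cdot v=s_1\cdots s_m t_1\cdots t_n$ and recall that, by the definition of \emph{reduced}, a word fails to be reduced precisely when it has two positions $p\neq q$ whose letters, say $w_p$ and $w_q$, satisfy $w_p\subseteq w_q$ with $w_p$ commuting with every letter lying strictly between positions $p$ and $q$. Suppose $u\cdot v$ admits such a pair. If both positions lie in the block $s_1,\dots,s_m$, the very same pair witnesses that $u$ is not reduced, contrary to hypothesis; symmetrically, both cannot lie in the block $t_1,\dots,t_n$. Hence the pair straddles the boundary, leaving exactly two possibilities: either the smaller letter is some $s_i$ and the larger some $t_j$, whence $s_i\subseteq t_j$ and $s_i$ commutes with $s_{i+1}\cdots s_m$ and with $t_1\cdots t_{j-1}$; or the smaller letter is some $t_j$ and the larger some $s_i$, whence $t_j\subseteq s_i$ and $t_j$ commutes with $t_1\cdots t_{j-1}$ and with $s_{i+1}\cdots s_m$. (The extreme values $i=m$ and $j=1$ are harmless under the empty-word conventions.) Conversely, each configuration visibly renders $u\cdot v$ non-reduced, so this is a genuine characterisation.

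It then remains only to translate. In the first configuration, ``$s_i$ commutes with $s_{i+1}\cdots s_m$'' says exactly that $s_i$ lies in the final segment of $u$, while ``$s_i$ commutes with $t_1\cdots t_{j-1}$ and $s_i\subseteq t_j$ for some $j\leq n$'' says exactly that $s_i$ is left-absorbed by $v$; together these say that $v$ bites $u$ from the right. In the second configuration, ``$t_j$ commutes with $t_1\cdots t_{j-1}$'' says $t_j$ lies in the initial segment of $v$, and ``$t_j$ commutes with $s_{i+1}\cdots s_m$ and $t_j\subseteq s_i$'' says $t_j$ is right-absorbed by $u$; together, $u$ bites $v$ from the left. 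Hence $u\cdot v$ is non-reduced iff $v$ bites $u$ from the right or $u$ bites $v$ from the left, which is logically equivalent to the statement of the lemma. I expect no real obstacle: the content is bookkeeping, the one point to watch being the orientation conventions, so that the first configuration pairs with ``bites from the right'' and the second with ``bites from the left'' and not the other way round; and the only substantive step, namely that a witnessing pair cannot sit inside a single block, is precisely where the hypothesis that $u$ and $v$ are each reduced is consumed. (Note also that admitting the improper inclusion $s_i=t_j$ is what forces ``left-absorbed'' rather than ``properly left-absorbed'' in the definition of biting.)
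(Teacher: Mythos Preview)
Your proposal is correct and follows exactly the route the paper takes: the paper records the two-case characterisation of when $u\cdot v$ fails to be reduced immediately before Definition~\ref{D:Endstueck}, introduces the biting vocabulary, and then states the lemma as an immediate consequence without further proof. You have simply spelled out the one step the paper leaves implicit, namely why a witnessing pair for non-reducedness must straddle the boundary between the two blocks (this is precisely where reducedness of $u$ and of $v$ is used), and then carried out the dictionary translation; nothing more is needed.
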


If both $u$ and $v$ are reduced and $u$ is absorbed by $v$, then
$u\cdot v$ reduces to $v$. Corollary
\ref{C:absorb_cox} will show that the converse also holds.

The following observations will be often used throughout this
article.
\begin{lemma}[Absorption Lemma]\label{L:schlucktheorie}
  Let $v$ be a (possibly non-reduced) word.
  \begin{enumerate}
  \item\label{L:schlucktheorie:1} If a letter $s$ is left-absorbed by
    $v$, then there is a unique letter in $v$ witnessing it.
  \item\label{L:schlucktheorie:2} If two non-commuting letters are
    absorbed by $v$, then they are absorbed by the same letter in $v$.
  \item\label{L:schlucktheorie:3} Suppose $v=v_1\cdot
    v_2$ and let $u$ be a word left-absorbed by $v$ but not bitten
from the right by $v_1$, then $u$ and $v_1$ commute and $u$ is
left  absorbed by $v_2$.
  \end{enumerate}
\end{lemma}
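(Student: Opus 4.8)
The plan is to isolate one elementary fact about intervals and deduce all three parts from it. Call it the \emph{transfer property}: if a letter $s$ is a subletter of a letter $t$ and a third letter $s'$ commutes with $t$, then $s'$ commutes with $s$. This is immediate from the combinatorial description of commuting letters, since $s'$ commuting with $t$ means $r_{s'}\le l_t$ or $r_t\le l_{s'}$, while $s$ being a subletter of $t$ means $l_t\le l_s$ and $r_s\le r_t$; in the first case $r_{s'}\le l_t\le l_s$ and in the second $r_s\le r_t\le l_{s'}$, so in either case $s$ and $s'$ commute. I shall also repeatedly use the hypothesis that a letter never commutes with a letter containing it, in particular not with itself.

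For (\ref{L:schlucktheorie:1}), suppose $s$ is left-absorbed by $v=t_1\cdots t_n$ and that both $t_j$ and $t_k$ with $j<k$ witness this. The second witness gives that $s$ commutes with $t_1\cdots t_{k-1}$, hence with $t_j$; but the first witness gives $s\subseteq t_j$, so $s$ cannot commute with $t_j$ --- a contradiction. For (\ref{L:schlucktheorie:2}) (stated for left-absorption; the right-absorbed case is dual under the inversion antiautomorphism), let $s$ and $s'$ be non-commuting letters left-absorbed by $v$, with witnesses $t_j$ and $t_k$ respectively, and suppose for contradiction that $j<k$. Then $s'$ commutes with $t_1\cdots t_{k-1}$, in particular with $t_j$, and $s\subseteq t_j$; the transfer property then forces $s'$ and $s$ to commute, a contradiction. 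Hence $j=k$.

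Part (\ref{L:schlucktheorie:3}) carries the real content. Write $v_1=t_1\cdots t_p$, so $v_2=t_{p+1}\cdots t_n$, and $u=s_1\cdots s_m$. By (\ref{L:schlucktheorie:1}) each $s_i$ has a well-defined witness $t_{j_i}$ in $v$. The crux is to show $j_i>p$ for every $i$, i.e. that no witness lies in $v_1$. Suppose otherwise and choose the \emph{largest} index $i$ with $t_{j_i}$ in $v_1$. For each $k>i$ the witness $t_{j_k}$ lies in $v_2$, so $s_k$ commutes with all of $v_1$, in particular with $t_{j_i}$; since $s_i\subseteq t_{j_i}$, the transfer property gives that $s_k$ commutes with $s_i$. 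Thus $s_i$ commutes with $s_{i+1}\cdots s_m$, so $s_i$ lies in the final segment of $u$; moreover the witness $t_{j_i}$ and all of $t_1\cdots t_{j_i-1}$ lie in $v_1$, so $s_i$ is left-absorbed by $v_1$. Then $v_1$ bites $u$ from the right, against the hypothesis. Hence every $j_i>p$: each $s_i$ then commutes with the whole of $v_1$, so $u$ and $v_1$ commute, and $s_i$ commutes with the prefix $t_{p+1}\cdots t_{j_i-1}$ of $v_2$ while $s_i\subseteq t_{j_i}$, so $s_i$ is left-absorbed by $v_2$; as this holds for every $i$, the word $u$ is left-absorbed by $v_2$.

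The only nonroutine point --- and the step I would expect to need care --- is the claim inside (\ref{L:schlucktheorie:3}) that a witness lying in $v_1$ forces the corresponding letter of $u$ into the final segment of $u$; choosing the largest offending index and playing the transfer property against every later letter is the trick that makes this work, and it is exactly why the hypothesis is phrased via biting rather than via plain non-absorption.
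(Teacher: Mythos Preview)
Your proof is correct. For parts (\ref{L:schlucktheorie:1}) and (\ref{L:schlucktheorie:2}) it is essentially identical to the paper's argument: the paper also shows that if $r\subset t_i$ and $s\subset t_j$ are both left-absorbed with $i\leq j$, then either $i=j$ or $s$ commutes with $t_i$ and hence with $r$; your ``transfer property'' is exactly this observation made explicit.

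For part (\ref{L:schlucktheorie:3}) the approaches genuinely differ. The paper argues by induction on the length of $u$: it peels off the \emph{first} letter $s_1$, applies the induction hypothesis to $u'=s_2\cdots s_m$ (whose final segment is contained in that of $u$, so $v_1$ still does not bite it), and then observes that if $s_1$ were absorbed by $v_1$ it would commute with the already-commuting $u'$ and hence lie in the final segment of $u$, yielding the forbidden bite. You instead argue directly: pick the \emph{largest} index $i$ whose witness lies in $v_1$, and use the transfer property against each later letter (whose witness lies in $v_2$ by maximality) to force $s_i$ into the final segment. Both arguments hinge on the same mechanism --- a letter absorbed by $v_1$ that commutes with everything to its right in $u$ produces a bite --- but your maximal-index trick isolates the contradiction in one shot without an induction, while the paper's version is slightly shorter to write down once the inductive framework is set up. Either is perfectly adequate here.
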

\begin{proof}
  Assume $v=t_1\cdots t_n$. Let $r\subset t_i$ commute with $t_1\cdots
  t_{i-1}$ and $s\subset t_j$ commute with $t_1\cdots t_{j-1}$. Assume
  $i\leq j$. Then, either $i=j$ or $s$ commutes with $t_i$, which
  implies that $s$ commutes with $r$. This yields both
  (\ref{L:schlucktheorie:1}) and (\ref{L:schlucktheorie:2}).

  For (\ref{L:schlucktheorie:3}), we apply induction on the
length $m$ of $u=s_1\cdots s_m$. If $m=0$, then there is nothing to
prove. Otherwise, the subword $u'=s_2\cdots s_m$ is not bitten by
$v_1$ by assumption. Induction gives that $u'$ commutes
with $v_1$ and is absorbed by $v_2$. The letter $s_1$
  cannot be absorbed by $v_1$, for otherwise $s_1$
  would also commute with $u'$ and thus it would belong to the final
segment of $u$. The word $u$ would then be bitten
by $v_1$. Since $s_1$ is  absorbed by $v$ but not by $v_1$, it must
commute with $v_1$ and hence it is absorbed by $v_2$ as well.
\end{proof}

Based on the the previous result, we introduce the following notions.
\begin{definition}\label{D:Schluckindizes}
  The \emph{left stabiliser} $\sL(v)$ of a word $v=t_1\cdots t_n$ is the
  union of the sets
  \[\sL^j(v)=  t_j \cap \Cent(t_1\ldots t_{j-1}).\] The
  \emph{right stabiliser} $\sr(v)$ is defined likewise or,
  alternatively, as $\sL(v\inv)$
\end{definition}

By Lemma \ref{L:schlucktheorie}(\ref{L:schlucktheorie:2}), the sets
$\sL^j(v)$
are either empty or intervals commuting with each other. Equivalent
words have same stabilisers. In fact, if $u\to v$ then
$\sL(u)\subset \sL(v)$.

\begin{lemma}\label{L:v_in_sl}
 The letter $s$ is absorbed by $v$ \iff $s\subset \sL(v)$.
\end{lemma}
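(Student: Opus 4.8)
The plan is to prove both implications separately, using the Absorption Lemma (Lemma \ref{L:schlucktheorie}) as the main tool. For the easy direction, suppose $s\subset\sL(v)$. By Lemma \ref{L:schlucktheorie}(\ref{L:schlucktheorie:2}) the nonempty sets $\sL^j(v)$ are pairwise commuting intervals, so their union $\sL(v)$, while not itself a letter in general, has the property that any subinterval $s$ of it is contained in some single $\sL^j(v)$: indeed, since $s$ is an interval and the $\sL^j(v)$ are pairwise commuting (hence pairwise disjoint or nested in a trivial way) intervals, $s$ cannot straddle two of them. Thus $s\subset \sL^j(v)= t_j\cap\Cent(t_1\cdots t_{j-1})$ for some $j$, which says precisely that $s\subset t_j$ and $s$ commutes with $t_1\cdots t_{j-1}$; that is, $s$ is left-absorbed by $v$.

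For the converse, suppose $s$ is left-absorbed by $v=t_1\cdots t_n$. By definition there is some $j$ with $s\subset t_j$ and $s$ commuting with $t_1\cdots t_{j-1}$. Then $s\subset t_j\cap\Cent(t_1\cdots t_{j-1})=\sL^j(v)\subset\sL(v)$, as desired. So in fact this direction is immediate from the definitions, and the only point requiring a small argument is the claim used above that a subinterval of $\sL(v)$ lands inside a single $\sL^j(v)$.

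The one step that needs care — and which I expect to be the main (minor) obstacle — is justifying that an interval contained in $\bigcup_j \sL^j(v)$ is contained in one of the $\sL^j(v)$. This rests on Lemma \ref{L:schlucktheorie}(\ref{L:schlucktheorie:2}): two non-commuting letters absorbed by $v$ are absorbed by the \emph{same} letter of $v$. Apply this as follows: if an interval $s\subset\sL(v)$ met two distinct $\sL^j(v)$ and $\sL^{j'}(v)$, pick subintervals (even singletons) $r\subset s\cap\sL^j(v)$ and $r'\subset s\cap\sL^{j'}(v)$; since $r,r'\subset s$ and $s$ is an interval, $r$ and $r'$ do not commute (distinct subletters of a common interval never commute, by the convention recorded just before Definition \ref{D:Cox}), so by part (\ref{L:schlucktheorie:2}) they are absorbed by the same $t_k$, forcing $j=j'$ by the uniqueness in part (\ref{L:schlucktheorie:1}). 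Hence $s$ lies in a single $\sL^j(v)$, completing the argument. Note this lemma makes no reducedness assumption on $v$, matching the statement.
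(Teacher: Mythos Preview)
Your overall approach is correct and matches the paper's implicit reasoning (the paper gives no explicit proof, relying on the remark just before the lemma that the $\sL^j(v)$ are pairwise commuting intervals). However, one step in your argument for the direction $s\subset\sL(v)\Rightarrow s$ absorbed is wrong as written: you assert that ``distinct subletters of a common interval never commute,'' citing the convention before Definition~\ref{D:Cox}. That convention only says that a letter does not commute with itself or with any proper subletter; it does \emph{not} say that two subletters of a common letter cannot commute, and in fact they can (for instance $\{0\}$ and $\{3\}$ are both subletters of $[0,3]$ and commute). So your arbitrary choices of $r$ and $r'$ need not satisfy the hypothesis of Lemma~\ref{L:schlucktheorie}(\ref{L:schlucktheorie:2}).

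The fix is immediate. Since the nonempty $\sL^j(v)$ are pairwise commuting intervals, they are pairwise at distance at least $2$; hence they are precisely the connected components of $\sL(v)\subset\{0,\ldots,N\}$. Any interval $s\subset\sL(v)$, being connected, must lie in a single component, i.e., in some $\sL^j(v)$, which is what you need. (If you prefer to keep your route through Lemma~\ref{L:schlucktheorie}, you can instead choose $r$ and $r'$ to be \emph{adjacent} singletons in $s$ lying in different $\sL^j(v)$'s; adjacency guarantees they do not commute, and then your argument goes through.)
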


Set
\[|s_1 \dotsb s_m| = s_1\cup\dotsb\cup s_m.\]
Then $u$ is absorbed by $v$ \iff $|u|\subset \sL(v)$.  Furthermore,
the word $v$ bites $u$ from the right \iff some element in the final
segment of $u$ is contained in $\sL(v)$.

\begin{lemma}\label{L:schluckteil}
  Given two words $u$ and $v$, there is a
  unique decomposition $u=u_1\cdot u_2$ (up to commutation) such that:
  \begin{itemize}
  \item $u_2$ is left-absorbed by $v$.
  \item $u_1$ is not bitten from the right by $v$.
  \end{itemize}
  The decomposition of $u$ depends only on the set $\sL(v)$.
\end{lemma}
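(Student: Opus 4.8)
The statement asserts existence and uniqueness (up to commutation) of a decomposition $u = u_1 \cdot u_2$ where $u_2$ is left-absorbed by $v$ and $u_1$ is not bitten from the right by $v$, and moreover that this decomposition depends only on $\sL(v)$. By Lemma~\ref{L:v_in_sl} and the remarks following it, being left-absorbed by $v$ is equivalent to $|u_2| \subset \sL(v)$, and $v$ bites $u_1$ from the right exactly when some letter in the final segment of $u_1$ is contained in $\sL(v)$; so the whole statement is really about the fixed interval-union $L := \sL(v)$, which is why the last sentence will come for free once we prove existence and uniqueness phrased in terms of $L$. The plan is therefore to forget $v$ and work only with $L$.

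\emph{Existence.} I would build $u_2$ greedily from the right: repeatedly look for a letter in the final segment of the current word that is contained in $L$, and if one exists, commute it to the far right and split it off into $u_2$. More precisely, define $u_2$ to be a maximal (reduced, or just as-a-word) suffix, up to commutation, all of whose letters lie in $L$ and each of which belongs to the final segment of $u$ at the moment it is peeled off; by Lemma~\ref{L:schlucktheorie}(\ref{L:schlucktheorie:1})--(\ref{L:schlucktheorie:2}) the letters accumulated this way pairwise commute with everything appropriate, so the process is coherent. When it stops, write $u = u_1 \cdot u_2$. By construction $|u_2| \subset L$, so $u_2$ is left-absorbed by $v$; and the process stopping means no letter in the final segment of $u_1$ lies in $L$, i.e. $u_1$ is not bitten from the right by $v$. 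Some care is needed to argue the process terminates and that the order in which commuting absorbable letters are extracted does not matter — this is exactly the content of Lemma~\ref{L:schlucktheorie}, applied with $v_1 = u_1$ and $v_2 = u_2$ in part (\ref{L:schlucktheorie:3}).

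\emph{Uniqueness.} Suppose $u = u_1 \cdot u_2 = u_1' \cdot u_2'$ are two such decompositions. Since $u_2'$ is left-absorbed by $v$, Lemma~\ref{L:schlucktheorie}(\ref{L:schlucktheorie:3}) applied to the decomposition $u = u_1 \cdot u_2$ — using that $u_1$ is not bitten from the right by $v$ — forces the part of $u_2'$ lying inside $u_1$ to commute with $u_1$ and be pushed into $u_2$; symmetrically for $u_2$ inside $u_1'$. Comparing, one gets that $u_2$ and $u_2'$ agree up to commutation, hence (by the cancellation remark "$u\cdot v \approx u \cdot v' \Rightarrow v \approx v'$" established just before Definition~\ref{D:Endstueck}) so do $u_1$ and $u_1'$. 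I expect the main obstacle to be bookkeeping: making the greedy extraction in the existence part precise enough that the inductive application of Lemma~\ref{L:schlucktheorie}(\ref{L:schlucktheorie:3}) is legitimate, and carefully tracking "up to commutation" throughout, since $u_1$ and $u_2$ are only well-defined as equivalence classes. Once that is set up, the dependence-only-on-$\sL(v)$ clause is immediate, as noted.
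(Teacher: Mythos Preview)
Your approach is correct and is essentially the paper's proof: the paper also proceeds by induction on the length of $u$, peeling off from the final segment a letter absorbed by $v$ and recursing, which is exactly your greedy right-extraction phrased inductively. One small comment: your uniqueness argument invokes Lemma~\ref{L:schlucktheorie}(\ref{L:schlucktheorie:3}) in a way that does not quite match its hypotheses (there the decomposed word is the \emph{absorbing} one, not $u$), but the same induction you use for existence handles uniqueness cleanly---any letter in the final segment of $u$ that is absorbed by $v$ must lie in $u_2$ in \emph{any} valid decomposition, so one strips it off and recurses---which is what the paper's ``similarly'' means.
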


\begin{proof}
  We proceed by induction on the length of $u$. If $u$ is not bitten
by $v$, we set $u_1=u$ and $u_2=1$. Otherwise, up to permutation,
we have  $u=u'\cdot s$, where $s$ is absorbed by $v$. Decompose
$u'$ as $u'_1\cdot u'_2$ and set  $u_1=u'_1$ and $u_2=u'_2\cdot
s$.

Uniqueness is proved in a similar fashion.
\end{proof}

We can now  describe the general form of the product of two
reduced words in $\cox$.

\begin{theorem}[Decomposition Lemma]\label{T:einfach_amador}
  Given two reduced words $u$ and $v$, there are unique
  decompositions (up to permutation):
  \begin{align*}
    u&=u_1\cdot u'&
    v'\cdot v_1&=v,
  \end{align*}
  such that:
  \renewcommand{\theenumi}{\alph{enumi}}
  \begin{enumerate}
  \item $u'$ is left-absorbed by $v_1$,
  \item $v'$ is properly right-absorbed by $u_1$,
  \item $u'$ and $v'$ commute,
  \item $u_1\cdot v_1$ is reduced.
  \end{enumerate}
\end{theorem}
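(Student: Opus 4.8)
The plan is to iterate the single-letter absorption analysis (Lemma~\ref{L:schluckteil} together with the Absorption Lemma~\ref{L:schlucktheorie}) and argue by induction, using the reduct of $u\cdot v$ (Proposition~\ref{P:nonsplitred}) to control when the product fails to be reduced. First I would decompose $u=u_1\cdot u'$ with respect to $\sL(v)$ via Lemma~\ref{L:schluckteil}, so that $u'$ is left-absorbed by $v$ and $u_1$ is not bitten from the right by $v$; dually, decompose $v=v'\cdot v_1$ with respect to $\sr(u)$, so that $v'$ is right-absorbed by $u$ and $v_1$ is not bitten from the left by $u$. The content of the theorem is then that these two decompositions are compatible: the $u'$ produced from the first and the $v'$ produced from the second commute, $v'$ is already absorbed by $u_1$ (not just by $u$), $u'$ is already absorbed by $v_1$, and nothing is left over, i.e. $u_1\cdot v_1$ is reduced.

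Next I would prove these compatibilities. For (c), note that every letter of $u'$ lies in $\sL(v)$ and every letter of $v'$ lies in $\sr(u)$; a letter of $u'$ either commutes with or is comparable to each letter of $v'$, and I would rule out containment using that $u$ and $v$ are reduced (a containment would let a cancellation propagate, contradicting reducedness of $u$ or $v$ after a permutation), leaving only commutation. Once $u'$ and $v'$ commute, I can slide $v'$ past $u'$: since $v'$ is right-absorbed by $u=u_1\cdot u'$ but commutes with $u'$, part~(\ref{L:schlucktheorie:3}) of the Absorption Lemma (applied to the inverted words, using the dual of the biting notion) forces $v'$ to be right-absorbed by $u_1$ alone, giving (b); symmetrically $u'$ is left-absorbed by $v_1$, giving (a). Properness in (b) comes from the proper/improper bookkeeping already built into Definition~\ref{D:Endstueck}: if some letter of $v'$ were equal to (not properly contained in) the absorbing letter of $u_1$, that letter would have been swallowed into $u'$ rather than $u_1$.

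For (d) I would argue that $u_1\cdot v_1$ is reduced by contradiction: if not, then by the characterisation of non-reduced products preceding Definition~\ref{D:Endstueck}, either some letter in the final segment of $u_1$ is left-absorbed by $v_1$, or some letter in the initial segment of $v_1$ is right-absorbed by $u_1$. The first case says $v_1$ bites $u_1$ from the right; but $v_1$ is a subword of $v$, so $v$ would bite $u_1$ from the right, contradicting the defining property of the decomposition $u=u_1\cdot u'$. The second case is symmetric against $v=v'\cdot v_1$. Uniqueness up to permutation I would get from the uniqueness clauses of Lemma~\ref{L:schluckteil} (for $u=u_1\cdot u'$ depending only on $\sL(v)=\sL(v_1)$, the last equality since $v'\cdot v_1\approx v$ and absorbed letters of $v'$ do not enlarge $\sL$) together with Remark after Corollary~\ref{C:cox_reduced} ($u\cdot v\approx u\cdot v'\Rightarrow v\approx v'$), used to cancel the reduced part $u_1\cdot v_1$ and isolate $u'$, $v'$.

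The main obstacle I expect is the interlocking nature of the two decompositions: the decomposition of $u$ is taken relative to $\sL(v)$, but once we strip $v$ down to $v_1$ we need $\sL(v)=\sL(v_1)$ so that the same decomposition of $u$ still works — and simultaneously we need $v'$ to have been computed relative to $u$ but to land inside $u_1$. Making this circle close cleanly, rather than by an ad hoc ordering of the two strippings, is the delicate point; I would handle it by first proving the commutation~(c) directly from reducedness of $u$ and $v$ (which is symmetric and does not privilege either side), and only then deriving (a), (b), (d) and the $\sL(v)=\sL(v_1)$ identity as consequences, so that no artificial order of operations is needed.
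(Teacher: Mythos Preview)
Your symmetric construction is the wrong starting point, and this is a genuine gap rather than a detail to be filled in. You propose to decompose $u=u_1\cdot u'$ relative to $\sL(v)$ and, \emph{independently}, $v=v'\cdot v_1$ relative to $\sr(u)$, then argue that these two pieces interlock. But take the simplest possible case $u=v=s$ for a single letter $s$: then $\sL(v)=\sr(u)=s$, so your recipe gives $u'=s$, $u_1=1$, $v'=s$, $v_1=1$. Now (a) fails ($u'=s$ is not left-absorbed by $v_1=1$), (b) fails ($v'=s$ is not properly right-absorbed by $u_1=1$), and (c) fails ($s$ does not commute with itself). Your own proposed resolution --- prove (c) first, symmetrically, from reducedness of $u$ and $v$ --- cannot succeed, because in this example $u$ and $v$ are reduced yet your $u'$ and $v'$ do not commute. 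The sentence ``a containment would let a cancellation propagate, contradicting reducedness of $u$ or $v$'' is exactly where the argument breaks: the containment $s\subset s$ propagates nothing inside a one-letter word.

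The paper's construction is deliberately asymmetric: one first strips $u$ relative to $\sL(v)$ to get $u_1$, and \emph{then} strips $v$ relative to $\sr(u_1)$ (not $\sr(u)$). In the example $u=v=s$ this gives $u_1=1$, hence $v'=1$ and $v_1=s$, and all four conditions hold. This ordering is not an artefact: the asymmetry between ``left-absorbed'' in (a) and ``\emph{properly} right-absorbed'' in (b) is already present in the statement, and any construction respecting it must break the symmetry. Once the second decomposition is taken relative to $u_1$, the proof of (c) goes through exactly because $v'$ is absorbed by $u_1$: if a letter $s$ of $u'$ failed to commute with $v'$, it would be absorbed by $v'$ and hence by $u_1$, contradicting reducedness of $u_1\cdot u'$. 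Your step (d) also leans on the wrong hypothesis: you need ``$v_1$ is not bitten from the left by $u_1$'', which is immediate from the asymmetric construction but not from yours (where you only know $v_1$ is not bitten by the larger word $u$). In short, replace ``decompose $v$ relative to $\sr(u)$'' by ``decompose $v$ relative to $\sr(u_1)$'' and most of your outline becomes correct; as written, the circle you worry about in your final paragraph does not close.
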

\noindent It follows that $u\cdot v \to u_1\cdot v_1$. We call such a
decomposition \emph{fine}.
\begin{proof}
  We apply Lemma \ref{L:schluckteil} to $u$ and $v$
  to obtain a decomposition
  \begin{equation*}
    u=u_1\cdot u',
  \end{equation*}
  such that $u'$ is left-absorbed by $v$ and
  $u_1$ is not bitten by $v$ from the right.
  The same (in the other direction) with $u_1$ and $v$ yields
\begin{equation*}
    v'\cdot v_1=v,
  \end{equation*}
  where $v'$ is right-absorbed by $u_1$ and $v_1$ is not bitten
  from the left by $u_1$.

  First, we show $(c)$, that is, the words $u'$ and $v'$
commute. If
not, let $s$ the first
  element of $u'$ which does not commute with $v'$.  Since $s$ is
  left-absorbed by $v'\cdot v_1$, it must be left-absorbed by $v'$.
  As $u_1$ right-absorbs $v'$, it also right-absorbs $s$, which
  contradicts that $u_1\cdot u'$ is reduced.
  Lemma \ref{L:schlucktheorie}(\ref{L:schlucktheorie:3}) gives that
$u'$ is absorbed by
  $v_1$, showing $(a)$.

  Let us now show $(d)$: the product $u_1\cdot v_1$ is reduced.
Otherwise,  as $v_1$ is not bitten from the left by $u_1$,
it bites $u_1$ from the right, i.e.\ it left-absorbs a letter $s$ from
the final segment of $u_1$. The Absorption Lemma
\ref{L:schlucktheorie}, applied to $u_1=u_1^1\cdot s$ and $v'$, which
is right absorbed by $u_1$, gives (possibly after permutation) a
decomposition $v'=x\cdot y$, where $|x|\subset s$ and $y$ commutes
with $s$. There are two cases:
  \begin{enumerate}
  \item The word $x=1$. Then $s$ commutes with $v'$ and is absorbed
by $v_1$. This contradicts that $u_1$ is not bitten by $v_1$ from the
right.
  \item The word $x$ is not trivial. As it is absorbed by
$s$ and $s$ is right-absorbed by $v_1$, we have that $x$ is
right-absorbed by $v_1$.  This contradicts that $v'\cdot v_1$ is
reduced.
  \end{enumerate}

  The only  point left to prove is that $v'$ is properly
right-absorbed by $u_1$. Otherwise, there is a letter $t$ in
$v'$ which is absorbed but not properly absorbed by $u_1$. Then $t$
occurs in the final segment of $u_1$ and $v'= t\cdot y$ up to
commutation. In particular, the word $u_1$ is bitten from the right by
$v'$ and thus by $v$, which contradicts our choice of $u_1$.

  In order to show uniqueness, assume we are given another fine
decomposition:
  \begin{align*}
    u&=u_1\cdot u'&
    v'\cdot v_1&=v
  \end{align*}
We need only show the following four facts:
\begin{enumerate}
  \item The word $u'$ is left-absorbed by $v$: Since $u'$ commutes
with $v'$ and is left-absorbed by $v_1$, then it is left-absorbed
by $v'\cdot v_1$ as well.
  \item The word $u_1$ is not bitten by $v$ from the right: Suppose
not and take   a letter $s$ in the final segment of $u_1$ which is
left-absorbed by $v$. Since $u_1\cdot v_1$ is reduced, the letter $s$
must be left-absorbed by $v'$. Let $t$ in $v'$ containing $s$.
However, the word $t$ is right-absorbed by $u_1$. As $u_1$ is
reduced and $s$ is in the final segment of $u_1$, the only
possibility is that $s=t$. But then $t$ is not properly left-absorbed
by $u_1$, which is a contradiction.
  \item $v'$ is right-absorbed by $u_1$: By definition.
  \item $v_1$ is not bitten from the left by $u_1$: This
clearly follows from the fact that $u_1\cdot  v_1$ is reduced.
  \end{enumerate}
\end{proof}

\begin{cor}\label{C:absorb_cox}
  Let $u$ and $v$ be reduced words. Then $v$ left-absorbs $u$ if and
  only if $uv=v$ in $\cox$.
\end{cor}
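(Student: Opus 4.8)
The plan is to establish the two implications separately; the forward one is essentially already recorded in Section~\ref{S:Woerte}. If $v$ left-absorbs $u$, write $u=s_1\cdots s_m$ and argue by induction on $m$: the rightmost letter $s_m$, being left-absorbed by $v=t_1\cdots t_n$, commutes with $t_1\cdots t_{j-1}$ and lies in $t_j$, so $s_m\cdot v$ reduces to $v$ by $j-1$ commutations followed by one cancellation; since $s_1\cdots s_{m-1}$ is still left-absorbed by $v$, induction gives $u\cdot v\to v$, hence $uv=v$ in $\cox$. (This is exactly the observation recorded in the text just before the Absorption Lemma~\ref{L:schlucktheorie}.)

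For the converse, assume $uv=v$ in $\cox$. The first step is to apply the Decomposition Lemma~\ref{T:einfach_amador} to obtain a fine decomposition
\[
u=u_1\cdot u',\qquad v=v'\cdot v_1,
\]
with $u'$ left-absorbed by $v_1$, with $v'$ \emph{properly} right-absorbed by $u_1$, with $u'$ and $v'$ commuting, with $u_1\cdot v_1$ reduced, and with $u\cdot v\to u_1\cdot v_1$. Then $u_1\cdot v_1$ and $v$ are two reduced words representing the same element of $\cox$, so by Corollary~\ref{C:cox_reduced} they are permutations of each other: $u_1\cdot v_1\approx v=v'\cdot v_1$. Next I would cancel $v_1$ on the right: applying the inversion antiautomorphism $w\mapsto w\inv$ (which reverses words, fixes every letter, and therefore preserves $\approx$) turns this into $v_1\inv u_1\inv\approx v_1\inv (v')\inv$, and the left-cancellation property of $\approx$ recorded earlier (if $u\cdot v\approx u\cdot v'$ then $v\approx v'$) yields $u_1\inv\approx (v')\inv$, i.e.\ $u_1\approx v'$.

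The crux is to deduce from this that $v'$ is the empty word. Since $u_1\approx v'$, the two words have exactly the same letters. If $v'\neq 1$, pick a letter $s$ occurring in $v'$ that is maximal for inclusion among the finitely many letters of $v'$. As $v'$ is \emph{properly} right-absorbed by $u_1$, the letter $s$ is a proper subset of some letter $t$ of $u_1$; but $t$ is then again one of the letters of $v'$, contradicting the maximality of $s$ in the finite inclusion poset of letters. Hence $v'=1$, and then $u_1\approx 1$ forces $u_1=1$. Consequently $v=v_1$ and $u=u'$, and clause (a) of the Decomposition Lemma says precisely that $u'$ is left-absorbed by $v_1=v$, which is the desired conclusion.

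The step I expect to be the main obstacle is not the combinatorics of absorption itself but two bookkeeping points: arranging the right-cancellation cleanly (the paper only states left-cancellation for $\approx$, so one must route it through the inversion antiautomorphism), and recognising that the word ``properly'' in clause (b) of the Decomposition Lemma is exactly what makes the finiteness of the letter poset force $v'=1$ — without it the argument collapses. Everything else is routine manipulation with the terminology of Section~\ref{S:Woerte}.
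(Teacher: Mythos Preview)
Your proof is correct and follows essentially the same route as the paper: apply the Decomposition Lemma, deduce $u_1\cdot v_1\approx v'\cdot v_1$, cancel $v_1$ to get $u_1\approx v'$, and conclude from proper self-absorption that $u_1$ (equivalently $v'$) is trivial. The only difference is that the paper is terser on exactly the two bookkeeping points you flag --- it asserts ``Thus $u_1=v'$'' without routing through inversion, and it dispatches proper self-absorption in a single clause (``Since $u_1$ must be properly right-absorb itself, this forces $u_1$ to be trivial'') where you spell out the maximal-letter argument; your added detail is sound and fills gaps the paper leaves to the reader.
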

\noindent Note that $uv=v$ in $\cox$ if and only if $u\cdot v\to v$.
\begin{proof}
 Clearly, if $v$ left-absorbs $u$, then $u\cdot v\to v$. For the
converse, apply the Decomposition Lemma \ref{T:einfach_amador} to
$u$ and $v$ to obtain:
  \begin{align*}
    u&=u_1\cdot u'&
    v'\cdot v_1&=v
  \end{align*}
  such that $u'$ is left-absorbed by $v_1$, the word $v'$ is properly
  right-absorbed by $u_1$, the words $u'$ and $v'$ commute and
$u_1\cdot v_1$ is reduced. By assumption,we have
$$u\cdot v\to u_1\cdot v_1\approx v=v'\cdot v_1.$$
 Thus $u_1=v'$. Since $u_1$ must be properly right-absorb itself,
this forces $u_1$ to be trivial. Hence $u=u'$ is
  left-absorbed by $v$.
\end{proof}
As in $\cox$ (or generally, in any semi-group), the identity
$uvx=uv$ holds if $vx=v$, we have the following.
\begin{cor}\label{C:sr_monoton}
  Let $u$ and $v$ be reduced words and $w$ the reduct of $u\cdot
  v$. Then $\sr(v)\subset\sr(w)$.
\end{cor}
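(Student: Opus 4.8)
The plan is to reduce the statement to a purely monoid-theoretic identity in $\cox$, exploiting the characterisation of the stabilisers via absorption together with Corollary \ref{C:absorb_cox}.

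First I would record the following translation: for a reduced word $z$ and a single letter $s$, one has $s\subset\sr(z)$ \iff $z\cdot s=z$ in $\cox$. Indeed, by definition $\sr(z)=\sL(z\inv)$, so Lemma \ref{L:v_in_sl} shows that $s\subset\sr(z)$ is equivalent to $s$ being (left-)absorbed by $z\inv$. Since $z\inv$ is again reduced and $s$ is a reduced word, Corollary \ref{C:absorb_cox} turns this into the equation $s\cdot z\inv=z\inv$ in $\cox$; applying the inversion antiautomorphism, and recalling that a single letter is its own inverse as a word, this becomes exactly $z\cdot s=z$.

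Next, let $u$, $v$ be reduced words and let $w$ be the reduct of $u\cdot v$, so that $w=u\cdot v$ in $\cox$ by Corollary \ref{C:cox_reduced}, and $w$ is reduced. Pick any index $i\in\sr(v)$ and consider the singleton letter $t=\{i\}$, which satisfies $t\subset\sr(v)$. By the translation above, $v\cdot t=v$ in $\cox$, hence we compute in $\cox$
$$w\cdot t=(u\cdot v)\cdot t=u\cdot(v\cdot t)=u\cdot v=w.$$
Applying the translation in the other direction to the reduced word $w$, we obtain $t\subset\sr(w)$, that is, $i\in\sr(w)$. As $i\in\sr(v)$ was arbitrary, this yields $\sr(v)\subset\sr(w)$.

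There is essentially no real obstacle here; the argument is formal once the dictionary of the previous section is in place. The only points requiring a little care are the bookkeeping with the inversion antiautomorphism when passing between the left- and right-handed forms of Corollary \ref{C:absorb_cox} (and of Lemma \ref{L:v_in_sl}), and the observation that it is enough to test membership in $\sr(w)$ on singleton letters, which is immediate since $\sr(w)\subset[0,N]$.
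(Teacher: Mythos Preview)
Your proof is correct and follows essentially the same route as the paper. The paper's argument is the one-line observation preceding the corollary: in any semigroup, $vx=v$ implies $(uv)x=uv$; together with Corollary~\ref{C:absorb_cox} (in its right-handed form, obtained via inversion exactly as you spell out) this gives $\sr(v)\subset\sr(w)$. Your reduction to singleton letters is a harmless extra step, and your explicit bookkeeping with the inversion antiautomorphism makes precise what the paper leaves implicit.
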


\begin{definition}\label{D:Wob}
  The \emph{wobbling} between two words is
  \[\wob(u,v)=\sr(u)\cap\sL(v).\]

\end{definition}

\begin{remark}\label{R:wob_echt}
  If $u\cdot v$ is reduced, then every $s\subset\wob(u,v)$ is properly
  right-absorbed by $u$ and properly left-absorbed by $v$.
\end{remark}
\begin{proof}
  If $s$ is not properly right-absorbed by $u$, then $s$ belongs to the
  final segment of $u$. Since $s$ is left-absorbed by $v$, the
product $u\cdot v$ would not be reduced.
\end{proof}
\begin{lemma}\label{L:Wobschluck}
  Assume that $v_1\cdot v_2$ and $u\cdot v_2$ are reduced. If $v_1$ is
  right absorbed by $u$, then \[\wob(v_1\cdot v_2, h)\subset
  \wob(u\cdot v_2,h).\]
\end{lemma}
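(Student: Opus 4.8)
The plan is to reduce the statement to the monotonicity of right stabilisers, Corollary~\ref{C:sr_monoton}. Since $\wob(v_1\cdot v_2,h)=\sr(v_1\cdot v_2)\cap\sL(h)$ and $\wob(u\cdot v_2,h)=\sr(u\cdot v_2)\cap\sL(h)$, it suffices to prove the inclusion $\sr(v_1\cdot v_2)\subseteq\sr(u\cdot v_2)$; intersecting both sides with $\sL(h)$ then gives the claim. Note first that $v_1\cdot v_2$ is reduced by hypothesis, and that $u$ is reduced as well, since any pair of indices witnessing non-reducedness of $u$ would also witness non-reducedness of the reduced word $u\cdot v_2$.

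Next I would translate the absorption hypothesis into an identity in $\cox$. Reading Corollary~\ref{C:absorb_cox} through the inversion antiautomorphism of $\cox$ (under which ``left-absorbed'' and ``right-absorbed'' are interchanged and $u\cdot v\mapsto v\inv\cdot u\inv$), the fact that $v_1$ is right-absorbed by $u$ is equivalent to $u\cdot v_1=u$ in $\cox$. Multiplying on the right by $v_2$ yields $u\cdot v_2=u\cdot v_1\cdot v_2$ in $\cox$. Since $u\cdot v_2$ is reduced, Corollary~\ref{C:cox_reduced} identifies it, up to permutation, with the reduct $w$ of the product $u\cdot(v_1\cdot v_2)$.

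Finally, I would apply Corollary~\ref{C:sr_monoton} to the reduced words $u$ and $v_1\cdot v_2$: it gives $\sr(v_1\cdot v_2)\subseteq\sr(w)$, and since equivalent words have the same stabilisers, $\sr(w)=\sr(u\cdot v_2)$. This establishes $\sr(v_1\cdot v_2)\subseteq\sr(u\cdot v_2)$, and intersecting with $\sL(h)$ concludes the proof. The only delicate point is the passage between ``$v_1$ right-absorbed by $u$'' and ``$u\cdot v_1=u$ in $\cox$'', but this is exactly Corollary~\ref{C:absorb_cox} combined with the fact, stated in the excerpt, that all the notions introduced are invariant under inversion; no genuine obstacle arises, and the whole argument is a short chain of already-established facts.
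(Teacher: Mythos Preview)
Your proof is correct and follows essentially the same route as the paper's: both observe that $u\cdot v_2$ is the reduct of $u\cdot(v_1\cdot v_2)$ (using that right-absorption gives $u\cdot v_1=u$ in $\cox$) and then invoke Corollary~\ref{C:sr_monoton} to obtain $\sr(v_1\cdot v_2)\subset\sr(u\cdot v_2)$. Your write-up is simply more explicit about the intermediate steps (reducedness of $u$, the inversion argument for Corollary~\ref{C:absorb_cox}) that the paper leaves implicit.
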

\begin{proof}
  The word $u\cdot v_2$ is the reduct of $u\cdot (v_1\cdot v_2)$.
Corollary  \ref{C:sr_monoton} yields that  $\sr(v_1\cdot
v_2)\subset \sr(u\cdot
v_2)$.
\end{proof}
We will now study the  idempotents of $\cox$.

\begin{definition}
  A word is commuting if it consists of pairwise commuting letters.

 The letters of the final segment of a word $u$ form a commuting
word, which we denote by $\tilde u$ (up to equivalence).
\end{definition}

 Commuting words are automatically reduced.  Since every subset of
$[0,N]$ can uniquely be written as the union of commuting intervals, a
commuting word (up to equivalence) can be considered as just a set of
numbers.  The following is an easy observation:

\begin{lemma}\label{L:segment_zerlegung}
  Every word $u$ is equivalent to a word $x\cdot\tilde u$, where
  $\tilde u$ is the final segment of $u$.
\end{lemma}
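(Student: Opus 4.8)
The plan is to prove Lemma~\ref{L:segment_zerlegung} by induction on the length $n$ of the word $u = s_1\cdots s_n$. The base case $n\le 1$ is trivial: a single letter is its own final segment. For the inductive step, the idea is to peel off the last letter and track how its final segment changes when one more letter is appended. So write $u = u'\cdot s_n$ with $u' = s_1\cdots s_{n-1}$, and apply the induction hypothesis to $u'$ to get $u' \approx x'\cdot \tilde{u'}$, where $\tilde{u'}$ is a commuting word (the final segment of $u'$).

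The key step is to understand how $\tilde u$ relates to $\tilde{u'}$ and $s_n$. Write $\tilde{u'} = c_1\cdots c_k$ as a commuting word. Each $c_i$ either commutes with $s_n$ or it does not. Let $\tilde{u'} = p\cdot q$ (up to commutation) where $p$ collects those letters of $\tilde{u'}$ commuting with $s_n$ and $q$ collects those not commuting with $s_n$; since $\tilde{u'}$ is commuting, $p$ and $q$ commute, so this splitting makes sense up to equivalence. Then $u \approx x'\cdot q\cdot p\cdot s_n$. The letters of $p$ commute with $s_n$ and with each other, and (being in the final segment of $u'$) commute with everything to their right in $u'$; so $p\cdot s_n$ is a commuting word sitting at the end, provided no letter of $p$ is absorbed into $s_n$ or vice versa. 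One must handle the cancellation relations: if some $c_i\subset s_n$, it is absorbed (the product $c_i\cdot s_n$ reduces to $s_n$), and if $s_n\subset c_i$ for some $c_i$ in $p$, then $s_n$ is absorbed; in either degenerate case the final segment simplifies accordingly, but the statement still holds with $\tilde u$ being the resulting commuting word and $x$ being $x'\cdot q$ (together with any absorbed remnants). After these reductions, $\tilde u$ is precisely the commuting word formed by $s_n$ together with those $c_i$ that commute with $s_n$ and are not comparable to it, and $x = x'\cdot q$.

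The main obstacle I anticipate is bookkeeping the cancellation relations of $\cox$ rather than the commutation ones: the ``final segment'' of a word is only well-defined up to the equivalence $\approx$, and when a letter of the old final segment is a subletter of $s_n$ (or conversely), the naive concatenation is not reduced, so one cannot simply say $\tilde u = p\cdot s_n$. The cleanest way around this is probably to observe that Definition~\ref{D:Endstueck}(1) characterises the final segment purely via the commutation relation $s_i$ \emph{commutes with} $s_{i+1}\cdots s_m$, and that this notion is manifestly invariant under $\approx$; then one argues directly that for $u\approx x\cdot w$ with $w$ commuting and $w$ contained in the final segment, maximality of $w$ among such suffixes gives $w\approx\tilde u$. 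Alternatively, one can invoke the normal form (the Claim inside the proof of the Remark preceding Lemma~\ref{L:schluckteil}, applied to the poset of letters) to put $u$ in normal form $s_1\cdots s_n$ and then simply read off the maximal commuting suffix; in normal form the final segment is literally a terminal stretch of pairwise-commuting letters, and the factorisation $u = x\cdot\tilde u$ is immediate. Given the machinery already developed, this normal-form route is the shortest: apply Proposition~\ref{P:nonsplitred} and the normal-form Remark to reduce $u$, then split off the terminal commuting block.
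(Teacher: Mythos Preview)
The paper does not actually prove this lemma; it is introduced as ``an easy observation'' and left to the reader. So there is no paper proof to compare against, only the question of whether your argument is sound.

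Your inductive scheme is correct, and in fact the verification that $\tilde u \approx p\cdot s_n$ goes through cleanly: a letter $s_i$ with $i<n$ lies in the final segment of $u$ iff it commutes with $s_{i+1},\dotsc,s_{n-1}$ \emph{and} with $s_n$, i.e.\ iff it lies in the final segment of $u'$ and commutes with $s_n$. That is exactly your $p$. Setting $x=x'\cdot q$ then gives $u\approx x'\cdot q\cdot p\cdot s_n = x\cdot\tilde u$.

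Your ``main obstacle'' is a phantom, however. Equivalence $\approx$ involves only the commutation rule, never cancellation; the lemma asserts nothing about reducedness, so whether $x\cdot\tilde u$ is reduced is irrelevant. Moreover the specific worry---that some $c_i\in p$ satisfies $c_i\subset s_n$ or $s_n\subset c_i$---cannot arise: by the definition of commuting letters in Section~\ref{S:Woerte}, no letter commutes with a subletter (proper or not), so any $c_i$ comparable to $s_n$ automatically lands in $q$, not $p$. Once you drop the cancellation discussion, the proof is two lines.

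A still shorter direct argument, bypassing induction: if $s_{i_1},\dotsc,s_{i_k}$ (with $i_1<\dotsb<i_k$) are the positions in the final segment, process them from right to left, each time commuting $s_{i_j}$ past all letters to its right. This is licit because $s_{i_j}$ commutes with every later letter of $u$ by definition, and the letters already moved to the end are among those. The result is $x\cdot\tilde u$.
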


\noindent Note that no letter in the final segment of $x$ commutes
with $\tilde u$.

\begin{prop}\label{P:proper_absorbed}
  Let $u$ and $v$ reduced words such that $v$ left-absorbs $u$. Then,
  up to permutation, there is are unique decompositions
  \begin{align*}
    u&=u'\cdot w&
    w\cdot v'&=v,
  \end{align*}
  such that
  \begin{enumerate}
  \item $u'$ is properly left-absorbed by $v'$,
  \item $w$ commutes with $u'$,
  \item $w$ is a commuting word.
  \end{enumerate}
\end{prop}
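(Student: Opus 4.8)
The plan is to single out the correct $w$, reduce the whole statement to one combinatorial claim, and then prove that claim by a reducedness argument.

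Call a letter of $u$ \emph{exact} (for $v$) if it coincides with its unique witnessing letter in $v$ given by the Absorption Lemma \ref{L:schlucktheorie}(\ref{L:schlucktheorie:1}); thus ``exact'' means ``left-absorbed but not \emph{properly} left-absorbed by $v$''. The whole Proposition will reduce to the following claim: \emph{every exact letter $s$ of $u$ commutes with every other letter of $u$.}

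First I would explain why this claim suffices. Assuming it, let $w$ be the word consisting of all exact letters of $u$; by the claim together with Lemma \ref{L:schlucktheorie}(\ref{L:schlucktheorie:2}) they pairwise commute, so $w$ is a commuting word, giving $(c)$. Since each exact letter commutes with everything else in $u$, I can slide all of them to the right, obtaining $u\approx u'\cdot w$ with $w$ commuting with $u'$, giving $(b)$. Dually, an exact letter $s$ equals its witness $t_j$ in $v=t_1\cdots t_n$, which by definition commutes with $t_1\cdots t_{j-1}$; as the witnesses of distinct exact letters sit at distinct positions and each commutes with its own initial segment of $v$, I can move them all simultaneously to the front, obtaining $v\approx w\cdot v'$ with $w$ again the commuting word of exact letters. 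Finally, for $(a)$: let $r$ be any letter of $u'$, so $r$ is properly absorbed, say $r\subsetneq t_{j'}$ with $t_{j'}$ its witness; the value $t_{j'}$ cannot be exact, for otherwise it occurs in $u$ and then by the claim it would commute with $r$, contradicting $r\subsetneq t_{j'}$; hence $t_{j'}$ survives in $v'$ and still properly witnesses $r$ there, so $u'$ is properly left-absorbed by $v'$. Uniqueness follows as well: in any decomposition $u=\bar u'\cdot\bar w$, $\bar w\cdot\bar v'=v$ as in the statement, every letter of $\bar w$, being the head of the commuting initial segment $\bar w$ of $v$, is its own witness, hence exact; and no exact letter can lie in $\bar u'$, for its (improper) witness would then survive in $\bar v'$ and violate $(a)$. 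Thus $\bar w\approx w$, and cancelling $w$ on the appropriate side — via the rule $x\cdot y\approx x\cdot y'\Rightarrow y\approx y'$ and its mirror image — gives $\bar u'\approx u'$ and $\bar v'\approx v'$.

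It remains to prove the claim. Let $s$ be exact with witness $t_j=s$, and let $s'$ be another letter of $u$, with witness $t_{j'}$. If $j'<j$ then $s$ commutes with $t_1\cdots t_{j-1}\ni t_{j'}\supseteq s'$, hence with $s'$; if $j'>j$ then $s'$ commutes with $t_1\cdots t_{j'-1}\ni t_j=s$, hence with $s$; the only remaining possibility is $j'=j$, which would force $s'\subseteq s$ and, if $s$ and $s'$ did not commute, $s'\subsetneq s$. Ruling this last case out using that $u$ is reduced is the part I expect to be the real work: a blocker of $u$ lying between the occurrences of $s$ and $s'$ would, by the same positional comparison applied to it, again be witnessed by $t_j$ and contained in $s$, so iterating produces inside $u$ a configuration of letters all contained in $s$ with $s$ itself among them, from which the definition of a reduced word yields a contradiction. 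Once the claim is established, the Proposition follows with no further difficulty as sketched above.
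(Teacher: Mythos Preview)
Your proposal is correct and follows exactly the approach of the paper: single out the letters of $u$ that are not \emph{properly} left-absorbed by $v$ (your ``exact'' letters), show they commute with every other letter of $u$, and let them form $w$. The paper's proof consists of two sentences asserting precisely this claim and nothing more; you have supplied the details the paper omits, including the construction of $v'$, the verification of (1), and uniqueness.

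One small wording issue: in the case $j'=j$ you write ``if $s$ and $s'$ did not commute, $s'\subsetneq s$'', but the possibility $s'=s$ (two occurrences of the same letter) also falls into this case and is not a \emph{proper} inclusion. This does not affect your argument, since the blocker-iteration only uses $s'\subseteq s$ together with reducedness; you might simply drop ``$\subsetneq$'' and say $s'\subseteq s$ with the two occurrences distinct. The iteration itself can be phrased cleanly as an induction on the distance between the position of $s$ and the position of the offending letter: the blocker $r$ does not commute with $s'\subseteq s$, hence not with $s$ either, so by Lemma~\ref{L:schlucktheorie}(\ref{L:schlucktheorie:2}) it shares the witness $t_j=s$, giving $r\subseteq s$ at strictly smaller distance to $s$, and the base case is immediate from reducedness.
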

\begin{proof}
  Apply the Absorption Lemma \ref{L:schlucktheorie} to $v$ and
$u$, which is completely left-absorbed by $v$. The letters of $u$
which are not properly left-absorbed by $v$ must commute with all
other letters and form the word $w$.
\end{proof}
We obtain therfore the following consequence, which implies that a
word is commuting if and only if it is an idempotents in
$\cox$.
\begin{cor}\label{C:kommwort}
  A reduced word is commuting if and only if it absorbs itself (left,
  or equivalently, right).
\end{cor}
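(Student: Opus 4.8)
The plan is to derive Corollary \ref{C:kommwort} from Proposition \ref{P:proper_absorbed} applied to the special case where $v = u$. Recall that a reduced word $u$ absorbs itself (say, left-absorbs itself) precisely when $uu = u$ in $\cox$, by Corollary \ref{C:absorb_cox}. So the statement splits into two implications.

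For the easy direction, suppose $u$ is a commuting word. Then its letters are pairwise commuting intervals, and in particular each letter $s$ of $u$ commutes with all the other letters of $u$ and satisfies $s \subset s$; hence $s$ is left-absorbed by $u$ (taking $t_j = s$ itself as the witnessing letter in Definition \ref{D:Endstueck}(2)). Thus every letter of $u$ is left-absorbed by $u$, so $u$ is left-absorbed by $u$, which is the desired conclusion. (The right-absorption version is symmetric, or follows by applying inversion, since commuting words are invariant under inversion.)

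For the converse, suppose the reduced word $u$ left-absorbs itself. Apply Proposition \ref{P:proper_absorbed} with $v = u$: we obtain (up to permutation) decompositions $u = u' \cdot w$ and $u = w \cdot v'$ with $u'$ properly left-absorbed by $v'$, with $w$ commuting with $u'$, and with $w$ a commuting word. From $u = u' \cdot w$ and $u = w \cdot v'$, together with the fact that $w$ commutes with $u'$, we get $u' \cdot w \approx w \cdot u'$, and hence (using the cancellation remark $u\cdot v \approx u\cdot v' \Rightarrow v\approx v'$, or rather its left-sided analogue following the same Remark after the normal-form claim) that $v' \approx u'$. But $u'$ is \emph{properly} left-absorbed by $v' \approx u'$; a word can only be properly left-absorbed by another word if it has strictly fewer ``room'', and in particular a nontrivial word cannot be properly left-absorbed by (a permutation of) itself — each letter $s$ of $u'$ would have to be a proper subset of some letter of $u'$, contradicting that $u'$ is reduced (a reduced word has no letter contained in another commuting-compatible letter). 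Therefore $u'$ must be trivial, so $u = w$ is a commuting word.

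The main obstacle I expect is making precise the step ``a nontrivial reduced word cannot be properly left-absorbed by a permutation of itself.'' One has to argue carefully with the Absorption Lemma \ref{L:schlucktheorie}(\ref{L:schlucktheorie:1}): if $u' = s_1 \cdots s_k$ is properly left-absorbed by a permutation of $u'$, pick the letter $s_i$ that is largest with respect to set-inclusion; its unique witnessing letter in the permutation would have to strictly contain $s_i$, but that witnessing letter is one of the $s_j$, contradicting maximality (and if several letters are inclusion-maximal they are pairwise incomparable, so none can be properly contained in another). Everything else is bookkeeping with the normal-form and cancellation remarks already established. I would also double-check that the ``equivalently, right'' clause is genuinely symmetric, which it is because inversion is an antiautomorphism fixing commuting words.
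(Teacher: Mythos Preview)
Your proof is correct and follows essentially the same approach as the paper: apply Proposition~\ref{P:proper_absorbed} with $v=u$, use the commutation of $w$ and $u'$ together with the cancellation remark to conclude $u'\approx v'$, and then argue that a nontrivial reduced word cannot properly left-absorb itself. The paper's write-up is terser (it simply asserts ``$u'=v'$ properly absorbs itself, i.e.\ the word $u'=1$''), but your maximal-letter argument is exactly what is needed to justify that last step.
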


\begin{proof}
  Clearly, if $u$ is commuting, then $|u|=\sL(u)$, so $u$ absorbs
  itself. Suppose now that $u$ left-absorbs itself. By the proposition
  applied to $v=u$ we find $u=w\cdot u'\approx w\cdot v'$ such that
  $u'$ is properly left-absorbed by $v'$ and $w$ is a commuting
  word. It follows that $u'=v'$ properly absorbs itself, i.e. the word
$u'=1$.
\end{proof}
We can now state a symmetric version of the Decomposition Theorem
\ref{T:einfach_amador}, combined  with
Proposition \ref{P:proper_absorbed}.

\begin{cor}[Symmetric Decomposition Lemma]\label{C:amador}
 Let $u$ and $v$ be two reduced words. Each can be uniquely decomposed (up to
 commutation) as:
  \begin{align*}
    u&=u_1\cdot u'\cdot w&
    w\cdot v'\cdot v_1&=v,
  \end{align*}
such that:
\renewcommand{\theenumi}{\alph{enumi}}
  \begin{enumerate}
  \item $u'$ is properly left-absorbed by $v_1$,
  \item $v'$ is properly right-absorbed by $u_1$,
  \item $u'$, $w$ and $v'$ pairwise commute,
  \item $w$ is a commuting word,
  \item $u_1\cdot w\cdot v_1$ is reduced.
  \end{enumerate}
  In particular, we have $u\cdot v \to u_1\cdot w\cdot v_1$.
\end{cor}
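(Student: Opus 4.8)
The plan is to build the decomposition in two stages and read off uniqueness from the uniqueness statements of the two ingredients. Starting from reduced words $u$ and $v$, the Decomposition Lemma (Theorem \ref{T:einfach_amador}) yields (up to permutation) factorizations $u=u_1\cdot p$ and $v=q\cdot r$ with $p$ left-absorbed by $r$, $q$ properly right-absorbed by $u_1$, $p$ and $q$ commuting, $u_1\cdot r$ reduced, and $u\cdot v\to u_1\cdot r$. Since $p$ and $r$ are reduced (being factors of reduced words) and $r$ left-absorbs $p$, Proposition \ref{P:proper_absorbed} splits $p=u'\cdot w$ and $r=w\cdot v_1$ with $u'$ properly left-absorbed by $v_1$, $w$ commuting with $u'$, and $w$ a commuting word. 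Put $v':=q$. Then $u=u_1\cdot u'\cdot w$, and because $u'$ and $w$ are factors of $p$, which commutes with $q$, they commute with $v'=q$; hence $v=q\cdot w\cdot v_1=w\cdot v'\cdot v_1$, as required.

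Next I would check conditions (a)--(e): (a) and (d) are given by Proposition \ref{P:proper_absorbed}; (b) is the property of $q$ from Theorem \ref{T:einfach_amador}; (c) holds since $u'$ and $w$ commute (Proposition \ref{P:proper_absorbed}) and both commute with $v'=q$; and (e) follows from $u_1\cdot r\approx u_1\cdot w\cdot v_1$ being reduced. The final reduction is then $u\cdot v\to u_1\cdot r\approx u_1\cdot w\cdot v_1$.

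For uniqueness, let $(u_1,u',w,v',v_1)$ be any tuple satisfying (a)--(e). Set $p:=u'\cdot w$, $q:=v'$, $r:=w\cdot v_1$. Using that $w$ commutes with $v'$ (by (c)) one gets $u=u_1\cdot p$ and $v=q\cdot r$ up to permutation, and one verifies directly that this is a fine decomposition in the sense of Theorem \ref{T:einfach_amador}: $w$ is left-absorbed by $r$ since it is a commuting prefix, $u'$ is left-absorbed by $r$ since it is properly left-absorbed by $v_1$ and commutes with $w$, $q$ is properly right-absorbed by $u_1$ by (b), $p$ commutes with $q$ by (c), and $u_1\cdot r=u_1\cdot w\cdot v_1$ is reduced by (e). Hence $u_1$, $p$, $q=v'$, $r$ are determined by $u$ and $v$. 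Since $p$ is left-absorbed by $r$ with the data $p=u'\cdot w$, $r=w\cdot v_1$ satisfying exactly the conclusion of Proposition \ref{P:proper_absorbed}, the uniqueness there determines $u'$, $w$, $v_1$ from $p$ and $r$. Thus the whole tuple is unique up to permutation.

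The two stages are routine applications of the results already established; the only care needed is in tracking the commutation relations so that the intermediate words are genuinely reduced and the various absorptions are the asserted ones — in particular that $w$ commutes with $v'$, which is exactly what allows $w$ to be transported to the front of $v$. I expect the main bookkeeping obstacle to be the uniqueness argument, namely recognizing the desired decomposition as the composite of a fine decomposition with a Proposition \ref{P:proper_absorbed}-decomposition so that the two separate uniqueness statements can be invoked.
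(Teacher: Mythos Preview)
Your proof is correct and follows essentially the same route as the paper: first apply the Decomposition Lemma (Theorem \ref{T:einfach_amador}) to obtain the fine decomposition, then apply Proposition \ref{P:proper_absorbed} to the absorbed piece and its absorber to extract the common commuting word $w$. The paper is terser---it leaves the verification of (a)--(e) implicit and dismisses uniqueness with ``follows similarly''---whereas you spell out the bookkeeping, including the key observation that $w$ commutes with $v'$ (inherited from $p$ commuting with $q$) and the reconstruction of the fine decomposition from any tuple satisfying (a)--(e); but the underlying two-stage strategy is identical.
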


\begin{figure}[!htbp]
\centering
\begin{tikzpicture}[>=latex,text height=.1ex,text depth=0.1ex]

\fill (0,0) circle (2pt);
\fill (1,1) circle (2pt);
\fill (-1,1) circle (2pt);
\fill (2,2) circle (2pt);
\fill (-2,2) circle (2pt);
\fill (1,3) circle (2pt);
\fill (-1,3) circle (2pt);
\fill (3,3.5) circle (2pt);
\fill (-3,3.5) circle (2pt);

\draw[->] (0,0) -- (1,1) node[pos=.5, below right] {$w$};
\draw[->] (1,1) -- (2,2) node[pos=.5, below right] {$v'$};
\draw[->] (2,2)  --(3,3.5) node[pos=.5, below right] {$v_1$} ;
\draw[->] (-3,3.5) -- (-2,2) node[pos=.5, below left] {$u_1$};
\draw[->] (-2,2)  --(-1,1) node[pos=.5, below left] {$u'$} ;
\draw[->] (-1,1) --(0,0) node[pos=.5, below left] {$w$} ;
\draw[->] (-3,3.5) -- (-1,3) node[pos=.5, above right] {$u_1$} ;
\draw[->] (-1,3) -- (1,3) node[pos=.5, above] {$w$} ;
\draw[->] (1,3) --(3,3.5) node[pos=.5, above left] {$v_1$} ;
\draw[<-] (2,2) -- (1,3) node[pos=.5, right] {$u'$};
\draw[->] (-2,2) -- (-1,3) node[pos=.5, left] {$v'$};
\draw[->] (-1,1) -- (1,1) node[pos=.5, above] {$w$};
\end{tikzpicture}
\end{figure}

\begin{proof}
  Let
  \begin{align*}
    u&=u_1\cdot \bar u'&
    v'\cdot \bar v_1&=v
  \end{align*}
  be a fine decomposition as in Theorem \ref{T:einfach_amador}. Apply
  Proposition \ref{P:proper_absorbed} to $\bar u'$ and $\bar v_1$ to
  obtain
  \begin{align*}
    \bar u'&=u'\cdot w&
    w\cdot v_1&=\bar v_1,
  \end{align*}
  where $u'$ is properly left-absorbed by $v_1$, $w$ commutes with
  $u'$ and $w$ is a commuting word.

  Uniqueness follows similarly.
\end{proof}

In order to describe canonical paths between elements (or
rather, between flags) in the
Fra\"iss\'e limit $M^N_\infty$, we require a stronger form
of reduction, since applying twice the same operation $\alpha_s$ does
not necessarily yield a global application of $\alpha_s$, but rather a
finite product of proper subletters.

\begin{definition}\label{D:strong_reduction}
  The word $u$ is \emph{strongly
    reduced to} $v$, denoted by $u\str v$, if $v$ is obtained from $u$
  by finitely many iterations of \textsc{Cancellation},
  \textsc{Commutation}, and
  \begin{description}
  \item[\sc Splitting] Replace an occurrence of $s\cdot s$ by a
(possibly trivial) product $t_1\cdots t_n$ of letters $t_i$, each
of which is properly contained in $s$.
  \end{description}
  If $v$ is reduced, we call $v$ a \emph{strong reduct} of $u$.
\end{definition}

\noindent As an example note that $u\cdot u\inv\str 1$.

Despite the possible confusion for the reader, we will  not
refer to reductions defined in \ref{D:reduction} as \emph{weak}
reductions.

Related to the notion of strong reduction, we also consider the
following partial ordering on
words.

\begin{definition}
  For words $u$ and $v$,  we define $u\prec v$ if some
  permutation of $u$ is obtained from $v$ by replacing  at least one
letter $s$ of $v$ by  by a (possibly empty) product of  proper
subletters of $s$. By $u\preceq v$, we mean $u\prec v$ or $u\approx
v$.
\end{definition}

\begin{lemma}\label{L:kleiner_fundiert}\par\noindent
  \begin{enumerate}
  \item $\prec$ is transitive and well-founded.
  \item\label{L:kleiner_fundiert:1} $u'\approx u\prec v\approx v'$
implies $u'\prec v'$.
  \item\label{L:kleiner_fundiert:2} If the strong reduction $u\str v$
involves at least one
    cancellation or splitting, we have $v\prec u$.
  \end{enumerate}
\end{lemma}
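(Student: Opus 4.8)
The plan is to treat the three parts more or less independently, with part (1) being the technical heart. For the \emph{well-foundedness} in part (1), I would assign to each word $u = s_1\cdots s_m$ a natural invariant that strictly decreases under $\prec$. The natural candidate is the multiset of \emph{lengths} $|s_i| = r_{s_i} - l_{s_i}$ of the letters of $u$, ordered by the multiset (Dershowitz--Manna) order on $\mathbb{N}$: replacing a letter $s$ by a product of proper subletters replaces one element of the multiset by finitely many strictly smaller ones, and commutation leaves the multiset unchanged. Since the multiset order on a well-founded order is well-founded, and since each single $\prec$-step strictly decreases this multiset, $\prec$ is well-founded. Transitivity of $\prec$ is essentially bookkeeping: if some permutation of $u$ comes from $v$ by replacing letters by proper-subletter products, and some permutation of $v$ comes from $w$ the same way, then composing the two substitutions (and the two permutations) exhibits a permutation of $u$ obtained from $w$ by such replacements — one just has to check that ``proper subletter of a proper subletter is a proper subletter,'' which is immediate, and that at least one genuine replacement survives the composition.

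For part (\ref{L:kleiner_fundiert:1}), the point is simply that $\prec$ was \emph{defined} only up to permutation of the smaller word, and that permutation is an equivalence relation respected by the definition: if $u \prec v$ via a permutation $\pi$ of $u$ obtained from $v$, and $u' \approx u$, then composing $\pi$ with the permutation taking $u$ to $u'$ shows $u' \prec v$; and if $v \approx v'$, then a permutation of $v$ obtained from $v'$ by letter-replacements can be turned into one obtained from $v$ by first permuting $v' \to v$, so $u \prec v'$ follows from $u \prec v$. I would just remark that this is immediate from the definition.

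For part (\ref{L:kleiner_fundiert:2}), I would analyze a strong reduction $u \str v$ step by step. A \textsc{Commutation} step keeps the length-multiset fixed and only permutes, so it changes nothing up to $\approx$. A \textsc{Cancellation} step removes a letter, strictly shrinking the multiset, hence produces a $\prec$-smaller word (removing a letter is the empty product of proper subletters, which the definition allows). A \textsc{Splitting} step replaces $s\cdot s$ by a product of letters each \emph{properly} contained in $s$: this replaces two copies of $|s|$ in the multiset by finitely many strictly smaller values, again a strict multiset decrease. So if at least one cancellation or splitting occurs anywhere in the reduction, the composite effect on the length-multiset is a strict decrease (the other steps being $\approx$), and by part (\ref{L:kleiner_fundiert:1}) together with the monotonicity of the argument we conclude $v \prec u$. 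The main obstacle I anticipate is purely notational: making precise that a finite \emph{sequence} of mixed steps, only some of which shrink the multiset, still yields a net strict $\prec$-decrease — this requires observing that $\approx$-steps can be absorbed into the permutation appearing in the definition of $\prec$, so that the ``at least one'' replacement coming from the cancellation/splitting step is genuinely witnessed in the final comparison.
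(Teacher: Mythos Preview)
Your proposal is correct and follows essentially the same approach as the paper. The paper's proof is terser --- it only explicitly addresses well-foundedness, via the ordinal-valued rank $\ord(w)=\omega^N\ord_N(w)+\dotsc+\ord_0(w)$ where $\ord_i(w)$ counts letters of size $i+1$; this is precisely the standard ordinal encoding of your length-multiset under the Dershowitz--Manna order, so the two arguments coincide.
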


\noindent Well-foundedness implies in particular that if $u\prec v$,
then $u\not\approx v$. Furthermore, property
(\ref{L:kleiner_fundiert:1}) yields that $\prec$ induces a partial
order on $\cox$, setting $[u]\prec[v]$ if $u\prec v$, where both $u$
and $v$ are reduced. With this notation, the trivial word $1$ becomes
the smallest element.

\begin{proof}

  To see that $\prec$ is well-founded, we introduce an ordinal-valued
  rank function $\ord$. For $i$ in $[0,N]$, set $\ord_i(w)$ to be
  number of letters $s$ in $w$ with $i+1$ elements. Define now
  \[\ord(w)=\omega^N\ord_N(w)+\omega^{N-1}\ord_{N-1}(w)+\dotsc+\ord_0(w).\]
  Then $u\prec v$ implies $\ord(u)<\ord(v)$.
\end{proof}

The semigroup $\cox$, equipped with the order function as above, is
an ordered semigroup in which left and right-cancellation are
(almost) order-preserving.

\begin{lemma}\label{L:OrdnungMonoid}
  Let $w\cdot v$ be reduced and $w\cdot v\preceq w\cdot v'$. Then
  $v\preceq v'$.
\end{lemma}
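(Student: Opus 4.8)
The statement naturally splits according to whether $w\cdot v\approx w\cdot v'$ or $w\cdot v\prec w\cdot v'$. In the first case the cancellation property for $\approx$ recalled above (that $u\cdot a\approx u\cdot b$ implies $a\approx b$) immediately yields $v\approx v'$, hence $v\preceq v'$; so from now on I would assume $w\cdot v\prec w\cdot v'$ and aim for the stronger conclusion $v\prec v'$.

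The first reduction I would make is to the case in which $w$ is a single letter. If $w=r\cdot w_0$ with $r$ a letter, then $w_0\cdot v$ is again reduced --- it is a suffix of the reduced word $w\cdot v$, and suffixes of reduced words are reduced straight from the definition of reducedness --- and $r\cdot(w_0v)=w\cdot v\prec w\cdot v'=r\cdot(w_0v')$. So the single-letter case applied to $r$ gives $w_0v\prec w_0v'$, and an induction on the length of $w$ then finishes the general case.

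So suppose $w=r$ is a single letter with $r\cdot v$ reduced and $r\cdot v\prec r\cdot v'$, and fix a permutation $z$ of $r\cdot v$ obtained from the word $r\cdot v'$ (whose first letter is $r$) by replacing at least one letter by a product of proper subletters. I would distinguish three cases according to what this replacement does to the initial occurrence of $r$. If $r$ is left untouched, then $z=r\cdot v''$ with $v''$ obtained from $v'$ by at least one replacement; cancelling $r$ in $r\cdot v\approx r\cdot v''$ gives $v\approx v''\prec v'$, hence $v\prec v'$ by Lemma~\ref{L:kleiner_fundiert}(\ref{L:kleiner_fundiert:1}). If $r$ is replaced by a non-empty product $t_1\cdots t_k$ of proper subletters of $r$, then $z=t_1\cdots t_k\cdot z'$ for some word $z'$, while $z\approx r\cdot v$ so that $z$ also contains the letter $r$; but in $r\cdot v$ the letter $r$ sits in the first position, so it precedes every occurrence of $t_1$, and since $t_1\subsetneq r$ the letters $r$ and $t_1$ do not commute, so their relative order is preserved by any sequence of commutations --- contradicting that $z$ begins with $t_1$. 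Hence this case does not occur. Finally, if $r$ is replaced by the empty product, then $z$ is obtained from $v'$ alone by replacements, so $r\cdot v\approx z\preceq v'$; since deleting the letter $r$ from the reduced word $r\cdot v$ exhibits $v\prec r\cdot v$, transitivity of $\prec$ together with Lemma~\ref{L:kleiner_fundiert}(\ref{L:kleiner_fundiert:1}) gives $v\prec v'$.

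The one delicate point is the interaction at the boundary between $w$ and $v$: a replacement applied to $w\cdot v'$ could in principle rewrite the tail of $w$ into smaller letters which then belong to the part matching $v$. Passing to the case $w=r$ a single letter isolates exactly this boundary, and the decisive combinatorial input is then that a letter never commutes with its proper subletters --- this is precisely what forbids $r$ from being broken up and reappearing behind its own fragments, which kills the middle case. The remaining two cases are routine bookkeeping with the $\approx$-cancellation property and with the well-founded order $\prec$ of Lemma~\ref{L:kleiner_fundiert}.
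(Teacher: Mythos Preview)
Your proof is correct and follows essentially the same route as the paper's: reduce to a single letter $w=s$, then analyse what the replacement witnessing $s\cdot v\preceq s\cdot v'$ does to the initial $s$. The paper's proof is terser --- it records the three cases as ``$u_s=s$'', ``$u_s$ a non-empty product of proper subletters'' and ``$u_s=1$'', and dispatches the middle one with the single clause ``since $s\cdot v$ is reduced, it follows that $u_s=1$'' --- whereas you spell out explicitly the underlying reason (a proper subletter of $s$ never commutes with $s$, so it cannot be moved in front of $s$ by commutations). Your splitting off of the $\approx$-case at the start is a cosmetic difference; the paper absorbs it into the $u_s=s$ branch.
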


The condition that $w\cdot v$ is reduced is needed, by
taking $v'=t \subsetneq s=w=v$ and $w \cdot v  \str 1$.

\begin{proof}
  By induction on the number of letters appearing in $w$, we need only
consider the case where $w=s$ for some interval $s$.

  The assumption implies that $s\cdot v$ is equivalent to a word
  $u_s\cdot u'$ where $u_s\preceq s$ and $u'\preceq v'$. The word
  $u_s$ either equals $s$ or is a product of proper subletters of
  $s$. If $u_s=s$, we have $v\approx u' \preceq v'$ and are
  done. Otherwise, since $s\cdot v$ is reduced, it follows that
  $u_s=1$. This implies $v\prec s\cdot v\approx u'\preceq v'$.
\end{proof}
\begin{cor}\label{C:OrdnungMonoid}
  Given reduced words $w\cdot v$ and $v'$ such that $w\cdot v$ is
  smaller than some strong reduct of $w\cdot v'$, then $v\preceq v'$.
\end{cor}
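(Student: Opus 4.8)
The plan is to deduce Corollary~\ref{C:OrdnungMonoid} from Lemma~\ref{L:OrdnungMonoid}, once we know that a strong reduct can never be strictly larger (in the order $\prec$) than the word it is obtained from. So the first step is to fix a strong reduct $z$ of $w\cdot v'$ witnessing the hypothesis, i.e.\ with $w\cdot v\preceq z$, and to show that $z\preceq w\cdot v'$. For this I would inspect the strong reduction $w\cdot v'\str z$ step by step: each step is either a \textsc{Commutation}, which replaces a word by an $\approx$-equivalent permutation of it, or a \textsc{Cancellation} or a \textsc{Splitting}, and in these last two cases Lemma~\ref{L:kleiner_fundiert}(\ref{L:kleiner_fundiert:2}) yields a strict drop for $\prec$. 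Composing all the steps, and using that $\prec$ is transitive and compatible with $\approx$ by Lemma~\ref{L:kleiner_fundiert}(1) and~(\ref{L:kleiner_fundiert:1}), gives $z\preceq w\cdot v'$.

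The second step is then a transitivity argument: from $w\cdot v\preceq z\preceq w\cdot v'$ and the fact that $\preceq$ is a partial order on $\cox$ (again Lemma~\ref{L:kleiner_fundiert}(1) and~(\ref{L:kleiner_fundiert:1})), we obtain $w\cdot v\preceq w\cdot v'$. Since $w\cdot v$ is reduced by hypothesis, Lemma~\ref{L:OrdnungMonoid} applies verbatim and delivers the conclusion $v\preceq v'$.

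I do not anticipate a genuine obstacle. The only point requiring a little care is the bookkeeping in the first step, namely that strong reduction is non-increasing for $\prec$; but this is immediate once one separates the commutation steps (which merely produce permutations) from the cancellation and splitting steps (handled by Lemma~\ref{L:kleiner_fundiert}(\ref{L:kleiner_fundiert:2})). Note also that no part of the argument requires $w\cdot v'$ itself to be reduced, only $w\cdot v$, so there is no hidden hypothesis to check.
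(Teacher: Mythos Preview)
Your proposal is correct and is precisely the argument the paper has in mind: the corollary is stated without proof because it follows immediately from Lemma~\ref{L:OrdnungMonoid} once one observes (via Lemma~\ref{L:kleiner_fundiert}) that a strong reduct $z$ of $w\cdot v'$ satisfies $z\preceq w\cdot v'$, giving $w\cdot v\preceq w\cdot v'$ by transitivity. Your remark that the reducedness of $w\cdot v'$ is not needed is also correct.
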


\begin{lemma}
  The partial order $\preceq$ is compatible with the semigroup
operation
in $\cox$.
\end{lemma}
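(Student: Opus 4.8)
The plan is to establish the two one‑sided statements $x\preceq y\Rightarrow wx\preceq wy$ and $x\preceq y\Rightarrow xw\preceq yw$ for $w\in\cox$; compatibility ($x\preceq y$, $x'\preceq y'\Rightarrow xx'\preceq yy'$) then follows from these and transitivity of $\preceq$. Since the inversion antiautomorphism preserves $\ord$, hence $\preceq$, the right‑hand statement is the image of the left‑hand one, so it is enough to show: for reduced words $u\preceq v$ and every reduced word $w$, writing $\overline{x}$ for the reduct of a word $x$, one has $\overline{w\cdot u}\preceq\overline{w\cdot v}$. Because $\prec$ is transitive and a replacement of several letters of $v$ factors as a composition of one‑letter replacements, each strictly lowering $\ord$, one may first reduce — using the inductive hypothesis below for the intermediate word together with transitivity of $\preceq$ — to the case where $u$ is the reduct of a word obtained from $v$ by replacing one occurrence of a letter $p$ by a product $q_1\cdots q_m$ of proper subletters of $p$. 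I argue by induction on $\ord(v)$, with a secondary induction on the length of $w$.

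Fix $w,v,p,q_1,\dots,q_m,u$ as above and apply the Decomposition Lemma \ref{T:einfach_amador} with $w$ in the role of the left word: $w=w_1\cdot w'$, $v=v'\cdot v_1$ with $w'$ left‑absorbed by $v_1$, $v'$ properly right‑absorbed by $w_1$, $w'$ and $v'$ commuting, and $w_1\cdot v_1$ reduced, so $\overline{w\cdot v}=w_1\cdot v_1$. Using $w'\subseteq\sL(v_1)$ and that $w'$ commutes with $v'$ one gets $w\cdot v'=w$ in $\cox$, hence $w\cdot v=w\cdot v_1$ in $\cox$. Now distinguish where the occurrence of $p$ sits. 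If it lies (up to commutation) in $v'$, then each $q_i$, being a proper subletter of $p$, is again right‑absorbed by $w_1$ — here one uses that a letter commuting with $p$ commutes with every proper subletter of $p$ — and $w'$ still commutes with the modified $v'$, so $\overline{w\cdot u}=w_1\cdot v_1=\overline{w\cdot v}$ and we are done with equality. If it lies in $v_1$, then $u=v'\cdot v_1'$ where $v_1'$ is the corresponding one‑letter modification of $v_1$, and as above $w\cdot u=w\cdot v_1'$ in $\cox$; provided $v'\neq1$ we have $\ord(v_1)<\ord(v)$, and the induction hypothesis applied to the right word $v_1$ gives $\overline{w\cdot v_1'}\preceq\overline{w\cdot v_1}$, as desired.

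The cases left over are those with $v'=1$, i.e.\ $w$ does not (properly) bite $v$ from the left, so $\overline{w\cdot v}=w_1\cdot v$ with $w=w_1\cdot w'$ and $w'\subseteq\sL(v)$. If $w'\neq1$ and $w'\subseteq\sL(u)$ as well, then $w\cdot u=w_1\cdot u$ in $\cox$ and we conclude by the secondary induction (the length of $w_1$ being smaller). The genuinely residual cases are $w'=1$ (so $w\cdot v$ is already reduced) and $w'\neq1$ with $w'\not\subseteq\sL(u)$; both are handled by ``letter tracking''. Reducing $w\cdot u$, which is literally $w\cdot v$ with the occurrence of $p$ replaced in place by $q_1\cdots q_m$, only deletes or commutes letters and never creates any; since a letter of $w'$ is a subletter of some letter of $v$ and each $q_i$ is a proper subletter of $p$, every letter surviving in $\overline{w\cdot u}$ is, or is a subletter of, a letter of $\overline{w\cdot v}$. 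Matching these up — keeping the surviving letters of $\overline{w\cdot v}$, replacing $p$ by the sub‑product of the $q_i$ that survive, and deleting the rest — exhibits $\overline{w\cdot u}$ as a $\preceq$‑image of $\overline{w\cdot v}$, and in fact $\overline{w\cdot u}\prec\overline{w\cdot v}$ because $p$ is replaced by a word of strictly smaller $\ord$; that subletters inherit all commutations guarantees no extra cancellation is forced that would break the matching.

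The step I expect to be the real obstacle is exactly this last letter‑tracking argument. It requires a confluence fact — that the reduct of a word equals, up to commutation, that word minus the multiset of its cancelled letters — and, more delicately, one must rule out that an occurrence of a letter coming from $w$ which was absorbed ``behind'' $p$ in $w\cdot v$ survives in $\overline{w\cdot u}$ and would then have to be represented as a non‑proper piece of $p$. This pathology does not in fact arise: such an occurrence necessarily absorbs all of $q_1,\dots,q_m$ (it contains each of them) and therefore merely stands in for the now‑deleted occurrence of $p$, so that $\overline{w\cdot u}=\overline{w\cdot v}$ in that sub‑case. Carrying out these verifications, together with the bookkeeping of multiplicities when a letter of $w$ coincides with a letter of $v$, is the technical heart of the proof.
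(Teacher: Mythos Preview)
Your overall strategy—reduce to one-sided compatibility via the inversion antiautomorphism, then argue by induction—is sound and matches the paper's opening move. But the paper diverges sharply after that: it inducts on the length of $w$ alone, reducing immediately to the case where $w=s$ is a single letter, and then splits on whether $s$ is left-absorbed by $v$. This keeps the combinatorics small and avoids precisely the difficulties you flag.

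Your proof has two genuine gaps. First, the reduction to a single-letter replacement is not justified as stated: you factor $u\prec v$ through an intermediate one-letter replacement $v_1$ and want to apply the inductive hypothesis to the pair $(u,\overline{v_1})$. But the inductive hypothesis requires $u\preceq\overline{v_1}$ as reduced words, and this does not follow from $u\preceq v_1$; reducing $v_1$ may delete the very occurrence you still need to replace, and there is no a priori reason the remaining replacements can be realised on $\overline{v_1}$ so as to yield $u$. Second, and as you yourself acknowledge, the ``letter tracking'' argument for the residual cases is a sketch rather than a proof. The confluence statement you invoke—that the reduct is, up to commutation, the original word minus a specific multiset of cancelled letters, and that this is compatible with letter replacement—is essentially the content of the lemma and cannot be taken for granted. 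Your sub-case $w'=1$ is in fact easy (since $w\cdot v$ is reduced, $w\cdot v^*\prec w\cdot v$ literally as words, and reduction only deletes letters, so $\overline{w\cdot v^*}\preceq w\cdot v^*\prec w\cdot v$), but the case $w'\neq 1$ with $w'\not\subseteq\sL(u)$ is not: here $\overline{w\cdot v}=w_1\cdot v$ is strictly shorter than $w\cdot v$, and the surviving letters of $w'$ in $\overline{w\cdot u}$ must be matched against $v$, not $w$.

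The paper's single-letter reduction sidesteps all of this. With $w=s$, one writes $v=\bar v\cdot t\cdot v_1$ where $s\subset t$ and $\bar v\subset\Cent(s)$ (when $s$ is absorbed by $v$), and uses that $u\preceq v$ forces a corresponding block decomposition $u_1\approx\bar u_1\cdot u_1^t\cdot u_1^1$ with $\bar u_1\preceq\bar v$, $u_1^t\preceq t$, $u_1^1\preceq v_1$; reducedness of $s\cdot u_1$ then forces $u_1^t\neq t$ (or trivial), whence $s\cdot u_1^t\preceq t$ and the conclusion follows directly. The case where $s$ is not absorbed by $v$ is handled symmetrically. No confluence or multiplicity bookkeeping is needed.
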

\begin{proof}
  Given reduced words $u$,$v$ and $w$, we have to show the following:
  \begin{align*}
    [u]\preceq[v]&\;\;\Rightarrow\;\;[w][u]\preceq[w][v]\\
    \intertext{and}
    [u]\preceq[v]&\;\;\Rightarrow\;\;[u][w]\preceq[v][w].\\
  \end{align*}
 By symmetry, it is sufficient to show the first
implication. By induction on $|w|$, it is enough to
consider the case where $w$ is a single letter $s$.

Suppose first that $s$ is left-absorbed by $v$. By Corollary
\ref{C:absorb_cox},
  $$[s][v]=[v].$$  If $s$ is also left-absorbed by $u$, we are
clearly done. Otherwise, by Theorem \ref{T:einfach_amador},  decompose
$u$ (up to permutation)
as $u=u'\cdot u_1$, where $s\cdot u_1$ is the reduct of
$s\cdot u$. Also, write $v=\bar v\cdot t\cdot v_1$ such that
$s\subset t$ and $\bar v$ is in $\Cent(s)$. Now, the word $u_1\preceq
u\preceq v$, so write $u_1=\bar u_1\cdot u_1^t \cdot\bar u_1^1$, where
$\bar u_1\preceq \bar v$, $ u_1^t\preceq t$ and $ u_1^1\preceq v_1$.
Since $s\cdot u_1$ is reduced, so is $s\cdot \bar
u_1\cdot u_1^t = \bar u_1\cdot s\cdot u_1^t$.

This forces $ u_1^t$ to be either trivial or different from $t$ (and
$s\neq t$ as well).  In both cases, we have that $s\cdot u_1^t\preceq
t$, which implies $s\cdot u_1\preceq v$, so we are done.

If  $s$ is not left-absorbed by $v$, by Theorem
\ref{T:einfach_amador}, we can write (up to permutation)  $v=v'\cdot
v_1$, where $v'$ is properly absorbed by $s$ and $s\cdot v_1$ is
reduced. So $[s][v]=[s\cdot v_1]$. If $s$ is
  left-absorbed by $u$, then $$[s][u]=[u] \preceq
  [v'\cdot v_1]\prec [s\cdot v_1].$$
\noindent Otherwise, write $u=\bar u\cdot u'\cdot u_1$ as above such
that $s\cdot u\to \bar u\cdot s\cdot u_1$.
  Since $\bar u$ and $s$ commute, note that $\bar u\cdot
u_1$ is irreducible, since $u$ is. Decompose $\bar u\cdot
u_1=u_1'\cdot u_{11}$ with $u_1'\preceq v'$ and
  $u_{11}\preceq v_1$. Since $s\cdot \bar u\cdot u_1=\bar u\cdot
s\cdot u_1$ is reduced, the word $u_1'$ must be trivial.
Therefore  $s\cdot \bar u \cdot u_1= s\cdot u_{11}\preceq
s\cdot v_1$.
\end{proof}

In particular, since $1\preceq v$ for any word $v$, we obtain the
following result.

\begin{cor}\label{C:u_kleinerals_uv}
  Let $u$ be reduced. Given any word $v$, the reduction $w$ of
$u\cdot v$ is $\preceq$-larger than $u$.
\end{cor}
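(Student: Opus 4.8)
The plan is to read this off immediately from the preceding lemma (compatibility of $\preceq$ with multiplication in $\cox$) together with the fact, recorded just after Lemma \ref{L:kleiner_fundiert}, that the trivial word $1$ is the $\preceq$-smallest element, so $1\preceq v$ for every word $v$.

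Concretely, I would first pass to the reduct $v_0$ of $v$. Since $v\to v_0$ implies $u\cdot v\to u\cdot v_0$, and reducts are unique up to equivalence by Proposition \ref{P:nonsplitred}, the word $w$ is also the reduct of $u\cdot v_0$; hence $[w]=[u][v_0]$ in $\cox$. Now $1\preceq v_0$, and applying the implication $[u]\preceq[v]\Rightarrow[u'][u]\preceq[u'][v]$ from the compatibility lemma, with the reduced word $u$ as the left factor, $1$ in place of $u$ and $v_0$ in place of $v$, yields
\[
[u]=[u][1]\preceq[u][v_0]=[w].
\]
Since $u$ and $w$ are both reduced, this means exactly $u\preceq w$, i.e.\ $w$ is $\preceq$-larger than $u$.

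There is essentially no obstacle here: the argument is a one-line consequence of the compatibility lemma. The only point requiring a little care is that the partial order $\prec$ (and hence $\preceq$) on $\cox$ is defined only on equivalence classes of \emph{reduced} words, which is why one first replaces $v$ by its reduct $v_0$ and uses that $w$, being the reduct of $u\cdot v$, is reduced.
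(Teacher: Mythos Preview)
Your proof is correct and is essentially identical to the paper's own argument: the corollary is deduced in one line from the compatibility lemma by applying $[u][1]\preceq[u][v_0]$ with $1\preceq v_0$. Your extra step of first replacing $v$ by its reduct $v_0$ just makes explicit what the paper leaves implicit.
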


In contrast to Proposition \ref{P:nonsplitred}, uniqueness of strong
reductions does no longer hold, \emph{e.g.}  $s\cdot s\str s$
and $s\cdot s\str 1$. However, we get the following result, which
allows us to permute the steps of the strong reduction:

\begin{prop}[Commutation Lemma]\label{P:commutation}
  If  $x$ is a strong reduct of $u\cdot v\cdot w$, then
  there is a strong reduct $y$  of $v$ such that $u\cdot y\cdot
  w\str x$.
\end{prop}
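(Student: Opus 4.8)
The plan is to fix a strong reduction $u\cdot v\cdot w=z_0\to z_1\to\dots\to z_n=x$ of length $n$ and argue by induction on the pair $\bigl(\ord(u\cdot v\cdot w),\,n\bigr)$ ordered lexicographically. If $n=0$ the word $u\cdot v\cdot w$ is reduced, hence so is its contiguous subword $v$, and $y=v$ works; if $v=1$ the claim is trivial. Otherwise the first step $z_0\to z_1$ rewrites two adjacent letters, and I would split according to whether both of them lie in the block $u$, both in $v$, both in $w$, or one is the last letter $s$ of $u$ and the other the first letter $t$ of $v$ (the symmetric situation at the other boundary being covered by the inversion antiautomorphism, which respects strong reduction).

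If both rewritten letters lie inside $v$, write $z_1=u\cdot v_1\cdot w$ with $v\to v_1$ a single step; that step is a cancellation, a commutation or a splitting, so either $\ord$ drops (by Lemma~\ref{L:kleiner_fundiert}(\ref{L:kleiner_fundiert:2}), for cancellation or splitting) or $n$ drops (for commutation), and in either case the inductive hypothesis applied to $z_1\str x$ gives a strong reduct $y$ of $v_1$ with $u\cdot y\cdot w\str x$, so $v\to v_1\str y$ exhibits $y$ as a strong reduct of $v$. If both rewritten letters lie inside $u$ (symmetrically $w$), write $z_1=u_1\cdot v\cdot w$; the inductive hypothesis gives a strong reduct $y$ of $v$ with $u_1\cdot y\cdot w\str x$, and prepending the localised step $u\cdot y\cdot w\to u_1\cdot y\cdot w$ finishes this case.

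The heart of the matter, and the step I expect to be the main obstacle, is the mixed case where the first step rewrites the last letter $s$ of $u$ together with the first letter $t$ of $v$. If it is a cancellation, so $t\subseteq s$ and $z_1=u\cdot v_0\cdot w$ with $v=t\cdot v_0$, the inductive hypothesis gives a strong reduct $y_0$ of $v_0$ with $u\cdot y_0\cdot w\str x$; since $t$ is the last letter of $u$ we get $u\cdot(t\cdot y_0)\cdot w\to u\cdot y_0\cdot w\str x$, and then either $\ord(u\cdot t\cdot y_0\cdot w)<\ord(u\cdot v\cdot w)$, in which case a further application of the statement to $u\cdot(t\cdot y_0)\cdot w$ produces the desired strong reduct $y$ of $v=t\cdot v_0$, or $v_0$ is already reduced, in which case $v$ is either itself reduced (take $y=v$) or satisfies $v\to v_0$ (take $y=v_0$). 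If it is a commutation, then $z_1\approx u\cdot v\cdot w$ and $n$ drops, so I would re-associate $z_1$ with the displaced first letter of $v$ attached to the left block, apply the inductive hypothesis to the resulting triple, and transport the output back across the harmless commutation, using the compatibility of $\preceq$ with the product in $\cox$ and Corollary~\ref{C:u_kleinerals_uv} to recognise it as a strong reduct of $v$. The genuinely delicate rule is splitting, where $s=t$ and the pattern $s\cdot s$ gets replaced by a product $p_1\cdots p_k$ of proper subletters lodged between the remnant of $u$ and $v_0$: here $\ord$ again drops strictly, so the statement can be reapplied after regrouping $p_1\cdots p_k\cdot v_0$ as the new middle block, but the subtle point — and, I expect, where the argument must be done with care — is to descend the strong reduct of this regrouped block to an honest strong reduct of $v=s\cdot v_0$ that still recombines with the untouched $u$ and $w$ to reach $x$. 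This descent is the recurring difficulty, and it is exactly what forces the induction to run on $\ord(u\cdot v\cdot w)$ rather than on reduction length alone; should it prove awkward, an alternative I would fall back on is first to standardise the given reduction so that commutations are absorbed into generalised cancellations and splittings, reducing the analysis to those two rules.
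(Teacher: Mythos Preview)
Your plan has a genuine gap at exactly the point you flag as ``the subtle point''. In the boundary case where the first step involves the last letter $s$ of $u$ and the first letter $t$ of $v$, after re-associating and applying the inductive hypothesis you obtain a strong reduct $y_0$ of some shorter block (typically $v_0$ where $v=t\cdot v_0$), together with a reduction $u\cdot(t\cdot y_0)\cdot w\str x$. To finish you want to re-apply the proposition to this new triple, but your induction on $(\ord(u\cdot v\cdot w),n)$ does not terminate: when $v_0$ is already reduced you get $y_0=v_0$, so $t\cdot y_0=v$ and $\ord$ has not moved, while the length of the new reduction is not controlled (the inductive hypothesis gives existence, not a length bound). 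Your cancellation subcase has the same problem in the direction $s\subset t$ you omit: the step $u\cdot y\cdot w\to u_0\cdot y\cdot w$ that you want to ``prepend'' cancels $s$ against the first letter of $v$, but after replacing $v$ by a strong reduct $y$ that letter need no longer be present. The dichotomy you state for the case $t\subseteq s$ with $v_0$ reduced (``either $v$ is reduced or $v\to v_0$'') is also false: take $v_0=r\subsetneq t$.

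The paper avoids all of this by inducting on the $\prec$-type of $v$ alone and never tracking the reduction length globally. The key manoeuvre you are missing is: if $v$ is not reduced, then after a permutation one can write $v=v_1\cdot a\cdot v_2$ with $a$ a \emph{non-reduced word of length $2$}. Since $a\prec v$, the proposition applied to the triple $(u\cdot v_1,\,a,\,v_2\cdot w)$ yields a strong reduct $b$ of $a$ with $u\cdot v_1\cdot b\cdot v_2\cdot w\str x$; and since $a$ was not reduced, $b\prec a$, hence $v_1\cdot b\cdot v_2\prec v$, and a second application of the inductive hypothesis gives the desired $y$. This reduces everything to the base case $|v|=2$, which is then handled by a direct analysis of the first generalised cancellation or generalised splitting (your fallback), together with one explicitly treated special case ($u$ a single letter, $w$ empty, and the first step deletes $u$). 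The point is that extracting a length-$2$ non-reduced subword from $v$ lets the induction run purely on $v$ and sidesteps the boundary re-association problem entirely.
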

\begin{proof}
  Consider first the case where $u=t$ has length
$1$, the word $v$ has length $2$ and $w$ is empty. Suppose
furthermore that in the first step of the reduction $t\cdot
  v\str x$, the letter $t$ is deleted. It is easy to check that
setting $y$ as the reduct of $v$, the results follows, except if
  $v=s\cdot s$, the letter $t$ is contained in $s$ and the strong
  reduction is $t\cdot (s\cdot s)\str s\cdot s\str x$, where $x$ is a
  product of letters which are properly contained in $s$. Then:

 \begin{itemize}
   \item If $t=s$, set $y=s$.
   \item If $t\cdot x\str x$, set $y=x$.
   \item Otherwise, apply  Theorem  \ref{T:einfach_amador} to $x$
and $t$ and decompose $x=x'\cdot x_1$ such that $|x'|$ is properly
contained in $t$ and $t\cdot x_1$ is reduced. Set $y=t\cdot x_1$.
  \end{itemize}

 In all three cases, the
  strong reductions hold:
  \[t\cdot (s\cdot s)\str t\cdot y\str x.\]

In order to show the proposition for the general case,
motivated by the proof of \ref{P:nonsplitred},
let us introduce the following rule:
\begin{description}
    \item[\sc Generalised Splitting] Given a word $s_1\cdots s_n$ and
      a pair of indices $i\not=j$ such that $s_i=s_j$ and $s_i$
      commutes with all $s_k$'s with $k$ between $i$ and $j$, delete
      $s_j$ and replace $s_i$ by a product of letters which are
      properly contained in $s$.
  \end{description}

  Note that a strong reduction consists of finitely
many generalised cancellations and generalised splittings, followed by
 commutation (if needed).

If $v$ is reduced, set  $y=v$. Otherwise, we will apply induction on
the $\prec$-order type of $v$. Suppose therefore that the assertion
holds  for all $v'\prec v$ and consider $x$ a strong reduct of
$u\cdot v\cdot w$. If $2<|v|$, then (after permutation) write
$v=v_1\cdot a\cdot v_2$, where $a$ is a non-reduced word of length
$2$. Note that by assumption, the subword $a\prec v$, so there is a
strong reduct $b$ of $a$ such that $u\cdot v_1\cdot b\cdot v_2 \cdot
w\str x$. Since $a$ is not reduced, we have $b\prec a$ and
thus $v_1\cdot  b\cdot v_2\prec v$. Induction yields the existence of
a strong reduct $y$ of $v_1\cdot b\cdot v_2$ such that
$$u\cdot y\cdot w\str x.$$

Note that $v= v_1\cdot a\cdot v_2\str
v_1\cdot b\cdot v_2\str y$. Therefore, we may assume that $v$
has length $2$ and it is non-reduced. By the above discussion,
the first step in the strong reduction

$$ u\cdot v \cdot w \str x.$$

\noindent is either a generalised cancellation or a generalised
splitting. If it involves only letters from $v$, its strong reduction
is $\preceq$-smaller and one step shorter to the output $x$, so we are
done by induction on the number of steps in the strong reduction.
Likewise if the letters involved are in $u\cdot w$. Thus, we may
assume that there are two letters  $t$ and $r$ witnessing the
reduction in the first step and, say, the letter $t$ occurs in $u$ and
$r$ in
$v$.

We have two cases:

\begin{itemize}
 \item The letter $t$ is absorbed by $v$. In particular, the letter
lies in the final segment $\tilde u$. Write $u=u_1\cdot t$.  If it was
a generalised splitting, the result $v'\prec v$ and $u_1\cdot v'
\cdot w\str x$. Induction gives a strong reduct $x'$ of $v'$ such that
$u_1\cdot x'\cdot w\str x$. In particular, we are now in the case
$t\cdot v \str x'$ and thus, by the discussion at the beginning of
the proof, there exists
a strong reduction $y$ of $v$ such that $t\cdot y\str x'$. Note that
$$u\cdot v \cdot w = u_1\cdot (t\cdot v)\cdot w \str u_1 (t\cdot
y)\cdot w \str u_1\cdot x'\cdot w\str x,$$
so we are done.

If the first step was a generalised cancellation, the word $v$ does
not change and now $u_1\cdot v\cdot w\str x$ in one step less. We
obtain a strong reduct $x'$ of $v$ with $u_1\cdot x'\cdot w\str x$.
Again, note that $t\cdot v\str v\str x'$ so, again by the previous
discussion, there is a strong reduct $y$ of $v$ which does the job.

   \item Otherwise, the occurrence $r$ in $v$ is deleted. If $r=t$, we
are in the previous case. Suppose hence $r\subsetneq t$ and write
$u=u_1\cdot t\cdot u_2$, where $u_2$  commutes with $r$. We may
assume that $v=r\cdot s$. Note that $r$ and $s$ are comparable, since
$v$ is not reduced. If $r\subseteq s$, then set $y=s$, which is a
strong reduct of $v$. We have that $u\cdot y \cdot w\str x$.

If $s\subsetneq r$, then $s$ and $u_2$ commute as well. Note that
$u_1\cdot (t\cdot s)\cdot u_2\cdot w=u \cdot s\cdot w\str x$
in one step less. We have that  $u_1\cdot t\cdot
u_2\cdot w \str x$ and setting $y=r$ does the job.
\end{itemize}
\end{proof}

Despite the apparent arbitrarity of the strong reductions, they are
orthogonal to the reduction without splitting, as the following
result shows.

\begin{prop}\label{P:strongvszerfallos}
  Let $u$ and $v$ be reduced words and consider $x$ the reduct of
$u\cdot v$ and $x^*$ some strong reduct of $u\cdot v$, where splitting
occurs.  Then  $x^*\prec x$.
\end{prop}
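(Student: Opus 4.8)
The plan is to localise the only source of non-reducedness of $u\cdot v$ by means of the Symmetric Decomposition Lemma~\ref{C:amador}. Write $u=u_1\cdot u'\cdot w$ and $v=w\cdot v'\cdot v_1$ as provided there, so that $u'$, $w$, $v'$ pairwise commute, $w$ is a commuting word, $u'$ is properly left-absorbed by $v_1$, $v'$ is properly right-absorbed by $u_1$, the word $u_1\cdot w\cdot v_1$ is reduced, and $u\cdot v\to u_1\cdot w\cdot v_1$; by uniqueness of reducts (Proposition~\ref{P:nonsplitred}) this means $x\approx u_1\cdot w\cdot v_1$. Commuting the second copy of $w$ past $v'$ gives $u\cdot v\approx u_1\cdot(u'\cdot v')\cdot(w\cdot w)\cdot v_1$, which exhibits $w\cdot w$ as the unique non-reduced portion. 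The proof then rests on two facts: (A) every strong reduct $x^*$ of $u\cdot v$ is equivalent to the reduct of $u_1\cdot y\cdot v_1$ for some strong reduct $y$ of $w\cdot w$, and $y$ is obtained using a splitting precisely when $x^*$ is; and (B) if $y$ is a strong reduct of $w\cdot w$ obtained using at least one splitting, then the reduct of $u_1\cdot y\cdot v_1$ is $\prec x$.

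Fact~(B) is the routine one. Since $w$ is commuting, write $w=r_1\cdots r_k$ with the $r_i$ pairwise commuting, so $w\cdot w\approx r_1r_1\cdots r_kr_k$; as proper subletters of distinct $r_i$'s lie in pairwise disjoint intervals, the blocks $r_ir_i$ do not interact, and every strong reduct of $w\cdot w$ has the form $y=c_1\cdots c_k$ where each $c_i$ is either $r_i$ (a cancellation $r_i\cdot r_i\to r_i$) or a possibly empty product of proper subletters of $r_i$ (a splitting). If at least one splitting occurred then some $c_i\neq r_i$, and replacing those $r_i$ in $w$ by the corresponding $c_i$ witnesses $y\prec w$, whence $u_1\cdot y\cdot v_1\prec u_1\cdot w\cdot v_1=x$. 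Passing to the reduct uses only \textsc{Commutation} and \textsc{Cancellation}, so Lemma~\ref{L:kleiner_fundiert} gives that the reduct of $u_1\cdot y\cdot v_1$ is $\preceq u_1\cdot y\cdot v_1$; combining this with transitivity of $\prec$ yields $x^*\approx(\text{reduct of }u_1\cdot y\cdot v_1)\preceq u_1\cdot y\cdot v_1\prec x$, that is, $x^*\prec x$. (The degenerate case $w=1$ cannot arise, as then $w\cdot w$ is reduced and no splitting is possible.)

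The substance of the argument, and the step I expect to be the main obstacle, is Fact~(A). Its first half is that, because $u$ and $v$ are reduced, every splitting step in a strong reduction $u\cdot v\str x^*$ acts on a copy of a letter of $w$: a letter that is split must occur twice with its two occurrences commutable adjacent; these cannot both lie in $u$ (resp.\ in $v$) without contradicting reducedness of $u$ (resp.\ of $v$); and a letter of $u'$ cannot occur in $v=w\cdot v'\cdot v_1$ at all, since it commutes with every letter of $w$ and of $v'$ and is a proper subletter of a letter of $v_1$. Its second half is that all the cancellations and splittings of the $w$--$w$ interface can be collected at the front of the reduction: here one applies the Commutation Lemma~\ref{P:commutation} (strong reductions of a product can be rearranged), together with the fact that the weak absorptions of $u'$ into $v_1$ and of $v'$ into $u_1$ commute past everything separating them from their absorbing factors, and with Corollaries~\ref{C:kommwort} and~\ref{C:absorb_cox} which identify the reducts of $w\cdot w$. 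Carrying this out cleanly calls for an induction on $\ord(u\cdot v)$: one peels off the first non-\textsc{Commutation} step of the reduction; \textsc{Commutation} and \textsc{Cancellation} steps keep $u\cdot v$ of the form ``product of two reduced words'' while (weakly) lowering $\ord$, whereas a \textsc{Splitting} step replaces some $r_i\cdot r_i$ by a product of proper subletters, which, after first reducing the two resulting factors, again yields a product of two reduced words of strictly smaller $\ord$-rank, so that the inductive hypothesis applies; the estimate for the intermediate word follows from the compatibility of $\preceq$ with the semigroup operation of $\cox$. The delicate bookkeeping is precisely in keeping both factors reduced through a \textsc{Splitting} step and in verifying that a splitting is not silently absorbed when passing to the reduct.
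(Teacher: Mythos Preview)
Your approach is essentially the paper's: use the Symmetric Decomposition Lemma to localise the non-reducedness of $u\cdot v$ to the block $w\cdot w$, observe that any strong reduct of $w\cdot w$ involving a splitting is $\prec w$ (your Fact~(B), which is also the paper's opening remark), and then invoke the Commutation Lemma to extract a $b\prec w$ with $x^*\preceq u_1\cdot b\cdot v_1\prec u_1\cdot w\cdot v_1\approx x$.

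Where you differ from the paper is in Fact~(A). You propose an induction on $\ord(u\cdot v)$, peeling off one non-commutation step at a time and arguing that each step keeps you in the shape ``product of two reduced words''. The paper bypasses this induction entirely. It observes that any strong reduction can be written as a block of generalised cancellations, then a single generalised splitting, then the rest. In $u_1\cdot u'\cdot w\cdot w\cdot v'\cdot v_1$ a generalised cancellation can only delete a final letter of $u'$, an initial letter of $v'$, or a letter in one copy of $w$; hence after the cancellation block the word still has the shape $u_1\cdot u''\cdot w'\cdot w'\cdot v''\cdot v_1$ with $u''\subset u'$, $v''\subset v'$, $w'\subset w$. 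The first generalised splitting is therefore forced to act inside $w'\cdot w'$, producing some $a\prec w$. One then rewrites the resulting word as $(u_1\cdot v')\cdot a\cdot(u'\cdot v_1)$ (using that $u'$, $v'$, $a$ pairwise commute) and applies the Commutation Lemma to that triple, obtaining $b\preceq a\prec w$ with the desired inequality. This is more direct than your induction and sidesteps precisely the ``delicate bookkeeping'' you flag.

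Two small points. First, your Fact~(A) is stated more strongly than required (and than what the paper actually proves): you do not need $x^*$ to equal the weak reduct of $u_1\cdot y\cdot v_1$ for some strong reduct $y$ of $w\cdot w$; it suffices that $x^*\preceq u_1\cdot b\cdot v_1$ for some $b\prec w$. Second, your ``first half'' argument that splittings can only touch $w$ is phrased for the original word $u\cdot v$, but the relevant claim is about the first splitting \emph{after} an arbitrary block of cancellations; the paper's shape-preservation observation is exactly what makes this rigorous without recursion.
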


Note that that this is not true for the product of three reduced
words: $s\cdot s\cdot s$ can be strongly reduced to $s$ by one
splitting operation.

\begin{proof}
  Remark first that, if $w=s_1\cdots s_n$ is a
  commuting word and $y^*$ is a strong reduct of $w\cdot w$, then
  $y^*=t_1\cdots t_n$, where each $t_i$ is a strong reducts of
$s_i\cdot s_i$.  If splitting ever occured in the reduction, then
$y^*\prec  w$.

  To prove the proposition, choose decompositions $u=u_1\cdot u'\cdot
w$ and  $w\cdot v'\cdot v_1=v$, as in Corollary \ref{C:amador}.  A
general  cancellation applied to $u_1\cdot u'\cdot w\cdot w\cdot
v'\cdot v_1$
  does the following: either the last letter of (a permutation of)
  $u'$ is deleted, the first letter of $v'$ is deleted or one letter
in one of the copies of $w$ is deleted. Hence,  after
  finitely may generalised cancellations, the end result has the form
$z=u_1\cdot  u''\cdot w'\cdot w'\cdot v''\cdot v_1$, where $u''$ is a
left end of $u'$, the subword $v''$ is a right right end of $v'$ and
$w'$ is a subword of  $w$. A generalised splitting for $z$ can
only happen inside $w'\cdot w'$. So we obtain a word $z'=u_1\cdot
  u''\cdot a \cdot v''\cdot v_1$, where $a$ is obtain from $w\cdot w$
  by the splitting operation. If we apply the Commutation Lemma
\ref{P:commutation} to  $(u_1\cdot v')\cdot a\cdot (u'\cdot
v_1)\approx z'$, we obtain a strong reduct $b$ of $a$ such that
$u_1\cdot b\cdot v_1\str x^*$. The above observation gives that
$b\prec w$ and thus $x^*\preceq u_1\cdot b\cdot v_1\prec
  u_1\cdot w\cdot v_1\approx x$.
\end{proof}

Inspired by the following picture:

\begin{figure}[!htbp]
\centering

\begin{tikzpicture}[>=latex,->]

\fill (-1,0) circle (2pt);
\fill (0,1) circle (2pt);
\fill (1,0) circle (2pt);

\draw[->] (-1,0) -- (0,1) node[pos=.5, above left] {$a$}
;
\draw[->] (0,1) -- (1,0) node[pos=.5, above right] {$b$} ;
\draw[->] (1,0) -- (-1,0) node[pos=.5, below] {$c$} ;

\end{tikzpicture}

\end{figure}

\noindent we deduce strong reductions from a given one, as long as
products are involved.

\begin{prop}[Triangle Lemma]\label{P:triangle}
  Let $a$, $b$ and $c$ be reduced words. Then $a\cdot b\str c\inv$
  implies $c\cdot a\str b\inv$ and $b\cdot c\str a\inv$.
\end{prop}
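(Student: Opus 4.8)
The plan is to exploit the inversion antiautomorphism together with the Commutation Lemma~\ref{P:commutation} and the orthogonality result in Proposition~\ref{P:strongvszerfallos}. First, observe that it suffices to prove just one of the two conclusions, say $c\cdot a\str b\inv$: applying that implication to the triple $(b,c,a\inv)$ — which is legitimate since $b\cdot c\str (a\inv)\inv=a$ would be exactly the second conclusion — shows the two statements are interderivable once we also use that inversion reverses and inverts all three words and turns $\str$ into $\str$ (because \textsc{Cancellation}, \textsc{Commutation} and \textsc{Splitting} are each stable under inversion, the latter because a proper subletter of $s$ is a proper subletter of $s\inv=s$). So the real content is a single cyclic-shift statement: from $a\cdot b\str c\inv$ deduce $c\cdot a\str b\inv$.

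The key step is to rewrite the hypothesis in a form that exposes the product $a\cdot b\cdot c$. From $a\cdot b\str c\inv$ and the fact (noted just before Definition~\ref{D:strong_reduction}) that $c\cdot c\inv\str 1$, I would argue that $a\cdot b\cdot c\str 1$: appending $c$ on the right of a strong reduction $a\cdot b\str c\inv$ yields $a\cdot b\cdot c\str c\inv\cdot c\str 1$, using that strong reduction is closed under right multiplication (each of the three rules acts locally, so it may be applied inside a longer word). Conversely one recovers $a\cdot b\str c\inv$ from $a\cdot b\cdot c\str 1$: here I would apply the Commutation Lemma~\ref{P:commutation} to the decomposition $(a\cdot b)\cdot c\cdot 1$ — or rather, reduce $c$ against the rest — but more robustly, multiply $a\cdot b\cdot c\str1$ on the right by $c\inv$ to get $a\cdot b\cdot c\cdot c\inv\str c\inv$, and then use that $c\cdot c\inv$ can be strongly reduced to $1$ \emph{inside} the word together with the Commutation Lemma to move that reduction to the front, giving $a\cdot b\str c\inv$. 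Thus the hypothesis is equivalent to the \emph{cyclically symmetric} statement $a\cdot b\cdot c\str 1$, and the same equivalence run with the cyclic shift $(c,a,b)$ shows $a\cdot b\cdot c\str1$ is equivalent to $c\cdot a\cdot b\str 1$, hence to $c\cdot a\str b\inv$.

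The main obstacle is the bookkeeping in the two ``multiply and re-commute'' moves: one must be careful that $c\cdot c\inv\str1$ really can be inserted as a sub-reduction of a longer word and then commuted to an end, which is precisely what Proposition~\ref{P:commutation} is for — I would apply it to the factorisation $u\cdot(c\cdot c\inv)\cdot w$ with $u=a\cdot b$, $w=1$ (and its mirror for the other direction) to pull the trivialising reduction of $c\cdot c\inv$ out, leaving exactly $a\cdot b\str c\inv$. A secondary point requiring care is that $\str$ is only a relation, not a function (uniqueness fails, as the excerpt stresses with $s\cdot s\str s$ and $s\cdot s\str 1$), so all the implications above are genuinely one-directional manipulations of \emph{existence} of a strong reduction; none of them needs a canonical reduct, which is why Proposition~\ref{P:commutation} — an existence statement — is exactly the right tool and Proposition~\ref{P:strongvszerfallos} is not actually needed. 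Once the cyclic symmetry of ``$a\cdot b\cdot c\str1$'' is established, both conclusions $c\cdot a\str b\inv$ and $b\cdot c\str a\inv$ are immediate by re-expanding.
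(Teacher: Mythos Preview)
Your cyclic-symmetry idea is appealing, but the key backward implication ``$a\cdot b\cdot c\str 1 \Rightarrow a\cdot b\str c\inv$'' has a genuine gap, and everything in your argument rests on it (and on its mirror). You propose to right-multiply by $c\inv$ to obtain $a\cdot b\cdot c\cdot c\inv\str c\inv$ and then invoke the Commutation Lemma with middle block $v=c\cdot c\inv$ to ``pull out'' the reduction $c\cdot c\inv\str 1$. But Proposition~\ref{P:commutation} does \emph{not} let you choose which strong reduct of $v$ to use: it only asserts that \emph{some} strong reduct $y$ of $c\cdot c\inv$ satisfies $a\cdot b\cdot y\str c\inv$. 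In general $c\cdot c\inv$ has many non-trivial strong reducts (e.g.\ if $c=s\cdot t$ with $s$ and $t$ overlapping letters, then $c\cdot c\inv=s\cdot t\cdot t\cdot s$ strongly reduces to the reduced word $s\cdot t\cdot s$), so there is no reason the $y$ handed to you is $1$. The same problem afflicts the left-side ``mirror'' you need for the cyclic shift. In fact the implication you want is equivalent in strength to the Triangle Lemma itself: once the lemma is known, Corollary~\ref{C:word_inverses} tells you that any reduced $y$ with $y\cdot c\str 1$ satisfies $y\approx c\inv$, which is exactly the missing ingredient---but you cannot invoke that corollary here without circularity. Your sentence ``the same equivalence run with the cyclic shift $(c,a,b)$ shows $a\cdot b\cdot c\str 1$ is equivalent to $c\cdot a\cdot b\str 1$'' thus has no independent content: it is just a restatement of what needs to be proved.

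The paper's proof avoids this by arguing directly, via $\prec$-induction on $a$ and $b$. When $a\cdot b$ is already reduced the claim is immediate since then $c\inv\approx a\cdot b$; otherwise one peels a single letter off the interface between $a$ and $b$ (either a last letter of $a$ properly left-absorbed by $b$, a first letter of $b$ properly right-absorbed by $a$, or a common final/initial letter $s$ with $a=a_1\cdot s$ and $b=s\cdot b_1$) and applies the Commutation Lemma to a \emph{short} middle block whose possible strong reducts are all controlled by the inductive hypothesis. The point is that Proposition~\ref{P:commutation} is used only in situations where every strong reduct that could arise keeps the induction running, rather than being applied to all of $c\cdot c\inv$ at once.
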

\begin{proof}
  By symmetry, it is enough to show that $a\cdot b\str c\inv$
 implies $c\cdot a\str b\inv$. Suppose hence that $a\cdot b\str
c\inv$. We apply induction on the $\prec$-type of $a$ and $b$.

If $a\cdot b$ is reduced, then $c=b\inv\cdot a\inv$ and
so $c\cdot a=b\inv\cdot a\inv\cdot a\str b\inv$. Thus, assume
$a\cdot b$ is not reduced. We distinguish the following  cases (up to
permutation):

\begin{itemize}
 \item  $a= a_1\cdot s$, where $s$ is properly left-absorbed
    by $b$. Since $b$ is the only strong reduct of $s\cdot b$, the
Commutation Lemma \ref{P:commutation} gives that
$$a\cdot b=a_1\cdot(s\cdot b)\to a_1\cdot b\str c\inv.$$

\noindent Since $a_1\prec a$, induction gives that $c\cdot a_1\str
b\inv$, which implies that

$$c\cdot a=(c\cdot a_1)\cdot s\str b\inv\cdot s\to b\inv.$$

\item $b=s\cdot b_1$, where $s$ is properly right-absorbed
    by $a$. Again $a\cdot b=a\cdot(s\cdot b_1)\to a\cdot b_1\str
c\inv$, so by induction $c\cdot a\str b_1\inv$. Thus
$$c\cdot(a\cdot s)\str b_1\inv\cdot s=b\inv.$$
\noindent Since $a$ is the only  strong reduct of $a\cdot s$, again
Proposition \ref{P:commutation} gives that $c\cdot a\str b\inv$.

\item $a=a_1\cdot s$ and $b=s\cdot b_1$ Since $a_1\cdot(s\cdot s)\cdot
b_1\str c\inv$, Proposition \ref{P:commutation} provides a strong
reduct $x$ of $s\cdot s$ such that $a_1\cdot x\cdot b_1\str c\inv
  b\str c\inv$. The word $x$ is either $s$ or a product
of proper subletters of $x$ and hence $\prec$-smaller than $s$. Since
$b=s\cdot b_1$ is reduced, apply Theorem \ref{T:einfach_amador} to
decompose $x=x_1\cdot x'$, where $x'$ is properly left absorbed by
$b_1$ and $x_1\cdot b_1$ is reduced (If $x=s$, then $x_1=s$ and
  $x'=1$). Since $x'\cdot b_1\str b_1$, the reduction $(a_1\cdot
x_1)\cdot(x'\cdot b_1)\str c\inv$ implies $ a_1\cdot x_1\cdot
b_1\str c\inv$. Since $a_1\prec a$ and $x_1\cdot b_1\preceq b$,
induction gives that
$$c\cdot a_1\str b_1\inv\cdot x_1\inv.$$

\noindent In particular,
$$c\cdot a= c\cdot a_1 \cdot s \str (b_1\inv\cdot x_1\inv)\cdot s
\to b_1\inv \cdot s \to b\inv.$$
\end{itemize}
\end{proof}
We can now easily conclude the following:
\begin{cor}\label{C:word_inverses}
  If $u$ and $v$ are both reduced and $u\cdot v\str 1$, then
$v\approx u\inv$~.
\end{cor}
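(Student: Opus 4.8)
The statement to prove is Corollary~\ref{C:word_inverses}: if $u$ and $v$ are reduced and $u\cdot v\str 1$, then $v\approx u\inv$. The plan is to reduce this directly to the Triangle Lemma~\ref{P:triangle}. Observe that the hypothesis $u\cdot v\str 1$ can be read as $u\cdot v\str 1\inv$, since $1\inv=1$. Applying the Triangle Lemma with $a=u$, $b=v$, $c=1$, we immediately obtain $c\cdot a\str b\inv$, that is, $1\cdot u\str v\inv$, hence $u\str v\inv$. But $u$ is reduced by hypothesis, so the only strong reduct of $u$ is (a permutation of) $u$ itself — there is nothing to cancel or split without first lengthening the word, which $\str$ cannot do. Similarly $v\inv$ is reduced because $v$ is, and reductions preserve the property of being reduced (permutations of reduced words are reduced, and the inversion antiautomorphism sends reduced words to reduced words). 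Therefore $u\str v\inv$ with both sides reduced forces $u\approx v\inv$, and applying the inversion antiautomorphism yields $u\inv\approx v$, i.e.\ $v\approx u\inv$.

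The one point that requires a word of justification is the claim that a strong reduction $u\str w$ between two \emph{reduced} words can only be a permutation. This follows from Lemma~\ref{L:kleiner_fundiert}(\ref{L:kleiner_fundiert:2}): if the strong reduction $u\str w$ involved at least one cancellation or splitting, then $w\prec u$, and in particular $w\not\approx u$; but a reduced word has no cancellation applicable to any of its permutations (by the remark following Definition~\ref{D:reduction}), and likewise the splitting rule requires an occurrence of $s\cdot s$, which cannot appear in a reduced word (two equal letters never commute, so $s\cdot s$ is never reduced, and a permutation of a reduced word is reduced). Hence the strong reduction $u\str w$ uses only commutations, which is exactly to say $u\approx w$.

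The main (and essentially only) obstacle here is simply making sure the Triangle Lemma is being invoked in the correct direction and with the correct roles of $a,b,c$; once that bookkeeping is done, the corollary is a two-line consequence. I expect no genuine difficulty beyond this. Concretely, the argument runs: from $u\cdot v\str 1$ apply Proposition~\ref{P:triangle} to get $1\cdot u\str v\inv$; simplify to $u\str v\inv$; note both $u$ and $v\inv$ are reduced; conclude $u\approx v\inv$ by the observation above; and finally apply the inversion antiautomorphism (which, being an antiautomorphism of $\cox$ preserving $\approx$, sends $u\approx v\inv$ to $u\inv\approx v$).
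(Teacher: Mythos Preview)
Your proof is correct and follows essentially the same strategy as the paper: both invoke the Triangle Lemma (Proposition~\ref{P:triangle}) with $a=u$, $b=v$, $c=1$. The only difference lies in the concluding step. The paper uses \emph{both} conclusions of the Triangle Lemma to obtain $u\inv\str v$ and $v\str u\inv$, hence $u\inv\preceq v\preceq u\inv$ by Lemma~\ref{L:kleiner_fundiert}(\ref{L:kleiner_fundiert:2}), and concludes $v\approx u\inv$ by antisymmetry. You instead use only one direction, obtaining $u\str v\inv$, and then observe directly that a reduced word admits no cancellation or splitting in any permutation, so any strong reduction from it is a pure permutation. Your route is marginally more economical (one fewer appeal to the Triangle Lemma), though your citation of Lemma~\ref{L:kleiner_fundiert}(\ref{L:kleiner_fundiert:2}) is superfluous---the real work is done by the subsequent direct observation that neither cancellation nor splitting can fire on a reduced word.
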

\begin{proof}
  The Triangle Lemma (Proposition \ref{P:triangle}) yields $1\cdot
u\str v\inv$ and $v\cdot 1\str u\inv$. That is,  $u\inv\str v$ and
$v\str u\inv$. Thus
  $$u\inv\preceq v\preceq u\inv,$$
\noindent and therefore $v\approx u\inv$~.
\end{proof}

Recall by Corollary
\ref{C:absorb_cox} that if $u$ is the reduct of $u\cdot v$, then
$v$ is right-absorbed by $u$. This is no  longer
true for strong reductions: take for example
$$(s\cdot t)\cdot(t\cdot s\cdot t) = s \cdot (t\cdot t) \cdot (s\cdot
t) \str s \cdot (s\cdot t)\str s\cdot t.$$

\noindent However, in  certain situations we are still able to
conclude the same  for strong reductions as for reductions with no
splitting.

\begin{lemma}
  Let $u$ and $v$ be reduced. If every letter in $v$ which is
  right-absorbed by $u$ is properly absorbed and  $u\cdot v\str
u$, then $u\cdot v\to u$.
\end{lemma}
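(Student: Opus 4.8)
The plan is to push the statement onto the Symmetric Decomposition Lemma and then show that, under the hypothesis, the \textsc{Splitting} rule is never available, so that the given strong reduction is in fact an ordinary one.

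First I would apply the Symmetric Decomposition Lemma (Corollary~\ref{C:amador}) to the reduced words $u$ and $v$, obtaining (up to commutation) $u=u_1\cdot u'\cdot w$ and $w\cdot v'\cdot v_1=v$ with $u'$ properly left-absorbed by $v_1$, $v'$ properly right-absorbed by $u_1$, the words $u',w,v'$ pairwise commuting, $w$ a commuting word, and $u_1\cdot w\cdot v_1$ reduced, so that $u\cdot v\to u_1\cdot w\cdot v_1$. The first key point is that the hypothesis forces $w=1$. Indeed $w\inv\approx w$, so $w$ appears as a commuting prefix of $u\inv$ and hence $|w|\subseteq\sL(u\inv)=\sr(u)$; thus every letter $s$ of the copy of $w$ lying inside $v$ is right-absorbed by $u$, and by uniqueness of the absorbing letter (Absorption Lemma~\ref{L:schlucktheorie}(\ref{L:schlucktheorie:1})) the one letter of $u$ witnessing this is exactly the corresponding copy of $s$ inside $u$ itself — so $s$ is absorbed by $u$ but \emph{not} properly. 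The hypothesis forbids such an $s$, whence $w=1$.

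With $w=1$ we have $u=u_1\cdot u'$, $v=v'\cdot v_1$ and $u\cdot v\to u_1\cdot v_1$ with $u_1\cdot v_1$ reduced, so $u_1\cdot v_1$ is the reduct of $u\cdot v$. It now suffices to show that \textsc{Splitting} never applies in a strong reduction of $u\cdot v$: then any such strong reduction is an ordinary reduction, hence terminates at the reduct $u_1\cdot v_1$, and $u\cdot v\str u$ with $u$ reduced gives $u\approx u_1\cdot v_1$, i.e.\ $u\cdot v\to u$. To see that no \textsc{Splitting} step is available I would verify that no repeated letter is ever brought into adjacency in a word obtained from $u\cdot v=u_1\cdot u'\cdot v'\cdot v_1$ by commutations and cancellations. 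Since cancellations only delete letters, the letters present always form a sub-multiset of those of $u_1,u',v',v_1$; running through the possible origins of two equal letters one uses that $u=u_1u'$, $v=v'v_1$ and $u_1v_1$ are reduced (two equal letters coming from one of these blocks, or one from $u_1$ and one from $v_1$, are separated by a letter of that block, and letters of $u_1$ and $v_1$ are never cancelled), that $u'$ and $v'$ commute and so share no letter, and that the absorptions of $u'$ into $v_1$ and of $v'$ into $u_1$ are proper, so that two equal copies of a letter $s$ in $u'$ (or in $v'$) are separated by a letter failing to commute with $s$, which therefore cannot be removed before the inner copy is — the latter being absorbed into a strictly larger letter of $v_1$, resp.\ $u_1$ — whence the copies never become adjacent. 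In every case no $s\cdot s$ pattern appears, so \textsc{Splitting} never fires.

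I expect the bookkeeping of the last paragraph — that with $w=1$ no doubled letter can ever be created, so that no splitting can occur — to be the main obstacle; the rest is a direct application of the decomposition machinery of this section. Dropping the hypothesis, a splitting of a doubled letter can genuinely carry the product below its reduct, as in the example $(s\cdot t)\cdot(t\cdot s\cdot t)\str s\cdot t$ noted just before the lemma, where the first $t$ of $v=t\cdot s\cdot t$ is right-absorbed by $u=s\cdot t$ but only improperly.
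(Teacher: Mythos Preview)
Your first two steps are fine and amount to the same thing the paper does with the asymmetric Decomposition Lemma: there one takes $u=u_1\cdot u'$, $v=v'\cdot v_1$ with $u'$ properly left-absorbed by $v_1$ and $v'$ merely right-absorbed by $u_1$, and then the hypothesis upgrades the absorption of $v'$ to a proper one (the witnessing letter in $u$ lies in $u_1$ because $v'$ commutes with $u'$). Your $w=1$ is exactly this upgrade, read through Corollary~\ref{C:amador}.

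The gap is in your last paragraph. You want to maintain the invariant ``no $s\cdot s$ pattern can be created by commutations and cancellations alone'', and you rely on the claim that letters of $u_1$ and $v_1$ are never cancelled and that the separating non-commuting letter between two equal letters survives. Neither claim is justified: cancellations delete letters of $u'$ and $v'$, and those deletions may remove precisely the non-commuting letter that was separating two equal letters inside $u$ (both in $u'$, or one in $u_1$ and one in $u'$, say). Your case analysis does not track what happens as the intermediate word evolves, and the sentence about the ``inner copy being absorbed into a strictly larger letter'' does not prevent the outer copy from meeting a third equal letter elsewhere. This bookkeeping is genuinely delicate and you have not carried it out.

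The paper sidesteps this entirely. Once both absorptions are proper, it applies the Commutation Lemma~\ref{P:commutation} to $(u_1\cdot v')\cdot(u'\cdot v_1)\str u$: since $v'$ is \emph{properly} right-absorbed by $u_1$ and $u'$ is \emph{properly} left-absorbed by $v_1$, the only strong reducts of $u_1\cdot v'$ and of $u'\cdot v_1$ are $u_1$ and $v_1$ respectively, so the strong reduction factors through $u_1\cdot v_1\str u$. As $u_1\cdot v_1$ is reduced this forces $u_1\cdot v_1\approx u=u_1\cdot u'$, hence $v_1\approx u'$; but then $u'$ properly left-absorbs itself, which is impossible unless $u'=1$. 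Thus $v_1=1$, $v=v'$ is right-absorbed by $u$, and $u\cdot v\to u$. This is a two-line application of existing machinery and avoids your invariant entirely; I would replace your last paragraph with it.
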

\begin{proof}
  Apply Theorem \ref{T:einfach_amador} to obtain fine decompositions
$u=u_1\cdot u'$ and $v'\cdot v_1=v$ such that $u'$ is properly
left-absorbed by $v_1$, the word $v'$ is right-absorbed by $u_1$, the
words $u'$ and $v'$ commute and $u_1\cdot v_1$ is reduced.

By hypothesis, the word  $v'$ is properly right-absorbed by
$u_1$. The Commutation Lemma \ref{P:commutation}  applied to
$(u_1\cdot v')\cdot(u'\cdot v_1)\str u$ gives
$$(u_1\cdot v')\cdot(u'\cdot v_1)\to u_1\cdot v_1\str u.$$

\noindent Since $u_1\cdot v_1$ is reduced, we have $u_1\cdot v_1=u$.
So $v_1=u'$ must properly absorb itself, which is a contradiction
unless $v_1=1$ and thus $u\cdot v\to u$.
\end{proof}

Let us conclude by giving a criteria for when a word wobbles inside
two other. This will be useful for determining all possible paths
between two given flags.

\begin{prop}\label{P:wortwobbel}
  Let $u\cdot v$ and $w$ be reduced. If $u\cdot w\str u$ and
  $w\inv\cdot v\str v$, then $|w|\subset \wob(u,v)$.
\end{prop}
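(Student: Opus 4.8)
The plan is to reduce the statement to a single inclusion by an inversion symmetry, and then to prove that inclusion by induction on the length of $w$.

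First I would observe that the three hypotheses are invariant under the substitution $(u,v,w)\mapsto(v\inv,u\inv,w\inv)$. Indeed $v\inv\cdot u\inv=(u\cdot v)\inv$ is reduced because $u\cdot v$ is; by the Triangle Lemma \ref{P:triangle} the hypothesis $u\cdot w\str u$ is equivalent to $w\cdot u\inv\str u\inv$ (apply it with $a=u$, $b=w$, $c=u\inv$), which is exactly the hypothesis of the second type for the triple $(v\inv,u\inv,w\inv)$; and symmetrically $w\inv\cdot v\str v$ gives $v\inv\cdot w\inv\str v\inv$, the hypothesis of the first type for that triple. Since $\wob(v\inv,u\inv)=\sL(v)\cap\sr(u)=\wob(u,v)$ and $|w\inv|=|w|$, it suffices to prove only that $|w|\subset\sr(u)$; equivalently, by Lemma \ref{L:v_in_sl} and Corollary \ref{C:absorb_cox}, that $w$ is right-absorbed by $u$, i.e.\ that $u\cdot w\to u$ (reduction without splitting).

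Now I would induct on the length of $w$. The case $w=1$ is trivial. If $w=s$ is a single letter, then $u\cdot s\str u$ cannot involve a splitting: a splitting step requires two adjacent equal letters to arise, and in reducing $u\cdot s$ a second copy of $s$ can become adjacent to the new one only after $s$ has commuted past a suffix of $u$ ending in a copy of $s$, which already forces $s\subset\sr(u)$; so by Proposition \ref{P:strongvszerfallos} the strong reduction is in fact a reduction $u\cdot s\to u$, giving $s\subset\sr(u)$. For the inductive step, $w\neq1$ forces $u\cdot w$ to be non-reduced (a reduced word is stable under $\str$), so either $u$ bites $w$ from the left or $w$ bites $u$ from the right. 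In the first situation write (after permutation) $w=r\cdot w_0$ with $r$ in the initial segment of $w$ and $u\cdot r\to u$, and fix a decomposition $u\approx u^{\circ}\cdot t\cdot u^{\bullet}$ with $r\subset t$ and $r$ commuting with $u^{\bullet}$; since $t\cdot u^{\bullet}\cdot r$ strongly reduces to the reduced word $t\cdot u^{\bullet}$, the Commutation Lemma \ref{P:commutation} applied to $u^{\circ}\cdot(t\cdot u^{\bullet}\cdot r)\cdot w_0\str u$ produces the shorter instance $u\cdot w_0\str u$. One then shows $r\subset\sL(v)$: rewriting $w\inv\cdot v\approx w_0\inv\cdot r\cdot v\str v$, the letter $r$ cannot be absorbed on the $w_0\inv$ side before $v$ absorbs it, because $u\cdot v$ reduced together with $r\subset\sr(u)$ prevents $r$ from tunnelling through $v$; hence $r\cdot v\to v$, and a second application of the Commutation Lemma turns $w_0\inv\cdot r\cdot v\str v$ into $w_0\inv\cdot v\str v$. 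The induction hypothesis now applies to $(u,v,w_0)$ and yields $|w_0|\subset\wob(u,v)$, whence $|w|=r\cup|w_0|\subset\wob(u,v)$. The remaining situation, where $w$ bites $u$ from the right, is handled in the mirror way: there one peels a letter off the end of $w$ and uses the hypotheses of the triple $(v\inv,u\inv,w\inv)$, reducing again to a strictly shorter word.

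The main obstacle is the inductive step, and inside it the passage $r\subset\sr(u)\Rightarrow r\subset\sL(v)$. Strong reductions are not confluent and can collapse drastically (cf.\ the examples after Corollary \ref{C:word_inverses}), so dropping the hypothesis that $u\cdot v$ is reduced really does break the statement: the peeled letter $r$ could then genuinely be swallowed on the $w_0\inv$ side. Extracting from ``$u\cdot v$ reduced'' the precise rigidity that pins $r$ and forces it into $\sL(v)$ is the heart of the matter; a secondary bookkeeping issue is checking that the biting dichotomy always allows one to peel off a letter of $w$, from one end or the other, while keeping all three hypotheses intact for the shorter word, and here the Commutation Lemma is the tool that converts the strong reductions one is handed into the ones the induction requires.
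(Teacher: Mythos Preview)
Your induction on the length of $w$ does not close, for exactly the reason you flag. To apply the induction hypothesis to $w_0$ you must verify both $u\cdot w_0\str u$ and $w_0\inv\cdot v\str v$. For the second, from $w_0\inv\cdot r\cdot v\str v$ you want $w_0\inv\cdot v\str v$, and the natural route is via $r\cdot v\to v$, i.e.\ $r\subset\sL(v)$. But that is precisely (half of) the conclusion of the proposition for the single letter $r$: you know $r\subset\sr(u)$ and want $r\subset\sL(v)$, under the standing hypotheses. So the inductive step has not lowered the difficulty; nothing in your setup gives an independent handle on $r\subset\sL(v)$. There is also a secondary problem with the case split: when $w$ bites $u$ from the right, what you obtain is a letter of the final segment of $u$ contained in some (possibly strictly larger) letter of $w$, not a letter of $w$ absorbed by $u$. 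Passing to the mirror triple $(v\inv,u\inv,w\inv)$ trades this for the analogous situation on the $v$ side; if $w$ properly absorbs letters from both ends while neither $u$ nor $v$ absorbs a letter of $w$, your peeling procedure never starts.

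The paper sidesteps both issues by inducting on the length of $v$ rather than of $w$. Write $v=s\cdot v_1$ and set $u_1=u\cdot s$, which is reduced because $u\cdot v$ is. Using the Triangle and Commutation Lemmas one manufactures a strong reduct $w_1$ of $s\cdot w\cdot s$ with $u_1\cdot w_1\str u_1$ and $w_1\inv\cdot v_1\str v_1$; induction then gives $w_1$ properly right-absorbed by $u_1=u\cdot s$. A final computation with the Absorption Lemma splits $w_1$ into a part properly absorbed by $s$ and a part $w_u$ properly right-absorbed by $u$ and commuting with $s$, and recovers $w$ as $x\cdot w_u$ with $x$ a product of proper subletters of $s$; one then checks $x$ is also properly right-absorbed by $u$. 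The key point is that moving the first letter of $v$ onto $u$ keeps the product reduced for free and never requires isolating an absorbable letter inside $w$.
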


\begin{proof}
  By Remark \ref{R:wob_echt}, it is enough to prove that $w$ is
properly right-absorbed by $u$ (and likewise for $v$). We proceed by
induction on the length of $|v|$.

 If $v=1$, then $ w\inv\cdot 1\str 1$ implies $w\inv=1$, since $w$ is
reduced.

 Suppose now that $v=s\cdot v_1$. Set $u\cdot s=u_1$, which is again
reduced. So is  $u_1\cdot v_1=u\cdot v$.

The condition $w\inv\cdot v\str v$ implies
  $v\inv\cdot w\str v\inv$ by Proposition \ref{P:triangle}. This
implies
$$v_1\inv\cdot(s\cdot  w\cdot s)\str (v_1\inv\cdot s)\cdot s\str
v_1\inv.$$

By the Commutation Lemma (Proposition \ref{P:commutation}), there is
a strong reduct $w_1$ of $s\cdot w\cdot s$ with $ v_1\inv\cdot
  w_1\str v_1\inv$, or equivalently,  $w_1\inv\cdot v_1\str v_1$.

The Triangle Lemma \ref{P:triangle} gives that  $s\cdot (w\cdot
s)\str w_1$ implies
$ w_1\inv\cdot s\str s\cdot w\inv$, that is, $s\cdot w_1\str
  w\cdot s$.

In particular, we have that $u_1\cdot w_1=u\cdot (s\cdot w_1)\str
u\cdot (w\cdot s)\str u\cdot s=u_1$.

  By the induction hypothesis applied to $u_1$, $v_1$ and $w_1$, we
have that $w_1$ is properly right-absorbed by
  $u_1=u\cdot s$. By Lemma \ref{L:schlucktheorie}
(\ref{L:schlucktheorie:3}), write $w_1$ as $w_s\cdot w_u$
  where $w_s$ is properly absorbed by $s$ and $w_u$ is properly
  right-absorbed by $u$ and commutes with $s$. Note that $s\cdot w_u$
is the only strong reduct of $s\cdot w_1$. Proposition
\ref{P:commutation} yields that the strong reduction $(s\cdot
w_1)\cdot s\str w\cdot s\cdot s\str w$ factors through $s\cdot
w_u\cdot s\str w$.

 Since  $s\cdot w_u\cdot s$ is equivalent to $s\cdot s\cdot w_u$,
there is strong reduct $x$ of $s\cdot s$ such
  that $x\cdot w_u\str w$. However, the product $x\cdot w_u$ is
already reduced and so $x\cdot w_u=w$. The reduct $x$ is either $s$
or consists of proper subletters of $s$. Suppose that $x=s$. Then
$u\cdot w=u\cdot s\cdot w_u=u\cdot s$, since $w_u$ is properly
right-absorbed by $u$ and commutes with $s$. This contradicts with
$u\cdot w\str u$. Hence, the word  $x$ consists of proper subletters
of $s$. By Theorem \ref{T:einfach_amador}, since $u\cdot s$ is
reduced, decompose $x$ into $x'\cdot x_1$, where $x'$ is properly
right-absorbed by $u$ and $u\cdot x_1$ is reduced. Then $u\cdot x_1$
is the only strong reduct of $u\cdot w=u\cdot x'\cdot x_1\cdot  w_u$.
We conclude that $u\cdot x_1=u$ and thus $x_1=1$ by Corollary
\ref{C:absorb_cox}. Hence, the word $w=x'\cdot  w_u$ is properly
right-absorbed by $u$.
\end{proof}
\section{Flags and Paths}

Let $M$ be any colored $N$-space. As in Definition
\ref{D:complete}, recall that a \emph{flag} $F$ in $M$ is a path
$a_0-\ldots-a_N$ of length $N$, where each $a_i$ belongs to
$\A_i(M)$. We call $a_i$ the $i$-\emph{vertex} of the flag
$F$.

\begin{definition}\label{D:weakop}
  Given flags $F$ and $G$, we say that $G$ is obtained from $F$ by
  the \emph{weak operation} $\alpha_s$ if $s$ consists of the indexes
  where the vertices of $F$ and $G$ differ. A weak path of flags $P$
  is a sequence of flags $F_0,\ldots,F_n$, where each $F_i$ is
obtained from $F_{i-1}$ by a weak operation $\alpha_{s_i}$. We call
  $s_1\cdots s_n$ the \emph{word of $P$.}
\end{definition}
More generally, we define:
\begin{definition}\label{D:Eq}
   Let $A$ be a subset of $[0,N]$. Two flags are \emph{equivalent
    modulo} $A$ if they have the same vertices in all levels outside
  $A$. We write $F/A$ for the equivalence class of $F$ modulo $A$.
\end{definition}
\noindent Note that $F/A$ is interdefinable with the set of vertices
of $F$ with levels outside $A$. For $i$ in $[0,N]$ and
$A=[0,N]\setminus \{i\}$, the equivalence class $F/A_i$ is
interdefinable with the vertex $f_i$. We can say that $F/A_i$
and $F'/A_j$, for $i$ and $j$ immediate succesors, are connected in
case they belong to a class of a common flag $G$. This induces a
structure bi-interpretable with $PS_N$.

Any two flags can be connected by a weak flag path:  decompose the
set $I$ of indices where the vertices of $F$ and $G$ differ
as the disjoint union $s_1\cup\cdots s_n$ of intervals, such that
$s_i$ and $s_j$ commute for $i\neq j$. Then $F$ and $G$ are connected
by a weak path with word $s_1\cdots s_n$. In particular, we obtain the
following.

\begin{lemma}\label{L:Eq_flags}
  Two flags $F$ and $G$ are equivalent modulo $A$ \iff they can be
connected by a weak path whose word consists of letters contained in
$A$. Furthermore,  there is such a path whose word is
commuting.

\noindent In particular, any two flags are connected by a weak path,
by taking $A=[0,N]$.
\end{lemma}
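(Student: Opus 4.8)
The plan is to prove both directions of the stated equivalence directly from the definitions; the forward direction will at the same time produce the commuting word promised in the ``furthermore'' clause. As already sketched in the paragraph preceding the statement, the construction rests on decomposing a set of levels into its maximal subintervals.

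I would first dispatch the easy implication. Suppose $F=F_0,F_1,\dots,F_m=G$ is a weak path whose word $s_1\cdots s_m$ consists of letters contained in $A$. By the definition of the weak operation $\alpha_{s_i}$, the flags $F_{i-1}$ and $F_i$ have the same vertex in every level outside $s_i$, hence in every level outside $A$; running over $i$ we conclude that $F$ and $G$ agree in all levels outside $A$, i.e.\ they are equivalent modulo $A$. Taking $A=[0,N]$ here also reduces the final assertion to the converse, since any two flags trivially agree outside $[0,N]$.

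For the converse, let $I\subseteq A$ be the (possibly empty) set of levels where $F$ and $G$ differ, and write $I=T_1\cup\dots\cup T_n$ as the disjoint union of its maximal subintervals of $[0,N]$. Between two distinct $T_i$'s there lies at least one level not in $I$, so these intervals have pairwise distance at least $2$ and hence commute as letters; thus $T_1\cdots T_n$ is a commuting word all of whose letters lie in $I\subseteq A$. I would then build a weak path with this word by letting, for $0\le k\le n$, the $i$-vertex of $F_k$ be the $i$-vertex of $G$ when $i\in T_1\cup\dots\cup T_k$ and the $i$-vertex of $F$ otherwise, so $F_0=F$ and $F_n=G$. The one point needing genuine care is that each $F_k$ is actually a flag, i.e.\ that its vertices form a path: whenever two consecutive levels take their $F_k$-vertices from the same flag ($F$ or $G$), those vertices are adjacent because that flag is a path; and whenever the source switches between two consecutive levels, the lower or upper of them, say $j$, borders one of the maximal intervals $T_i$ from outside, so by maximality of $T_i$ the level $j$ is not in $I$ (or $j\in\{-1,N+1\}$), whence the $j$-vertex is shared by $F$ and $G$ and the consecutive pair at $j$ can again be read off a single flag. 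Thus $F_k$ is a flag; here the maximality of the $T_i$ is exactly what is used, and it is equally what makes the word $T_1\cdots T_n$ commuting. Finally, since $F_k$ is a flag and differs from $F_{k-1}$ precisely in the levels of $T_k$ --- all of which lie in $I$, where $F$ and $G$ genuinely differ --- the flag $F_k$ is obtained from $F_{k-1}$ by the weak operation $\alpha_{T_k}$, so $F_0,\dots,F_n$ is the required weak path.
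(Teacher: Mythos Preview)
Your proof is correct and follows exactly the approach sketched in the paragraph immediately preceding the lemma in the paper: decompose the set $I$ of differing levels into its maximal (hence pairwise commuting) subintervals and build the weak path by successively swapping in the $G$-vertices on each interval. You have simply supplied the details the paper leaves implicit, in particular the verification that each intermediate $F_k$ is indeed a flag, which hinges precisely on the maximality of the $T_i$.
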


Commuting letters in a path induces another path whose word
is a permutation of the previous one.

\begin{lemma}\label{L:vertauschung}
  Let $s$ and $t$ be commuting letters and assume that $F$ and $G$ are
  connected by a weak flag path with word $s\cdot t$. Then there is a unique
  weak flag path from $F$ to $G$ with word $t\cdot s$.
\end{lemma}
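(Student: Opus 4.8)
The plan is to exhibit the intermediate flag directly. Suppose $F=F_0 \xrightarrow{\alpha_s} F_1 \xrightarrow{\alpha_t} F_2 = G$ is the given weak flag path with word $s\cdot t$. Since $s=(l_s,r_s)$ and $t=(l_t,r_t)$ commute, their distance is at least $2$, so the intervals $(l_s,r_s)$ and $(l_t,r_t)$ are disjoint and moreover no index of $s$ is adjacent to an index of $t$; in particular $F_0$ and $F_1$ agree off $s$, and $F_1$ and $F_2$ agree off $t$. Define a flag $H$ by taking, for each level $i$, the $i$-vertex of $F_2$ if $i\in s$, and the $i$-vertex of $F_0$ otherwise (equivalently, the $i$-vertex of $F_1$ if $i \in t$, and of $F_0$ otherwise — these coincide because $s$ and $t$ are disjoint).

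The main point is to check that $H$ is actually a flag, i.e.\ that $a_0 - a_1 - \cdots - a_N$ with these vertices is a path in $M$. Here is where commutativity does the work: the vertex at level $i$ of $H$ was inherited from $F_2$ when $i\in s$ and from $F_0$ otherwise, and consecutive levels $i,i+1$ cannot straddle the ``boundary'' between $s$ and $t$ in a bad way precisely because $r_s \le l_t$ or $r_t \le l_s$. Concretely, if both $i$ and $i+1$ lie in $s$, the edge $a_i - a_{i+1}$ is an edge of the path $F_2$; if both lie outside $s$, it is an edge of $F_0$; and if one lies in $s$ and the other outside, then (because no index of $s$ is within distance $1$ of an index of $t$) the exceptional level is an endpoint $l_s$ or $r_s$, and the corresponding vertices of $F_0$ and $F_2$ already agree there since the operation $\alpha_s$ fixes the vertices $a_{l_s}$ and $a_{r_s}$ (the operation only inserts new vertices strictly inside $s$). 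Hence every consecutive pair is joined by an edge already present, so $H$ is a flag. By construction $H$ and $F=F_0$ differ exactly in the levels of $t$, so $F \xrightarrow{\alpha_t} H$ is a weak operation; and $H$ and $G=F_2$ differ exactly in the levels of $s$, so $H \xrightarrow{\alpha_s} G$. This gives a weak flag path $F \to H \to G$ with word $t\cdot s$.

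For uniqueness, observe that in a weak flag path from $F$ to $G$ with word $t\cdot s$ the first operation $\alpha_t$ must change precisely the $t$-levels and leave everything else fixed, so the intermediate flag is forced to agree with $G$ on the $t$-levels and with $F$ elsewhere; but that is exactly $H$. (One uses here that the ``weak operation $\alpha_t$'' in Definition \ref{D:weakop} is determined by the set of levels where the two flags differ, so once that set is pinned down to be $t$, and the values off $t$ are pinned down to those of $F$, the values on $t$ are pinned down to those of $G$.) Thus $H$ is the unique possible intermediate flag, and the weak flag path with word $t\cdot s$ is unique. I expect the only genuinely delicate point to be the edge-by-edge verification that $H$ is a path, and in particular the bookkeeping at the boundary levels $l_s, r_s, l_t, r_t$ — but commutativity (distance $\ge 2$) is exactly tailored to make this go through, since it prevents a level of $s$ from being adjacent to a level of $t$.
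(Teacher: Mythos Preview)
Your approach is the same as the paper's---build the intermediate flag by mixing the $s$-part and $t$-part of $F$ and $G$---but your explicit definition is swapped. As written, your $H$ takes the $i$-vertex of $F_2$ for $i\in s$ and of $F_0$ otherwise; this is exactly $F_1$, since $(F_2)_i=(F_1)_i$ for $i\in s$ (the second step only changed $t$) and $(F_0)_i=(F_1)_i$ for $i\notin s$. So your edge-by-edge verification merely reconfirms that $F_1$ is a flag, and your subsequent claim that ``$H$ and $F$ differ exactly in the levels of $t$'' is false for this $H$. The correct definition is the mirror image: take the $i$-vertex of $F_2=G$ for $i\in t$ and of $F_0=F$ otherwise (equivalently, the $i$-vertex of $F$ for $i\in s$ and of $G$ otherwise). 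Your parenthetical ``equivalently'' is also off: with $(F_1)_i$ for $i\in t$ and $(F_0)_i$ otherwise you get $F_0$ itself, since $(F_1)_i=(F_0)_i$ whenever $i\notin s$.

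Once you swap $s$ and $t$ in the definition, your edge verification goes through verbatim with the roles exchanged, and the rest of the argument (including uniqueness) is correct and matches the paper's proof. The paper states it more tersely---``replace the $s$-part of $H$ by the $s$-part of $F$ and its $t$-part by the $t$-part of $G$''---and leaves the flag verification implicit, but your more explicit boundary check is a reasonable addition.
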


\begin{proof}
  Given the path $F - H - G$ with word $s\cdot t$, define a new flag
$H'$ by replacing the $s$-part of $H$ by the $s$-part of $F$ and its
$t$-part by the $t$-part of $G$. By construction, the weak path
$F - H' - G$ has word $t\cdot s$.

\noindent Uniqueness is clear since the $s$-part and the $t$-part of
$H'$ are determined by those of $F$ and $G$.
\end{proof}

Iterating the previous result, since any permutation can be achieved
by a sequence of transpositions of adjacent commuting letters, given a
weak path $P_u$ be a from $F$ to $G$ with word $u$, if $v$ is a
permutation of $u$, we can connect $F$ and $G$ by a weak path $P_v$
with word $v$. Note that $P_v$ does not depend on the sequence of
transpositions and the collection of vertices of flags occuring in
$P_u$ agrees with the one of flags in $P$. We call the path $P_v$ a
\emph{permutation of} $P_u$.

We will now link the words appearing in weak paths with their
distance as in Lemma \ref{L:operation}.

\begin{lemma}\label{L:abstand_flaggenpfad}
  Let\/ $t=(l,r)$ and $F$ and $G$ be equivalent modulo $t$. Let $a_l$
  and $a_r$ the vertices of $F$ (and $G$) of level\/ $l$ and $r$,
  respectively. Given a subletter $s\subset t$, the following are
  equivalent:
  \begin{enumerate}[a)]
  \item\label{L:abstand_flaggenpfad:abstand} The flags $F$ and $G$
    have finite $s$-distance in $M^{a_r}_{a_l}$.
  \item\label{L:abstand_flaggenpfad:pfad} The flag $F$ and $G$ are
    connected by a weak flag path whose letters are contained in $t$
    but do not contain $s$.
  \end{enumerate}
\end{lemma}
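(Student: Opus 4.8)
The plan is to translate both statements into a common combinatorial picture inside the colored $N$-space $M^{a_r}_{a_l}$ (renumbered, via Remark~\ref{R:lokal_operation}, so that $a_l=a_{-1}$ and $a_r=a_{N+1}$ relative to $t=(l,r)$, i.e.\ $\A_s$-paths in $M^{a_r}_{a_l}$ are exactly $s$-paths running beneath $a_r$ and above $a_l$). Recall that a weak flag path from $F$ to $G$ replaces, at each step $\alpha_{s_i}$, the vertices of the current flag in levels $s_i$ by new ones; by Lemma~\ref{L:Eq_flags} we may assume from the outset that $F$ and $G$ share all vertices outside $t$, in particular the vertices $a_l$ and $a_r$ of level $l$ and $r$. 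The key observation is the correspondence between an $s$-path beneath $a_r$ and above $a_l$ and a weak flag path: given an $s$-path $x=c_0-c_1-\dots-c_k=y$ in $\A_s(M^{a_r}_{a_l})$, where $c_0$ is the $l_s$-vertex of $F$ (or more precisely a vertex of level in $s$ that we can reach from $F$), one wobbles the flag one vertex at a time along this path; conversely a weak flag path whose letters lie in $t$ but avoid $s$ leaves the $s$-vertices untouched and hence realizes the two configurations as endpoints of a path that bypasses the level(s) in $s$.

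Here is the order of steps. First, reduce to $a_l=a_{-1}$, $a_r=a_{N+1}$ and $F$, $G$ agreeing outside $t$, as above. Second, for the direction $(\ref{L:abstand_flaggenpfad:pfad})\Rightarrow(\ref{L:abstand_flaggenpfad:abstand})$: given a weak path with word $u=s_1\cdots s_n$, all $s_i\subset t$, no $s_i\supset s$ — note that since the $s_i$ are subletters of $t$, either $s_i$ is disjoint from $s$ or $s_i\subsetneq s$ or $s_i$ overlaps $s$ without containing it; in every case $s\nsubseteq s_i$ means the weak operation $\alpha_{s_i}$ fixes at least one level of $s$, and by chaining the fixed "pivot" vertices through successive flags we build an explicit $s$-path of length bounded by (something like) $\sum_i (\text{something})$ connecting the $s$-vertices of $F$ and $G$; it runs beneath $a_r$ and above $a_l$ because every flag in the weak path does, giving finite $s$-distance in $M^{a_r}_{a_l}$. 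Third, for $(\ref{L:abstand_flaggenpfad:abstand})\Rightarrow(\ref{L:abstand_flaggenpfad:pfad})$: take a shortest $s$-path $P$ beneath $a_r$ and above $a_l$ between the $s$-vertices of $F$ and $G$; induct on its length, using that consecutive vertices of $P$ differ by a single level, which corresponds to a weak operation whose letter is a proper subletter of $s$ avoiding... no — rather, we wobble *around* the end vertices: realize each step of $P$ by a weak operation $\alpha_{s_i}$ with $s_i\subsetneq s$, and these are the letters that "do not contain $s$". The subtlety is that moving one $s$-vertex forces recomputation of the adjacent non-$s$ vertices of the flag too (a flag must remain a path), so one appeals to completeness/richness of $M$ — via Theorem~\ref{T:extnice} or Axiom~$(\ref{S:axiome:unendlich})$ — to fill in the intermediate flags; the letters used to refill levels outside $s$ are again subletters of $t$ not containing $s$.

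The main obstacle I expect is the bookkeeping in step three: an $s$-path in $M^{a_r}_{a_l}$ between the $s$-vertices of $F$ and $G$ need not a priori lift to a *weak flag* path, because a flag carries vertices in all of $[0,N]$, not just in $s$, and sliding one level along $P$ may be incompatible with the current choice of the others. One has to argue that, after possibly lengthening the path (this is why the statement gives an inequality-free biconditional rather than an exact distance), one can always extend each intermediate $s$-configuration to a full flag lying beneath $a_r$ and above $a_l$, which is exactly what completeness of $M$ and the niceness/richness machinery of Section~\ref{S:Construction} were set up to provide. Once that is in place, the bound on the length of the weak path is immaterial to the statement, and the equivalence follows. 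Finally, by Lemma~\ref{L:abstand_flaggenpfad} combined with Lemma~\ref{L:vertauschung} one may moreover take the word of the weak path to be reduced, but that refinement is not needed here.
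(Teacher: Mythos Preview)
Your direction $(\ref{L:abstand_flaggenpfad:pfad})\Rightarrow(\ref{L:abstand_flaggenpfad:abstand})$ is correct and matches the paper: since each letter $s_i$ does not contain $s$, some level of $s$ is fixed at each step, and chaining these pivot vertices through the $\A_s$-parts of the successive flags gives the desired $s$-path inside $M^{a_r}_{a_l}$.

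For $(\ref{L:abstand_flaggenpfad:abstand})\Rightarrow(\ref{L:abstand_flaggenpfad:pfad})$ you are overcomplicating, and in one respect mis-aiming. The lemma is stated for an \emph{arbitrary} colored $N$-space $M$; it does not assume $M$ is complete, simply connected, or a model of $\psn$. So the appeal to Theorem~\ref{T:extnice} or Axiom~(\ref{S:axiome:unendlich}) is out of place, and the ``main obstacle'' you anticipate does not exist. The intermediate flags come for free from the definition of $M^{a_r}_{a_l}$: if $b_0,\ldots,b_n$ is the given $s$-path in $\A_s(M^{a_r}_{a_l})$ joining an $s$-vertex of $F$ to one of $G$, then each $b_i$ lies between $a_l$ and $a_r$, so by the very definition of ``lies between'' there is a path $a_l-\cdots-b_i-b_{i+1}-\cdots-a_r$ (using the edge $b_i-b_{i+1}$ to splice the two witnessing paths). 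Padding with the vertices of $F$ outside $t$ produces a flag $F_i$ containing $b_i$ and $b_{i+1}$ and agreeing with $F$ (and $G$) outside $t$; set $F_0=F$, $F_n=G$. Now $F_i$ and $F_{i+1}$ share the vertex $b_{i+1}$, whose level $j_i$ lies in $s$; hence they are equivalent modulo $t\setminus\{j_i\}$, and Lemma~\ref{L:Eq_flags} connects them by a weak flag path with letters contained in $t\setminus\{j_i\}$, none of which can contain $s$. Concatenate.

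Two minor corrections to your narrative: the letters of the resulting weak path are not required to be proper subletters of $s$ (they are subletters of $t$ missing some index in $s$), and no induction on the length of the $s$-path is needed.
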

\begin{proof}
  $(\ref{L:abstand_flaggenpfad:abstand})\to
  (\ref{L:abstand_flaggenpfad:pfad})$: Consider a path $ b_0,\ldots
  b_n$ in $\A_s(M^{a_r}_{a_l})$ connecting two vertices of $F$ and
  $G$.  For every $i$ in $\{1,\ldots,n-1\}$, pick a flag $F_i$
  containing $b_i$ and $b_{i+1}$ which agrees with $F$ and $G$ outside
  the levels in $t$.  Set $F_0=F$ and $F_{n}=G$. If $b_{i+1}$  has
  level $j_i$, then $F_i$ and $F_{i+1}$ are equivalent modulo
  $t\setminus\{j_i\}$. They are thus connected by a weak flag path
  whose letters are contained in $t\setminus\{j\}$ and therefore none
  contains $s$. The concatenation of these flag paths gives the
  result.

  $(\ref{L:abstand_flaggenpfad:pfad})\to
  \ref{L:abstand_flaggenpfad:abstand})$: Let $F=F_0 - \ldots - F_n=G$
  be a weak flag path whose letters are in $t$ but do not contain
  $s$. For every $i$ in $\{0,n-1\}$, the flags $F_i$ and
  $F_{i+1}$ have a common vertex in $\A_s(M^{a_r}_{a_l})$. Thus, we
  can connect $F$ and $G$ by a path whose vertices lie in
  $\A_s(F_0)\cup\ldots\cup\A_s(F_n)$ and hence, between $a_l$ and
  $a_r$.
\end{proof}

In order to distinguish between weak operations between flags and
global applications of $\alpha_s$ to nice sets, as in Lemma
\ref{L:nice}, we introduce the following definition, at the level of
flags.

\begin{definition}
 For $s=(l,r)$, the flag $G$ is obtained by a \emph{global application
of} $\alpha_s$ from  $F$ if $G$ is obtained by a weak application of
$\alpha_s$ from $F$  and its new vertices  have infinite
distance in $M^{a_r}_{a_l}$ from $F$, where $a_l$ and $a_r$
are the vertices of of $F$ (and $G$) of level $l$ and
$r$, respectively.
\end{definition}

Since a flag is in particular a nice set, these two
definitions agree, by applying Lemma \ref{L:abstand_flaggenpfad} to
the case $t=s$:

\begin{cor}\label{C:fteprod}
  Given an interval $s$ and flags $F$ and $G$, the following
  are equivalent:
  \begin{enumerate}[a)]
  \item The flag $G$ is obtained from $F$ by a global application of
    $\alpha_s$, as in Lemma \ref{L:nice}.
  \item The flag $G$ is obtained from $F$ by the weak operation
$\alpha_s$ and there is no weak flag path connecting them whose
word consists of proper subletters of $s$.
  \end{enumerate}
\end{cor}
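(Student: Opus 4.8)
The plan is to derive the corollary as a direct specialization of Lemma~\ref{L:abstand_flaggenpfad} to the case $t=s$, once the flag-level notion of a global application has been recast as a statement about $s$-distances between $F$ and $G$.

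First I would record what both (a) and (b) presuppose, namely that $G$ is obtained from $F$ by the weak operation $\alpha_s$. Writing $s=(l,r)$, this means $F$ and $G$ agree outside the levels of $s$; in particular they share a vertex $a_l\in\A_l$ beneath a vertex $a_r\in\A_r$, and $G$ is obtained from $F$ by replacing the vertices at the levels of $s$ with new vertices $b_{l+1},\dots,b_{r-1}$ forming a path $a_l,b_{l+1},\dots,b_{r-1},a_r$. These $b_i$ are exactly the $\A_s$-vertices of $G$, and consecutive ones are adjacent, so they all lie in one $\A_s$-component of $G$. Hence ``some (equivalently every) $b_i$ has infinite $s$-distance from $\A_s(F)$ in $M^{a_r}_{a_l}$'' is the same condition as ``$F$ and $G$ have infinite $s$-distance in $M^{a_r}_{a_l}$''. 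Since a flag is a nice set, Lemma~\ref{L:nice} (condition~(\ref{L:nice:schwachnice})), applied to the nice set $F$ and its extension $F\cup\{b_{l+1},\dots,b_{r-1}\}$, shows that this condition is precisely what it means for $G$ to arise from $F$ by a global application of $\alpha_s$ as in that lemma. Thus (a) is equivalent to the assertion that $F$ and $G$ have infinite $s$-distance in $M^{a_r}_{a_l}$.

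Next I would apply Lemma~\ref{L:abstand_flaggenpfad} with $t:=s$, taking the distinguished subletter to be $s$ itself. It gives that $F$ and $G$ have \emph{finite} $s$-distance in $M^{a_r}_{a_l}$ \iff they are connected by a weak flag path all of whose letters are contained in $s$ but do not contain $s$, i.e.\ are proper subletters of $s$; so the word of such a path consists of proper subletters of $s$. Negating both sides and combining with the previous paragraph yields exactly ``(a) $\iff$ (b)''.

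The only slightly delicate point --- the ``main obstacle'', such as it is --- is the bookkeeping of the middle paragraph: checking that a flag is genuinely a nice set, and that the three formulations of ``the new vertices are far from $F$'' (the flag-level definition just above, the Lemma~\ref{L:nice} condition for the extension $F\cup\{b_{l+1},\dots,b_{r-1}\}$, and the $s$-distance between $F$ and $G$ being infinite) all coincide. With that dictionary in place, the corollary is immediate.
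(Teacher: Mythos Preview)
Your proposal is correct and follows exactly the paper's approach: the paper's proof is the single sentence ``Since a flag is in particular a nice set, these two definitions agree, by applying Lemma~\ref{L:abstand_flaggenpfad} to the case $t=s$'', and you have simply spelled out the bookkeeping behind that sentence. The dictionary you set up---identifying the flag-level global application with the Lemma~\ref{L:nice} condition and with the infinite $s$-distance statement, then negating Lemma~\ref{L:abstand_flaggenpfad}---is precisely what the paper leaves implicit.
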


\begin{definition}
  A \emph{flag path} is a weak flag path where each flag is
obtained from its predecessor by a global operation. If $F$ and $G$
are connected with a flag path with word $u$, we write
\[F\lmto{u}G.\]
  A flag path is \emph{reduced} if its word is reduced.
\end{definition}

\begin{lemma}\label{L:stark_aus_schwach}
  If there is a weak path from $F$ to $G$ with word $u$, we have
  $F\lmto{v} G$ for some $v$ with $v\preceq u$.
\end{lemma}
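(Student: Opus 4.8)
The plan is to argue by well-founded induction on the word $u$ with respect to the order $\prec$, which is legitimate by Lemma \ref{L:kleiner_fundiert}. So assume the statement holds for every word strictly $\prec$-smaller than $u$, and let $P\colon F=F_0-F_1-\cdots-F_n=G$ be a weak flag path with word $u=s_1\cdots s_n$.

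If every step of $P$ is a global operation, then $P$ is already a flag path, so $F\lmto{u}G$ with $u\preceq u$ and there is nothing to prove. Otherwise, choose an index $i$ for which the step $F_{i-1}-F_i$ is not a global application of $\alpha_{s_i}$. These two flags are connected by the weak operation $\alpha_{s_i}$, so, since condition (a) of Corollary \ref{C:fteprod} fails while the weak operation does hold, condition (b) must fail as well; hence there is a weak flag path $Q$ from $F_{i-1}$ to $F_i$ whose word $r$ consists of proper subletters of $s_i$. Splicing $Q$ into $P$ in place of its $i$-th step produces a weak flag path $P'$ from $F$ to $G$ with word $u'=s_1\cdots s_{i-1}\cdot r\cdot s_{i+1}\cdots s_n$.

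Because $u'$ arises from $u$ by replacing the single letter $s_i$ with a (possibly empty) product of proper subletters of it, we have $u'\prec u$ by the definition of $\prec$. The induction hypothesis applied to $P'$ then yields a word $v\preceq u'$ with $F\lmto{v}G$, and combining $v\preceq u'$, $u'\prec u$ and Lemma \ref{L:kleiner_fundiert}(\ref{L:kleiner_fundiert:1}) we get $v\prec u$, in particular $v\preceq u$, as required.

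The point deserving care is the choice of induction parameter: replacing a single step by the subpath $Q$ may lengthen the flag path, so an induction on the number of steps would not close. It is precisely the strict $\prec$-decrease---forced because the splice trades one letter for proper subletters of it, and terminating since $\prec$ is well-founded---that makes the recursion bottom out. The remaining verifications are routine: a concatenation of weak flag paths is again a weak flag path and the word of a concatenation is the concatenation of the words, so $P'$ and its word behave as stated.
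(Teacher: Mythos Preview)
Your proof is correct and follows essentially the same approach as the paper: both exploit Corollary~\ref{C:fteprod} to replace a non-global step by a weak path of proper subletters, obtaining a strictly $\prec$-smaller word, and rely on the well-foundedness of $\prec$ (Lemma~\ref{L:kleiner_fundiert}) to conclude. The only cosmetic difference is that the paper phrases this as choosing a weak path with $\preceq$-minimal word among those $\preceq u$ and deriving a contradiction from non-minimality, whereas you run the equivalent well-founded induction explicitly.
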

\begin{proof}
  By Lemmma \ref{L:Eq_flags}, choose a weak path
$F=F_0-\ldots- F_n=G$ whose word $v=s_1\cdots s_n$ is
$\preceq$-smaller to $u$ and minimal such. We need only show
that this path is a flag path.
Otherwise, some operation  $\alpha_{s_i}$ is not global and, by
Corollary \ref{C:fteprod}, we can connect $F_{i-1}$ and $F_i$ with a
weak path whose word consists of proper subletters of $s_i$. The
resulting word is $\prec$-smaller than $v$, contradicting its
minimality.
\end{proof}

Combining the previous result and  Corollary \ref{C:fteprod}, we
obtain the following:

\begin{cor}\label{C:stark_oder_zerfall}
  If $F$ and $G$ are equivalent modulo $t$, then either $F\lmto{t} G$
or  $F\lmto{x}G$, for some product $x$ whose factors are proper
subletters of $t$.
\end{cor}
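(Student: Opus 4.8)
The plan is to combine Lemma~\ref{L:Eq_flags}, Corollary~\ref{C:fteprod} and the immediately preceding Lemma~\ref{L:stark_aus_schwach}. First I would use Lemma~\ref{L:Eq_flags} to join $F$ and $G$ by a weak flag path whose word $w$ is commuting and consists of letters contained in $t$. Since the letters of a commuting word are pairwise distinct and pairwise commuting, and since no letter contained in $t$ commutes with $t$ (such a letter is either $t$ itself or a proper subletter), there are exactly two possibilities: either $w$ is equivalent to the single letter $t$, or no letter of $w$ equals $t$ and hence $w$ is a (possibly empty) product of proper subletters of $t$.

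In the latter case I would apply Lemma~\ref{L:stark_aus_schwach} to this weak path, obtaining a flag path $F\lmto{x}G$ with $x\preceq w$. Every letter occurring in $x$ is either a letter of $w$ or a proper subletter of such a letter, so by transitivity of proper containment all letters of $x$ are proper subletters of $t$, which is the second alternative. In the former case, $w\approx t$ means precisely that $G$ is obtained from $F$ by the weak operation $\alpha_t$, and I would feed this into Corollary~\ref{C:fteprod}: either $G$ is obtained from $F$ by a global application of $\alpha_t$, which is exactly $F\lmto{t}G$, or $F$ and $G$ are joined by a weak flag path whose word consists of proper subletters of $t$, and one further application of Lemma~\ref{L:stark_aus_schwach} (exactly as above) converts it into a flag path $F\lmto{x}G$ with $x$ a product of proper subletters of $t$.

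I do not expect a genuine obstacle; the corollary is essentially a repackaging of Lemma~\ref{L:stark_aus_schwach} and Corollary~\ref{C:fteprod}. The only point requiring a little care is the bookkeeping of which letters may occur: one must check that the commuting normal form of Lemma~\ref{L:Eq_flags} really does separate the two regimes — this is where the fact that $t$ commutes with none of its subletters is used — and that passing from a weak path to a flag path through the relation $\preceq$ can never re-create the letter $t$ out of proper subletters, which is immediate from the definition of $\prec$.
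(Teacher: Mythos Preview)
Your proof is correct and follows essentially the same route as the paper: invoke Lemma~\ref{L:Eq_flags} to obtain a commuting word of subletters of $t$, then Lemma~\ref{L:stark_aus_schwach} to upgrade the weak path to a flag path. The paper's version is slightly shorter because it does not pass through Corollary~\ref{C:fteprod} in the case $w\approx t$: applying Lemma~\ref{L:stark_aus_schwach} directly to the single-letter weak path already yields a flag path with word $v\preceq t$, and the definition of $\preceq$ immediately gives the dichotomy ($v\approx t$ or $v$ a product of proper subletters). Your detour through Corollary~\ref{C:fteprod} is not wrong, merely redundant.
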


\begin{proof}
  By Lemma \ref{L:Eq_flags}, the flag $G$ is obtained from $F$ by a
weak path $P$ whose word $x$ either equals $t$ or consists of letters
properly contained in $t$. By Lemma \ref{L:stark_aus_schwach}, we may
assume  that $P$ is a flag path.
\end{proof}

We can now compose flag paths, using the results of the previous
section.

\begin{lemma}\label{L:flag_rules}
  Assume $F\lmto{s}G\lmto{t}H$.
  \begin{enumerate}
  \item\label{L:flag_rules:comm} If $s$ and $t$ commute, there is a
    unique $G'$ with $F\lmto{t}G'\lmto{s}H$.
  \item\label{L:flag_rules:echt} If $s$ is a proper subset of $t$, then
    $F\lmto{t}H$. Similarly, if $t$ is a proper subset of $s$, then
    $F\lmto{s}H$.
  \item\label{L:flag_rules:split} If $s=t$, then either $F\lmto{t}H$
    or $F\lmto{x}H$, for some product $x$ whose factors are proper
subletters of $t$.
  \end{enumerate}
In particular, a permutation of a flag path yields again a flag
path, by $(\ref{L:flag_rules:comm})$.
\end{lemma}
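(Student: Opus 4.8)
I would split the statement into its three cases; they have quite different flavours.

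For case $(\ref{L:flag_rules:comm})$ the plan is to invoke Lemma~\ref{L:vertauschung} directly. The path $F-G-H$ is a weak flag path with word $s\cdot t$ and $s$ commutes with $t$, so Lemma~\ref{L:vertauschung} produces a unique weak flag path $F-G'-H$ with word $t\cdot s$; since a flag path is in particular a weak flag path, this already gives the uniqueness of $G'$. It then remains to see that the two operations $F\lmto{t}G'$ and $G'\lmto{s}H$ are \emph{global}, and I would check this straight from the distance definition. Because $s$ and $t$ are disjoint intervals, none of the flags $F$, $G$, $H$, $G'$ differs at the two endpoint levels of $t$, nor at the two endpoint levels of $s$, so the ambient sets $M^{a_r}_{a_l}$ attached to $\alpha_t$ and to $\alpha_s$ do not depend on which of these flags one reads them off. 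Moreover, by the construction of $G'$ in Lemma~\ref{L:vertauschung}, the $t$-part of $G'$ is the $t$-part of $H$ and the $s$-part of $G'$ is the $s$-part of $F$; and $F$ and $G$ agree at every level strictly between the endpoints of $t$, while $G$ and $H$ agree at every level strictly between the endpoints of $s$. Hence the infinite-distance conditions witnessing that $G\lmto{t}H$ and that $F\lmto{s}G$ are global are literally the conditions needed for $F\lmto{t}G'$ and $G'\lmto{s}H$. This proves $(\ref{L:flag_rules:comm})$, and iterating it over adjacent transpositions gives the closing remark that a permutation of a flag path is again a flag path.

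Case $(\ref{L:flag_rules:split})$ is essentially free: since $F$ and $H$ agree outside the levels in $s=t$, they are equivalent modulo $s$, and Corollary~\ref{C:stark_oder_zerfall} is exactly the asserted dichotomy. For case $(\ref{L:flag_rules:echt})$, say $s\subsetneq t$ (the case $t\subsetneq s$ being symmetric). Again $F$ and $H$ are equivalent modulo $t$, so Corollary~\ref{C:stark_oder_zerfall} yields either $F\lmto{t}H$, which is what we want, or $F\lmto{x}H$ for some product $x$ of proper subletters of $t$. To exclude the second alternative I would use that $G\lmto{t}H$ is a global operation: weak flag operations are symmetric, so reading $F\lmto{s}G$ backwards and prepending it to the weak flag path underlying $F\lmto{x}H$ produces a weak flag path from $G$ to $H$ whose word $s\cdot x$ consists entirely of proper subletters of $t$ (this is where $s\subsetneq t$ enters), contradicting Corollary~\ref{C:fteprod}. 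Hence $F\lmto{t}H$. The case $t\subsetneq s$ is the mirror image: append the reverse of $G\lmto{t}H$ to a hypothetical $F\lmto{x}H$ and contradict the globality of $F\lmto{s}G$.

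The step I expect to be the main obstacle is the globality transfer in case $(\ref{L:flag_rules:comm})$: ``global'' is phrased via distances computed inside a set $M^{a_r}_{a_l}$ relative to one fixed flag, so one must verify that swapping the order of the two commuting operations --- that is, replacing $G$ by $G'$ --- changes neither the two endpoint vertices $a_l$, $a_r$ nor the set of intermediate vertices that the distance actually sees. Once this bookkeeping is unwound, the argument is purely formal. By contrast, cases $(\ref{L:flag_rules:split})$ and $(\ref{L:flag_rules:echt})$ are short consequences of Corollaries~\ref{C:stark_oder_zerfall} and \ref{C:fteprod}, the only extra ingredient being the freedom to reverse and concatenate weak flag paths, which lets the globality of one of the two hypothesised operations kill the unwanted ``splitting'' outcome.
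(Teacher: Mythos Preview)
Your proof is correct and follows essentially the same route as the paper: case~$(\ref{L:flag_rules:comm})$ via Lemma~\ref{L:vertauschung}, and cases~$(\ref{L:flag_rules:echt})$ and~$(\ref{L:flag_rules:split})$ via Corollary~\ref{C:stark_oder_zerfall}, with the unwanted alternative in~$(\ref{L:flag_rules:echt})$ ruled out by concatenating with the reversed step and invoking Corollary~\ref{C:fteprod}. Your explicit bookkeeping for the globality transfer in~$(\ref{L:flag_rules:comm})$ is more detailed than the paper's one-line justification, but the underlying argument is the same.
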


\begin{proof}
  Property $(\ref{L:flag_rules:comm})$ follows
easily from Lemma \ref{L:vertauschung}, since the permutation of a
reduced word remains reduced.

\noindent For $(\ref{L:flag_rules:echt})$,  assume $s\subsetneq t$.
Then $H$ is equivalent to $F$ modulo $t$. So by Corollary
\ref{C:stark_oder_zerfall}, either  $F\lmto{t}H$ or $F\lmto{x}H$,
where $x$ consists of proper subletters of $t$. The latter implies
that $G\lmto{s\cdot x}H$, which contradicts the
assumption $G\lmto{t}H$. The proof is similar if $t$ is a proper
subset of $s$.

  Property $(\ref{L:flag_rules:split})$ clearly follows from
Corollary \ref{C:stark_oder_zerfall}, as $F$ and $H$
are equivalent modulo $t$.
\end{proof}

Lemma \ref{L:stark_aus_schwach} yields the following.

\begin{cor}\label{C:minimal_path_is_reduced}
  Let $F$ and $G$ be two flags.
  \begin{enumerate}
  \item If $F\lmto{u}G$, then $F\lmto{v}G$ for some strong reduct
$v$ of $u$.

  \item If $u$ is $\prec$-minimal with $F\lmto{u}G$, then $u$ is
    reduced.
  \end{enumerate}
\end{cor}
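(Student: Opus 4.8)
The plan is to deduce both parts from Lemma \ref{L:stark_aus_schwach} together with the well-foundedness of $\prec$ established in Lemma \ref{L:kleiner_fundiert}. For the first part, suppose $F\lmto{u}G$ via a flag path $P$. Among all words $v$ with $v\preceq u$ for which there is a \emph{weak} flag path from $F$ to $G$ with word $v$, choose one of minimal $\prec$-type; such a $v$ exists by well-foundedness, and $v$ exists at all since $u$ itself qualifies. By Lemma \ref{L:stark_aus_schwach} applied to this weak path, we may moreover take $P'$ with word $v$ to be a genuine flag path. It remains to check that $v$ is a strong reduct of $u$, i.e.\ that $v$ can be obtained from $u$ by a sequence of \textsc{Cancellation}, \textsc{Commutation} and \textsc{Splitting} steps. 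This is where the flag-path composition rules of Lemma \ref{L:flag_rules} come in: whenever $u$ is not already reduced, some two consecutive letters $s_i, s_{i+1}$ in the word of the flag path are comparable or equal, and parts (\ref{L:flag_rules:echt}) and (\ref{L:flag_rules:split}) of that lemma let us replace that pair by a single larger letter (a \textsc{Cancellation}) or by a product of proper subletters (a \textsc{Splitting}), exactly as in the definition of $\str$; commutations handle the case of commuting adjacent letters. Iterating and invoking well-foundedness to guarantee termination, we arrive at a reduced word reachable from $u$ by $\str$-steps, which is what we want — one has only to observe that the $\prec$-minimal weak-path word produced above is in fact such a strong reduct, since every rewriting step above is a $\str$-step and the process cannot stop at a non-reduced word without contradicting minimality.

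For the second part, suppose $u$ is $\prec$-minimal among all words realising $F\lmto{u}G$, and suppose toward a contradiction that $u$ is not reduced. Then, by the characterisation of reducedness (a word is reduced iff the cancellation rule cannot be applied to any permutation, from Section \ref{S:Woerte}), some permutation of the flag path — still a flag path by Lemma \ref{L:flag_rules}(\ref{L:flag_rules:comm}) — has two adjacent letters $s_i\subset s_{i+1}$ (or $s_{i+1}\subset s_i$), or two adjacent equal letters. In the containment case, Lemma \ref{L:flag_rules}(\ref{L:flag_rules:echt}) gives a flag path with the strictly shorter word obtained by deleting $s_i$, and by Lemma \ref{L:kleiner_fundiert}(\ref{L:kleiner_fundiert:2}) this new word is $\prec u$, contradicting minimality. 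In the equal-letter case $s_i=s_{i+1}=t$, Lemma \ref{L:flag_rules}(\ref{L:flag_rules:split}) gives either a flag path with word obtained by collapsing $t\cdot t$ to $t$ (again $\prec u$ by the splitting clause of Lemma \ref{L:kleiner_fundiert}, since a cancellation or splitting is involved), or a flag path with word in which $t\cdot t$ is replaced by a product of proper subletters, which is also $\prec u$. Either way we contradict $\prec$-minimality, so $u$ must be reduced.

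The main obstacle, I expect, is the bookkeeping in the first part: one must be careful that the $\prec$-minimal weak-path word one extracts is genuinely obtained from $u$ by the \emph{particular} rewriting system $\str$ (with its three rules) rather than merely being $\preceq u$ in the abstract order. The safest route is not to minimise abstractly but to argue constructively — start from the flag path $F\lmto{u}G$, and whenever the current word is non-reduced, apply the relevant clause of Lemma \ref{L:flag_rules} to perform one $\str$-step while preserving the ``is a flag path'' property; well-foundedness of $\prec$ (Lemma \ref{L:kleiner_fundiert}), combined with the fact that each cancellation/splitting step strictly decreases the $\prec$-type (Lemma \ref{L:kleiner_fundiert}(\ref{L:kleiner_fundiert:2})) while commutations are $\approx$-steps that can only be performed finitely often between two cancellations (the inversion-count argument from Section \ref{S:Woerte}), guarantees termination at a reduced word $v$ with $u\str v$ and $F\lmto{v}G$. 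This makes the first bullet immediate and the second bullet then follows as above, or alternatively directly: a $\prec$-minimal $u$ admits no $\str$-step involving a cancellation or splitting, hence is reduced.
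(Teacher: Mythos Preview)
Your proposal is correct and matches the paper's intended argument: the paper states the corollary with the one-line justification ``Lemma \ref{L:stark_aus_schwach} yields the following,'' leaving implicit exactly the iterative use of Lemma \ref{L:flag_rules} that you spell out in your final paragraph. Your initial detour through an abstract $\prec$-minimisation is unnecessary (as you yourself recognise, it does not by itself show that the minimal word is a \emph{strong} reduct of $u$), so you could simply start with the constructive argument: apply Lemma \ref{L:flag_rules} to perform one $\str$-step whenever the current flag-path word is not reduced, and terminate by well-foundedness of $\prec$.
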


\begin{definition}
  Let $A$ be a subset of $M$ and two vertices $a_l$ and $a_r$ in $A$
such that $a_l$ lies below $a_r$ in $A$. The pair $(a_l,a_r)$ is
called  \emph{open} in $A$ if there are vertices $b$
  and $c$ in $A^{a_r}_{a_l}$ whose distance in
  $M^{a_r}_{a_l}$ is infinite.

A pair as before which is not open is called \emph{closed}.
\end{definition}

\begin{lemma}\label{L:nice_entspannt}
  Let $s=(l,r)$ be an interval and $M$ be simply connected. Take a
nice subset $A$ of $M$ with two distinguished vertices $a_l$ and
$a_r$ of
levels $l$ and $r$, respectively. Given a flag $F$ in $A$ containing
$a_l$ and $a_r$, assume that $F\lmto{s}G$ for some flag $G$ in $M$.
Set $B=A\cup G$. If the pair
$(a_l,a_r)$ is closed in $A$, we have that:
  \begin{enumerate}
  \item The set $B$ is obtained from $A$ by a global application of
$\alpha_{s}$ on $(a_l,a_r)$.
  \item The open pairs in $B$ are exactly the open pairs of $A$
together with $(a_l,a_r)$.
  \end{enumerate}
\end{lemma}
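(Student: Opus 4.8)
The plan is to prove both assertions by exhibiting an explicit sequence of operations taking $A$ to $B$ and analysing which pairs become open. First I would pass to the local picture: by Remark~\ref{R:lokal_operation} it suffices to treat $\A_{[l,r]}(M)$, so we may rename and assume $l=-1$, $r=N+1$ and $s=[0,N]$; then $a_l,a_r$ are the imaginary endpoints and ``closed'' simply means that every two vertices of $A$ in the relevant strip have finite distance in $M$. Now $F\lmto{s}G$ means, by the definition of a flag path, that $G$ is obtained from $F$ by a single \emph{global} weak operation $\alpha_s$, i.e. the new vertices of $G$ have infinite distance from $F$ in $M^{a_r}_{a_l}$. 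The new vertices of $B$ over $A$ are exactly the vertices of $G$ not already in $A$; call these $b_{l+1},\dots,b_{r-1}$ (some may coincide with existing vertices of $A$ precisely at the levels where $F$ and $G$ agree, but at the ``interior'' levels of $s$ they are genuinely new).

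For assertion (1), I would invoke Lemma~\ref{L:nice}: it suffices to check that $\dd_s^M(b_i,A)=\infty$ for some (hence all) $i\in s$. Since $F\lmto{s}G$ is global, $\dd^{M^{a_r}_{a_l}}(b_i,F)=\infty$; what must be upgraded is infinite distance from all of $A$, not just from $F$. This is where closedness of $(a_l,a_r)$ in $A$ enters: if some $b_i$ were at finite distance in $M^{a_r}_{a_l}$ from a vertex $c\in A$, then concatenating with a path from $c$ back to the corresponding vertex of $F$ inside $A$ — which exists and stays between $a_l$ and $a_r$ because $A$ is nice and the pair is closed, using Corollary~\ref{C:AAAnice}/Lemma~\ref{L:stark_nice} to keep the path in the strip — would give a finite path from $b_i$ to $F$ in $M^{a_r}_{a_l}$, contradicting globality. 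Hence $\dd^{M^{a_r}_{a_l}}(b_i,A)=\infty$, which by the remark following Lemma~\ref{L:nice} equals $\dd_s^M(b_i,A^{a_r}_{a_l})$, and then Lemma~\ref{L:nice}, part~(\ref{L:nice:schwachnice}), gives that $B$ is nice and obtained from $A$ by a global application of $\alpha_s$ on $(a_l,a_r)$.

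For assertion (2), I would argue both inclusions. An open pair of $A$ stays open in $B$ since $A\subset B$ and distances in $M^{c}_{d}$ are computed in $M$, not in $A$ or $B$ (open is a property of the ambient $M$ together with membership in the subset, and the two witnessing vertices still lie in $B$); and $(a_l,a_r)$ itself is open in $B$ precisely because the new $b_i$'s witness infinite distance, by part~(1) and the definition of a global application. Conversely, suppose $(c,d)$ is open in $B$ but not in $A$; then the witnessing pair of vertices of infinite distance in $M^{d}_{c}$ must involve at least one new vertex $b_i$. Using that $B$ is obtained from $A$ by the single operation $\alpha_s$ on $(a_l,a_r)$ — so the $b_i$ only attach to $A$ through $a_l$ and $a_r$ — I would show that any vertex $c\in B$ lying beneath $b_i$ must lie beneath $a_l$ (or be $a_l$), and dually any $d$ above $b_i$ lies above $a_r$; chasing the path through $a_l$ and $a_r$ then forces $(c,d)=(a_l,a_r)$, after using niceness of $A$ and simple connectedness to reroute within $A$ the portions of the path not involving the $b_i$'s. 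The main obstacle, and the step deserving the most care, is exactly this last one: controlling how paths in $M^{d}_{c}$ can traverse the newly added path $S_B$, i.e. showing that the new vertices cannot create a finite detour between two old vertices — this is the combinatorial heart, and it will lean on Lemma~\ref{L:operation} (the operation changes no $t$-distances among old vertices) together with simple connectedness as in the proof of Lemma~\ref{L:nice}, part~(\ref{L:nice:nice})$\to$(\ref{L:nice:operation}).
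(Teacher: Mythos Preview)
Your argument for (1) is essentially the paper's: use closedness of $(a_l,a_r)$ in $A$ to pass from $\dd^{M^{a_r}_{a_l}}(b_i,F)=\infty$ to $\dd^{M^{a_r}_{a_l}}(b_i,A)=\infty$ via the triangle inequality, then invoke Lemma~\ref{L:nice}. Two minor points: the localisation via Remark~\ref{R:lokal_operation} is misapplied (that remark concerns the Fra\"iss\'e limits $\m_p^N$, not arbitrary simply connected spaces, and in any case localising to $[l,r]$ would discard the pairs in part~(2) with a vertex outside that range); and you do not need the connecting path from $c\in A^{a_r}_{a_l}$ back to $F$ to lie \emph{inside} $A$ --- closedness only gives finite distance in $M^{a_r}_{a_l}$, and that already suffices.

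For (2) there is a genuine gap. You correctly observe that a witness of openness for $(c,d)$ in $B$ which was absent in $A$ must involve a new vertex $b_i$, and that any $c\in A$ lying beneath $b_i$ must lie beneath (or equal) $a_l$, and dually for $d$. But this does \emph{not} force $(c,d)=(a_l,a_r)$: one can perfectly well have $c$ strictly below $a_l$ or $d$ strictly above $a_r$. What you must then show is that such a pair is \emph{closed} in $B$, contradicting the assumption. The missing step is precisely: since $(c,d)$ is closed in $A$, any old vertex $b\in A^d_c$ has finite distance in $M^d_c$ to $a_l$ (or $a_r$), which lies between $c$ and $d$; and $a_l$ (resp.\ $a_r$) has finite distance to each $b_i$ along the newly added path, which also runs between $c$ and $d$. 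Hence $b$ and $b_i$ have finite distance in $M^d_c$, so no such open witness exists. This is exactly the paper's argument. Your ``main obstacle'' paragraph points in the wrong direction: the issue is not that new vertices might create unwanted finite detours, but rather that when $(c,d)$ properly contains $(a_l,a_r)$ the new vertices \emph{are} at finite distance from every old vertex in the strip, killing any candidate witness. You also omit the (easy) case where $c$ or $d$ is itself a new vertex; the paper dispatches this separately by noting that all vertices between such a pair lie on, or are connected to, the new path.
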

\begin{proof}
  For the first assertion, by Lemma \ref{L:nice}, we need only
check that
  \[\dd^{M^{a_r}_{a_l}}(d,A)=\infty,\]
\noindent  where $d$ is  one of the new vertices of $G$.

  Pick any $b$ in $A^{a_r}_{a_l}$ and choose some vertex $c$ in $F$
  between $a_l$ and $a_r$. Since $(a_l,a_r)$ is closed in $A$, we
have that
  $\dd^{M^{a_r}_{a_l}}(b,c)<\infty.$ Since
$F\lmto{s}G$, Lemma \ref{L:abstand_flaggenpfad} shows that 
  $\dd^{M^{a_r}_{a_l}}(c,d)=\infty$. In particular,
  $$\dd^{M^{a_r}_{a_l}}(b,d)=\infty,$$
\noindent which gives the desired result.

  For the second assertion, clearly $(a_l,a_r)$ is now open in $B$. We
need only show there  are no new open pairs in $B$. Consider
an open pair $(x,y)$. If $x$ is one of the new elements of $G$,
then either $y$ is either also in $B\setminus A$ or in $A$ and
either equal to $a_r$ or above of it. If
both $x$ and $y$ lie in $B\setminus A$, they form a closed pair. If
$y=a_r$, all vertices between $x$ and $y$ lie
on $B\setminus A$, and thus the pair $(x,y)$ is closed. If $y$
lies above $a_r$ in $A$, then all vertices between $x$ and $y$ are are
connected with $a_r$ and thus their distance is finite, so  $(x,y)$ is
closed.

Hence, we conclude that both $x$ and $y$ lie in $A$. Suppose
 $(x,y)$ is not $(a_l,a_r)$. Either it was already open in $A$ or
there is a vertex $d$ in $B\setminus A$ whose distance to some $b$ in
$A$ is infinite in $M^y_x$. In particular, the vertex $x$ lies below
$a_l$ and $y$ lies above $a_r$. Since $(x,y)$ is closed in $A$, the
distance between $b$ and $a_l$ in $M^y_x$ is finite and thus $b$ and
$d$ have finite distance in $M^y_x$, which is a contradiction.
\end{proof}

Flag paths provide scaffolds which are nice sets, as the following
Lemma shows.

\begin{lemma}\label{L:scaffold} Let $M$ be simply connected and
  $F_0\lmto{s_1} F_1\lmto{s_2}\ldots\lmto{s_n}F_n$ be a reduced flag
path in $M$.  The following hold:
  \begin{enumerate}
  \item The set $A_n=F_0\cup F_1\cup\ldots\cup F_n$ is nice in $M$.
  \item If $a_0-\ldots-a_N$ are the vertices of $F_n$, then
    $(a_l,a_r)$ is open in $A_n$ \iff the letter $(l,r)$ belongs to
    final segment of $s_1s_2\ldots s_n$.
  \end{enumerate}
\end{lemma}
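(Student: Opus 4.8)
The plan is to argue by induction on the length $n$ of the reduced flag path. For $n=0$ we have $A_0=F_0$, a single flag; since a flag is a path, both conditions of niceness hold trivially, and no pair of vertices of $F_0$ is open, as any two of them already have finite distance inside $F_0$. This matches the claim, the final segment of the empty word being empty. So assume $n\ge 1$ and the statement for shorter reduced flag paths. A subword $s_1\cdots s_{n-1}$ of a reduced word is again reduced, hence $F_0\lmto{s_1}\cdots\lmto{s_{n-1}}F_{n-1}$ is a reduced flag path, and by the induction hypothesis $A_{n-1}=F_0\cup\cdots\cup F_{n-1}$ is nice and its open pairs are exactly the pairs of vertices of $F_{n-1}$ at levels $p<q$ with $(p,q)$ in the final segment of $s_1\cdots s_{n-1}$.

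First I would reduce to Lemma~\ref{L:nice_entspannt}. Write $s_n=(l,r)$, and let $a_l,a_r$ be the vertices of $F_{n-1}$ of levels $l$ and $r$; since the weak operation $\alpha_{s_n}$ changes only the levels strictly between $l$ and $r$, these are also the $l$- and $r$-vertices of $F_n$. The essential preliminary observation is that $(a_l,a_r)$ is \emph{closed} in $A_{n-1}$: if it were open, the induction hypothesis would yield an index $j<n$ with $s_j=(l,r)=s_n$ commuting with $s_{j+1},\dots,s_{n-1}$, and then the pair of indices $j,n$ would witness that $s_1\cdots s_n$ is not reduced. Hence Lemma~\ref{L:nice_entspannt} applies to the nice set $A_{n-1}$, the flag $F_{n-1}$, and $F_{n-1}\lmto{s_n}F_n$: the set $A_n=A_{n-1}\cup F_n$ is obtained from $A_{n-1}$ by a global application of $\alpha_{s_n}$ on $(a_l,a_r)$, so it is nice by Lemma~\ref{L:nice}, which is assertion~(1); moreover its open pairs are precisely the open pairs of $A_{n-1}$ together with $(a_l,a_r)$, and the newly added vertices of $F_n$ lie outside $A_{n-1}$.

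It remains to translate this into the description in~(2), and this is the only delicate step. Let $a_0-\ldots-a_N$ be the vertices of $F_n$; then $a_i$ is a vertex of $F_{n-1}$ exactly when $i\le l$ or $i\ge r$, in which case it is the $i$-vertex of $F_{n-1}$. I would use the elementary fact that the final segment of $s_1\cdots s_n$ consists of $s_n$ together with those letters of the final segment of $s_1\cdots s_{n-1}$ that commute with $s_n$. For one direction: if $(p,q)=s_n$ then $(a_p,a_q)=(a_l,a_r)$ is open in $A_n$; and if $(p,q)$ lies in the final segment of $s_1\cdots s_{n-1}$ and commutes with $s_n$, then $q\le l$ or $r\le p$, so $a_p,a_q$ are also vertices of $F_{n-1}$ and $(a_p,a_q)$ is an open pair of $A_{n-1}$, hence of $A_n$. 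Conversely, an open pair of $A_n$ is either $(a_l,a_r)$, giving the letter $s_n$, or an open pair of $A_{n-1}$; in the latter case both its vertices lie in $A_{n-1}$, so they are the $p$- and $q$-vertices of $F_{n-1}$ with $p,q\notin\{l+1,\dots,r-1\}$, and the induction hypothesis places $(p,q)$ in the final segment of $s_1\cdots s_{n-1}$. The main obstacle is to exclude here the configuration $p\le l<r\le q$ with $(l,r)\subsetneq(p,q)$: then $s_n$ would be a proper subletter of the letter $(p,q)=s_j$ occurring in the final segment of $s_1\cdots s_{n-1}$, and since a proper subletter of $s_j$ commutes with every letter that $s_j$ commutes with, $s_n$ would commute with $s_{j+1},\dots,s_{n-1}$, once more contradicting that $s_1\cdots s_n$ is reduced. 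Hence $q\le l$ or $r\le p$, so $(p,q)$ commutes with $s_n$ and therefore lies in the final segment of $s_1\cdots s_n$, which completes the induction.
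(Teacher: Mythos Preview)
Your proof is correct and follows essentially the same approach as the paper's: induction on $n$, using reducedness to show that $(a_{l_n},a_{r_n})$ is closed in $A_{n-1}$, then invoking Lemma~\ref{L:nice_entspannt} to obtain niceness of $A_n$ and the description of its open pairs, and finally translating this into the characterisation via the final segment of $s_1\cdots s_n$. In fact you are more explicit than the paper in one place: the paper dismisses the containment case $(l,r)\supsetneq s_n$ with the terse remark that ``$s_n$ is not contained in the final segment of $w_{n-1}$'', whereas you spell out why no letter of the final segment of $s_1\cdots s_{n-1}$ can properly contain $s_n$ (a subletter of $s_j$ commutes with whatever $s_j$ commutes with, so such containment would again contradict reducedness).
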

\begin{proof}
  We prove it by induction on $n$. Let $s_i=(l_i,r_i)$ and
  $w_i=s_1s_2\ldots s_i$. If $n=0$, there is nothing to prove, since
  any flag is nice and the word $w_0$ is trivial.

  Suppose hence that $n>0$ and let $F_n=a_0-\ldots-a_N$.  Since $w_n$
is reduced by assumption, the letter $s_n$ does not belong to the
final segment of
  $w_{n-1}$. Therefore, the pair $(a_{l_n},a_{r_n})$ appeared already
  in $F_{n-1}$ and, by induction, it is closed in $A_{n-1}$, which is
 nice. Lemma \ref{L:nice_entspannt} gives that so is $A_n$.

  Furthermore, Lemma \ref{L:nice_entspannt} also implies that
  $(a_l,a_r)$ is open in $A_n$ \iff $(a_l,a_r)=(a_{l_n},a_{r_n})$ or
  it belongs to $A_{n-1}$ and was already open in $A_{n-1}$. In
  particular, the pair $(a_l,a_r)$ belongs to $A_{n-1}$ \iff either
  $(l,r)$ commutes with $s_n$ or $(l,r)$ contains $s_n$. Since $s_n$
 is not contained in the final segment of $w_{n-1}$, induction gives
  that $(a_l,a_r)$ is open in $A_n$ iff $(l,r)=s_n$ or $(l,r)$
  commutes with $s_n$ and belongs to the final segment of $w_{n-1}$,
 which means that $(l,r)$ belongs to the final segment of $w_n$.
\end{proof}
If the space is simply connected, we shall prove that there are no
flag loops, unless they are not reduced.

\begin{cor}\label{C:geschl_flaggenweg}
  If $M$ is simply connected, there are no non-trivial closed reduced
  flags paths.
\end{cor}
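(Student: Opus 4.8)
The plan is to assume there is a non-trivial closed reduced flag path and to reach a contradiction by invoking the scaffold Lemma \ref{L:scaffold} twice: once for the whole path and once for the path obtained by deleting its last step.

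So suppose $F_0\lmto{s_1}F_1\lmto{s_2}\cdots\lmto{s_n}F_n$ is a reduced flag path with $F_n=F_0$ and $n\ge 1$; put $w_i=s_1\cdots s_i$ and write the last letter as $s_n=(l,r)$. I would first record three routine observations. (i) Since $w_n$ is reduced, so is its initial part $w_{n-1}$, and consequently $F_0\lmto{s_1}\cdots\lmto{s_{n-1}}F_{n-1}$ is again a reduced flag path (for $n=1$ this is the trivial path consisting of $F_0$ alone, whose word is empty). (ii) Because $F_n=F_0$, the two scaffolds coincide: $A_n:=F_0\cup\cdots\cup F_n$ equals $A_{n-1}:=F_0\cup\cdots\cup F_{n-1}$. (iii) Since the levels $l$ and $r$ lie outside the letter $s_n=\{l+1,\dots,r-1\}$, the weak operation $\alpha_{s_n}$ does not move those vertices, so $F_{n-1}$ and $F_n$ share the same vertex $a_l$ at level $l$ and the same vertex $a_r$ at level $r$.

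Next I would apply Lemma \ref{L:scaffold}(2) to the full path: the pair $(a_l,a_r)$ is open in $A_n$ exactly when the letter $(l,r)$ belongs to the final segment of $w_n$; but $s_n=(l,r)$ is the last letter of $w_n$, hence commutes with the empty remainder, so $(a_l,a_r)$ is open in $A_n$. Then I would apply Lemma \ref{L:scaffold}(2) to the truncated path $F_0\lmto{s_1}\cdots\lmto{s_{n-1}}F_{n-1}$, using observation (iii): the pair $(a_l,a_r)$ is open in $A_{n-1}$ exactly when $(l,r)$ belongs to the final segment of $w_{n-1}$. Were this the case, some $s_j$ with $j\le n-1$ would equal $(l,r)$ and would commute with $s_{j+1}\cdots s_{n-1}$; but then the indices $j<n$ would witness that $w_n$ is not reduced, since $s_j\subseteq s_n$ and $s_j$ commutes with every letter strictly between them. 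Hence $(l,r)$ does not occur in the final segment of $w_{n-1}$, so $(a_l,a_r)$ is closed in $A_{n-1}$. Since $A_{n-1}=A_n$ by observation (ii), the pair $(a_l,a_r)$ would be simultaneously open and closed in the same nice set, which is absurd. Therefore $n=0$, i.e. every closed reduced flag path is trivial.

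The one delicate point is the bookkeeping of the last paragraph: ensuring that the two uses of Lemma \ref{L:scaffold} speak about the very same pair of vertices inside the very same set (observations (ii) and (iii)), together with the elementary but essential translation of the reducedness of $w_n$ into the assertion that $(l,r)$ cannot appear in the final segment of its proper initial part $w_{n-1}$. As an alternative to the ``open-and-closed'' contradiction, once $(a_l,a_r)$ is known to be closed in the nice set $A_{n-1}$ one may instead invoke Lemma \ref{L:nice_entspannt}: the step $F_{n-1}\lmto{s_n}F_n$ is then a genuine global application of $\alpha_{s_n}$ on $(a_l,a_r)$, so the new vertices of $F_n$ have infinite distance from $A_{n-1}$ inside $M^{a_r}_{a_l}$; this contradicts $F_n=F_0\subseteq A_{n-1}$.
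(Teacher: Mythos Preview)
Your proof is correct and follows essentially the same approach as the paper. Your alternative argument at the end---once $(a_l,a_r)$ is closed in $A_{n-1}$, invoke Lemma~\ref{L:nice_entspannt} to see that the new vertices of $F_n$ lie outside $A_{n-1}$, contradicting $F_n=F_0\subset A_{n-1}$---is exactly the paper's proof; your main ``open-and-closed'' argument is a minor repackaging of the same idea, reading off the openness of $(a_l,a_r)$ in $A_n$ from a second application of Lemma~\ref{L:scaffold}(2) rather than deducing disjointness directly.
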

\begin{proof}
  Let $F_0\lmto{s_1} F_1\lmto{s_2}\ldots\lmto{s_n}F_n$ a
non-trivial reduced flag path. By Lemmata \ref{L:nice_entspannt} and
\ref{L:scaffold}, the flag $F_n$ is obtained by a global application
of $\alpha_{s_n}$ to $F_0\cup\dotsb\cup F_{n-1}$. In particular,
the flag $F_n$ must differ from $F_0$.
\end{proof}

Since there are no loops, the reduced word of a flag path is hence
unique, up to permutation.

\begin{prop}\label{P:unique_path_word}
  The word of a reduced path between two flags $F$ and $G$ is uniquely
  determined up to equivalence.
\end{prop}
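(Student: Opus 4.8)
The plan is to turn the two given paths into a single closed flag path and appeal to the absence of nontrivial reduced loops established in Corollary \ref{C:geschl_flaggenweg}. Concretely, suppose $F\lmto{u}G$ and $F\lmto{v}G$ are two reduced flag paths; I want to conclude $u\approx v$. I would first run the second path backwards to obtain $G\lmto{v\inv}F$, then concatenate to get a closed flag path $F\lmto{u\cdot v\inv}F$. This concatenated word need not be reduced, but by Corollary \ref{C:minimal_path_is_reduced}(1) there is a strong reduct $w$ of $u\cdot v\inv$ with $F\lmto{w}F$, and $w$ is reduced by the very definition of strong reduct. Corollary \ref{C:geschl_flaggenweg} then forces this reduced closed flag path to be trivial, i.e.\ $w=1$; hence $u\cdot v\inv\str 1$, and Corollary \ref{C:word_inverses} gives $v\inv\approx u\inv$, whence $u\approx v$.

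The one point that requires a short verification is that reversing a flag path is again a flag path. This is because the statement ``$G'$ is obtained from $F'$ by the weak operation $\alpha_s$'' only records the set $s$ of levels on which the two flags differ, which is symmetric in $F'$ and $G'$; and the extra requirement making the operation \emph{global}, namely that the new vertices have infinite distance in $M^{a_r}_{a_l}$, is likewise symmetric. Thus if $F=F_0\lmto{s_1}\dots\lmto{s_n}F_n=G$, then $G=F_n\lmto{s_n}\dots\lmto{s_1}F_0=F$ is a flag path; its word $s_n\cdots s_1$ is exactly $u\inv$, and it is reduced because reducedness of words is invariant under inversion, as noted in Section \ref{S:Woerte}. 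Applying this observation to the path with word $v$ supplies the path $G\lmto{v\inv}F$ used above; for this step I also need $v\inv$ reduced, which is the same remark.

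I do not expect a genuine obstacle here: the substantive work --- the combinatorics of strong reduction leading to Corollary \ref{C:word_inverses}, the straightening of weak paths into reduced flag paths (Lemma \ref{L:stark_aus_schwach}, Corollary \ref{C:minimal_path_is_reduced}), and above all the nonexistence of nontrivial reduced flag loops in a simply connected space (Corollary \ref{C:geschl_flaggenweg}) --- has already been carried out. The only care needed is the bookkeeping of the previous paragraph and the observation that, since $M$ is simply connected throughout this discussion, Corollary \ref{C:geschl_flaggenweg} does apply. The proposition then follows formally, and as a byproduct any two reduced flag paths between $F$ and $G$ have the same length and the same final segment, since these depend only on the equivalence class of the word.
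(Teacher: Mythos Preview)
Your proof is correct and follows essentially the same route as the paper: concatenate the two paths into a closed path with word $u\cdot v\inv$, pass to a strong reduct via Corollary \ref{C:minimal_path_is_reduced}, conclude $w=1$ by Corollary \ref{C:geschl_flaggenweg}, and finish with Corollary \ref{C:word_inverses}. Your explicit verification that reversing a flag path yields a flag path is a detail the paper leaves implicit (it only calls the concatenation a weak path), but it does no harm and the arguments are otherwise identical.
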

\begin{proof}
  If $u$ and $v$ are both reduced and there are two flag paths
  $F\lmto{u}G$ and $F\lmto{v}G$ connecting $F$ and $G$, composing them
  we get a weak path $F -F$ with word $u\cdot v\inv$.  Corollary
  \ref{C:minimal_path_is_reduced} yields a strong reduct $w$ of
  $u\cdot v\inv$ with $F\lmto{w}F$. Corollary
  \ref{C:geschl_flaggenweg} implies that $w=1$ and thus $u\approx v$
  by Corollary \ref{C:word_inverses}.
\end{proof}

If $u$ is reduced, we will sometimes refer to $F\lmto{u}G$ by saying
that \emph{the reduced word $u$ connects $F$ to
$G$}.

\begin{lemma}\label{L:flags_im_pfad}
  Let $M$ be simply connected and $P$ be a reduced flag path in
  $M$. Denote by $A$ the set of vertices of flags occurring in $P$.
Every flag contained in $A$ appears in some permutation of $P$.
\end{lemma}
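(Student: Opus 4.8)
The plan is to argue by induction on the length $n$ of the reduced flag path $P\colon F_0\lmto{s_1}F_1\lmto{s_2}\cdots\lmto{s_n}F_n$; for $n=0$ the set $A=F_0$ is itself a flag, so there is nothing to prove. Write $A_j=F_0\cup\cdots\cup F_j$, put $s_n=(l,r)$, and let $V$ be the set of new vertices of $F_n$, i.e.\ its vertices at levels in $(l,r)$. By Lemma~\ref{L:scaffold} the set $A_n$ is nice and $F_n$ is obtained from $A_{n-1}$ by a global application of $\alpha_{s_n}$; hence $A_n$ is the disjoint union of $A_{n-1}$ and $V$, and the only edges of $A_n$ meeting $V$ are those of the path $a_l-b_{l+1}-\cdots-b_{r-1}-a_r$ lying inside $F_n$, where $a_l,a_r$ are the level-$l$ and level-$r$ vertices of $F_n$ (and I set $b_l:=a_l$, $b_r:=a_r$).

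Now fix a flag $G\subseteq A_n$. If no vertex of $G$ lies in $V$, then $G\subseteq A_{n-1}$, and the induction hypothesis applied to the reduced path $F_0\lmto{s_1}\cdots\lmto{s_{n-1}}F_{n-1}$ produces a permutation $P'$ of it through $G$; appending the last step $F_{n-1}\lmto{s_n}F_n$ gives the required permutation of $P$ (the transpositions realising $P'$, applied to $P$ itself, leave $F_{n-1}$ and $F_n$ fixed). So assume a vertex $g_j$ of $G$ lies in $V$, say $g_j=b_j$ with $l<j<r$. Since each $b_i$ has, inside $A_n$, exactly the neighbours $b_{i-1},b_{i+1}$, walking along the path $G$ away from $b_j$ in both directions forces $g_i=b_i$ for all $l\le i\le r$. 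Thus $G$ contains all of $V$ and agrees with $F_n$ on every level of $[l,r]$, while each vertex of $G$ at a level outside $[l,r]$ lies in $A_{n-1}$. Define the flag $G'$ to agree with $G$ off $s_n$ and with $F_{n-1}$ on the levels of $s_n$; this is a legitimate flag, all of whose vertices lie in $A_{n-1}$, and $G'\lmto{s_n}G$ is a \emph{global} flag operation, because $(G')^{a_r}_{a_l}=(F_{n-1})^{a_r}_{a_l}$ and $F_{n-1}\lmto{s_n}F_n$ is global, so the new vertices $b_i$ of $G$ have infinite distance from $G'$ in $M^{a_r}_{a_l}$.

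By induction there is a permutation $P'$ of $F_0\lmto{s_1}\cdots\lmto{s_{n-1}}F_{n-1}$ through $G'$, say $P'\colon F_0\lmto{t_1\cdots t_k}G'\lmto{t_{k+1}\cdots t_{n-1}}F_{n-1}$ with $t_1\cdots t_{n-1}\approx s_1\cdots s_{n-1}$ reduced. The key point is that every letter $t_i$ with $k<i\le n-1$ commutes with $s_n$. Indeed $G'$ and $F_{n-1}$ are equivalent modulo $B:=[0,N]\setminus[l,r]$, so by Lemmas~\ref{L:Eq_flags} and~\ref{L:stark_aus_schwach} they are joined by a reduced flag path all of whose letters lie in $B$; by Proposition~\ref{P:unique_path_word} that word is equivalent to $t_{k+1}\cdots t_{n-1}$, so each $t_i$ ($i>k$) is an interval contained in one of the two components $[0,l-1]$, $[r+1,N]$ of $B$, and any such interval commutes with $s_n=(l,r)$. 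Applying the transpositions realising $P'$ to $P$ yields a permutation $\hat P$ of $P$ ending $\cdots\lmto{t_{n-1}}F_{n-1}\lmto{s_n}F_n$ in which $G'$ occurs at position $k$; inside $\hat P$ move the final letter $s_n$ leftwards past $t_{n-1},t_{n-2},\dots,t_{k+1}$ using Lemma~\ref{L:flag_rules}$(1)$. The outcome is a permutation of $P$ of the form $F_0\lmto{t_1\cdots t_k}G'\lmto{s_n}H\lmto{t_{k+1}\cdots t_{n-1}}F_n$; since the operations $\alpha_{t_i}$ ($i>k$) do not affect the levels of $s_n$, the flag $H$ has the same $s_n$-part as $F_n$, and being obtained from $G'$ by a weak $\alpha_{s_n}$ it agrees with $G'$, hence with $G$, off $s_n$. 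Therefore $H=G$, and this permutation of $P$ contains $G$, which closes the induction.

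The step I expect to be the main obstacle is proving that the tail word $t_{k+1}\cdots t_{n-1}$ consists only of letters commuting with $s_n$, since it is precisely this that licenses the commutation manoeuvre in the last paragraph; it is obtained by combining the description of $G'$ versus $F_{n-1}$ as equivalent modulo $B$ with the uniqueness of reduced path words (Proposition~\ref{P:unique_path_word}). The remaining ingredients—the fact that a flag meeting $V$ must contain all of $V$, and the bookkeeping that identifies the intermediate flag $H$ with $G$—are straightforward once the structural description of $A_n$ from Lemma~\ref{L:scaffold} is in place.
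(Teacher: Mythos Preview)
Your proof is correct and follows essentially the same approach as the paper's: induction on the length of $P$, reducing to the case where the target flag $G$ meets the new vertices $V$, constructing the auxiliary flag $G'\subset A_{n-1}$, and then commuting $s_n$ past the tail of a permutation through $G'$. In fact you are more explicit than the paper at the one delicate point, namely why the tail word $t_{k+1}\cdots t_{n-1}$ commutes with $s_n$; the paper simply asserts that the permutation of $Q$ through $G'$ has the form $F\to G'\lmto{w}G$ with $w$ commuting with $s$, whereas you justify this via Proposition~\ref{P:unique_path_word}.
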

\begin{proof}
  We use induction on the length of $P$. Let $u=v\cdot s$ be the word
  of $P$ with $s=(l,r)$. Split $P$ in a path $Q$ from $F$ to $G$
with word $v$ and in the path from $G$ to $H$ with word $s$. Denote
by $B$ the vertices of flags occurring in $Q$. Consider
a flag $K\subset A$. If  $K\subset B$, then $K$ occurs in a 
permutation of $Q$ by induction. Thus, it occurs in a permutation of
$P$. If $K\nsubseteq B$, since $u$ is reduced, the letter $s$ does not
belong to the final segment of $v$, so by Lemma \ref{L:scaffold}
implies that the pair $(a_l,a_r)$ in $K$ is closed. Lemma
\ref{L:nice_entspannt} gives that $H$ is obtained by the
operation $\alpha_s$ to the nice set $B$. So $K\lmto{w}H$, where the
reduced word $w$ commutes with $s$. By Lemma \ref{L:Eq_flags}, there
is a unique $G'\subset B$ such that $G'\lmto{w}G$ and $G'\lmto{s}K$.
Induction gives that $G'$ is part of a reduced path $F\to G'\lmto{w}
G$, which is a permutation
of $Q$. Then $F\to G'\lmto{w} G\lmto{s}H$ is a permutation of $P$. We
  permute $w$ and $s$ and obtain $F\to G'\lmto{s} K\lmto{w}H$, as
  desired.
\end{proof}

Once the word of a flag path between $F$ and $G$ is fixed, the
intermediate flags appearing in the path are unique up to wobbling.
\begin{lemma}[Wobbling Lemma]\label{L:Wob}
  Given two paths between $F$ and $G$ with reduced word $s_1\cdots
  s_i\cdots s_n$,
\begin{figure}[!htbp]
\centering

\begin{tikzpicture}[>=latex,->]

\matrix (A) [matrix of math nodes,column
sep=1cm]
{  & H_1 & \cdots &  H_{n-1} &  \\
 F &   &  &  &   G, \\
   & H'_1 & \cdots & H'_{n-1} &   \\};

 \draw (A-2-1)  edge node[pos=.7, above left] {$s_1$} (A-1-2)
		edge node[pos=.7, below left] {$s_1$} (A-3-2) ;

\draw (A-1-2)  edge  (A-1-3) ;
\draw (A-3-2)  edge (A-3-3);

\draw (A-1-3)  edge  (A-1-4) ;
\draw (A-3-3)  edge (A-3-4);

 \draw[<-] (A-2-5)  edge node[pos=.7, above right] {$s_n$} (A-1-4)
		edge node[pos=.7, below right] {$s_n$} (A-3-4) ;

\end{tikzpicture}

\end{figure}

\noindent the flags $H_i$ and
$H'_i$ are equivalent modulo $\wob(s_1\cdots  s_i, s_{i+1}\cdots
s_n)$, for every $i$ in $\{1,\ldots,n-1\}$.
\end{lemma}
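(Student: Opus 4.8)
The plan is to reduce the statement to the Wobbling criterion for words, Proposition \ref{P:wortwobbel}. Fix $i\in\{1,\dots,n-1\}$, put $u=s_1\cdots s_i$, $v=s_{i+1}\cdots s_n$, $H=H_i$, $H'=H'_i$, and write $F$, $G$ for the common first and last flags. Since $s_1\cdots s_n$ is reduced, so are $u$, $v$ and $u\cdot v=s_1\cdots s_n$, and we have reduced flag paths $F\lmto{u}H$, $F\lmto{u}H'$, $H\lmto{v}G$, $H'\lmto{v}G$. Because consecutive flags along a flag path differ exactly in the current letter (Definition \ref{D:weakop}), and $\wob(u,v)=\sr(u)\cap\sL(v)$, it suffices to exhibit a \emph{reduced} word $w$ with $H\lmto{w}H'$ and $|w|\subseteq\wob(u,v)$: then the levels on which $H$ and $H'$ differ all lie in $|w|$, i.e.\ $H$ and $H'$ are equivalent modulo $\wob(u,v)$.

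First I would fix \emph{any} reduced word $w$ with $H\lmto{w}H'$: one exists because $H$ and $H'$ are joined by a weak path (Lemma \ref{L:Eq_flags}), which by Lemma \ref{L:stark_aus_schwach} yields a flag path, and a $\prec$-minimal flag-path word between $H$ and $H'$ is reduced by Corollary \ref{C:minimal_path_is_reduced}. The heart of the matter is then to verify the two strong reductions required by Proposition \ref{P:wortwobbel}, namely $u\cdot w\str u$ and $w\inv\cdot v\str v$; granting these, the proposition gives $|w|\subseteq\wob(u,v)$ and we are done. For $u\cdot w\str u$, concatenating $F\lmto{u}H$ with $H\lmto{w}H'$ produces a flag path $F\lmto{u\cdot w}H'$ (each step is still global). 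The idea is that the word of any flag path can be driven to a reduced word by a strong reduction while remaining a flag path with the same endpoints: whenever the current word fails to be reduced, Lemma \ref{L:flag_rules}(\ref{L:flag_rules:comm}) commutes an absorbing pair into adjacency, and then parts (\ref{L:flag_rules:echt}) and (\ref{L:flag_rules:split}) of that lemma replace the length-two subpath by one whose word is strictly $\prec$-smaller (a cancellation or a splitting step, hence genuinely smaller by Lemma \ref{L:kleiner_fundiert}); well-foundedness of $\prec$ forces termination at a reduced word $z$ with $F\lmto{z}H'$ and $u\cdot w\str z$. Since $F\lmto{u}H'$ with $u$ reduced, uniqueness of reduced path words (Proposition \ref{P:unique_path_word}) gives $z\approx u$, hence $u\cdot w\str u$. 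Reversing $H\lmto{w}H'$ — legitimate because a global application of $\alpha_s$ is a symmetric relation between the two flags (Corollary \ref{C:fteprod}) — and concatenating with $H\lmto{v}G$ yields a flag path $H'\lmto{w\inv\cdot v}G$, and the same reduction argument run against the reduced path $H'\lmto{v}G$ gives $w\inv\cdot v\str v$.

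The step I expect to be the real obstacle is exactly this passage: turning the mere inequality $x\preceq u$ that Lemma \ref{L:stark_aus_schwach} would hand back into an honest strong reduction $x\str u$ for $x=u\cdot w$ (and $x=w\inv\cdot v$). One must check carefully that every move used to shrink the flag-path word is a bona fide strong-reduction move (commutation, cancellation, or splitting), so that the whole process assembles into $x\str z$, and that it cannot cycle; only then does Proposition \ref{P:wortwobbel} become applicable. Everything else is routine bookkeeping: that $u$, $v$, $u\cdot v$ and $w$ are reduced, that concatenations and reversals of flag paths are again flag paths, and that "equivalent modulo $A$" is the same as "agreeing outside the levels in $A$".
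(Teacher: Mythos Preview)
Your argument is correct and is exactly the paper's proof: pick a reduced $w$ with $H\lmto{w}H'$, show $u\cdot w\str u$ and $w\inv\cdot v\str v$ via uniqueness of reduced path words, and then invoke Proposition~\ref{P:wortwobbel}. The ``real obstacle'' you flag is already packaged as Corollary~\ref{C:minimal_path_is_reduced}(1), which says that any flag-path word admits a strong reduct still connecting the same endpoints; your detailed argument via Lemma~\ref{L:flag_rules} and well-foundedness of $\prec$ is precisely (a correct reproof of) that corollary, so you may simply cite it.
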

\begin{proof}
  Write $u=s_1\cdots s_i$ and $v=s_{i+1}\cdots s_n$. Suppose we are
given flags $H_i$ and $H_i'$ as in the previous picture. Hence
  $$F\lmto{u}H_i\lmto{v}G \qquad F\lmto{u}H'_i\lmto{v}G.$$

\noindent  Let $w$ be some reduced word with $H_i\lmto{w}H'_i$. By
Corollary \ref{C:minimal_path_is_reduced} and Proposition
\ref{P:unique_path_word},
the word $u$ is a strong reduct of $u\cdot w$. Likewise, the word
$v$ is a strong reduct of $w\inv\cdot v$. Proposition
\ref{P:wortwobbel} gives that $|w|\subset\wob(u,v)$, which yields the
 result.
\end{proof}

We finish this section by observing that  nice sets are
flag-connected.

\begin{prop}\label{P:nice_durch_flaggenwege}
  Let $M$ be simply connected and $A$ some union of flags from $M$.
  The set $A$ is nice \iff any two flags in $A$ can be connected by a
  reduced flag path which belongs to $A$.
\end{prop}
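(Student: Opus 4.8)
The plan is to prove both implications of \ref{P:nice_durch_flaggenwege} using the machinery accumulated in this section, especially Lemma \ref{L:scaffold}, Lemma \ref{L:nice_entspannt}, and Theorem \ref{T:extnice}.

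\emph{From left to right.} Suppose $A$ is nice and $F, G$ are two flags in $A$. Since $M$ is simply connected, Corollary \ref{C:AAAnice} gives that $A$ is itself simply connected; since $A$ is a union of flags it is also complete. By Lemma \ref{L:Eq_flags} the flags $F$ and $G$ are connected by a weak flag path inside $A$ (the intermediate flags can be chosen in $A$ because $A$ is complete and $D^b_a = A\cap M^b_a$ for the relevant endpoints, so the needed vertices already live in $A$). By Lemma \ref{L:stark_aus_schwach}, and then Corollary \ref{C:minimal_path_is_reduced}, we obtain a \emph{reduced} flag path $F\lmto{u}G$; I must still check this reduced path stays inside $A$. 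Here the point is that the reduction steps (commutation, cancellation, splitting) only ever \emph{re-route through vertices already present in the flags of the original weak path}, which all lie in $A$ — more carefully, each global operation producing a shorter path produces new vertices whose distance in $M^{a_r}_{a_l}$ is infinite, but since $A$ is nice (equivalently wunderbar, by Corollary \ref{C:nice=wund}), any such vertex forced to appear is already forced to appear inside $A$. So the reduced flag path lies in $A$.

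\emph{From right to left.} Conversely, suppose any two flags in $A$ are joined by a reduced flag path inside $A$. To show $A$ is nice I verify the two conditions of Definition \ref{D:nice}. For the first condition, given (possibly imaginary) $a,b\in A$ and a vertex $x\in A\cap M^b_a$, pick a flag $F$ in $A$ through $a$ and $b$ and a flag $F'$ in $A$ through $x$; connect $F$ to $F'$ by a reduced flag path $P\subset A$. By Lemma \ref{L:scaffold} the union $B$ of flags in $P$ is nice in $M$, hence wunderbar, and $x\in B$; one then checks that $x$ lies in $B^b_a$, so that the path from $a$ (or $b$) to $x$ through $B$ stays between $a$ and $b$, giving $A^b_a = A\cap M^b_a$ after taking unions over all such $F'$. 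For the second condition, given an interval $t$ and $x,y\in\A_t(A)$ with $\dd^M_t(x,y)<\infty$, choose flags through $x$ and through $y$ in $A$, connect them by a reduced flag path $P\subset A$, and apply Lemma \ref{L:scaffold}: the scaffold $B$ is nice, hence wunderbar, so $\dd^B_t(x,y)=\dd^M_t(x,y)<\infty$, and since $B\subset A$ this gives $\dd^A_t(x,y)<\infty$.

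\emph{Main obstacle.} The delicate point is the left-to-right direction: showing that the reduced flag path produced by Lemma \ref{L:stark_aus_schwach} / Corollary \ref{C:minimal_path_is_reduced} can be taken \emph{inside} $A$ rather than merely inside $M$. The reduction process, especially \textsc{Splitting}, introduces brand-new vertices, and a priori these live in $M\setminus A$. The resolution is that $A$, being nice in a complete simply connected $M$, is wunderbar (Corollary \ref{C:nice=wund}) and is an increasing union of nice sets exhausting a suitable countable elementary substructure (Corollary \ref{C:nice_modell}); applying Theorem \ref{T:extnice} one extends $A$ step by step inside $A$ itself, or equivalently one runs the whole argument inside a countable $M'$ with $A\leq M'$, where $A\leq M'$ forces every globally-forced configuration over $A$ to already be realised in $A$. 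Once this localisation is in place, the reduced flag path obtained from Lemma \ref{L:abstand_flaggenpfad} and Lemma \ref{L:nice_entspannt} automatically lies in $A$, and the proof closes.
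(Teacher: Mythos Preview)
Your right-to-left direction is fine and is in fact a pleasant alternative to the paper's argument: the paper uses uniqueness of the reduced word (Proposition \ref{P:unique_path_word}) to transfer the $s$-connection from $M$ to $A$, whereas you route through Lemma \ref{L:scaffold} to get a nice scaffold $B\subset A$ and then invoke niceness of $B$ directly. (Minor points: you only need niceness of $B$, not wunderbar, so completeness of $M$ is irrelevant; and the first condition $A_a^b=A\cap M_a^b$ holds automatically for any union of flags, so your detour via reduced paths there is unnecessary.)

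The left-to-right direction, however, has a real gap. You correctly identify the obstacle---reductions performed in $M$ (especially splitting) may produce flags not in $A$---but your proposed resolution is vague and does not work as stated: $M$ is not assumed complete, so Corollary \ref{C:nice=wund} is unavailable, and the sentence about ``$A\leq M'$ forcing globally-forced configurations to be realised in $A$'' has no precise content. The clean fix, which you nearly have, is to reverse the order of operations. You already observed that $A$ is complete and simply connected; so treat $A$ as a colored $N$-space in its own right and run Lemma \ref{L:stark_aus_schwach} and Corollary \ref{C:minimal_path_is_reduced} \emph{inside $A$}. This yields a path in $A$ which is a reduced flag path \emph{in the sense of $A$}, i.e.\ each step is a global application of $\alpha_s$ as measured by distances in $A$. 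The remaining task is to check that a global application in $A$ is still global in $M$: if $b\in G\setminus F$ has $\dd^A_s(b,F)=\infty$, then $\dd^M_s(b,F)=\infty$. This is exactly the contrapositive of the second niceness condition. That one line replaces your entire ``main obstacle'' paragraph.
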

\begin{proof}
  Clearly, any union of flags satisfies that $A_a^b=A\cap M_a^b$.

Suppose it is nice. Consider two flags $F$ and $G$ in $A$ and connect
them in $M$ by some weak   path. Since $A$ is nice, we can find a weak
path $P$ belonging to
  $A$ which is reduced \emph{in the sense of $A$.}  In order to show
  that $P$ is a flag path (in the sense of $M$), we need only show
  that if $G$ is obtained from $F$ by a global application of
  $\alpha_s$ in $A$, then it remains a global application of
  $\alpha_s$ in $M$. Equivalently, for any $b$ in $G\setminus F$, if
  $\dd^A_s(b,F)=\infty$ then $\dd^M_s(b,F)=\infty.$ This is exactly
  the definition of niceness.

  \noindent Assume now that every two flags in $A$ are connected in
  $A$ by a reduced flag path. Consider two vertices $b$ and $c$ in
  $\A_s(A)$ with finite $s$-distance in $M$ and choose two flags $F$
  and $G$ in $A$ containing $b$ and $c$, respectively. Lemma
  \ref{L:abstand_flaggenpfad} (with $t=[0,N])$ and Lemma
  \ref{L:stark_aus_schwach} imply that we can connect $F$ and $G$ by a
  reduced path $P$ with word $u$ whose letters do not contain $s$. By
  assumption, there is a reduced flag path $P'$ in $A$ connecting $F$
  and $G$ as well. Thus, the word of $P'$ is a permutation of $u$ by
  Proposition \ref{P:unique_path_word}. So, again by Lemma
  \ref{L:abstand_flaggenpfad}, the points $b$ and $c$ are
  $s$-connected in $A$ and hence $A$ is nice.
\end{proof}

\section{Forking in the free pseudospace}\label{S:Forking}
In this section we provide a detailed description of nonforking over
nice sets and canonical bases. In particular, we obtain weak
elimination of imaginaries. The theory $\psn$ has trivial forking and
is totally trivial, as in \cite{BP00}.

We will work inside a sufficiently saturated model $M$. We start with
an easy observation which follows immediately from Theorem
\ref{T:extnice}.
\begin{prop}\label{P:stable}
 The theory $\psn$ is $\omega$-stable.
\end{prop}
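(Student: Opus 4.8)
The plan is to show $\omega$-stability by counting types over a countable model, using the structural results already established about nice sets and flag paths. First I would recall that every type over a set $A$ is determined by a type over a nice set: given a countable model $M_0 \prec M$ and a finite tuple $b$, Theorem \ref{T:extnice} produces a finite nice set $B \supseteq M_0$ (in finitely many steps) containing $b$ -- here one must first replace $M_0$ by a countable nice superset, which exists by taking the union of the increasing chain of nice sets produced by iterating Theorem \ref{T:extnice} over an enumeration of $M_0$, noting (Corollary \ref{C:nice_modell}) that a countable nice set is simply connected and complete, hence a model of $\psn$, and in fact $M_0 \leq$ this superset. So without loss of generality $M_0$ is itself nice.

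The key point is then that $\tp(b/M_0)$ is controlled by finite combinatorial data. Enlarge $M_0$ to a finite nice $B \supseteq M_0$ with $b \in B$. By Corollary \ref{C:complete} (the type of a nice set is determined by its quantifier-free type) together with Theorem \ref{T:el}, the type $\tp(B/M_0)$ is determined by the isomorphism type of $B$ over $M_0$ as a colored $N$-space, i.e.\ by: which finitely many new vertices were added, their levels, and to which (finitely many) vertices of $M_0$ they are connected -- plus the data recording, via Lemma \ref{L:nice}, that the relevant distances to $M_0$ are infinite (which is automatic once we know $B$ is nice and obtained by the operations $\alpha_s$). Concretely, by Corollary \ref{C:nice_modell}(\ref{F:nice_modell:nice_op}) we have $\emptyset \leq M_0$, and then $B$ is obtained from $M_0$ by finitely many global applications of operations $\alpha_s$; the choice at each step involves only a finite interval $s \subseteq [0,N]$ and a pair of already-present vertices. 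Since $M_0$ is countable, there are only countably many such finite sequences of choices, hence only countably many isomorphism types of finite nice $B \supseteq M_0$, hence (quotienting by the choice of which vertex of $B$ is $b$, again finite) only countably many types over $M_0$.

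I would then phrase the conclusion: for every countable model $M_0$, $|S_1(M_0)| \le \aleph_0$, which by the standard criterion gives that $\psn$ is $\omega$-stable. The main obstacle, and the place requiring the most care, is the bookkeeping in the first step: producing a single countable nice model $M_0' \supseteq M_0$ with $M_0 \subseteq M_0'$ elementarily, so that the counting is genuinely ``over a model.'' This is a routine iteration of Theorem \ref{T:extnice} together with Corollary \ref{C:nice_modell} and Theorem \ref{T:el}, but it must be spelled out; alternatively, and more cleanly, one simply takes $M_0 = \mi$ (which is $\omega$-saturated by Corollary \ref{C:omegasat} and countable and nice) and counts types over it, observing that $\omega$-stability is equivalent to $|S_1(\mi)| = \aleph_0$ for this one countable model since the theory is complete and $\mi$ realizes every type over $\emptyset$; then the whole argument reduces to: a finite nice superset of $\mi$ is, up to iso over $\mi$, specified by a finite word in the operations $\alpha_s$ anchored at finitely many vertices of $\mi$, of which there are only countably many.
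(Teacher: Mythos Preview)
Your proof is correct and takes essentially the same approach as the paper: reduce to a countable nice base set, use Theorem \ref{T:extnice} to place any element in a nice extension obtained by finitely many operations $\alpha_s$, and invoke Corollary \ref{C:complete} to conclude there are only countably many types. One wording slip to fix: you write ``a finite nice $B \supseteq M_0$,'' but $B$ cannot be finite when $M_0$ is infinite---you mean that $B$ is obtained from $M_0$ in finitely many steps, i.e.\ $B\setminus M_0$ is finite.
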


\begin{proof}
  Work over a countable subset $A$, which we may assume to be nice.
  Theorem \ref{T:extnice} shows that every $1$-type over $A$ lies in
  some nice set $B$, obtained from $A$ by a finite number
  of applications $\alpha_s$. In
  particular, there are countably many quantifier-free types of such
  $B$'s over $A$ and thus countably many types by Corollary
  \ref{C:complete}. The theory $\psn$ is therefore $\omega$-stable.
\end{proof}

The following result will allow us to determine the
type of a flag over a nice set.

\begin{prop}\label{P:fusspunkt}
  Let $X$ be a nice set and $F$ a flag which is connected
  to a flag $G$ in $X$ by a reduced flag path $P$ with word $u$.
  The following are equivalent:
  \renewcommand{\theenumi}{\alph{enumi}}
  \begin{enumerate}
  \item\label{P:fusspunkt:indep} Let $v$ by a reduced word connecting
    $G$ to another flag $G'$ in $X$. Then $F$ is connected to $G'$
    by the reduct of $u\cdot v$.
  \item\label{P:fusspunkt:kleinstes} $u$ is the $\preceq$-smallest
    word connecting $F$ to a flag in $X$.
  \item\label{P:fusspunkt:minimal} $u$ is $\preceq$-minimal among words
    connecting $F$ to a flag in $X$.
  \end{enumerate}
\end{prop}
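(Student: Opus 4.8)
The plan is to show the cycle of implications $(\ref{P:fusspunkt:kleinstes})\Rightarrow(\ref{P:fusspunkt:indep})\Rightarrow(\ref{P:fusspunkt:minimal})\Rightarrow(\ref{P:fusspunkt:kleinstes})$. The implication $(\ref{P:fusspunkt:kleinstes})\Rightarrow(\ref{P:fusspunkt:minimal})$ is trivial: the $\preceq$-smallest word is in particular $\preceq$-minimal. For $(\ref{P:fusspunkt:minimal})\Rightarrow(\ref{P:fusspunkt:kleinstes})$, suppose $u$ is $\preceq$-minimal and let $u'$ be any reduced word connecting $F$ to a flag $G''$ in $X$; I need $u\preceq u'$. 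Since $G$ and $G''$ both lie in $X$, which is nice, Proposition \ref{P:nice_durch_flaggenwege} gives a reduced flag path inside $X$ connecting $G$ to $G''$, say with word $v$; then $u'\cdot v\inv$ is (the word of) a weak path from $F$ to $G$, and $u$ connects $F$ to $G$, so composing the $F\to G''$ path of word $u'$ with the $G''\to G$ path of word $v\inv$ and then with the reverse of the $F\to G$ path of word $u$ yields a closed weak path at $F$ with word $u'\cdot v\inv\cdot u\inv$. By Corollary \ref{C:minimal_path_is_reduced} this strongly reduces to some $w$ with $F\lmto{w}F$, and Corollary \ref{C:geschl_flaggenweg} forces $w=1$; hence $u'\cdot v\inv\cdot u\inv\str 1$, and by Corollary \ref{C:word_inverses} (applied after the Triangle Lemma \ref{P:triangle}) we get $u'\cdot v\inv\approx u$, so $u$ is a strong reduct of $u'$ composed with a path in $X$. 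Then $\preceq$-minimality of $u$ forces $u\preceq u'\cdot v\inv$, and since $v\inv$ connects two flags in $X$, a further argument (or directly Corollary \ref{C:u_kleinerals_uv}) gives $u\preceq u'$; combining over all $u'$ shows $u$ is the $\preceq$-smallest such word.

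For $(\ref{P:fusspunkt:kleinstes})\Rightarrow(\ref{P:fusspunkt:indep})$: assume $u$ is $\preceq$-smallest and let $v$ connect $G$ to $G'$ inside $X$. Let $w$ be the reduct of $u\cdot v$; by Proposition \ref{P:unique_path_word} it suffices to show $F\lmto{w}G'$. First, composing $F\lmto{u}G$ with $G\lmto{v}G'$ gives a weak path of word $u\cdot v$, so by Lemma \ref{L:stark_aus_schwach} there is a flag path $F\lmto{w'}G'$ with $w'\preceq u\cdot v$, and by Corollary \ref{C:minimal_path_is_reduced} we may take $w'$ reduced. I claim $w'\approx w$. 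On one hand $w'$ is a strong reduct of $u\cdot v$, hence $w'\preceq w$ unless $w'\approx w$ — more precisely, by Proposition \ref{P:strongvszerfallos} the reduct $w$ is $\preceq$-maximal among strong reducts of $u\cdot v$ that avoid splitting, and any splitting produces something strictly $\prec w$; so if $w'\prec w$, splitting occurred. But then $w'$ connects $F$ to $G'\in X$ and $v\inv$ connects $G'$ to $G\in X$, so the reduct of $w'\cdot v\inv$ connects $F$ to a flag in $X$ and is $\preceq w'$ (Corollary \ref{C:u_kleinerals_uv} bounds it below, but composing back toward $G$ and using that $u$ is $\preceq$-smallest forces $u\preceq$ that reduct $\preceq w' \prec w$); since $w = $ reduct of $u\cdot v$ and $u$ is itself $\preceq$ everything connecting $F$ to $X$, one derives $w\preceq w'$, a contradiction. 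Hence $w'\approx w$ and $F\lmto{w}G'$.

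For $(\ref{P:fusspunkt:indep})\Rightarrow(\ref{P:fusspunkt:minimal})$: suppose $(\ref{P:fusspunkt:indep})$ holds but $u$ is not $\preceq$-minimal, so there is a reduced $u'\prec u$ connecting $F$ to some flag $G'\in X$. As above, $G'$ and $G$ lie in $X$, so there is a reduced word $v$ with $G'\lmto{v}G$ inside $X$. Applying $(\ref{P:fusspunkt:indep})$ with $G'$ in the role of the second flag — or rather running the previous paragraph's conclusion in reverse — the flag $F$ is connected to $G$ by the reduct of $u'\cdot v$; but by Proposition \ref{P:unique_path_word} this reduct must be $\approx u$, while Corollary \ref{C:u_kleinerals_uv} gives that the reduct of $u'\cdot v$ is $\preceq$-larger than $u'$, and a dual application (composing with $v\inv$) shows it is also $\preceq u'$ up to the flags staying in $X$, forcing $u\preceq u' \prec u$, a contradiction. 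Hence $u$ is $\preceq$-minimal.

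\textbf{Main obstacle.} The delicate point throughout is bookkeeping the interaction between strong reduction and the order $\preceq$ when composing paths whose endpoints are constrained to lie in the nice set $X$: one repeatedly needs that composing a path to $X$ with a path \emph{inside} $X$ cannot decrease the $\preceq$-class below what $u$ already achieves, which is exactly where Corollaries \ref{C:u_kleinerals_uv}, \ref{C:word_inverses}, Proposition \ref{P:strongvszerfallos} and the Triangle Lemma \ref{P:triangle} have to be combined carefully. I expect the hardest single step to be pinning down $w'\approx w$ in $(\ref{P:fusspunkt:kleinstes})\Rightarrow(\ref{P:fusspunkt:indep})$, i.e. ruling out that a splitting in the strong reduction of $u\cdot v$ produces a strictly smaller word still connecting $F$ to $X$ — this is precisely where the minimality hypothesis on $u$ must be fed back in.
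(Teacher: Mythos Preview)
Your cycle $(\ref{P:fusspunkt:kleinstes})\Rightarrow(\ref{P:fusspunkt:indep})\Rightarrow(\ref{P:fusspunkt:minimal})\Rightarrow(\ref{P:fusspunkt:kleinstes})$ runs opposite to the paper's $(\ref{P:fusspunkt:indep})\Rightarrow(\ref{P:fusspunkt:kleinstes})\Rightarrow(\ref{P:fusspunkt:minimal})\Rightarrow(\ref{P:fusspunkt:indep})$, and this creates real trouble. In the paper's cycle there is exactly one hard implication, $(\ref{P:fusspunkt:minimal})\Rightarrow(\ref{P:fusspunkt:indep})$; your cycle has two, $(\ref{P:fusspunkt:minimal})\Rightarrow(\ref{P:fusspunkt:kleinstes})$ and $(\ref{P:fusspunkt:kleinstes})\Rightarrow(\ref{P:fusspunkt:indep})$, and both of your arguments for these have gaps.

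The decisive gap is in $(\ref{P:fusspunkt:kleinstes})\Rightarrow(\ref{P:fusspunkt:indep})$. You correctly isolate the issue: with $w$ the (non-splitting) reduct of $u\cdot v$ and $w'$ the actual reduced word connecting $F$ to $G'$, you must rule out $w'\prec w$. From $(\ref{P:fusspunkt:kleinstes})$ you get $u\preceq w'$, and you then try to parlay this into $w\preceq w'$ by composing $w'$ with $v^{-1}$ and ``going back to $G$''. But Corollary~\ref{C:u_kleinerals_uv} gives the inequality in the \emph{wrong} direction: the reduct of $w'\cdot v^{-1}$ is $\succeq w'$, not $\preceq w'$, so the chain ``$u\preceq\text{that reduct}\preceq w'\prec w$'' you write down is not valid. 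There is no general order-theoretic manipulation that turns $u\preceq w'$ into $w\preceq w'$; one genuinely needs to locate, inside $X$, a flag witnessing a word strictly below $u$ whenever splitting occurs.

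This is exactly what the paper does, and it is the idea missing from your plan. The paper proves $(\ref{P:fusspunkt:minimal})\Rightarrow(\ref{P:fusspunkt:indep})$ directly via the Symmetric Decomposition Lemma (Corollary~\ref{C:amador}): write $u=u_1\cdot u'\cdot w$ and $v=w\cdot v'\cdot v_1$, and pick flags $F^*$ and $G^*$ along the path so that $G^*$ lies in $X$ (it sits on the reduced path $G\to G'$ inside $X$, which exists by Proposition~\ref{P:nice_durch_flaggenwege}) and $F^*\lmto{b}G^*$ for some strong reduct $b$ of $w\cdot w$. The only place splitting can occur in $u\cdot v$ is in this $w\cdot w$ block; if it does, then $b\prec w$ and $F$ connects to $G^*\in X$ by a word $\preceq u_1\cdot u'\cdot b\prec u$, contradicting $\preceq$-minimality of $u$. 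Once $b\approx w$, the unique strong reduct of $u_1\cdot u'\cdot w\cdot v'\cdot v_1$ is $u_1\cdot w\cdot v_1$, which is the reduct of $u\cdot v$. Your ``main obstacle'' paragraph correctly diagnoses the difficulty, but the cure is this structural use of Corollary~\ref{C:amador}, not further juggling with Corollaries~\ref{C:u_kleinerals_uv} and~\ref{C:word_inverses}.

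Your argument for $(\ref{P:fusspunkt:minimal})\Rightarrow(\ref{P:fusspunkt:kleinstes})$ has a similar problem: from the closed path you get $u'\cdot v^{-1}\cdot u^{-1}\str 1$, but Corollary~\ref{C:word_inverses} requires a product of \emph{two} reduced words, and $u'\cdot v^{-1}$ need not be reduced. The Triangle Lemma does not rescue this without first knowing something about the reduct of $u'\cdot v^{-1}$, which is precisely what is at stake. In the paper this implication is not proved directly at all; it comes for free once $(\ref{P:fusspunkt:minimal})\Rightarrow(\ref{P:fusspunkt:indep})$ is established, since $(\ref{P:fusspunkt:indep})\Rightarrow(\ref{P:fusspunkt:kleinstes})$ is an immediate consequence of Corollary~\ref{C:u_kleinerals_uv}.
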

\begin{proof}
  (\ref{P:fusspunkt:indep})$\to$(\ref{P:fusspunkt:kleinstes}) follows
  from Corollary \ref{C:u_kleinerals_uv}.\\

  \noindent
  (\ref{P:fusspunkt:kleinstes})$\to$(\ref{P:fusspunkt:minimal})
  is trivial.\\

  \noindent (\ref{P:fusspunkt:minimal})$\to$(\ref{P:fusspunkt:indep}):
  Let $G'$ be any flag in $X$. Then $G$  is connected to $G'$ by a flag
  path $P$ with
  word $v$. By Proposition  \ref{P:nice_durch_flaggenwege},
we may assume that $P$ in $X$. Choose a decomposition $u=u_1\cdot
u'\cdot w$ and $w\cdot v'\cdot v_1=v$ as in Corollary \ref{C:amador},
with corresponding paths
  \[F\lmto{u_1\cdot u'}F^*\lmto{w}G\lmto{w}G^*\lmto{v'\cdot v_1}G',\]
  where $G^*$ is a flag in $X$.

  Let $b$ be a strong reduct of $w\cdot w$ connecting $F^*$ to $G^*$.
If $b\not\approx w$, consider the reduced word $c$ which connects
  $F$ with $G^*$. Since $c$ is a strong reduct of $u_1\cdot u'\cdot
b$, we have $c\preceq u_1\cdot u'\cdot b\prec u$, a contradiction. 
So
  $b$ is equivalent to $w$. We obtain a path from
  $F$ to $G'$ with word $u_1\cdot u'\cdot w\cdot v'\cdot v_1$. Up to
permutation, its only possible  strong reduct  is
$u_1\cdot w\cdot v_1$. So $F$ connects to $G'$ by word $u_1\cdot
w\cdot v_1$, which is the reduct of $u\cdot v$.

\end{proof}

\begin{definition}\label{D:basept}
  Given a nice set $X$. We call a flag $G$ in $X$ a \emph{base-point}
  of $F$ over $X$ if the conditions of Proposition \ref{P:fusspunkt}
  hold: The word connecting $F$ to $G$ is $\preceq$-minimal among
  words which connect $F$ with flags in $X$.
\end{definition}

\begin{lemma}\label{L:alpha_kette}
  Let $X$ be a nice set and $F_0\lmto{s_1}\cdots\lmto{s_n}F_n$ be a
  reduced flag path with $F_n\in X$. Then $F_n$ is a basepoint of
  $F_0$ over $X$ if and only if the flag $F_{i-1}$ is obtained from
  $F_i\cup\ldots F_{n}\cup X$ by a global application of
  $\alpha_{s_i}$ for all $i\ge 1$.

  In particular, if $F_n$ is a basepoint of $F_0$ over $X$, then
  $F_0\cup\ldots F_{n}\cup X$ is nice.
\end{lemma}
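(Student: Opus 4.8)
The plan is to induct on the length $n$ of the flag path, using Proposition~\ref{P:fusspunkt}(\ref{P:fusspunkt:minimal}) as the working form of \emph{base-point}: $F_n$ is a base-point of $F_0$ over $X$ exactly when $u=s_1\cdots s_n$ is $\preceq$-minimal among the reduced words connecting $F_0$ to a flag of $X$. Writing $A_i=F_i\cup\cdots\cup F_n\cup X$, so that $A_n=X$ and $A_{i-1}=F_{i-1}\cup A_i$, the statement becomes the equivalence of "$u$ is $\preceq$-minimal'' with "$F_{i-1}$ is a global application of $\alpha_{s_i}$ to $A_i$ for every $i\ge1$''. The case $n=0$ is trivial ($F_0=F_n\in X$, empty word, $A_0=X$ nice). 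The inductive step rests on a local statement: \emph{if $A_1$ is nice and $F_n$ is a base-point of $F_1$ over $X$ (so $u'=s_2\cdots s_n$ is the $\preceq$-smallest word connecting $F_1$ to a flag of $X$, by Proposition~\ref{P:fusspunkt}), then $u=s_1\cdot u'$ is $\preceq$-minimal among words connecting $F_0$ to a flag of $X$ if and only if $F_0$ is obtained from $A_1$ by a global application of $\alpha_{s_1}$.}

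Granting this, both directions follow. If all the global applications hold, a downward induction with Lemma~\ref{L:nice} shows each $A_i$ is nice: $A_n=X$ is nice, and from $A_i$ nice one passes to $A_{i-1}=F_{i-1}\cup A_i$ because the new vertices of $F_{i-1}$ have infinite distance from $A_i$ in $M^{a_{r_i}}_{a_{l_i}}$; so $A_0$ is nice (the "in particular'' clause), the induction hypothesis applied to the tail path gives that $F_n$ is a base-point of $F_1$ over $X$, and the local statement finishes. Conversely, if $F_n$ is a base-point of $F_0$ over $X$ it is also one of $F_1$ over $X$: were some reduced $w\prec u'$ to connect $F_1$ to a flag $G$ of $X$, Lemma~\ref{L:stark_aus_schwach} applied to the weak path $F_0\to F_1\to G$ of word $s_1\cdot w$ would yield a flag path $F_0\lmto{w'}G$ with $[w']\preceq[s_1\cdot w]$, and since $\preceq$ is compatible with the product in $\cox$ and the additive rank $\ord$ (invariant under permutation, strictly decreased by cancellation or splitting) precludes $[s_1\cdot w]=[u]$, this gives $[w']\prec[u]$, contradicting minimality; hence the induction hypothesis applies to the tail, giving $A_1$ nice and the global applications for $i\ge2$, and the local statement supplies $i=1$.

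For the local statement, the implication from the global application to minimality is obtained by verifying condition~(\ref{P:fusspunkt:indep}) of Proposition~\ref{P:fusspunkt} for $F_0$: given a reduced $v$ connecting $F_n$ to a flag $G'$ of $X$, the base-point property of $F_1$ gives $F_1\lmto{\mathrm{reduct}(u'\cdot v)}G'$, which by Proposition~\ref{P:nice_durch_flaggenwege} may be realised inside the nice set $A_1$; prepending $F_0\lmto{s_1}F_1$ gives a flag path whose word reduces to $\mathrm{reduct}(u\cdot v)$ unless a splitting occurs at the junction. But a splitting there (Lemma~\ref{L:flag_rules}) would produce a flag $H\subseteq A_1$ obtained from $F_1$ by $\alpha_{s_1}$ and connected to $F_0$ by a product of proper subletters of $s_1$, whence by Lemma~\ref{L:abstand_flaggenpfad} a new vertex of $F_0$ would lie at finite distance from $A_1$ in $M^{a_{r_1}}_{a_{l_1}}$, contradicting globality. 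For the reverse implication, argue contrapositively: if $F_0$ is not a global application of $\alpha_{s_1}$ to the nice set $A_1$, Lemma~\ref{L:nice} gives a new vertex of $F_0$ at finite distance from $A_1$ in $M^{a_{r_1}}_{a_{l_1}}$; then Lemma~\ref{L:abstand_flaggenpfad}, together with flag-connectedness of $A_1$, yields a weak flag path from $F_0$ to a flag $G$ of $A_1$ whose word $x$ is a product of proper subletters of $s_1$, and composing with a reduced flag path from $G$ to a flag of $X$ inside $A_1$ and strongly reducing gives a flag path from $F_0$ to a flag of $X$ of word $\preceq$-smaller than $u$ — since every proper subletter of $s_1$ has strictly fewer elements than $s_1$, so $x$ is $\ord$-cheaper — contradicting the minimality of $u$.

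The step I expect to be the main obstacle is exactly this junction analysis: matching the metric condition defining a \emph{global} application of $\alpha_{s_1}$ relative to the \emph{whole} accumulated set $A_1$ with the combinatorial condition that no splitting intervenes when $s_1$ is prepended, and dually extracting a genuinely cheaper connection to $X$ from a non-global step. This requires Lemmata~\ref{L:abstand_flaggenpfad}, \ref{L:nice} and \ref{L:flag_rules}, Proposition~\ref{P:nice_durch_flaggenwege}, and the ordered-semigroup structure of $\cox$ (Lemma~\ref{L:kleiner_fundiert} and Corollary~\ref{C:u_kleinerals_uv}) to be combined carefully.
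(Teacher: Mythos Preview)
Your overall architecture (induction on $n$, with a local statement linking the $n{=}1$ case over $A_1$ to the basepoint property over $X$) is sound, and your argument that ``$F_n$ basepoint of $F_0$ over $X$ $\Rightarrow$ $F_n$ basepoint of $F_1$ over $X$'' via the $\ord$-rank is correct. The forward direction of your local statement also works, though the junction-splitting analysis is really just a re-derivation of the $n{=}1$ case: once $F_0$ is a global $\alpha_{s_1}$ from the nice set $A_1$, the $n{=}1$ case gives that $F_1$ is a basepoint of $F_0$ over $A_1$, and then Proposition~\ref{P:fusspunkt}(\ref{P:fusspunkt:indep}) applied to $A_1\supseteq X$ yields directly that the word from $F_0$ to any $G'\in X$ is the reduct of $s_1\cdot y$ with $y$ the word from $F_1$ to $G'$ --- no separate splitting discussion is needed.

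The genuine gap is in the \emph{backward} direction of your local statement. You find $G\in A_1$ with $\mathrm{word}(F_0,G)=w$ a product of proper subletters of $s_1$, compose with a reduced path $G\to G'$ inside $A_1$ to some $G'\in X$ with word $z$, and claim the strong reduct of $w\cdot z$ is $\prec u$ because ``$w$ is $\ord$-cheaper than $s_1$''. But $\ord$ is additive, so you would need $\ord(z)\le\ord(u')$, and nothing you have said bounds $z$: the flag $G$ lives in $A_1=F_1\cup\cdots\cup F_n\cup X$, which is $n{-}1$ steps above $X$, and the word from $G$ to $X$ can be arbitrarily complicated relative to $u'$. The paper avoids this by peeling from the \emph{other} end. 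It first shows $F_{n-1}$ is a global $\alpha_{s_n}$ from $X$ (the $n{=}1$ case at the tail, where the ambient nice set is $X$ itself), so $Y=F_{n-1}\cup X$ is nice, and then proves $F_{n-1}$ is a basepoint of $F_0$ over $Y$. The point is that $Y$ is only \emph{one} step above $X$, so any flag $G\in Y\setminus X$ satisfies $G\lmto{s_n}G'$ for some $G'\in X$; then $u=s_1\cdots s_{n-1}\cdot s_n\preceq \mathrm{word}(F_0,G')$, and the right-handed version of Corollary~\ref{C:OrdnungMonoid} gives $s_1\cdots s_{n-1}\preceq\mathrm{word}(F_0,G)$. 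Peeling from the back keeps the ``extra'' part of the nice set small enough that the flags in it are explicitly describable, which is exactly what your front-peeling approach lacks.
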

\begin{proof}
  The equivalence for $n=1$ is clear, since $F_0$ is obtained by an
  global application of $\alpha_{s_1}$ from $F_1\cup X=X$ if and only
  if there is no connection of $F_0$ to $X$ by a product of proper
  subletters of $s$ by Lemma \ref{L:abstand_flaggenpfad}.

  Proceed now by induction over $n$ and assume first that each
$F_{i-1}$ is obtained from $F_i\cup\ldots
  F_{n}\cup X$ by a global application of $\alpha_{s_i}$. Lemma
\ref{L:nice}  implies
  that $Y=F_1\cup\ldots F_{n}\cup X$ is nice. Furthermore,
the flag $F_1$ is a basepoint of $F_0$ over $Y$.  We will show that
property  \ref{P:fusspunkt} (\ref{P:fusspunkt:indep}) holds for $F_0$
and $F_n$ over $X$. Let $G$ be a flag in $X$. Choose reduced words
$x$, $y$ and $v$ with
\begin{alignat*}{2}
F_0&\lmto{x}G \text{ , }  F_1&\lmto{y}G \text{ and }
F_n&\lmto{v}G.
\end{alignat*}

Then $x$ is the
  reduct of $s_1\cdot y$ and, by induction, the word $y$ is the reduct
of  $s_2\cdots s_n\cdot v$. So $x$ is the reduct of $s_1\cdots
s_n\cdot v$. Therefore, the flag $F_n$ is a basepoint of
$F_0$ over $X$. 

For the other direction, note first that $F_{n-1}$ is obtained from
$F_n\cup X=X$ by a global application of $\alpha_{s_n}$. So
$Y=F_{n-1}\cup F_n\cup X$ is nice. If we can show  that $F_{n-1}$ is a
basepoint of $F_0$ over $Y$, we can conclude by induction. For that,
we will
verify \ref{P:fusspunkt}(\ref{P:fusspunkt:kleinstes}). Consider any
flag $G$ in $Y$ and let $x$ be the reduced word which connects $F_0$
to $G$.  If $G$ belongs to $X$, we have $s_1\cdots s_{n-1}\prec
s_1\cdots s_n\preceq x$. Otherwise, there are a flag $G'$ in $X$ and
  a word $w$ commuting with $s_n$ such the following
  diagram holds:
\begin{figure}[!htbp]
\centering

\begin{tikzpicture}[>=latex,->]

  \matrix (A) [matrix of math nodes,column
  sep=1cm]
          {  && F_{n-1} & F_n  \\
            F_0 & &   &  \\
            && G & G'   \\};

          \draw (A-2-1)  edge node[pos=.5, above ]
{$\prod\limits_{j=1}^{n-1} s_j$} (A-1-3)
		edge node[pos=.5, below left ] {$x$} (A-3-3) ;

                \draw (A-1-3)  edge node[pos=.5, above ] {$s_n$}
(A-1-4) ;
                \draw (A-3-3)  edge node[pos=.5, above ] {$s_n$}
(A-3-4) ;

                \draw (A-3-4)  edge node[pos=.5, right ] {$w$}
(A-1-4) ;

\end{tikzpicture}

\end{figure}

\noindent The reduced word $x'$ connecting $F_0$ with $G'$ is a
strong reduct of $x\cdot s_n$. Minimality of $u=s_1\cdots s_n$ yields
that $u\preceq x'$.  Corollary \ref{C:OrdnungMonoid} gives that
$s_1\cdots s_{n-1}\preceq x$.
\end{proof}

\begin{cor}\label{C:fusspunkttyp}
  Let $G$ be a flag in a nice set $X$. Given
a reduced word $u$, there is a flag $F$ a path $P$ from $F$ to $G$
with word $u$ such  that $G$ is the basepoint of $F$ over $X$. The set
$X\cup P$ is nice. The type of $F$ over $G$ (and thus, over $X$) is
uniquely  determined.
\end{cor}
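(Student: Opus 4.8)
The plan is to construct $F$ by descending from $G$ along the word $u$ through successive \emph{global} applications of the operations $\alpha_s$, and then to read the three assertions off Lemma \ref{L:alpha_kette}. Writing $u=s_1\cdots s_n$ in reduced form, I would set $F_n=G$ and $Y_n=X$ (note $F_n\subset X$), and then, given a nice set $Y_i=F_i\cup\cdots\cup F_n\cup X$, let $a$ and $a'$ be the vertices of $F_i$ at levels $l_{s_i}$ and $r_{s_i}$ (the imaginary vertices when these levels are $-1$ or $N+1$), so that $a$ lies beneath $a'$. Since $M$ is a saturated model of $\psn$ it satisfies $(\ref{S:axiome:unendlich})$, so by saturation there is a path $a,b_{l_{s_i}+1},\dots,b_{r_{s_i}-1},a'$ whose interior vertices have infinite $s_i$-distance in $M$ to $Y_i$; let $F_{i-1}$ be the flag obtained from $F_i$ by the weak operation $\alpha_{s_i}$ through these new vertices, and put $Y_{i-1}=Y_i\cup F_{i-1}$. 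By Lemma \ref{L:nice} the set $Y_{i-1}$ is nice and is obtained from $Y_i$ by a global application of $\alpha_{s_i}$ on $(a,a')$, whence (Corollary \ref{C:fteprod}) the step $F_{i-1}\lmto{s_i}F_i$ is a global flag operation; as $u$ is reduced, $P\colon F_0\lmto{s_1}\cdots\lmto{s_n}F_n=G$ is then a reduced flag path with word $u$. Since each $F_{i-1}$ is obtained from $F_i\cup\cdots\cup F_n\cup X$ by a global application of $\alpha_{s_i}$, Lemma \ref{L:alpha_kette} yields at one stroke that $G$ is a basepoint of $F:=F_0$ over $X$ and that $X\cup P=Y_0$ is nice.

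For the uniqueness of the type I would argue as follows. Let $F'$ be any flag that, together with a reduced flag path to $G$ of word $u$, also makes $G$ a basepoint over $X$. By Proposition \ref{P:unique_path_word} the word of every reduced flag path from $F'$ to $G$ is equivalent to $u$, so after permuting (Lemma \ref{L:flag_rules}(\ref{L:flag_rules:comm})) we may fix such a path $P'\colon F'_0\lmto{s_1}\cdots\lmto{s_n}F'_n=G$; since $G$ is a basepoint of $F'=F'_0$ over $X$, the $\Rightarrow$-direction of Lemma \ref{L:alpha_kette} shows that each $F'_{i-1}$ is obtained from $Y'_i:=F'_i\cup\cdots\cup F'_n\cup X$ by a global application of $\alpha_{s_i}$, and that $X\cup P'=Y'_0$ is nice. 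Now unravel the chain $Y_0\supseteq Y_1\supseteq\cdots\supseteq Y_n=X$: by Definition \ref{D:alpha} the graph underlying $Y_{i-1}$ is that of $Y_i$ with a fresh path $b_{l_{s_i}+1},\dots,b_{r_{s_i}-1}$ adjoined between two prescribed vertices of $F_i$ and no further edges, and which level receives such a path at stage $i$ — hence which vertices of $F_i$ are new and which persist from earlier stages — is dictated by $u$ alone. Consequently the quantifier-free type of $X\cup P$ over $X$ is a function of the flag $G\subset X$ and the word $u$; the identical description applies to the chain $Y'_0\supseteq\cdots\supseteq Y'_n=X$, so $X\cup P'$ has the same quantifier-free type over $X$. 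Since $X\cup P$ and $X\cup P'$ are nice, Corollary \ref{C:complete} promotes the evident graph isomorphism $X\cup P\to X\cup P'$ (the identity on $X$, carrying $F$ to $F'$) to an elementary map, so $\tp(F/X)=\tp(F'/X)$ and a fortiori $\tp(F/G)=\tp(F'/G)$. Thus the type of $F$ over $G$ — indeed over all of $X$ — is determined by $u$.

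I expect the main obstacle to be the uniqueness part, and within it the need to trust that the basepoint condition \emph{forces} $X\cup P$ to be exactly the scaffold built from $X$ by the descending sequence of global $\alpha_{s_i}$'s; this is precisely the content of Lemma \ref{L:alpha_kette}, so the real task is to line up its hypotheses. The remainder is comparatively mechanical: a global application of $\alpha_{s_i}$ contributes, graph-theoretically, nothing but a fresh path (Definition \ref{D:alpha}), so the quantifier-free type of the scaffold is visibly a function of $u$ and $G$; niceness then lets Corollary \ref{C:complete} turn the scaffold isomorphism into an elementary one; and the existence half only needs Axiom $(\ref{S:axiome:unendlich})$ plus saturation to run the construction.
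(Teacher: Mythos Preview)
Your proof is correct and follows exactly the route the paper intends: the corollary is stated immediately after Lemma \ref{L:alpha_kette} with no separate proof, and you have simply spelled out the details—construct the path by successive global applications of $\alpha_{s_i}$ to the growing nice set (using Axiom $(\ref{S:axiome:unendlich})$ and saturation), invoke Lemma \ref{L:alpha_kette} in both directions for the basepoint and niceness claims, and read off type-uniqueness from the fact that the scaffold's quantifier-free type over $X$ is dictated by $u$ and $G$ together with Corollary \ref{C:complete}. One small remark: Theorem \ref{T:el} is phrased for \emph{finite} nice sets, but Corollary \ref{C:complete} is stated without that restriction, and your use of it for the possibly infinite $X\cup P$ is legitimate (the extension is routine, e.g.\ since the back-and-forth system of Proposition \ref{L:hinher} starts just as well from an arbitrary nice isomorphism).
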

\noindent Denote these types  by
\[\p_u(G)\text{  and  } \p_u(G)|X .\]

In order to describe the regular types and the dimensions of $\psn$,
we will need a characterisation of nonforking over nice sets in terms
of the reduction of the corresponding words connecting the
paths.

\begin{lemma}\label{L:forking}
  Let $F$ and $G$ be  flags, where $G$ lies in a nice set $X$. The
independence  $F\ind_G X$ holds if and only if $G$ is a basepoint of
$F$ over $X$.
\end{lemma}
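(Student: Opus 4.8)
The plan is to prove the two implications separately, first establishing the ``easy'' direction ``$G$ a basepoint $\Rightarrow F\ind_G X$'' and then feeding it, together with stationarity over nice sets, into the converse.

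For the forward direction, assume $G$ is a basepoint of $F$ over $X$ and fix a reduced flag path $F=F_0\lmto{s_1}\dotsb\lmto{s_n}F_n=G$ realising it, with word $u$, so that $\tp(F/X)=\p_u(G)\restriction X$ by Corollary~\ref{C:fusspunkttyp}. By Lemma~\ref{L:alpha_kette} the scaffold $Y=F_0\cup\dotsb\cup F_{n-1}\cup X$ is nice and is obtained from $X$ by successive global applications of $\alpha_{s_n},\dotsc,\alpha_{s_1}$; in particular $X\le Y$. To see that $\tp(F/X)$ does not fork over $G$ I would test non-dividing: take a Morley sequence $(X_i)_{i<\omega}$ of $\tp(X/G)$ over $G$, realised concretely as the free amalgam $Z=\bigcup_i X_i$ of copies of $X$ glued along the common flag $G$ (with no edges between $X_i\setminus G$ and $X_j\setminus G$), which is again nice and on which the $X_i$ form a $G$-indiscernible non-forking sequence. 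Repeating the construction of Lemma~\ref{L:alpha_kette} over $Z$ — applying the same global operations $\alpha_{s_n},\dotsc,\alpha_{s_1}$ on the images of the pairs of $G$ — produces a flag $F'$ for which $G$ is a basepoint of $F'$ over $Z$, hence over each $X_i$; by Corollary~\ref{C:fusspunkttyp} this pins $\tp(F'/X_i)$ down to the corresponding copy of $\p_u(G)\restriction X=\tp(F/X)$. Thus a single $F'$ realises $\tp(F/X)$ simultaneously along the whole indiscernible sequence, so $\tp(F/X)$ does not divide over $G$, and since $\psn$ is $\omega$-stable (Proposition~\ref{P:stable}) it does not fork; that is, $F\ind_G X$.

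For the converse, suppose $F\ind_G X$, and let $u$ be the reduced word of a flag path connecting $F$ to $G$ in the monster (such a path exists by Lemmata~\ref{L:Eq_flags}, \ref{L:stark_aus_schwach} and Corollary~\ref{C:minimal_path_is_reduced}). Since $G$ is the only flag contained in the flag $G$, it is trivially a basepoint of $F$ over $G$, so $\tp(F/G)=\p_u(G)$ by Corollary~\ref{C:fusspunkttyp}. By that same corollary choose a flag $F^{*}$ with $F^{*}\lmto{u}G$ such that $G$ is a basepoint of $F^{*}$ over $X$; then $\tp(F^{*}/G)=\p_u(G)=\tp(F/G)$, and by the forward direction already proved $F^{*}\ind_G X$. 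As $G$, being nice, is algebraically closed, the common type $\p_u(G)$ over $\aclq(G)$ is stationary, so $\tp(F/X)$ and $\tp(F^{*}/X)$, both being non-forking extensions of it, coincide. An automorphism of the monster fixing $X$ pointwise and sending $F^{*}$ to $F$ preserves the colored-space structure and fixes every flag of $X$; hence it carries ``$v$ is a reduced word connecting $F^{*}$ to a flag of $X$'' to the same statement for $F$. Therefore the $\preceq$-minimal such word still connects $F$ to $G$, i.e.\ $G$ is a basepoint of $F$ over $X$ in the sense of Definition~\ref{D:basept}.

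The main obstacle is the combinatorial amalgamation in the forward direction: showing that the free amalgam $Z=\bigcup_i X_i$ of copies of $X$ over the common nice flag $G$ exists inside the monster as a \emph{nice} set and that the sequence $(X_i)$ is $G$-indiscernible and non-forking over $G$ (so that it genuinely witnesses non-dividing). This should follow from the amalgamation property for strong embeddings (Remark~\ref{R:amalg}), the stability of niceness under global operations (Lemma~\ref{L:nice}), the identification of nice with wunderbar (Corollary~\ref{C:nice=wund}), the homogeneity of $\mi$ (Corollary~\ref{C:omegasat}), and Fact~\ref{F:Inter} together with Remark~\ref{R:Mod} for the model-theoretic bookkeeping. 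A secondary technical point, used in the converse, is that types over nice sets are stationary; this is harmless and can be arranged by working over $\aclq(G)$ throughout (or deduced from weak elimination of imaginaries, which the paper establishes).
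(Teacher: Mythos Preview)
Your backward direction is sound: once you know the forward implication, stationarity of $\p_u(G)$ finishes the argument, and stationarity itself is a consequence of the forward direction (a type with a $G$-invariant global extension in a stable theory has that as its unique non-forking extension, hence is stationary). So there is no circularity with weak elimination of imaginaries here.

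The forward direction is where your proposal diverges from the paper and where the gap you flag is genuine. To invoke the criterion ``consistency along a Morley sequence $\Rightarrow$ non-dividing'' you must know that your free amalgam $(X_i)$ really \emph{is} a Morley sequence over $G$, i.e.\ that $X_i\ind_G X_{<i}$. But this is an instance of the very phenomenon the lemma is meant to establish: that free amalgamation over a nice set corresponds to non-forking. Showing that $(X_i)$ is merely $G$-indiscernible does not suffice, since consistency along \emph{one} indiscernible sequence does not rule out dividing along another. Any non-circular route you take to verify $X_i\ind_G X_{<i}$ will in effect prove the lemma for the flags of $X_i$ over the nice set $X_{<i}$, which is exactly what is at stake.

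The paper bypasses all of this with a one-line argument. Corollary~\ref{C:fusspunkttyp} says that $\p_u(G)$ has a canonical extension $\p_u(G)|Y$ to every nice $Y\supseteq G$, determined by $G$ and $u$ alone; in particular it extends canonically to every model. This global extension is therefore invariant under $\mathrm{Aut}(\C/G)$, and in a stable theory an invariant global extension does not fork over its base and is the unique non-forking extension of its restriction. Both directions of the lemma follow at once: $\p_u(G)|X$ is the non-forking extension of $\p_u(G)$ to $X$, so $F\ind_G X$ iff $\tp(F/X)=\p_u(G)|X$ iff $G$ is a basepoint of $F$ over $X$. Your amalgamation construction, and the attendant bookkeeping, are not needed.
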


\begin{proof}
  Let $u$ be the reduced word which connects $F$ to $G$. Then the type
  $\p_u(G)$ of $F$ over $G$ has a canonical extension $\p_u(G)|Y$ to
  every nice set $Y$ which contains $G$. Since $\psn$ is stable, it
  follows that $\p_u(G)|X$ is the only non-forking extension of
  $\p_u(G)$ to $X$.
\end{proof}

\begin{prop}\label{P:indep_for_flags}
  Given three flags with reduced paths $F\lmto{u}G$, $G\lmto{v}H$ and
  $F\lmto{w}H$, we have that $F\ind_G H$ if and only if $u\cdot v\to
w$.
\end{prop}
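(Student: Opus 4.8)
The plan is to reduce everything to Lemma~\ref{L:forking}, which reads off independence over a nice set from the basepoint condition, and to feed it a realization of the canonical extension. Fix reduced flag paths realizing the three given connections; by Proposition~\ref{P:unique_path_word} the words $u$, $v$, $w$ are well defined up to $\approx$. Fix in addition a reduced flag path $G\lmto{v}H$ and let $X$ be the union of the flags occurring in it, so that $G,H\in X$ and $X$ is nice by Lemma~\ref{L:scaffold}. Since $\psn$ is $\omega$-stable (Proposition~\ref{P:stable}), let $F^*$ realize the non-forking extension $\p_u(G)|X$ of $\tp(F/G)=\p_u(G)$, as furnished by Corollary~\ref{C:fusspunkttyp}. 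Then $F^*\ind_G X$, hence $F^*\ind_G H$, and by Lemma~\ref{L:forking} the flag $G$ is a basepoint of $F^*$ over $X$. Applying Proposition~\ref{P:fusspunkt}(\ref{P:fusspunkt:indep}) with $G'=H$ and with the path $G\lmto{v}H$ lying inside $X$, we get $F^*\lmto{w^*}H$, where $w^*$ is the \emph{reduct} of $u\cdot v$.

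For the forward direction assume $F\ind_G H$. Since $\tp(F/G)=\p_u(G)$ (Corollary~\ref{C:fusspunkttyp}) and both $\tp(F/GH)$ and $\tp(F^*/GH)$ are extensions of $\p_u(G)$ which do not fork over $G$, uniqueness of non-forking extensions in a stable theory gives $\tp(F/GH)=\tp(F^*/GH)$. In particular $F$ and $F^*$ are joined to $H$ by the same reduced word (Proposition~\ref{P:unique_path_word}), so $w\approx w^*=\mathrm{reduct}(u\cdot v)$, that is $u\cdot v\to w$.

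For the converse assume $u\cdot v\to w$; then $w\approx w^*$, so $F^*$ is joined to $G$ by $u$ and to $H$ by $w$, exactly like $F$. The plan is now to show that $G$ is itself a basepoint of $F$ over $X$: by Lemma~\ref{L:forking} this yields $F\ind_G X$, hence $F\ind_G H$ by monotonicity. By Proposition~\ref{P:fusspunkt}(\ref{P:fusspunkt:minimal}) it suffices to check that $u$ is $\preceq$-minimal among the words joining $F$ to flags of $X$. A flag $G'$ of $X$ occurs, by Lemma~\ref{L:flags_im_pfad}, in some permutation of the path $G\lmto{v}H$, so $G$ is joined to $G'$ by a reduced word $v'$ with $v\approx v'\cdot v''$; concatenating $F\lmto{u}G\lmto{v'}G'$ and reducing (Corollary~\ref{C:minimal_path_is_reduced}) shows $F$ is joined to $G'$ by a strong reduct of $u\cdot v'$, which by Corollary~\ref{C:u_kleinerals_uv} is $\preceq$-at-least the reduct of $u\cdot v'$, itself $\succeq u$. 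The crux — and the step I expect to be the main obstacle — is to show that this strong reduct is in fact the ordinary reduct of $u\cdot v'$; concretely, that the hypothesis ``$u\cdot v$ reduces to $w$ with no splitting'' propagates to every prefix $v'$ of a permutation of $v$, so that no genuine splitting shortcut from $F$ into $X$ can arise once there is none reaching $H$. This is a purely combinatorial statement about $\cox$, to be extracted from the machinery of Section~\ref{S:Woerte}, notably Proposition~\ref{P:strongvszerfallos} and Corollary~\ref{C:OrdnungMonoid}. Granting it, $u$ is $\preceq$-minimal, $G$ is a basepoint of $F$ over $X$, and $F\ind_G H$ follows.
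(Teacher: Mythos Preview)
Your proposal is correct and follows essentially the same route as the paper: take $X$ to be the nice set carried by a reduced path $G\lmto{v}H$ and show that $G$ is a basepoint of $F$ over $X$. The paper verifies condition~(\ref{P:fusspunkt:indep}) of Proposition~\ref{P:fusspunkt} rather than~(\ref{P:fusspunkt:minimal}), but the core step is exactly the ``crux'' you flag: writing $v\approx v_1\cdot v_2$ with $G\lmto{v_1}G'\lmto{v_2}H$ and $F\lmto{x}G'$, one has the chain of strong reductions $u\cdot v=(u\cdot v_1)\cdot v_2\str x\cdot v_2\str w$, and since $w$ is by hypothesis the ordinary reduct of $u\cdot v$, Proposition~\ref{P:strongvszerfallos} forbids any splitting in this chain, giving $u\cdot v_1\to x$ directly.
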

\begin{proof}
  If $F\ind_G H$, there is a nice set $X$ containing $G$ and $H$ such
  that $F\ind_G X$. But then $G$ is a basepoint of $F$ over $X$ and
  $u\cdot v\to w$ follows.

  Assume now $u\cdot v\to w$. Take $P$ the reduced path from $G$ to
  $H$ with word $v$. The set $P$ is nice. Enough to show $F\ind_G P$
  by verifying \ref{P:fusspunkt}(\ref{P:fusspunkt:indep}).  Given any
  flag $G'$ in $P$, by Lemma \ref{L:flags_im_pfad}, we may assume that
  $G'$ occurs in $P$.  Thus, write $v_1\cdot v_2=v$ with
  $G\lmto{v_1}G'\lmto{v_2}H$. If $x$ is reduced with $F\lmto{x}G'$,
  then
  \[u\cdot v=(u\cdot v_1)\cdot v_2\str x\cdot
  v_2\str w.\] \noindent By assumption $u\cdot v\to w$, so
Proposition
  \ref{P:strongvszerfallos} yields that no splitting occurs in the
  strong reductions above. This implies that $u\cdot
  v_1\to x$, which completes the proof.
\end{proof}

Note that the previous proof also yields $x\cdot v_2\to w$, which will
be used in the proof of Lemma \ref{L:redWegtrans}. Furthermore, we
have the following:

\begin{cor}\label{C:unabPfad}
Given flags $F$, $G$ and $H$ with $F\ind_GH$, then 
$$F\ind_GP,$$
where $P$ is the reduced flag path connecting $G$ to $H$.
\end{cor}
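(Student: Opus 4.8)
The plan is to reduce the statement at once to Proposition~\ref{P:indep_for_flags}, and in fact to the very argument carried out in the second half of its proof. Write $u$, $v$ and $w$ for the reduced words connecting $F$ to $G$, $G$ to $H$, and $F$ to $H$, respectively. By Proposition~\ref{P:indep_for_flags}, the hypothesis $F\ind_G H$ is equivalent to $u\cdot v\to w$. Let $P$ be the reduced flag path from $G$ to $H$ with word $v$; by Lemma~\ref{L:scaffold} the underlying set of $P$ is nice, so it is meaningful to ask for $F\ind_G P$, and this is what I will establish.

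By Lemma~\ref{L:forking} it suffices to show that $G$ is a basepoint of $F$ over the nice set $P$, and for this I will verify condition~\ref{P:fusspunkt}(\ref{P:fusspunkt:indep}) of Proposition~\ref{P:fusspunkt}. Let $G'$ be any flag in $P$. Since every flag contained in $P$ occurs in a permutation of $P$ by Lemma~\ref{L:flags_im_pfad}, I may assume $G'$ occurs in $P$ and write $v=v_1\cdot v_2$ with $G\lmto{v_1}G'\lmto{v_2}H$. If $x$ denotes the reduced word connecting $F$ to $G'$, then
\[u\cdot v=(u\cdot v_1)\cdot v_2\str x\cdot v_2\str w.\]
As $u\cdot v\to w$, Proposition~\ref{P:strongvszerfallos} forbids any splitting in these strong reductions, whence $u\cdot v_1\to x$. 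This is precisely condition~\ref{P:fusspunkt}(\ref{P:fusspunkt:indep}), so $G$ is a basepoint of $F$ over $P$ and therefore $F\ind_G P$.

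There is essentially no obstacle here: the corollary is a bookkeeping observation, since the proof of Proposition~\ref{P:indep_for_flags} already produces $F\ind_G P$ and not merely $F\ind_G H$. The only point that needs a moment's care is that $P$, viewed as the union of the vertices of all flags occurring in the path, is a legitimate nice set over which the basepoint description of non-forking (Lemma~\ref{L:forking}) applies; this is guaranteed by Lemma~\ref{L:scaffold} together with Lemma~\ref{L:flags_im_pfad}.
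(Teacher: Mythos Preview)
Your proposal is correct and follows precisely the paper's approach: the corollary is stated without separate proof because the second half of the proof of Proposition~\ref{P:indep_for_flags} already establishes $F\ind_G P$ (not just $F\ind_G H$), exactly as you observe and reproduce.
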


We will now compute the Morley rank $\Mr(p)$ and Lascar
rank $\U(p)$ of certain types in $\psn$.
\begin{definition}\label{D:Rdiv}
  Given reduced words $u$ and $v$, we say that $u$ is a proper
  left-divisor of $v$ if $u\not\approx v$ and there is a reduced $w$
such that $uw=v$ in  $\cox$.
\end{definition}
\noindent Note that $uw=v$ in  $\cox$ is equivalent to $u\cdot w \to
v$.

If $u$ is a proper left-divisor of $v$, it follows by Corollary
\ref{C:u_kleinerals_uv} that $u\prec v$. In particular, Lemma
\ref{L:kleiner_fundiert} yields that being a proper left-divisor is
well-founded. Let $\Rd$ be its foundation
rank and likewise let $\Rkl$ denote the foundation rank with respect
to $\prec$. 

\begin{lemma}\label{L:URdiv}
  For every flag $G$ and every reduced word $u$, 
\[\U(\p_u(G))=\Rd(u).\]
\end{lemma}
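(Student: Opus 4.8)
The plan is to prove the two inequalities $\U(\p_u(G))\geq\Rd(u)$ and $\U(\p_u(G))\leq\Rd(u)$ separately, by induction on the respective ranks, using Proposition \ref{P:indep_for_flags} to translate forking into reductions of words. The key dictionary is: if $F\models\p_u(G)$ and $G'$ is any flag with $G\lmto{v}G'$ reduced, then $F$ forks with $G'$ over $G$ precisely when $u\cdot v$ does \emph{not} reduce to the reduct of $u\cdot v$ with $u$ as a left-divisor --- more usefully, by Lemma \ref{L:forking} and Proposition \ref{P:fusspunkt}, $G$ is the basepoint of $F$ over any nice set containing $G$, so a forking extension of $\p_u(G)$ corresponds exactly to moving the basepoint ``forward'' to some $G'$ reached from $G$, the reduct of $u\cdot v$ being then the new connecting word, which is $\prec$-larger and in fact has $u$ as a \emph{proper} left-divisor whenever genuine forking occurs.

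For the lower bound $\U(\p_u(G))\geq\Rd(u)$, I would argue by induction on $\Rd(u)$. Given a proper left-divisor relation witnessing $\Rd$, say $u = u'\cdot w$ in $\cox$ with $u'$ a proper left-divisor and $\Rd(u')$ as large as desired below $\Rd(u)$, one realizes $\p_{u'}(G)$ by some flag $F'$ with basepoint $G$, then extends the path by $w$ to reach a flag $F$ realizing (a conjugate of) $\p_u(G)$; the point is that $F$ forks with the enlarged nice set over $G$ because the connecting word $u$ is strictly $\prec$-above $u'$, so $F'$ (equivalently its locus) witnesses $\U(\p_u(G)) > \U(\p_{u'}(G)) \geq \Rd(u')$; taking the sup over all such $u'$ gives $\U(\p_u(G))\geq\Rd(u)$. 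One must be slightly careful that every proper left-divisor of $u$ is actually realized as an intermediate basepoint along a reduced path ending at $u$ --- this is where Proposition \ref{P:indep_for_flags} and Lemma \ref{L:alpha_kette} do the work, since $u'\cdot w\to u$ exactly means the path factors through a flag whose basepoint data is $u'$.

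For the upper bound $\U(\p_u(G))\leq\Rd(u)$, I would take any forking extension of $\p_u(G)$: a flag $F$ with $F\nind_G X$ for some nice $X\supseteq G$. By Lemma \ref{L:forking}, $G$ is \emph{not} a basepoint of $F$ over $X$; letting $G'$ be the actual basepoint, with reduced path $G\lmto{v}G'$ and $F\lmto{w}G'$, Proposition \ref{P:indep_for_flags} (and the remark after Proposition \ref{P:fusspunkt} about minimality) forces $w$ to be the reduct of $u\cdot v$ and, crucially, $u$ to be a \emph{proper} left-divisor of $w$: properness is exactly the failure of $G$ to be the basepoint, i.e. genuine forking. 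Then $\tp(F/G') = \p_w(G')$ with $\Rd(w) < \Rd(u)$ is false in general --- rather $w\succ u$ and $u$ properly left-divides $w$, so $\Rd(w)$ could exceed $\Rd(u)$; the correct move is to induct on $\Rd$ of the base, not the extension. So instead: the forking extensions of $\p_u(G)$ are classified by pairs $(G', \text{word})$ with $G$ properly left-dividing, and the \U-rank drops, meaning one should run the induction downward --- $\U(\p_u(G))$ is the sup over forking extensions of ($\U$ of the extension) $+1$, and each forking extension $\p_w(G')$ restricted appropriately has $u$ as a proper left-divisor of its connecting word back to $G$. The cleanest formulation: for $F'$ a forking extension of $\p_u(G)$ over $G$, its own connecting word $u''$ to its basepoint satisfies: $u$ is a proper left-divisor of $u''$, hence by induction $\U(\p_{u''}(\cdot)) \leq \Rd(u'')$, but we need $\leq \Rd(u) - 1$. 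The resolution is that the basepoint of $F'$ over the \emph{original} parameter $G$ is still $G$ in the relevant direction --- one works with $\cb$ and the left-divisor order directly --- so I expect the main obstacle to be setting up the induction so that ``proper left-divisor'' is exactly the relation whose foundation rank is being computed, and confirming that the forking extensions of $\p_u(G)$ are in bijection (rank-preservingly) with types $\p_{u'}(G)$ for $u'$ properly left-dividing $u$, via Proposition \ref{P:indep_for_flags} applied in the form: $F'\nind_G F$ with $F\models\p_u(G)$ and $F'\models\p_{u'}(G)$ iff $u'\cdot(\text{connecting word}) \to u$ nontrivially, i.e. $u'$ properly left-divides $u$. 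Granting that bijection, both inequalities follow by a one-line induction on foundation rank, and the whole proof is essentially the observation that the Lascar-rank forking order on types $\p_u(G)$ \emph{is} the proper-left-divisor order on reduced words.
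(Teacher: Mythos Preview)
Your overall strategy --- matching the Lascar-rank forking order on types $\p_u(G)$ with the proper-left-divisor order on reduced words --- is exactly the paper's approach, and both inequalities do follow by straightforward induction once that dictionary is set up. However, your execution has the direction of left-divisibility reversed in the upper bound, and this creates a phantom obstacle that is not actually there.

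Concretely: in the upper-bound argument you take $F\models\p_u(G)$ with $F\nind_G X$, let $G'$ be the true basepoint over $X$, and write $F\lmto{w}G'$, $G\lmto{v}G'$. You then claim $w$ is the reduct of $u\cdot v$ and that $u$ properly left-divides $w$. This is backwards. Since $G'$ is the basepoint, Proposition~\ref{P:fusspunkt}(\ref{P:fusspunkt:indep}) says the word connecting $F$ to any other flag of $X$ (in particular $G$) is the reduct of $w$ times the word from $G'$ to that flag; hence $u$ is the reduct of $w\cdot v^{-1}$, so $w$ is a proper left-divisor of $u$ (properness because $G$ is not a basepoint). Then $\Rd(w)<\Rd(u)$, and by induction $\U(\tp(F/X))=\U(\p_w(G'))=\Rd(w)<\Rd(u)$. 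This gives $\U(\p_u(G))\le\Rd(u)$ immediately --- the ``resolution'' you search for in the second half of the paragraph is unnecessary once the direction is fixed.

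A smaller issue in the lower bound: you write ``one realizes $\p_{u'}(G)$ by some flag $F'$\ldots{} then extends the path by $w$.'' The intermediate type is not over $G$ but over an interpolating flag: given $u'\cdot w\to u$, one takes $F\lmto{u}G$ and finds $H$ with $F\lmto{u'}H\lmto{w}G$ (this exists because $u'\cdot w\to u$); then Proposition~\ref{P:indep_for_flags} gives $F\ind_H G$, so $\tp(F/H)=\p_{u'}(H)$, and the path $P$ from $H$ to $G$ is a nice set over which $\tp(F/P)$ is a forking extension of $\p_u(G)$ of rank $\U(\p_{u'}(H))=\Rd(u')$ by induction. Your phrase ``a conjugate of $\p_u(G)$'' suggests you see this, but the roles of $F$, $F'$, $G$, $H$ are tangled in your write-up.
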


\begin{proof}
  Show $\U(\p_u(G))\leq\Rd(u)$ by induction on $\Rd(u)$. Assume 
that $\alpha < \U(\p_u(G))$. Then there is is a nice extension $X$ of
$G$ and a realisation $F$ of $\p_u(G)$ such that
$\alpha\leq\U(F/X)$. Since $F\nind_GX$, the type of $F$ over $X$ is
of the form $\p_v(H)|X$ for a reduced word $v$ and some flag $H$ in
$X$. Proposition  \ref{P:fusspunkt} $(\ref{P:fusspunkt:indep})$ and
Lemma
  \ref{L:forking} imply that $v$ is a proper left-divisor of
  $u$. By induction, we have
  \[\alpha\leq\U(F/X)=\U(\p_v(H))=\Rd(v)<\Rd(u),\]
  which proves $\U(\p_u(G))\leq\Rd(u)$.

  For the other direction, assume $\alpha<\Rd(u)$. Then there is a
  proper left-divisor $v$ of $u$ such that
$\alpha\leq \Rd(v)$. Choose
  a reduced word $w$ such that $v\cdot w\to u$. It is easy to
construct a flag $H$ with
  \[ F \lmto{v} H \lmto{w} G.\]
  Actually, such an $H$ exists whenever $v\cdot w\str u$. By
  Proposition \ref{P:indep_for_flags} we have $F\ind_HG$. Let $P$ be a
  path from $H$ to $G$ with associated word $w$. Seen as a
collection of points, the path $P$ is nice by Lemma \ref{L:scaffold}.
Corollary \ref{C:unabPfad} gives that  $F\ind_H P$,
so $\tp(F/P)=\p_v(H)|P$ and thus $F\nind_G P$. By induction,
\[\alpha\leq\Rd(v)= \U(\p_v(H)) <\U(\p_u(G).\]
\end{proof}

\begin{lemma}\label{L:MRRkl}
  For every flag $G$ and reduced word $u$, we have that
  \[\Mr(\p_u(G)) \leq \Rkl(u).\]
\end{lemma}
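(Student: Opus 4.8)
The plan is to prove $\Mr(\p_u(G))\le\Rkl(u)$ by induction on the dimension $N$, using three devices for a fixed $N$: single letters are handled by descending to a lower‑dimensional pseudospace, products of letters by a fibration (subadditivity) argument along a basepoint flag path, and the top letter $(-1,N+1)$ by a supremum. Throughout one uses that $\Mr(\p_u(G))$ depends only on $u$ and not on $G$ (homogeneity of $\m_p$, via Corollary~\ref{C:fusspunkttyp}), the easy inequality $\Rkl(w)\le\ord(w)$ (Lemma~\ref{L:kleiner_fundiert}), and the standard subadditivity of Morley rank in $\omega$‑stable theories, $\Mr(ab/C)\le\Mr(a/Cb)+\Mr(b/C)$. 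The base cases are immediate: for $N=0$ the structure is strongly minimal, and for any $N$, if $\Rkl(u)=0$ then $u\approx 1$ and $\p_1(G)$ is algebraic.

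Single letters. Let $u=s$ be a single letter. If $s$ is a proper subinterval of $[-1,N+1]$, then by Remark~\ref{R:lokal_operation} a realisation of $\p_s(G)$ is, after relabelling levels, the full flag of a localised copy of $\m_p^{\,|s|-1}$ obtained by the single global operation $\alpha_{(-1,|s|+1)}$; since $|s|-1<N$ this is the statement in dimension $|s|-1$ applied to the full‑interval letter, and $\Rkl$ (being defined purely from $\prec$ and $\ord$) is the same ordinal for $s$ in either dimension, so $\Mr(\p_s(G))\le\Rkl(s)$ by the outer induction. If $s=(-1,N+1)$, then a realisation $F$ of $\p_s(G)$ is independent from $G$, so $\tp(F/G)$ is the canonical extension of the generic flag type; hence $\Mr(\p_s(G))=\sup_v\Mr(\p_v(G))$, the supremum over all reduced words $v$. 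Every $v\not\approx s$ is a product of proper subletters of $s$, so $v\prec s$ and such $v$ have already been treated; thus $\Mr(\p_s(G))\le\sup_{v\prec s}\Rkl(v)\le\Rkl(s)$.

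Products. Let $u=s_1\cdots s_n$ be reduced with $n\ge 2$; a reduced word of length at least two cannot contain $(-1,N+1)$ as a letter, so every $s_i$ is a proper subinterval. Realise $\p_u(G)$ by a flag $F=F_0$ lying in a reduced basepoint flag path $F_0\lmto{s_1}F_1\lmto{s_2}\cdots\lmto{s_n}F_n=G$. By Lemma~\ref{L:scaffold} and Lemma~\ref{L:alpha_kette}, $F_0\cup\cdots\cup F_n$ is nice, each truncation $Y_i:=F_i\cup\cdots\cup F_n$ is nice, $F_{i-1}$ is obtained from $Y_i$ by a global application of $\alpha_{s_i}$, and (Lemma~\ref{L:forking}) $\tp(F_{i-1}/Y_i)=\p_{s_i}(F_i)\,|\,Y_i$. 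Since $F=F_0\in\dcl(F_0\cdots F_{n-1})$ and $Y_n=G$, iterated subadditivity of Morley rank along the chain gives
\[
\Mr(\p_u(G))=\Mr(F_0\cdots F_{n-1}/G)\;\le\;\sum_{i=1}^{n}\Mr(F_{i-1}/Y_i)\;=\;\sum_{i=1}^{n}\Mr(\p_{s_i}(G)),
\]
the ordinal sum taken in the order $i=1,\dots,n$. By the single‑letter case $\Mr(\p_{s_i}(G))\le\Rkl(s_i)\le\omega^{\,|s_i|-1}$, so the proof is finished once one has the purely combinatorial inequality
\[
\sum_{i=1}^{n}\omega^{\,|s_i|-1}\;\le\;\Rkl(u)\qquad(u=s_1\cdots s_n\text{ reduced}),
\]
where the order of the letters matters on the left. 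Ordinal arithmetic gives at once $\sum_i\omega^{|s_i|-1}\le\ord(u)$, but one needs the sharper lower bound for $\Rkl(u)$ itself; this is obtained by producing long $\prec$‑descending chains below $u$ — replacing the last non‑singleton letter of $u$ by an alternating product of two maximal proper subintervals yields $\prec$‑predecessors of $u$ whose $\ord$ is cofinal in the required ordinal — followed by an induction on $n$.

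The main obstacle is not the Morley‑rank bookkeeping (subadditivity and homogeneity are routine) but two more delicate points: verifying that the generic realisation of $\p_u(G)$ together with its canonical flag path genuinely forms a nice scaffold over which the truncations have the stated canonical bases — which is exactly what Lemmata~\ref{L:scaffold} and~\ref{L:alpha_kette} deliver — and the combinatorial estimate $\sum_i\omega^{|s_i|-1}\le\Rkl(u)$ for reduced words, which rests on a careful analysis of $\prec$‑predecessors and is where most of the real work lies.
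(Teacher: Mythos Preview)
Your approach contains a fatal gap: the ``standard subadditivity of Morley rank'' $\Mr(ab/C)\le\Mr(a/Cb)+\Mr(b/C)$ that you invoke is not a theorem. The Lascar inequality holds for $\U$-rank (with the natural sum), but for Morley rank it fails, and it fails in this very theory. The Remark following Corollary~\ref{C:U=MR} provides the counterexample: for $u=[0,1]\cdot[1,3]$ one has $\Mr(\p_u(G))=\omega^2+1$. Yet your fibration bound along the path $F_0\lmto{[0,1]}F_1\lmto{[1,3]}G$ would yield
\[
\Mr(\p_u(G))\;\le\;\Mr(F_0F_1/G)\;\le\;\Mr\bigl(F_0/F_1G\bigr)+\Mr(F_1/G)\;=\;\omega+\omega^2\;=\;\omega^2,
\]
a contradiction. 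So the subadditivity step is simply false here; what \emph{is} available for Morley rank under a uniform fibre bound is Erimbetov's inequality, which is multiplicative and far too coarse for your purposes. The remaining ingredients (the combinatorial estimate $\sum_i\omega^{|s_i|-1}\le\Rkl(u)$, only sketched, and the passage from bounds on all $\p_v(G)$ with $v\prec[0,N]$ to a bound on the generic type) are secondary once this step collapses.

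For comparison, the paper's proof avoids all of this with a direct induction on $\Rkl(u)$: over an $\omega$-saturated model $X\ni G$, the formula ``there is a weak flag path from $x$ to $G$ with word $u$'' lies in $p=\p_u(G)|X$, and every other type over $X$ containing it is of the form $\p_v(H)|X$ with $v\prec u$; by induction these have Morley rank $<\Rkl(u)$, so $p$ is the unique type of rank $\ge\Rkl(u)$ in that formula, whence $\Mr(p)\le\Rkl(u)$. No decomposition of $u$, no localisation to smaller $N$, and no rank arithmetic is needed.
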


\begin{proof}
  Extend $\p_u(G)$ to $p=\p_u(G)|X$, where $X$ is an
$\omega$-saturated model containg $G$. The type $p$ contains a formula
$\varphi(x)$ stating that there is a weak path connecting the
flag $x$ to $G$ with word $u$. If $F$ realizes $\varphi$, then either
$F$ realizes $p$ or there is a path connecting $F$ to $X$ with 
word $\prec$-smaller that  $u$. For the latter, 
induction  gives that the Morley rank of $F$ over $X$ is
strictly smaller than $\Rkl(u)$. Since $X$ is $\omega$-saturated, this
implies that $\Mr(p)\leq\Rkl(u)$.
\end{proof}

\begin{lemma}\label{L:RdRkl}
  If $u=s_1\cdots s_n$ is reduced and $|s_i|\geq |s_{i+1}|$ for
  $i=1,\ldots,n-1$, then \[\Rd(u)=\Rkl(u) = \omega^{|s_1|-1}
  +\dotsb+\omega^{|s_n|-1}.\]
\end{lemma}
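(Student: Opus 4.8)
The plan is to prove the two equalities separately: first the ordinal identity $\Rd(u)=\omega^{|s_1|-1}+\dotsb+\omega^{|s_n|-1}$ under the monotonicity hypothesis, then $\Rkl(u)=\Rd(u)$ in the same generality. Combined with Lemma \ref{L:URdiv} and Lemma \ref{L:MRRkl}, this also pins down $\Mr(\p_u(G))=\U(\p_u(G))$ for such $u$, although the statement only asks for the word-combinatorial identity.

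For the lower bound $\Rd(u)\geq\omega^{|s_1|-1}+\dotsb+\omega^{|s_n|-1}$, I would exhibit an explicit descending chain of proper left-divisors. Write $u=s_1\cdot u'$ with $u'=s_2\cdots s_n$; since $u$ is reduced, $s_1$ is not absorbed and $s_1\cdot u'$ is a genuine left-factorization, so $s_1$ is a proper left-divisor of $u$ and, by Corollary \ref{C:u_kleinerals_uv}, every proper left-divisor $v$ of $u$ either is a left-divisor of (a permutation of) $s_2\cdots s_n$ prefixed by $s_1$, or is $\prec$-below $u$ without the leading $s_1$; the point is that removing the leading letter drops $\Rd$ by exactly $\omega^{|s_1|-1}$ when $|s_1|$ is maximal among the $s_i$, because any further left-division can only touch letters of size $<|s_1|$ before it can delete $s_1$, and there are no such letters available to the left of $s_1$. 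More precisely, I would prove by induction on $n$ that under the hypothesis $|s_1|\geq|s_2|\geq\dotsb$, the proper left-divisors of $s_1\cdots s_n$ are exactly the reducts of $s_1\cdots s_{i-1}\cdot w$ where $w$ is a proper left-divisor of $s_i\cdots s_n$ (for the "same prefix" case) together with the left-divisors that lie $\prec$-below $s_2\cdots s_n$ arising when $s_1$ itself gets cancelled — but cancellation of $s_1$ requires a letter $t\supsetneq s_1$ to its right, impossible since $|s_1|$ is maximal. Hence $\Rd(s_1\cdots s_n)=\omega^{|s_1|-1}+\Rd(s_2\cdots s_n)$, and the claimed formula follows by induction, using that $\omega^{|s_1|-1}$ dominates $\Rd(s_2\cdots s_n)$ (which is $<\omega^{|s_1|-1}\cdot\omega=\omega^{|s_1|}$, and in fact $<\omega^{|s_2|-1}\cdot n<\omega^{|s_1|-1}\cdot 2$ is enough) so the sum is the natural sum and the induction closes.

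For $\Rkl(u)=\Rd(u)$, one inequality is immediate: a proper left-divisor $v$ of $u$ satisfies $v\prec u$ by Corollary \ref{C:u_kleinerals_uv}, so the proper-left-divisor relation refines $\prec$ and therefore $\Rd(u)\leq\Rkl(u)$. For the reverse, I would show that under the monotonicity hypothesis every word $v\prec u$ has $\Rkl(v)<\omega^{|s_1|-1}+\dotsb+\omega^{|s_n|-1}$, by analysing how $\prec$ acts: $v$ is obtained from a permutation of $u$ by replacing at least one letter $s_i$ by a product of proper subletters (each of size $\leq|s_i|-1$), so using the rank function $\ord$ from Lemma \ref{L:kleiner_fundiert} one gets $\ord(v)<\ord(u)$, and more quantitatively the leading term $\omega^{|s_1|-1}\ord_{|s_1|-1}$ either stays the same with strictly smaller tail or strictly decreases; an induction on $\ord(u)$ (or directly on $\Rkl$) then gives $\Rkl(u)\leq\omega^{|s_1|-1}+\dotsb+\omega^{|s_n|-1}=\Rd(u)$. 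This forces equality throughout.

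The main obstacle I anticipate is the bookkeeping in the lower-bound step for $\Rd$: one must be careful that when the letters are listed in non-increasing size, the \emph{only} way to obtain a proper left-divisor is to act on a suffix, i.e. that the leading block of maximal-size letters cannot be cancelled or split from the left. This uses that cancellation $ts\to s$ needs $t\supsetneq s$ and that there is no letter to the left of $s_1$, together with the Decomposition Lemma \ref{T:einfach_amador} to control what $u\cdot w\to v$ can look like; getting the exact recursion $\Rd(s_1\cdots s_n)=\omega^{|s_1|-1}+\Rd(s_2\cdots s_n)$ rather than a mere inequality is where the hypothesis $|s_i|\geq|s_{i+1}|$ is essential and where the care is needed.
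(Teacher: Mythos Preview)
Your overall architecture is right: sandwich $\Rd(u)\le\Rkl(u)\le\ord(u)$, observe that under the non-increasing hypothesis $\ord(u)=\omega^{|s_1|-1}+\dotsb+\omega^{|s_n|-1}$, and then prove the lower bound $\Rd(u)\ge\ord(u)$ by exhibiting enough proper left-divisors. The upper-bound half of your argument (via $\ord$) is exactly what the paper does. The gap is in your lower bound.

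You try to peel off $s_1$ from the \emph{left} and establish a recursion $\Rd(s_1\cdots s_n)=\omega^{|s_1|-1}+\Rd(s_2\cdots s_n)$. But to prove the $\geq$ direction you would need, for every $\beta<\Rd(s_2\cdots s_n)$, a proper left-divisor $v$ of $u$ with $\Rd(v)\ge\omega^{|s_1|-1}+\beta$. The natural candidate is $v=s_1\cdot w$ for a proper left-divisor $w$ of $s_2\cdots s_n$ with $\Rd(w)\ge\beta$; however, to conclude $\Rd(s_1 w)\ge\omega^{|s_1|-1}+\Rd(w)$ you would have to apply the induction hypothesis to $s_1 w$, and $s_1 w$ need not have letters of non-increasing size (left-divisors $w$ of $s_2\cdots s_n$ can be arbitrary reduced words with letters of size $\le|s_2|$, in any order). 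Your attempted description of all proper left-divisors is also not correct as stated: for instance any reduced product of proper subletters of $s_1$ is a proper left-divisor of $u$, and these are not of the form $s_1\cdots s_{i-1}\cdot w$.

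The paper avoids this by working at the \emph{right} end. Given $\alpha<\ord(u)$, it produces a proper left-divisor $u'$ of $u$ that \emph{still satisfies the non-increasing hypothesis} and has $\ord(u')\ge\alpha$: if $|s_n|=1$, take $u'=s_1\cdots s_{n-1}$; if $|s_n|>1$, take $u'=s_1\cdots s_{n-1}\cdot t_1\cdots t_k$ for suitably chosen subletters $t_j\subsetneq s_n$ of size $|s_n|-1$ making $u'$ reduced, with $k$ large enough. In either case $u'\cdot s_n\to u$, so $u'$ is a proper left-divisor, and induction gives $\Rd(u')=\ord(u')\ge\alpha$. This closes the loop with a single inequality $\ord(u)\le\Rd(u)$, with no need to characterise all left-divisors or to control $\Rd$ on words outside the non-increasing class.
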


\begin{proof}
  Let $\ord$ be the function introduced in the proof of Lemma
  \ref{L:kleiner_fundiert}. Recall that for any reduced word $w$
  \[\Rd(w)\leq\Rkl(w) \leq \ord(w).\]
  If $u$ satifies the above hypotheses, then
$\ord(u)=\omega^{|s_1|-1} +\dotsb+\omega^{|s_n|-1}$. Hence, we need
only that $\ord(u)\leq\Rd(u)$. By induction, it is enough to find, for
every $\alpha<\ord(u)$, a proper left-divisor $u'$ of $u$ satisfying
the hypotheses of the Lemma such that $\alpha\leq\ord(u')$.

  There are two cases: If $|s_n|=1$, 
set $u'=s_1\cdots s_{n-1}$. If $|s_n|>1$, let $k$ be large enough such
that
  \[\alpha\leq\omega^{|s_1|-1}
  +\dotsb+\omega^{|s_{n-1}|-1}+\omega^{|s_n|-2}\cdot k\] Then choose
an appropriate sequence $t_1\cdots t_k$ of subletters of $s_n$, each
of size $|s_n|-1$, such that $u'=s_1\cdots s_{n-1}\cdot t_1\cdots t_k$
is
  reduced.
\end{proof}

\begin{cor}\label{C:U=MR}
  For every flag $G$ and every reduced word $u=s_1\cdots
s_n$ with $|s_i|\geq |s_{i+1}|$ for
  $i=1,\ldots,n-1$,  
  \[\U(\p_u(G))=\Mr(\p_u(G))=\omega^{|s_1|-1}
  +\dotsb+\omega^{|s_n|-1}.\]
\end{cor}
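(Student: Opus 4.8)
The plan is to simply chain together the three preceding lemmas with the standard fact, valid in any stable theory, that Lascar rank never exceeds Morley rank. First I would invoke Lemma \ref{L:URdiv} to rewrite $\U(\p_u(G))=\Rd(u)$, and then apply Lemma \ref{L:RdRkl}, whose hypothesis $|s_i|\ge|s_{i+1}|$ is exactly the one assumed in the corollary, to evaluate
\[
\Rd(u)=\omega^{|s_1|-1}+\dotsb+\omega^{|s_n|-1}.
\]
This already settles the Lascar rank of $\p_u(G)$.

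Next I would bound the Morley rank. Lemma \ref{L:MRRkl} gives $\Mr(\p_u(G))\le\Rkl(u)$, and Lemma \ref{L:RdRkl} again yields $\Rkl(u)=\omega^{|s_1|-1}+\dotsb+\omega^{|s_n|-1}$, so $\Mr(\p_u(G))$ is at most this ordinal. For the reverse inequality I would use $\U(\p_u(G))\le\Mr(\p_u(G))$, which combined with the computation of the first paragraph gives
\[
\omega^{|s_1|-1}+\dotsb+\omega^{|s_n|-1}=\U(\p_u(G))\le\Mr(\p_u(G))\le\omega^{|s_1|-1}+\dotsb+\omega^{|s_n|-1}.
\]
Hence all three quantities coincide, which is the assertion.

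There is no real obstacle here: the substance has already been carried out in Lemmata \ref{L:URdiv}, \ref{L:MRRkl} and \ref{L:RdRkl}, and this corollary is just the bookkeeping that packages them. The only point deserving a word is that the monotonicity hypothesis $|s_i|\ge|s_{i+1}|$ is precisely what makes $\ord(u)$, $\Rkl(u)$ and $\Rd(u)$ coincide and, moreover, guarantees that $\omega^{|s_1|-1}+\dotsb+\omega^{|s_n|-1}$ is already written in Cantor normal form, so that the equalities of ordinals above are literal rather than merely up to rearrangement.
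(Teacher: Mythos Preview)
Your argument is correct and is exactly the intended one: the paper gives no separate proof for this corollary, treating it as immediate from Lemmata \ref{L:URdiv}, \ref{L:MRRkl} and \ref{L:RdRkl} together with the general inequality $\U\leq\Mr$. Your write-up makes explicit precisely the chain of inequalities the authors leave implicit, and your remark about the Cantor normal form is a pleasant bonus.
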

However, Lascar and Morley rank may differ in general, as the
following example  shows. 
\begin{remark}
  Consider the word $u=[0,1][1,3]$. It is easy to see that
  $\Rd(u)=\omega^2$ and $\Rkl(u)=\omega^2+\omega$, since the
inversion antiautomorphism $u \to u\inv$ preserves $\prec$. In
particular, the Lascar rank of $\p_u(G)$ is $\omega^2$. To compute the
Morley rank of $\p_u(G)$, consider the following sequence of words
\[u_k=\underbrace{[1][0]\cdots[1][0]}_k[1,3].\] The
  Morley rank of $u_k$ is at least $\Rd(u_k)=\omega^2$.  Since
  $\p_u(G)$ is the limit of the types $\p_{u_k}(G)$, its Morley rank
  of $\p_u(G)$ is at least $\omega^2+1$. Actually, it is easy to show
that $\Mr(\p_u(G))=\omega^2+1$. 
\end{remark}

The non-orthogonality classes of regular types over a nice set in
$\psn$ are given by global operations of $\alpha_s$ for $s$ varying
among all intervals. These types have trivial forking and therefore so
does $\psn$.

\begin{theorem}\label{T:rankregulartypes}
  The theory $\psn$ is $\omega$-stable of rank $\omega^N$.  Every type
  over a nice set $X$ is non-orthogonal to some type $\p_s(G)|X$,
where $G$ lies in $X$.  Forking is trivial, that is, any three
pairwise independent tuples are independent (as a set).
\end{theorem}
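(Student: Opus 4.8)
The plan is to treat the three assertions in turn.

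\emph{$\omega$-stability and rank.} $\omega$-stability is Proposition~\ref{P:stable}. For the upper bound $\Mr(\psn)\le\omega^N$, observe that every $1$-type over a model $X$ is realised by a vertex $c$ of some flag $F$, so $\Mr(c/X)\le\Mr(cF/X)=\Mr(F/X)$; by Corollary~\ref{C:fusspunkttyp} the latter equals $\Mr(\p_u(G))$ for the reduced word $u$ and basepoint $G$ of $F$ over $X$, and by Lemma~\ref{L:MRRkl} and the inequalities $\Rd\le\Rkl\le\ord$ recalled in the proof of Lemma~\ref{L:RdRkl} we get $\Mr(\p_u(G))\le\ord(u)$. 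Now a reduced word either is $[0,N]$ — whose flag type has rank exactly $\omega^N$ by Corollary~\ref{C:U=MR} — or has all letters of size at most $N$, because the only letter of size $N+1$ is $[0,N]$ and any other letter, being a subletter of it, would make the word non-reduced; in that case $\ord_N(u)=0$, so $\ord(u)=\omega^{N-1}\ord_{N-1}(u)+\dots+\ord_0(u)<\omega^N$. For the lower bound, take a fresh vertex $b_0$ of level $0$ and a fresh flag $F=b_0-b_1-\dots-b_N$ through it. Then $\Mr(F)=\omega^N$ by Corollary~\ref{C:U=MR}, while $\Mr(b_1\cdots b_N/b_0)=\omega^{N-1}$ since $(\mi)_{b_0}\cong\mi^{N-1}$ (Remark~\ref{R:lokal_operation}) and $b_1\cdots b_N$ is a fresh flag of it. From $\omega^N=\Mr(F)\le\Mr(b_0)\oplus\Mr(b_1\cdots b_N/b_0)=\Mr(b_0)\oplus\omega^{N-1}$ and Cantor normal form arithmetic we get $\Mr(b_0)\ge\omega^N$, hence $\Mr(b_0)=\omega^N$. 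Thus $\Mr(\psn)=\omega^N$.

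\emph{Non-orthogonality.} Let $p=\tp(c/X)$ be non-algebraic, $X$ nice, and pick a flag $F$ with $c$ a vertex of $F$. Since $c\in\dcl(XF)\setminus\acl(X)$, the type $\tp(c/XF)$ is algebraic, hence forks over $X$; so $c\nind_X F$ and $p$ is non-orthogonal to $\tp(F/X)=\p_u(G)|X$ (Corollary~\ref{C:fusspunkttyp}), with $u=s_1\cdots s_n$ reduced and $G$ a basepoint of $F$ over $X$. If $n\le1$ this type already has the form $\p_s(G)|X$. If $n\ge2$, take the reduced flag path $F=F_0\lmto{s_1}\cdots\lmto{s_n}F_n=G$; applying Lemma~\ref{L:alpha_kette} to the last step shows $G$ is a basepoint of $F_{n-1}$ over $X$, so $\tp(F_{n-1}/X)=\p_{s_n}(G)|X$. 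Moreover $s_1\cdots s_{n-1}$ is a proper left-divisor of $u$, hence strictly $\prec$-smaller, so $F$ reaches the flag $F_{n-1}$ by a word below $u$; by Proposition~\ref{P:fusspunkt}, $F_0$ cannot remain a basepoint of $F$ over a nice set containing $F_{n-1}$. Since $F\ind_{F_0}X$ (Lemma~\ref{L:forking}), transitivity forces $F\nind_X F_{n-1}$, so $\tp(F/X)\not\perp\tp(F_{n-1}/X)=\p_{s_n}(G)|X$, and by transitivity of non-orthogonality $p\not\perp\p_{s_n}(G)|X$.

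\emph{Trivial forking.} By the usual reductions — base replaced by a model $C$ independent from the tuples, each tuple replaced by a finite union of flags covering it, the replacements chosen so as to preserve pairwise independence over $C$ — it suffices to show: if $C$ is a model and $F,G,H$ are flags with $F\ind_C G$, $F\ind_C H$ and $G\ind_C H$, then $F\ind_C GH$. The crux is the identity $\acl(CGH)=\acl(CG)\cup\acl(CH)$. Let $P_G,P_H$ be the reduced flag paths from $G,H$ to their basepoints $G_0,H_0$ in $C$; by the Wobbling Lemma~\ref{L:Wob} their intermediate flags are algebraic over the endpoints, so $P_G\subseteq\acl(CG)$ and $P_H\subseteq\acl(CH)$, whence $C\cup P_G\cup P_H$ sits between $CGH$ and $\acl(CGH)$. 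One checks via Proposition~\ref{P:nice_durch_flaggenwege} that $C\cup P_G\cup P_H$ is nice: two flags in $P_G$ and $P_H$ are joined by concatenating their reduced paths to $G_0,H_0$ with a reduced path inside $C$, and $G\ind_C H$ — which keeps $G_0$ a basepoint of $G$ over $\acl(CH)$ and $H_0$ a basepoint of $H$ over $\acl(CG)$ — makes this concatenation reduced. A nice set is algebraically closed (Corollary~\ref{C:Niceacl}), so $C\cup P_G\cup P_H=\acl(CGH)=\acl(CG)\cup\acl(CH)$. Consequently every flag $K$ of $\acl(CGH)$ lies in $\acl(CG)$ or $\acl(CH)$; since $F\ind_C G$ (resp.\ $F\ind_C H$) keeps $F_0$ a basepoint of $F$ over $\acl(CG)$ (resp.\ $\acl(CH)$), every reduced word connecting $F$ to such $K$ is $\succeq u$. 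Hence $u$ remains $\preceq$-minimal, $F_0$ is a basepoint of $F$ over $\acl(CGH)$, and Lemma~\ref{L:forking} gives $F\ind_{F_0}\acl(CGH)$; as $F_0\subseteq C$ this yields $F\ind_C\acl(CGH)$, in particular $F\ind_C GH$. Together with $G\ind_C H$ this makes $\{F,G,H\}$ $C$-independent, and the general pairwise case follows by the reductions.

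The step I expect to be delicate is the niceness of $C\cup P_G\cup P_H$: this is precisely where the independence $G\ind_C H$ must be turned into the combinatorial statement that no cancellation occurs when the reduced $G$-path and $H$-path are glued through $C$, and carrying this out rigorously calls for careful bookkeeping with the Decomposition Lemma~\ref{C:amador} and the Triangle Lemma~\ref{P:triangle}.
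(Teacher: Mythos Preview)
Your rank and non-orthogonality arguments are close in spirit to the paper's, with only minor slips: for the lower bound you should run the Lascar inequality in $\U$-rank rather than Morley rank (the symmetric-sum bound is not valid for $\Mr$ in general, though here the ranks coincide for the flag types in question), and in the non-orthogonality paragraph ``$F_0$ cannot remain a basepoint'' should read ``$G=F_n$ cannot remain a basepoint''. The paper takes $s$ from the final segment of $u$ rather than the last letter, but your variant is fine.

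The real gap is in the triviality argument, and it lies earlier than the step you flagged as delicate. Your claim that the Wobbling Lemma makes the intermediate flags of the reduced path $P_G$ algebraic over its endpoints is false: Lemma~\ref{L:Wob} only pins down an intermediate flag $F_i$ modulo $\wob(s_1\cdots s_i,\,s_{i+1}\cdots s_n)$, and when that set is nonempty the vertices of $F_i$ at those levels vary over infinitely many candidates. For instance in $\mathrm{PS}_2$, along a path $G\lmto{[0,1]}F_1\lmto{[1,2]}G_0$ the wobbling at $F_1$ is $\{1\}$, and the $1$-vertex of $F_1$ can be any of the infinitely many level-$1$ vertices in $M^{g_2}_{c_0}$, where $c_0$ is the $0$-vertex of $G_0$ and $g_2$ the $2$-vertex of $G$. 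So $P_G\not\subseteq\acl(CG)$ in general, your identity $\acl(CGH)=\acl(CG)\cup\acl(CH)$ collapses, and with it the claim that every flag of $\acl(CGH)$ sits inside one of the two pieces.

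The paper avoids this entirely. It first observes that each $\p_s(G)$ is regular (monomial Lascar rank $\omega^{|s|-1}$) and has trivial pregeometry: for a \emph{single-letter} word there are no intermediate flags, so the only flags contained in $G\cup F_2\cup F_3$ are $G$, $F_2$, $F_3$ themselves, whence three pairwise $G$-independent realisations of $\p_s(G)$ are jointly independent. It then invokes \cite[Proposition~2]{jBG91}: in a superstable theory where every type is non-orthogonal to a trivial regular type, forking is trivial. A direct argument along your lines is possible and is essentially what the paper carries out later for \emph{total} triviality (Proposition~\ref{P:totally_trivial}, via Lemma~\ref{L:unab_von_Pfad}); but it must work with the specific nice set $C\cup P_G\cup P_H$ and show $F_0$ remains a basepoint of $F$ over it directly, not via the false detour through $\acl(CG)$ and $\acl(CH)$.
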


\begin{proof}
  By Lemma \ref{L:MRRkl}, the Morley rank of a flag cannot exceed
  $\Rkl([0,N])=\omega^N = \U(\p_{[0,N]}(G))=\Mr(\p_{[0,N]}(G))$, by
Corollary \ref{C:U=MR}. Thus, the Lascar and Morley
  rank of a flag over the emptyset are both $\omega^N$. Let $a$ be a
  vertex of $F$.  Lascar inequalities implie that
  $\U(F/a)+\U(a)\leq\U(F)$. Since $\U(a)>0$, this implies that
  $\U(a)=\omega^N$, and therefore $\Mr(a)=\omega^N$.

  Given a type $p$ over $X$, we may assume it is the type of a flag
  $F$ and thus determined by some reduced word $u$ connecting $F$ a
  basepoint $G$ over $X$. In particular, take any $s$ in the final
  segment of $u$. The type $p$ is hence non-orthogonal to the type
  $\p_s(G)|X$, since the connecting word of $F$ over the nice set
  consisting of $G$ together with a realisation of $\p_s(G)|X$ is
  $\prec$-smaller than $u$.

  Since the type $\p_s(G)$ has monomial Lascar rank, it is regular. A
  different way to see this is by taking a non-forking realisation $F$
  of $\p_s(G)|X$ and a forking realisation $F'$ to $X$. Now, since
  $F'$ forks with $X$ over $G$, Proposition
  \ref{P:fusspunkt}(\ref{P:fusspunkt:kleinstes}) gives a flag $G'$ in
  $X$ such that the word connecting $F'$ to $G'$ is a finite product
  $x$ of proper subletters of $s$. Since the reduction $s\cdot x\str
  s$ involves no splitting, the flags $F$ and $F'$ are independent
  over $G$ by Proposition \ref{P:indep_for_flags}.  The type $\p_s(G)$
  is regular, and so is $\p_s(G)|X$.

  Note that the geometry on every type $\p_s(G)$ is trivial: given
  three pairwise independent realisations $F_1$, $F_2$ and $F_3$ of
  $\p_s(G)$, note that any flag in $G\cup F_2\cup F_3$ must be either
  $G$, $F_2$ or $F_3$, for there are no new $s$-connections between
  them. Hence, \[ F_1 \ind_G F_2\cup F_3\]

  \noindent and forking is trivial on each $\p_s(G)|X$. Since the
  theory is superstable, forking is trivial \cite[Proposition
    2]{jBG91}.
\end{proof}

Nice sets are algebraically closed in $\psn^\mathrm{eq}$.

\begin{remark}\label{C:aclq_nice}
  Let $X$ be nice and $F$ be a flag with $F/A\in\aclq(X)$ for some set
  $A\subset [0,N]$. Then, the class $F/A$ lies in $X^\mathrm{eq}$.
That is, all vertices of $F$ with level outside $A$ belong to $X$.
\end{remark}
\noindent Since $X$ is nice, this is equivalent to $F/A=G/A$ for some
$G$ in $X$.
\begin{proof}
  Let $u$ be the reduced word connecting $F$ to a basepoint $G$ over
  $X$. By taking a sufficiently large initial segment of a sequence of
  $X$-independent realisations of $\tp(F/X)$, since the class $F/A$ is
  algebraic, we may find another realisation $F'$ with $F\ind_G F'$
  and $F/A=F'/A$. By Lemmata \ref{L:Eq_flags} and
  \ref{L:stark_aus_schwach}, there is a path connecting $F$ and $F'$
  whose reduced word $v$ satisfies $|v|\subset A$. Proposition
\ref{P:indep_for_flags} and the independence
  $F\ind_G F'$ imply  that $v$ is the reduct of $u\cdot u\inv$. Thus
$|u|=|u\cdot u\inv|=|v|\subset
  A$.  In particular, the flags $F$ and $G$ are equivalent modulo $A$.
\end{proof}

Let us now explicitly describe canonical bases of types over nice
sets. They are interdefinable with finite sets of real elements and
hence $\psn$ has weak elimination of imaginaries (\emph{cf.}
Corollary \ref{C:WEI}).

\begin{theorem}\label{T:Canbasis}
  Let $u$ be a reduced word and $G$ a flag. Then the canonical base of
  $\p_u(G)$ is interdefinable with $G/\sr(u)$.
\end{theorem}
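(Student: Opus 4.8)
My plan is to show that, inside the monster model, an automorphism $\sigma$ fixes $\cb(\p_u(G))$ pointwise \emph{exactly} when it fixes the imaginary $G/\sr(u)$; as both parameters live in $\psn^{\mathrm{eq}}$ this gives the asserted interdefinability. Write $\hat p$ for the global nonforking extension of $\p_u(G)$, which is legitimate since that type is stationary (Lemma~\ref{L:forking} and Corollary~\ref{C:fusspunkttyp}). The standard automorphism description of canonical bases says $\sigma$ fixes $\cb(\p_u(G))$ iff $\sigma(\hat p)=\hat p$; and since $\p_u$ is defined by an automorphism--invariant recipe, $\sigma_\ast\p_u(G)=\p_u(\bar G)$ with $\bar G:=\sigma(G)$, so $\sigma(\hat p)=\hat p$ amounts to saying that $\p_u(G)$ and $\p_u(\bar G)$ have the same global nonforking extension. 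Thus everything reduces to proving, for flags $G$ and $\bar G$,
\[
\p_u(G)\ \text{and}\ \p_u(\bar G)\ \text{share a global nonforking extension}\qquad\Longleftrightarrow\qquad \bar G/\sr(u)=G/\sr(u).
\]
For the comparison I fix a finite nice set $Y\supseteq G\cup\bar G$ (iterate Theorem~\ref{T:extnice}) and work with the basepoint calculus of Section~\ref{S:Forking}.

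For "$\Leftarrow$", take $F\models\p_u(G)|Y$, so $F\ind_G Y$, the flag $G$ is a basepoint of $F$ over $Y$ and $F\lmto{u}G$. Let $v$ be the reduced word joining $G$ to $\bar G$ \emph{inside} $Y$ (Proposition~\ref{P:nice_durch_flaggenwege}). Since $G$ and $\bar G$ agree off the levels of $\sr(u)$, Lemma~\ref{L:Eq_flags} joins them by a weak flag path with word using only letters contained in $\sr(u)$; passing to a reduced flag path by Lemma~\ref{L:stark_aus_schwach} (which only shrinks letters) and using uniqueness of reduced path-words (Proposition~\ref{P:unique_path_word}) gives $|v|\subseteq\sr(u)$. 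Hence $v$ is right-absorbed by $u$ (Lemma~\ref{L:v_in_sl}), so $u\cdot v\to u$, and Proposition~\ref{P:fusspunkt}(\ref{P:fusspunkt:indep}) yields $F\lmto{u}\bar G$. As $u$ is the $\preceq$-smallest word joining $F$ to a flag of $Y$ (Proposition~\ref{P:fusspunkt}(\ref{P:fusspunkt:kleinstes}) for the basepoint $G$), the same criterion applied to $\bar G$ makes $\bar G$ a basepoint of $F$ over $Y$ with connecting word $u$; therefore $\tp(F/Y)=\p_u(\bar G)|Y=\p_u(G)|Y$, and the two global nonforking extensions agree.

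For "$\Rightarrow$", let $\hat p$ be the common extension and take $F\models\hat p|Y$, so $F\lmto{u}G$, $F\lmto{u}\bar G$, and both $G$ and $\bar G$ are basepoints of $F$ over $Y$ by Lemma~\ref{L:forking}. With $v$ the reduced word joining $G$ to $\bar G$ in $Y$, Proposition~\ref{P:fusspunkt}(\ref{P:fusspunkt:indep}) applied to the basepoint $G$ says $F$ is joined to $\bar G$ by the reduct of $u\cdot v$; by uniqueness of reduced path-words (Proposition~\ref{P:unique_path_word}) this reduct is $\approx u$, i.e. $uv=u$ in $\cox$, whence $|v|\subseteq\sr(u)$ by Corollary~\ref{C:absorb_cox} together with Lemma~\ref{L:v_in_sl}. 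Since the flag path $G\lmto{v}\bar G$ changes vertices only at levels in $|v|\subseteq\sr(u)$, we conclude $\bar G/\sr(u)=G/\sr(u)$.

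The parts I am quoting or treating as routine are stationarity of $\p_u(G)$ plus the automorphism description of canonical bases, the existence of a finite nice hull $Y$, and the equivalence "$uv=u$ in $\cox$ iff $|v|\subseteq\sr(u)$" (immediate from the Absorption Lemma~\ref{L:schlucktheorie} material). The only delicate point I anticipate is the bookkeeping that links weak paths, reduced flag paths and reduced words when I assert $|v|\subseteq\sr(u)$ and $F\lmto{u}\bar G$: one must verify that the passage from a weak path to a reduced flag path (Lemma~\ref{L:stark_aus_schwach}) never enlarges the support of its word, and invoke uniqueness of reduced path-words (Proposition~\ref{P:unique_path_word}) at exactly the moment the reduced word must be pinned down. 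Once this is handled, the two implications close symmetrically.
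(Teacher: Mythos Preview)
Your proof is correct and follows essentially the same route as the paper's. Both arguments reduce to the equivalence ``$\p_u(G)$ and $\p_u(G')$ have a common nonforking extension iff $G/\sr(u)=G'/\sr(u)$'', and both prove it via the chain: $G/\sr(u)=G'/\sr(u)$ $\Leftrightarrow$ $|v|\subseteq\sr(u)$ (Lemma~\ref{L:v_in_sl}) $\Leftrightarrow$ $u\cdot v\to u$ (Corollary~\ref{C:absorb_cox}) $\Leftrightarrow$ $G'$ is a basepoint of $F$ over $X$ (Proposition~\ref{P:fusspunkt}). Your explicit detour through the automorphism description of canonical bases and the careful justification that $|v|\subseteq\sr(u)$ via Lemmas~\ref{L:Eq_flags} and~\ref{L:stark_aus_schwach} are more verbose than the paper's one-line assertion of the same fact, but the mathematical content is identical.
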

Observe that  $G/\sr(u)$ is interdefinable with a finite set by
Definition \ref{D:Eq}.
\begin{proof}
  We have to show that $\p_u(G)$ and $\p_u(G')$ have a common nonforking
  extension if and only if $G$ and $G'$ are equivalent modulo
  $\sr(u)$. Or, in other words, given a nice set $X$, if $F$ is a
realisation of $\p_u(G)|X$, then $G'\in X$ is a basepoint of $F$ over
$X$ if and only if $G/\sr(u)=G'/\sr(u)$.

  If $v$ is a reduced word connecting $G$ and $G'$, then
  $G/\sr(u)=G'/\sr(u)$ means that $|v|\subset\sr(u)$, or equivalently
by Lemma \ref{L:v_in_sl}, that $v$ is right-absorbed by $u$.  Let $w$
be the reduced word connecting $F$ to $G'$. Then $w$ is the reduct of
$u\cdot v$ by Proposition \ref{P:fusspunkt}(\ref{P:fusspunkt:indep}).
The flag $G'$ is a basepoint of $F$ if an only if $w\approx u$. By
Corollary \ref{C:absorb_cox}, this is equivalent to $v$ being
right-absorbed by $u$.
\end{proof}

The following result will be useful in order to prove that the
theory $\psn$ is not $(N+1)$-ample.

\begin{lemma}[Basepoint Lemma]\label{L:Fusspunkt}
  Let $X$ be a nice set and $F$ connected by a reduced word $u$ to
its  basepoint $G$ in $X$. Assume $u=w\cdot v$ and pick a flag $H$
with
  \[ F \lmto{w} H \lmto{v} G.\]
  If $H/A\in X$ for some set $A\subset[0,N]$, then $|v|$ is
a subset of $A$.
\end{lemma}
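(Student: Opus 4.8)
The plan is to contradict the $\preceq$-minimality of the basepoint word $u=w\cdot v$ unless $|v|\subseteq A$. First I would unpack the hypothesis. Since $X$ is nice, "$H/A\in X$" means, exactly as in Remark~\ref{C:aclq_nice}, that there is a flag $G'$ in $X$ with $H/A=G'/A$, i.e.\ $H$ and $G'$ are equivalent modulo $A$. Lemma~\ref{L:Eq_flags} then produces a weak flag path from $H$ to $G'$ whose word $\sigma$ is commuting and satisfies $|\sigma|\subseteq A$.

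Next I would splice this onto the given path $F\lmto{w}H$: concatenation yields a weak flag path from $F$ to $G'$ with word $w\cdot\sigma$. By Lemma~\ref{L:stark_aus_schwach} there is a genuine flag path $F\lmto{y}G'$ with $y\preceq w\cdot\sigma$, and by Corollary~\ref{C:minimal_path_is_reduced} we may assume $y$ reduced, since passing to a strong reduct only makes it $\preceq$-smaller. As $G$ is a basepoint of $F$ over $X$ and $G'$ is a flag in $X$, the $\preceq$-minimality of $u$ among words connecting $F$ to flags in $X$ (Definition~\ref{D:basept}, Proposition~\ref{P:fusspunkt}) gives $u\preceq y$, so that $w\cdot v=u\preceq w\cdot\sigma$. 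The word $w\cdot v$ is reduced, being equal to $u$, so Lemma~\ref{L:OrdnungMonoid} applies and yields $v\preceq\sigma$. Finally, each elementary step of $\prec$ replaces a letter $s$ by a (possibly empty) product of proper subletters of $s$, hence can only shrink the union of the letters of a word; together with transitivity of $\prec$ (Lemma~\ref{L:kleiner_fundiert}) and the fact that $\approx$ preserves this union, $v\preceq\sigma$ forces $|v|\subseteq|\sigma|\subseteq A$, as desired.

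The only delicate point I anticipate is the very first step: reading "$H/A\in X$" as "there is a flag $G'$ in $X$ with $H/A=G'/A$" genuinely uses that $X$ is nice, exactly as in Remark~\ref{C:aclq_nice}. After that it is bookkeeping with the order $\preceq$ from Section~\ref{S:Woerte}, the one subtlety being to keep track of which words are reduced: the hypothesis that $w\cdot v$ is reduced is essential for invoking Lemma~\ref{L:OrdnungMonoid}, as the remark following that lemma shows.
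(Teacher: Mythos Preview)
Your argument is correct, but it differs from the paper's. The paper does not compare $u=w\cdot v$ to $w\cdot\sigma$ and then cancel $w$ via Lemma~\ref{L:OrdnungMonoid}; instead it invokes Lemma~\ref{L:alpha_kette} to conclude that $G$ is also a basepoint of $H$ over $X$, and then applies the $\preceq$-minimality of the basepoint word directly to $H$: any reduced word $v'$ with $|v'|\subset A$ connecting $H$ to a flag $G'$ in $X$ must satisfy $v\preceq v'$, whence $|v|\subset|v'|\subset A$. Your route trades the propagation-of-basepoints lemma for the order-cancellation lemma; it stays entirely at the level of $F$ and the word calculus, at the cost of tracking one extra inequality ($u\preceq y\preceq w\cdot\sigma$) before cancelling. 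The paper's route is a bit more conceptual in that it isolates the relevant fact about $H$ itself, while yours is more self-contained within the combinatorics of Section~\ref{S:Woerte}.
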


\begin{proof}
  By Remark \ref{C:aclq_nice} and Corollary
\ref{C:minimal_path_is_reduced}, there is a flag
$G'$ in $X$ connected to $H$ by a reduced word $|v'|\subset A$.
The flag $G$ is a basepoint of $H$ over $X$ by Lemma
\ref{L:alpha_kette}. Proposition \ref{P:fusspunkt}
(\ref{P:fusspunkt:kleinstes})
gives that  $v\preceq v'$ and therefore $|v|\subset|v'|\subset A$.
\end{proof}

We finish the section with a strengthening of triviality, called
\emph{totally trivial} \cite{jBG91}, that is, given any set of
parameters $X$ and tuples $a$, $b$ and $c$ such that $a$ is both
independent from $b$ and $c$ over $X$, then it is independent from
$\{b,c\}$ over $X$. For theories of finite U-rank, both notions agree
 \cite[Proposition 5]{jBG91}.

By Lemma \ref{L:forking}, recall that, given a nice set $X$
and a distinguished flag $F_0$ in $X$, the following are equivalent
for any flag $F$,

 \begin{itemize}
  \item $F\ind_{F_0} X$
  \item $F\ind_{F_0} H $ for every flag $H$ in $X$
  \item $F_0$ is a basepoint of $F$ over $X$.
  \end{itemize}

Whilst considering flag paths, there is a simpler version of
transitivity of nonforking, due to the nature of the reduction with
non splitting.

\begin{lemma}\label{L:redWegtrans}
  Given flags $H$, $F$, $H_0$ and $F_0$, then $F\ind_{F_0} H_0$ and
  $F\ind_{H_0} H$ imply $F\ind_{F_0} H$. If there is a reduced path
  $F_0\lmto{v}H_0\lmto{w}H$, the converse also holds: $F\ind_{F_0} H$
  implies $F\ind_{F_0} H_0$ and $F\ind_{H_0} H$.
\end{lemma}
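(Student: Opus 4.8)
The plan is to pass to reduced words via Proposition \ref{P:indep_for_flags}. Fix reduced words $a,v,w,b,c$ connecting $F\to F_0$, $F_0\to H_0$, $H_0\to H$, $F\to H_0$, $F\to H$ respectively, and let $p$ be the reduced word connecting $F_0\to H$. By Proposition \ref{P:indep_for_flags}, the hypotheses $F\ind_{F_0}H_0$ and $F\ind_{H_0}H$ say $a\cdot v\to b$ and $b\cdot w\to c$, the conclusion $F\ind_{F_0}H$ says $a\cdot p\to c$, and --- in the converse situation, where $F_0\lmto{v}H_0\lmto{w}H$ is reduced so that $v\cdot w$ is the reduced word from $F_0$ to $H$ --- the hypothesis $F\ind_{F_0}H$ says $a\cdot(v\cdot w)\to c$.

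For the forward implication I would first observe $a\cdot v\cdot w\to b\cdot w\to c$, so that $c$ is the reduct of $a\cdot v\cdot w$. Now $p$ is a strong reduct of $v\cdot w$ by Corollary \ref{C:minimal_path_is_reduced} and $c$ is a strong reduct of $a\cdot p$, so it is enough to check that no splitting is required to pass from $a\cdot p$ to $c$. If the strong reduction $v\cdot w\str p$ involves no splitting, then $v\cdot w\to p$, hence $a\cdot v\cdot w\to a\cdot p$, and uniqueness of reducts (Proposition \ref{P:nonsplitred}) forces $a\cdot p\to c$. Otherwise Proposition \ref{P:strongvszerfallos} gives $p\prec q$, where $q$ is the reduct of $v\cdot w$; then $v\cdot w\to q$, hence $a\cdot v\cdot w\to a\cdot q$ and so $a\cdot q\to c$, and compatibility of $\preceq$ with the semigroup operation in $\cox$ shows that the reduct $x_0$ of $a\cdot p$ satisfies $x_0\preceq c$. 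If a splitting were needed to pass from $a\cdot p$ to $c$, a second application of Proposition \ref{P:strongvszerfallos} would give $c\prec x_0$, hence $c\prec c$, contradicting well-foundedness of $\prec$ (Lemma \ref{L:kleiner_fundiert}). So $a\cdot p\to c$ in all cases, i.e.\ $F\ind_{F_0}H$.

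For the converse I would start from $a\cdot(v\cdot w)\to c$ and decompose the reduced path $F_0\lmto{v\cdot w}H$ through the intermediate flag $H_0$; the remark recorded immediately after Proposition \ref{P:indep_for_flags} then yields $a\cdot v\to b$ together with $b\cdot w\to c$. Reading these back through Proposition \ref{P:indep_for_flags}, applied to the reduced paths $F\lmto{a}F_0\lmto{v}H_0$ and $F\lmto{b}H_0\lmto{w}H$, gives exactly $F\ind_{F_0}H_0$ and $F\ind_{H_0}H$.

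The main obstacle is the strong-reduction bookkeeping in the forward implication: one cannot merely concatenate reductions, since the reduced word $p$ from $F_0$ to $H$ need not arise from $v\cdot w$ without splitting. Controlling the possible splittings --- and thereby forcing $a\cdot p\to c$ --- is precisely what requires Proposition \ref{P:strongvszerfallos} together with the compatibility of $\preceq$ with multiplication in $\cox$.
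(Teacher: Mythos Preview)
Your proof is correct. For the converse you essentially reproduce the paper's argument, invoking the remark recorded immediately after Proposition~\ref{P:indep_for_flags} to extract $a\cdot v\to b$ and $b\cdot w\to c$ from $a\cdot(v\cdot w)\to c$.

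For the forward implication your route genuinely differs from the paper's. The paper argues by a direct induction on the $\prec$-complexity of $v$: it first isolates the case $v=w=s$ for a single letter $s$ (handling the two subcases according to whether $s$ is right-absorbed by $u$), and then in general writes $v=v'\cdot s$, $w=s\cdot w'$ at the spot where a first splitting would occur, applies the already-proved converse to strip off the outer pieces, and invokes the inductive hypothesis. Your argument is more global and order-theoretic: two applications of Proposition~\ref{P:strongvszerfallos} together with the compatibility of $\preceq$ with multiplication in $\cox$ squeeze the reduct $x_0$ of $a\cdot p$ between $c$ and $c$, forcing $a\cdot p\to c$ without any induction. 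This is cleaner and shows that the forward implication is really a formal consequence of the word-combinatorics already established in Section~\ref{S:Woerte}; the paper's approach, by contrast, stays closer to the flag-path picture and makes the mechanism of potential splitting more visible. (As a minor remark, your case split is not needed: since $p\preceq q$ always holds by Proposition~\ref{P:strongvszerfallos}, the argument you give in Case~2 covers Case~1 as well.)
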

Observe that the condition on the path being reduced is needed for the
converse, as the following example shows, where $t\subsetneq s$:

\begin{figure}[!htbp]
\centering

\begin{tikzpicture}[>=latex,->]

\matrix (A) [matrix of math nodes,column
sep=1cm]
{ F_0 & H_0 & H  \\[5mm]
   & F &  \\ };

 \draw (A-1-1)  edge node[pos=.5, above left] {$s$} (A-1-2) ;

 \draw[->] (A-1-1) to [out=45,in=135] node[pos=.5, below  ]
{$t$} (A-1-3) ;

\draw (A-1-2)  edge node[pos=.5, above ] {$s$} (A-1-3) ;

\draw (A-2-2)  edge node[pos=.5, left ] {$s$} (A-1-1)
		 edge node[pos=.5, left ] {$t$} (A-1-2)
		  edge node[pos=.5, right ] {$s$} (A-1-3);

\end{tikzpicture}

\end{figure}
\noindent Although $F\ind_{F_0} H$, since no splitting occurs when
reducing $s\cdot t$ to $s$, we have that $F\nind_{F_0} H_0$, as $t$ is
not the reduct of $s\cdot s$.
\begin{proof}
  We will use throughout the proof the characterisation of
  independence between flags given by Proposition
  \ref{P:indep_for_flags}. It actually follows from the proof of
  Proposition \ref{P:indep_for_flags} that the above converse holds,
  by taking $F$, $G$, $G'$, $H$ instead of $H$, $F$, $H_0$, $F_0$ in
  the proof. Alternatively, we may argue as follows: as $H_0$ occurs
  in a reduced path $P$ from $F_0$ to $H$, the proof of Proposition
  \ref{P:indep_for_flags} shows that $F\ind_{F_0} P$. This implies
  $F\ind_{F_0} H_0$. Since $F_0\lmto{v}H_0\lmto{w}H$, we have that
  $F_0\ind_{H_0}{H}$ by Proposition \ref{P:indep_for_flags}. This,
  together with $F\ind_{F_0}H$, the first part of the lemma and
  forking symmetry implies $F\ind_{H_0} H$.

  Assume now $F\ind_{F_0} H_0$ and $F\ind_{H_0} H$. Choose reduced
  paths $F\lmto{u}F_0$, $F_0\lmto{v}H_0$, $H_0\lmto{w}H$ and
  $F_0\lmto{x}H$. The word $a$ which connects $F$ to $H_0$ is the
reduct of $u\cdot v$. Also, the word $b$
  connecting $F$ to $H$ is the reduct of $u'\cdot w$. Hence, the word
$b$ is the reduct of $u\cdot v\cdot w$.  If $x$ were the reduct of
$v\cdot w$, then $b$ is the reduct of $u\cdot x$, so we are done.
Therefore, suppose that splitting occurs in
$v\cdot w\str x$. Treat first the case  $v=w=s$. Then $x$ is a
product of proper subintervals of $s$. By the Decomposition Lemma
  \ref{T:einfach_amador}, either $s$ is right absorbed by $u$, or
  $u=u_1\cdot u'$, where $u'$ is properly absorbed by $s$ and
  $u_1\cdot s$ is reduced. In the first case, the word $x$ is properly
absorbed by $u$, hence $F\ind_{F_0} H$.

  For the second case, decompose  $u=u_1\cdot u'$ as above. Then
$b$ (the  word connecting $F$ and $H$) equals $u_1\cdot s$. This
cannot be a strong reduct of $u_1\cdot u'\cdot x$, since the latter
is $\prec$-smaller, contradicting Proposition
\ref{P:strongvszerfallos}.

  For the general case, as in the proof of Proposition
\ref{P:strongvszerfallos}).we may assume that the splitting in
$v\cdot w\str x$ happens at the first step of the reduction.
Write hence $v=v'\cdot
  s$ and $w=s\cdot w'$, where $$F_0 \lmto{v'} K_1 \lmto{s} H_0\lmto{s}
  K_2 \lmto{w'} H.$$ The word $y$ connecting $K_1$ and $K_2$ consists
  of proper subletters of $s$.  By the first part of the proof, since
  $F\ind_{F_0} H_0$, we have that $F\ind_{F_0} K_1$ and $F\ind_{K_1}
  H_0$. Similarly, we obtain $F\ind_{H_0} K_2$ and $F\ind_{K_2} H$.
  By the previous discussion, we have that $F\ind_{K_1} K_2$. This,
  together with $F\ind_{F_0} K_1$, yields $F\ind_{F_0} K_2$, by
  induction on the length of $v$. Now, the word connecting $F_0
  \lmto{} K_2$ is a strong reduction of $v'\cdot y$, so $\prec$-smaller
  than $v$. Induction on the complexity of $v$ together with
  $F\ind_{K_2} H$ gives $ F\ind_{F_0} H$, as desired.
\end{proof}

In order to prove the total triviality of $\psn$, we will use the
following lemma, a stronger form of which follows already
from total triviality, without the assumption $F_0\ind_AB$, since
if $$A\lmto{s}B\lmto{t}C,$$
\noindent where $s$ and $t$ commute with each other, then $B$ is
definable in $A\cup C$, by Lemma \ref{L:Wob}.

\begin{lemma}\label{L:tot_triv_special}
  Let $A$, $B$, $C$, $F$, $F_0$ be flags and $s$ and $t$ two commuting
  letters, such that $A\lmto{s}B\lmto{t}C$. If the following
  independencies hold:
\begin{alignat*}{2}
F&\ind_{F_0}A \text{ , }  F&\ind_{F_0}C \text{ and }  F_0&\ind_AB,
\end{alignat*}
\noindent then  $F\ind_{F_0}B$.
\end{lemma}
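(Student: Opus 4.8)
The plan is to run everything through the word calculus of $\cox$. Note first that, since $s$ and $t$ commute, $\wob(s,t)=\sr(s)\cap\sL(t)=s\cap t=\emptyset$, so by the Wobbling Lemma~\ref{L:Wob} the flag $B$ is the unique flag sitting in a path $A\lmto{s}\bullet\lmto{t}C$; concretely every vertex of $B$ lies in $A\cup C$, so $B\in\dcl(A,C)$. This is why the statement is the combinatorial avatar of total triviality, but it is not what I would use directly.

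Next I would translate the three hypotheses into statements about reduced words. Fix reduced flag paths $F\lmto{u}F_0$ and $F_0\lmto{p}A$. By Proposition~\ref{P:indep_for_flags}, $F\ind_{F_0}A$ says the reduced word $w$ with $F\lmto{w}A$ is the reduct of $u\cdot p$; and, since $A\lmto{s}B$ is a one--letter (hence reduced) path, $F_0\ind_AB$ says the reduced word connecting $F_0$ to $B$ is the reduct of $p\cdot s$. Thus $F\ind_{F_0}B$ amounts to saying that the reduced word $b$ with $F\lmto{b}B$ equals the reduct of $u\cdot p\cdot s$, i.e.\ of $w\cdot s$; as $b$ is in any case \emph{some} strong reduct of $w\cdot s$ (obtained from $F\lmto{w}A\lmto{s}B$ via Corollary~\ref{C:minimal_path_is_reduced}), the lemma comes down to proving that no \textsc{Splitting} is used in passing from $w\cdot s$ to $b$. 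Finally, since $B\lmto{t}C$ is a one--letter path with obvious basepoint $B$, Lemma~\ref{L:forking} gives $F_0\ind_BC$, whence Lemma~\ref{L:redWegtrans} gives $F_0\ind_AC$; the reduced path $A\to C$ has word $s\cdot t$, so the reduced word connecting $F_0$ to $C$ is the reduct of $p\cdot s\cdot t$, and $F\ind_{F_0}C$ then says the reduced word $c$ with $F\lmto{c}C$ is the reduct of $u\cdot p\cdot s\cdot t$, that is, of $w\cdot s\cdot t$.

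Now I split on whether $s$ lies in the final segment of $w$ --- equivalently, by Lemma~\ref{L:scaffold}, whether the pair $(a_{l_s},a_{r_s})$ is open in the scaffold of $F\lmto{w}A$. If it does not, then $w$ has no letter equal to $s$ in its final segment, so \textsc{Splitting} is simply unavailable in reducing $w\cdot s$; hence $b$ is the reduct of $w\cdot s$ and we are done, using only $F\ind_{F_0}A$ and $F_0\ind_AB$. The remaining case, $s$ in the final segment of $w$ (so $w\approx w'\cdot s$), is the crux, and the one place $F\ind_{F_0}C$ enters. Write $F\lmto{w'}F^*\lmto{s}A$ and apply Lemma~\ref{L:flag_rules}(3) to the composite $F^*\lmto{s}A\lmto{s}B$: either $F^*\lmto{s}B$ --- and then $F\lmto{w}B$ is a reduced flag path, so $b\approx w$ and $F\ind_{F_0}B$ --- or $F^*\lmto{x}B$ for a product $x$ of proper subletters of $s$, and I would argue this second possibility is incompatible with $F\ind_{F_0}C$. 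Every letter of $x$ is a subletter of $s$, hence commutes with $t$, so Lemma~\ref{L:flag_rules}(1) turns $F^*\lmto{x}B\lmto{t}C$ into $F^*\lmto{t}C^*\lmto{x}C$; then $F\lmto{w'}F^*\lmto{t}C^*\lmto{x}C$ exhibits $c$ as a strong reduct of $w'\cdot t\cdot x$, so $c\preceq$ the reduct of $w'\cdot t\cdot x$ by Proposition~\ref{P:strongvszerfallos}. On the other hand $s\subseteq\sr(w)$ (as $s$ is in the final segment of $w$), so $c$ is the reduct of $w\cdot t\approx w'\cdot s\cdot t$; and $x\prec s$, so $w'\cdot t\cdot x\prec w'\cdot s\cdot t$ gives, by compatibility of $\preceq$ with the product in $\cox$, that the reduct of $w'\cdot t\cdot x$ is $\preceq$ the reduct of $w'\cdot s\cdot t=c$. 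Hence the reducts of $w'\cdot t\cdot x$ and of $w'\cdot s\cdot t$ must coincide --- and the contradiction is read off from the words themselves: no reduction of $w'\cdot s\cdot t$ can delete the trailing $s$ (since $w'\cdot s=w$ is already reduced), so the reduct of $w'\cdot s\cdot t$ still ``spends'' a copy of $s$ beyond those already in $w'$, whereas $w'\cdot t\cdot x$ contains no $s$ outside $w'$ at all, since $t\ne s$ and each letter of $x$ is a proper subletter of $s$. This rules out the second possibility, so $b\approx w$ and $F\ind_{F_0}B$.

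The main obstacle is exactly this last case, and within it the closing comparison: one must be sure that passing through $C$ cannot ``hide'' the missing copy of $s$, i.e.\ that the reduct of $w'\cdot s\cdot t$ genuinely records one more $s$ than any word routed through proper subletters of $s$. Making this watertight requires careful bookkeeping of which letters survive a reduction, and I expect one needs to exploit not only that $s,t$ commute but also that every proper subletter of $s$ commutes with $t$, possibly re-invoking Lemma~\ref{L:OrdnungMonoid} (and its mirror under inversion) in the sub-case where $w'\cdot s\cdot t$ is already reduced; it may also be that the hypothesis $F_0\ind_AB$ is what excludes the awkward configurations outright.
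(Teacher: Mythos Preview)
Your overall architecture is sound and close to the paper's, but there is a genuine gap in the step where you claim $F_0\ind_B C$. Writing $q$ for the reduced word $F_0\to B$, the assertion $F_0\ind_B C$ means that the word $F_0\to C$ is the \emph{reduct} (no splitting) of $q\cdot t$; your justification (``$B\lmto{t}C$ is a one-letter path with obvious basepoint $B$'') only says that $B$ is a basepoint of $C$ over the flag $B$ itself, which is vacuous, and says nothing about $F_0$. In fact splitting in $q\cdot t$ is possible precisely when $t$ lies in the final segment of $q=\text{reduct}(p\cdot s)$, and nothing in the hypotheses rules this out a priori. Without $F_0\ind_B C$ (equivalently $F_0\ind_A C$) you cannot conclude $c=\text{reduct}(w\cdot s\cdot t)$, and then your upper bound ``$c$ is a strong reduct of $w'\cdot t\cdot x$'' has nothing to contradict. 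This is exactly the place where the hypothesis $F_0\ind_A B$ must do real work, and in your argument it is only used earlier, to compute $q$.

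The paper resolves this by using $F_0\ind_A B$ in a sharper way. From $F\nind_A B$ one gets, as you do, a penultimate flag $A'$ on the path $F\lmto{w}A$ with $A'\lmto{s}A$ and $A'$ connected to $B$ by proper subletters of $s$; but since $F_0\ind_A B$, no such $A'$ can occur inside the path $F_0\lmto{p}A$. Since $w$ is the reduct of $u\cdot p$, this forces $s$ to commute with \emph{all} of $p$ and to lie already in the final segment of $u$. From there one shows the word $F_0\to B$ equals $p\cdot s$, reads off that the word $y$ from $F_0$ to $C$ must (after permutation) end in $s$, and then $F\ind_{F_0}C$ forces $s$ into the final segment of $c$; finally $s$ persists in the final segment of any strong reduct of $c\cdot t$ (as $s$ and $t$ commute), so $s$ is in the final segment of $b$, contradicting $F\nind_A B$. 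The moral is that you must localise $s$ inside $u$, not merely inside $w$: that is precisely what $F_0\ind_A B$ buys, and it is what your route through ``$F_0\ind_B C$'' was trying to shortcut.
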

\begin{proof}

 In order to show that $F\ind_{F_0}B$, since $F\ind_{F_0}A$, by Lemma
\ref{L:redWegtrans}, we need only show $F\ind_AB$.
Thus, consider a reduced word $z$  with
$F\lmto{z}B$ and connect the above flags by reduced paths as in
the diagram below.

\begin{figure}[!htbp]
\centering
\begin{tikzpicture}[>=latex,->]

  \matrix (A) [matrix of math nodes,column
  sep=2cm, row sep=5mm]
          {  & B &   \\
            A &   & C \\
            & F_0&    \\[3mm]
	    & F & \\};

         \draw (A-4-2)  edge node[pos=.5, left ]
{$u$} (A-3-2)
  edge node[pos=.5, below right ]
{$b$} (A-2-3)  edge node[pos=.5, below left ]
{$a$} (A-2-1) ;

  \draw (A-3-2)  edge node[pos=.5, above left ]
{$y$} (A-2-3)  edge node[pos=.5, above right ]
{$x$} (A-2-1) edge node[pos=.5, right ]
{$v$} (A-1-2);

 \draw (A-2-3)  edge node[pos=.5, above right ]
{$t$} (A-1-2);

\draw (A-2-1)  edge node[pos=.5, above left ]
{$s$} (A-1-2);

\end{tikzpicture}

\end{figure}

  Assume for a contradiction that $F\nind_AB$. Then $z$, which is a
strong reduct of $a\cdot s$, is not the reduct of $a\cdot s$. This
has two consequences: first, the letter $s$ does not occur in the
final segment of $z$. Secondly, up to permutation,  the path
$F\lmto{a}A$ ends with a flag $A'\lmto{s} A$, such that  $A'$ is
connected to $B$ by a word consisting of proper subletters of $s$.
Since $F_0\ind_AB$, such a flag $A'$ cannot occur in any  permutation
of $x$. Thus, as $a$ is a reduct of $u\cdot x$, it follows that $s$
commutes with $x$ and is in the final segment of $u$. In particular,
the word $x\cdot s$ is reduced, which implies that $v$ is (up to
permutation) the word $x\cdot s$.

  On the other hand, the word $v=x\cdot s$ is a strong reduct of
$y\cdot t$. It is easy to see that this can only be possible if (after
permutation) $y$ has the form $y'\cdot s$,where $y'$ and $s$
commute. The independence $F\ind_{F_0}C$ implies that
$b$ is the reduct of $u\cdot y$. Hence $s$ still
belongs to the final segment of $b$. Finally, since $z$ is a strong
reduct of $b\cdot t$, the word $s$ must belong to the final segment of
$z$, which contradicts that $F\nind_AB$.
\end{proof}

In order to ensure the independence of a flag with respect to a
whole flag path over a nice set, it is enough to check the
independence with respect to the set itself and the end flag of the
path.

\begin{lemma}\label{L:unab_von_Pfad}
  Let $A$ be a nice set and a reduced path $P$ connecting a flag $H$
  to a basepoint in $A$.  Given a flag $F_0$ in
  $A$ and a flag $F$, we have that $F\ind_{F_0} A\cup P$ if and only
  if $F\ind_{F_0} A$ and $F\ind_{F_0} H$.
\end{lemma}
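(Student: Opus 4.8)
The forward implication is immediate, since $A$ and $H$ are both contained in $A\cup P$.

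For the converse, write $P$ as $H=H_0\lmto{s_1}H_1\lmto{s_2}\cdots\lmto{s_n}H_n$ with $H_n$ the basepoint of $H$ in $A$. By Lemma \ref{L:alpha_kette} the set $A\cup P=A\cup\{H_0,\dots,H_n\}$ is nice and, for each $1\le i\le n$, the flag $H_{i-1}$ is obtained from the nice set $A^{(i)}:=A\cup\{H_i,\dots,H_n\}$ by a global application of $\alpha_{s_i}$, with $H_i$ a basepoint of $H_{i-1}$ over $A^{(i)}$; moreover $s_2\cdots s_n$ connects $H_1$ to the basepoint $H_n$ in $A$. Since $F_0\in A\subseteq A\cup P$ and $A\cup P$ is nice, by Lemma \ref{L:forking} it suffices to prove $F\ind_{F_0}A\cup P$, and I would argue by induction on $n$; the case $n=0$ is trivial, since then $H=H_n\in A$ and $A\cup P=A$.

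Assume $n\ge1$. The inductive step splits into two moves. The first, which is the technical core, is to show $F\ind_{F_0}H_1$. Let $u$, $\beta$, $\beta_1$, $\gamma$ be the reduced words joining $F$ to $F_0$, $F_0$ to $H_0$, $F_0$ to $H_1$, and $F$ to $H_0$. As $H_n$ is a basepoint of both $H_0$ and $H_1$ over $A$ and $F_0\in A$, Proposition \ref{P:fusspunkt}(a) forces $\beta$ to be the reduct of $\beta_1\cdot s_1$. If $\beta_1\cdot s_1$ is already reduced, then $F_0\lmto{\beta_1}H_1\lmto{s_1}H_0$ is a reduced path, and the converse half of Lemma \ref{L:redWegtrans} turns $F\ind_{F_0}H_0$ directly into $F\ind_{F_0}H_1$. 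Otherwise $s_1$ is right-absorbed by $\beta_1$, so $\beta=\beta_1$ and $s_1\subseteq\sr(\beta)$; here I would use that $\sr$ only grows under reduction (Corollary \ref{C:sr_monoton}) to obtain $s_1\subseteq\sr(\gamma)$ for the reduct $\gamma$ of $u\cdot\beta$ (using $u\cdot\beta\to\gamma$, which holds by $F\ind_{F_0}H_0$ and Proposition \ref{P:indep_for_flags}), and then invoke the $\preceq$-minimality of $u$ supplied by $F\ind_{F_0}A$, together with the splitting alternative of Lemma \ref{L:flag_rules}, to rule out any proper splitting when passing from $H_0$ to $H_1$, so that $\gamma$ also joins $F$ to $H_1$ and hence $u\cdot\beta_1\to\gamma$ gives $F\ind_{F_0}H_1$. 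Throughout, the fact that a global operation genuinely adds vertices, so that $H_{i-1}\ne H_i$, keeps these reductions non-degenerate (for instance via Corollary \ref{C:geschl_flaggenweg}). Iterating this along $P$ yields $F\ind_{F_0}H_i$ for every $i$.

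For the second move, having $F\ind_{F_0}H_1$, the induction hypothesis applied to the shorter reduced path $H_1\lmto{s_2}\cdots\lmto{s_n}H_n$ (over the same $A$ and $F_0$) gives $F\ind_{F_0}A^{(1)}$. It remains to pass from $F\ind_{F_0}A^{(1)}$ and $F\ind_{F_0}H_0$ to $F\ind_{F_0}A\cup P=F\ind_{F_0}(A^{(1)}\cup H_0)$, which is the length-one case with $A^{(1)}$ in place of $A$. By Lemma \ref{L:forking} and Proposition \ref{P:fusspunkt} it is enough to show that the reduced word $u$ joining $F$ to $F_0$ stays $\preceq$-minimal among words joining $F$ to a flag of $A^{(1)}\cup H_0$; since it is already $\preceq$-minimal over $A^{(1)}$, one only has to exclude a flag $K$ that uses the new vertices of $H_0$ and is joined to $F$ by a word $\prec u$. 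Because those vertices carry no edges beyond the path that created $H_0$, such a $K$ must agree with $H_0$ on all levels of $s_1$, hence is joined to $H_0$ — and so to $H_1\in A^{(1)}$ — by a reduced word of the shape $c\cdot s_1$ with $c$ commuting with $s_1$; feeding this into Proposition \ref{P:indep_for_flags} together with $F\ind_{F_0}H_0$ and $F\ind_{F_0}A^{(1)}$ produces the desired contradiction. The main obstacle is the first move — propagating independence across a single global step — which is exactly where the word combinatorics of Section~\ref{S:Woerte} (Decomposition and Commutation Lemmas, monotonicity of $\sL$, $\sr$) must be married to the absence of closed reduced flag paths.
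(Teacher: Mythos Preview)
Your overall plan (reduce to the length-one case by induction, then treat that case directly) is reasonable and is essentially how the paper is organised, but both of your moves have gaps.

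\textbf{First move.} Your dichotomy ``either $\beta_1\cdot s_1$ is reduced, or $s_1$ is right-absorbed by $\beta_1$'' is incomplete: there is a third case where $s_1$ properly left-absorbs a letter in the final segment of $\beta_1$, so that $\beta_1\cdot s_1$ is not reduced yet $s_1\nsubseteq\sr(\beta_1)$ and $\beta\not\approx\beta_1$. Your subsequent argument (``invoke the $\preceq$-minimality of $u$ supplied by $F\ind_{F_0}A$ to rule out splitting when passing from $H_0$ to $H_1$'') does not work either, since $H_1\notin A$ for $n\ge 2$, so minimality over $A$ says nothing about the word from $F$ to $H_1$. The good news is that this step is much easier than you make it: since $H_n$ is a basepoint of $H$ over $A$ and $F_0\in A$, we have $F_0\ind_{H_n}H$; together with $F\ind_{F_0}H$, Lemma~\ref{L:redWegtrans} gives $F\ind_{H_n}H$. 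Now $H_n\lmto{}H_i\lmto{}H$ is reduced, so the converse half of Lemma~\ref{L:redWegtrans} yields $F\ind_{H_n}H_i$, and one more forward application (using $F\ind_{F_0}H_n$, which holds since $H_n\in A$) gives $F\ind_{F_0}H_i$. No case analysis on $\beta_1\cdot s_1$ is needed.

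\textbf{Second move.} This is where the real content lies, and your sketch does not deliver it. You correctly observe that a flag $K\subset A^{(1)}\cup H_0$ not contained in $A^{(1)}$ must agree with $H_0$ on all levels of $s_1$ (and its endpoints), hence is connected to $H_0$ by a reduced word $c$ commuting with $s_1$. But ``feeding this into Proposition~\ref{P:indep_for_flags} together with $F\ind_{F_0}H_0$ and $F\ind_{F_0}A^{(1)}$'' is not a proof: you have to show that the strong reduct of (word from $F$ to $H_0$)$\cdot\,c$ is the weak reduct, and nothing you have assembled forces this. The paper isolates exactly this difficulty as Lemma~\ref{L:tot_triv_special}: given commuting letters $s,t$ and flags $A'\lmto{s}B'\lmto{t}C'$ with $F\ind_{F_0}A'$, $F\ind_{F_0}C'$ and $F_0\ind_{A'}B'$, conclude $F\ind_{F_0}B'$. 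Its proof is a genuine piece of word combinatorics (tracking where $s$ sits in the various final segments and using that no splitting can produce $s$ in the final segment of the word from $F$ to $K$ once it has disappeared). With that lemma in hand, the length-one case follows by taking $A'=K'$ (the flag in $A^{(1)}$ with $K'\lmto{c}H_1$ and $K'\lmto{s_1}K$), $B'=K$, $C'=H_0$, and inducting on the length of $c$. Without it, your argument stops short of the key computation.
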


\begin{proof}
  Left-to-right is clear. Assume now that $F\ind_{F_0} A$ and
  $F\ind_{F_0} H$. Since $A\cup P$ is nice by Lemma
  \ref{L:alpha_kette}, in order to check that $F\ind_{F_0} A\cup P$,
  we need to check that $F\ind_{F_0} H'$ for any flag $H'$ in $A\cup
  P$ by the remark above Lemma \ref{L:redWegtrans}. This is clear for
flags in $A$, so let $H'$ be in $A\cup P$ but not in $A$.

  We treat first the case where $H'$ is in $P$. Let $H_0$ be the
  base-point of $H$ in $A$. We have then that $F_0\ind_{H_0}H$ and
  $F\ind_{F_0}H$ by assumption, which implies $F\ind_{H_0}H$ by Lemma
  \ref{L:redWegtrans}. Since the path $P$ is reduced, Lemma
  \ref{L:redWegtrans} gives $F\ind_{H_0}H'$, which together with
  $F\ind_{F_0} H_0$ implies $F\ind_{F_0} H'$.

  For the general case, we will proceed by induction on the length of
$P$, based on the above paragraph.  Thus, it suffices to consider the
case where $P$ has length $1$ and let $s$ be its letter:

$$H_0\lmto{s}H.$$

 If $H'$ is a flag in $A\cup P$ not completely contained in $A$, it
differs from $H$ only on the indices outside $s$.  As in the proof of
Lemma \ref{L:flags_im_pfad}, we can find a reduced word $w$ commuting
with $s$ such that $H'\lmto{w}H$. Furthermore, there is some flag
$H'_0$ in $A$ with $H'_0\lmto{w}H_0$ and $H'_0\lmto{s}H'$.

  Note that $H'_0$ is again a basepoint of $H'$ over $A$, so in
particular $F_0\ind_{H'_0}H'$. By induction on the length of $w$, we
may assume that $w$ is a letter $t$. Setting $A=H'_0$, $B=H'$ and
$C=H$, the hypotheses of Lemma \ref{L:tot_triv_special} are
satisfied. We conclude that $F\ind_{F_0}H'$, which gives the desired
result.
\end{proof}

We now have all the ingredients to prove total triviality of
forking.

\begin{prop}\label{P:totally_trivial}
  The theory $\psn$ is totally trivial, that is, given  any set
of parameters $X$ and tuples $a$, $b$ and $c$ such that $a$ is both
independent from $b$ and $c$ over $X$, then it is independent from
$\{b,c\}$ over $X$. In particular, the canonical base of a tuple is
the union of the canonical bases of each singleton.
\end{prop}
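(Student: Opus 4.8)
The plan is to reduce, using weak elimination of imaginaries (Corollary~\ref{C:WEI}) and the usual passage from tuples to singletons, to a statement purely about flags and nice sets, and then to iterate Lemma~\ref{L:unab_von_Pfad}. First, it is enough to prove total triviality when $a$ is a single element: if $a=(a_1,a_2)$ and $a\ind_X b$, $a\ind_X c$, then $a_1\ind_X b$, $a_1\ind_X c$ and $a_2\ind_{Xa_1}b$, $a_2\ind_{Xa_1}c$, so the singleton case over $X$ and over $Xa_1$, together with transitivity, gives $a\ind_X bc$. By weak elimination of imaginaries we may assume $X$ is nice and $b,c$ are real, and replacing them by the nice sets $\acl(Xb)$, $\acl(Xc)$ changes neither hypotheses nor conclusion. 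Finally, choosing a flag $F$ through $a$ with $F\ind_{Xa}bc$ (possible by extension), the assumptions $a\ind_X b$, $a\ind_X c$ upgrade by transitivity to $F\ind_X b$, $F\ind_X c$, while $F\ind_X bc$ implies $a\ind_X bc$ by monotonicity. Thus I am reduced to showing: \emph{for a flag $F$, a nice set $X$, and nice sets $B,C\supseteq X$ with $F\ind_X B$ and $F\ind_X C$, one has $F\ind_X B\cup C$.}

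Next I would move the base to a flag. Let $F_0$ be a basepoint of $F$ over $X$; by Lemma~\ref{L:forking} (and Proposition~\ref{P:fusspunkt}) this is a flag in $X$ with $F\ind_{F_0}X$, and since $F\ind_X B$ and $F\ind_X C$, transitivity shows $F\ind_{F_0}\acl(XB)$ and $F\ind_{F_0}\acl(XC)$, so it suffices to prove $F\ind_{F_0}\acl(XBC)$. Start from the nice set $\acl(XC)$, which contains $F_0$ and over which $F$ is independent from $F_0$. Write $\acl(XB)$ as an increasing union $F_0=D_0\le D_1\le\cdots$ of nice sets where $D_{j+1}$ is obtained from $D_j$ by a global application of some $\alpha_{s_j}$, realised at the flag level by a new flag $H_j\in\acl(XB)$; since $F\ind_{F_0}\acl(XB)$, monotonicity gives $F\ind_{F_0}H_j$ for all $j$. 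Now attach these flags on top of $\acl(XC)$ one reduced flag path at a time: having built a nice set $A_j\supseteq\acl(XC)\cup\{H_0,\dots,H_{j-1}\}$ with $F\ind_{F_0}A_j$, let $R_j$ be the reduced flag path from $H_j$ to its basepoint in $A_j$ and set $A_{j+1}=A_j\cup R_j$; this is nice by Lemma~\ref{L:alpha_kette}, and $F\ind_{F_0}A_{j+1}$ by Lemma~\ref{L:unab_von_Pfad}, whose two hypotheses $F\ind_{F_0}A_j$ and $F\ind_{F_0}H_j$ hold. The increasing union of the $A_j$ is a nice set containing $\acl(XB)\cup\acl(XC)$, hence $\acl(XBC)$, so $F\ind_{F_0}\acl(XBC)$ as required.

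For the addendum on canonical bases, put $e_i=\cb(a_i/X)$ and $W=\aclq(e_1\cup\cdots\cup e_n)$. Since each $e_i\subseteq\aclq(X)$ and $a_i\ind_{e_i}X$, transitivity yields $a_i\ind_W X$; applying total triviality to $X$ against $a_1,\dots,a_n$, iterated, gives $a\ind_W X$, so $\cb(a/X)\subseteq\aclq(W)$. Conversely, $a_i\ind_{\cb(a/X)}X$ gives $e_i\subseteq\aclq(\cb(a/X))$. Weak elimination of imaginaries then lets one replace these algebraic closures by definable closures of finite real sets, so $\cb(a/X)$ is interdefinable with $\bigcup_i\cb(a_i/X)$.

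The main obstacle I expect is the bookkeeping in the two reductions: checking that independence over a nice parameter set can be replaced by independence over a flag basepoint (via Lemma~\ref{L:forking} and transitivity), and that $\acl(XB)$ really is an increasing union of nice sets built from $F_0$ by global operations, so that Lemma~\ref{L:unab_von_Pfad} may be iterated. The genuinely combinatorial content is already isolated in that lemma, which in turn rests on the Wobbling Lemma~\ref{L:Wob} and on Lemma~\ref{L:tot_triv_special}.
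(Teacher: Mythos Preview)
Your overall strategy---reduce to a flag $F$ over a nice base and iterate Lemma~\ref{L:unab_von_Pfad}---is the same as the paper's, but there are two genuine gaps.

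First, you invoke Corollary~\ref{C:WEI} (weak elimination of imaginaries), but its proof in the paper uses Proposition~\ref{P:totally_trivial}, so this is circular. This is easy to repair for the reduction of $X$: simply pass to a small model $M_0\supseteq X$ with $M_0\ind_X abc$, as the paper does. However, your use of WEI in the addendum to upgrade inter-algebraicity of $\cb(a/X)$ and $\bigcup_i\cb(a_i/X)$ to interdefinability cannot be salvaged this way.

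Second, and more seriously, you assert that $\acl(Xb)$ and $\acl(Xc)$ are nice; the paper only proves the converse (Corollary~\ref{C:Niceacl}: nice implies algebraically closed). Even granting this, your chain $F_0=D_0\le D_1\le\cdots$ with union $\acl(XB)$ is problematic: you cannot start at the single flag $F_0$ (it should be $X$), and more importantly there is no guarantee that the flags $H_j$ produced by global operations $\alpha_{s_j}$ \emph{in $M$} lie inside $\acl(XB)$. Theorem~\ref{T:extnice} builds nice extensions in $M$, not inside a prescribed nice subset, so the crucial hypothesis $F\ind_{F_0}H_j$ (which you deduce from $H_j\in\acl(XB)$) is unjustified.

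The paper sidesteps both issues by reducing \emph{all three} tuples to singletons. For $b$ and $c$ singletons, choose flags $H_1\ni b$ and $H_2\ni c$ with $H_i\ind_{Xb_i}a$ (hence $F\ind_{F_0}H_i$), and then only two applications of Lemma~\ref{L:unab_von_Pfad} are needed: first to $X$ and $H_1$ (yielding $F\ind_{F_0}X\cup P_1$), then to $X\cup P_1$ and $H_2$. No claim about $\acl(Xb)$ being nice is required, and no infinite iteration. Your reduction of $a$ alone forces you into the infinite construction where the bookkeeping breaks down.
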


\begin{proof}
 We may assume that our parameter set $X$ is nice, by
choosing a small model containing it independent from $a,b,c$.

Suppose first that the tuples $a$, $b$ and $c$ consists of singletons:
By transitivity, choose flags $H_1$ and $H_2$ independently from $a$
over $X$ containing $b$ and $c$ respectively. Choose now a flag $F$
containing $a$ independently from $H_1$ and from $H_2$ over $X$. We
need only to show that $$F\ind_X H_1 \cup H_2.$$

Let $F_0$ and $H_0$ be basepoints of $F$ and $H_1$ respectively over
$X$. Since $F\ind_{F_0} X$ and $F\ind_{X} H_1$, we have that
$F\ind_{F_0} X\cup P_1$ by Lemma \ref{L:unab_von_Pfad}, where $P_1$
denotes the reduced flag path (connecting $H_1$ to $H_0$) determined
by $H_1$ over $X$. The set $X\cup P_1$ is again nice by Lemma
\ref{L:alpha_kette}. Work now over $X\cup P_1$ in order to show that
$F\ind_{F_0} X\cup P_1\cup P_2$, where $P_2$ is the flag path given by
$H_2$ over $X\cup P_1$. Lemma \ref{L:unab_von_Pfad} gives that $F$ is
independent from $H_1\cup H_2$ over $X$.

Transitivity of forking allows us to work with finite tuples by
choosing accordingly nonforking extensions for each coordinate. The
result now follows by local character.
\end{proof}
Since $\psn$ is superstable, \cite[Proposition 7]{jBG91} allows to
conclude the following.
\begin{cor}\label{C:perfect}
  The theory $\psn$ is perfectly trivial, that is, given given any set
of parameters $X$ and tuples $a$, $b$ and $c$ such that $a$ and $b$
are both independent over $X$, then so are they  over $X\cup\{c\}$.
\end{cor}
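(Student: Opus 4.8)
The plan is to derive the corollary from the total triviality of $\psn$ established in Proposition \ref{P:totally_trivial}, together with its superstability. By Proposition \ref{P:stable} (see also Theorem \ref{T:rankregulartypes}) the theory $\psn$ is $\omega$-stable, hence in particular superstable, and Proposition \ref{P:totally_trivial} shows it is totally trivial. Perfect triviality of a superstable totally trivial theory is exactly the content of \cite[Proposition 7]{jBG91}, so the statement follows at once.

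If one prefers to spell out why the hypotheses of that proposition are met: the finitary form of triviality required there is precisely the assertion that a tuple independent over $X$ from each of $b$ and $c$ is independent over $X$ from $\{b,c\}$, which is Proposition \ref{P:totally_trivial}; and superstability is what allows \cite{jBG91} to upgrade this one-step statement to the statement over an arbitrary parameter extension $X\cup\{c\}$, by a standard argument on local character of forking. No pseudospace-specific combinatorics beyond Proposition \ref{P:totally_trivial} enters.

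Alternatively, one could bypass the citation and argue directly inside Section \ref{S:Forking}: as in the proof of Proposition \ref{P:totally_trivial}, reduce to a nice parameter set $X$ and to the case where $a$, $b$, $c$ are flags, encode the independence data by reduced connecting words via Proposition \ref{P:indep_for_flags}, and then propagate the independence of $a$ from $b$ along the reduced flag path determined by $c$ over $X$, controlling the change of base-points with Lemmata \ref{L:unab_von_Pfad} and \ref{L:redWegtrans}. The delicate point of this route -- and the reason the citation is the economical choice -- is the case where $c$ forks with $b$ over $X$, so that adjoining $c$ genuinely moves the base-point of $a$: one then has to verify that the new base-point still lies beyond $b$ in the word-absorption sense, which is exactly the kind of bookkeeping already carried out for the Basepoint Lemma \ref{L:Fusspunkt}. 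Given Proposition \ref{P:totally_trivial}, this effort is unnecessary and the proof reduces to the single invocation above.
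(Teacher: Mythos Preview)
Your proof is correct and matches the paper's own argument exactly: the paper also derives the corollary by noting that $\psn$ is superstable (being $\omega$-stable) and totally trivial by Proposition~\ref{P:totally_trivial}, and then invokes \cite[Proposition~7]{jBG91}. The additional commentary you provide on the alternative direct route is accurate but, as you yourself observe, unnecessary.
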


\begin{cor}\label{C:WEI}
  The theory $\psn$ has weak elimination of imaginaries.
\end{cor}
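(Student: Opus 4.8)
The plan is to derive weak elimination of imaginaries from the explicit computation of canonical bases in Theorem~\ref{T:Canbasis}, together with the total triviality of forking from Proposition~\ref{P:totally_trivial}. The route is the standard criterion: a stable theory has weak elimination of imaginaries as soon as, for every finite real tuple $\bar a$ and every set of parameters $B$, the canonical base $\cb(\bar a/B)$ is interdefinable with a finite tuple of real elements (see \cite{TZ12}). I would begin by recalling why this suffices. Given $e\in\psn^{\mathrm{eq}}$, write $e=\bar b/E$ with $\bar b$ real and $E$ an $\emptyset$-definable equivalence relation, and choose a model $M_0$ with $e\in M_0^{\mathrm{eq}}$ and $\bar b\ind_{\aclq(e)}M_0$; then $c:=\cb(\bar b/M_0)=\cb(\bar b/\aclq(e))\subseteq\aclq(e)$, and comparing two members $\bar b_0,\bar b_1$ of a Morley sequence of $\tp(\bar b/\aclq(e))$, for which $\bar b_1\ind_c\bar b_0$ and $e\in\dcl^{\mathrm{eq}}(\bar b_0)\cap\dcl^{\mathrm{eq}}(\bar b_1)$, one gets $e\in\aclq(c)$. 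If $c$ is interdefinable with a real tuple $\bar c$, then $\bar c\subseteq\aclq(e)$; and since a stationary type is determined by its canonical base, any automorphism fixing $\bar c$ fixes $\tp(\bar b/\aclq(c))$, hence fixes the $E$-class $e$ common to all of its realisations, so that $e\in\dcl^{\mathrm{eq}}(\bar c)$. Thus $\bar c$ is a weak canonical parameter for $e$.

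The substance is then to show that $\cb(\bar a/B)$ is always interdefinable with a finite real tuple. As $\cb(\bar a/B)=\cb(\bar a/\aclq(B))$, I would first reduce to the case where $B$ is a model: passing to some $M_0\supseteq\aclq(B)$ with $\bar a\ind_{\aclq(B)}M_0$ leaves the canonical base unchanged, and a model of $\psn$, viewed as an elementary substructure of the monster, is a nice set—an elementary substructure inherits the subgraphs $M^b_a$ and all the distances $\dd_t$—so the analysis of canonical bases developed above applies over $M_0$. By Proposition~\ref{P:totally_trivial} the canonical base of a tuple is the union of the canonical bases of its single coordinates, which reduces the problem to a single vertex $v$. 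I would then extend $v$ to a flag $F$ through it whose connecting word $u$ to its basepoint $G$ in $M_0$ (Corollary~\ref{C:fusspunkttyp}, Lemma~\ref{L:forking}) is $\preceq$-minimal among all flags through $v$, and identify $\cb(v/M_0)$—by an argument in the spirit of the Basepoint Lemma~\ref{L:Fusspunkt} and Remark~\ref{C:aclq_nice}—with the class $G/A$ for a suitable $A\supseteq\sr(u)$; by Definition~\ref{D:Eq} this is interdefinable with the finite real tuple of the vertices of $G$ at the levels outside $A$, a sub-tuple of the tuple $G/\sr(u)$ furnished by Theorem~\ref{T:Canbasis}. It then follows that $\cb(\bar a/B)$ is interdefinable with a finite real tuple, and weak elimination of imaginaries is established.

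The main obstacle will be this last reduction to a bare vertex. Theorem~\ref{T:Canbasis} computes canonical bases of \emph{flag} types $\p_u(G)$, and to descend to a single vertex of a flag one must pin down exactly which sub-class $G/A$ of $G/\sr(u)$ is the canonical base of that vertex, and check that it is genuinely interdefinable with a real tuple—an ordered tuple of vertices of $G$, which is possible only because the levels $0,\dots,N$ are named—and not merely interalgebraic with one, interalgebraicity being too weak to yield weak elimination of imaginaries (as the example of an equivalence relation with two infinite classes already shows). This is precisely where the rigidity of the level structure, via Remark~\ref{C:aclq_nice}, is used; the remaining steps are routine given the material already developed.
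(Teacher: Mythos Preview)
Your approach is essentially the paper's: reduce the base set to a nice set, use total triviality (Proposition~\ref{P:totally_trivial}) to pass to singletons, and then invoke Theorem~\ref{T:Canbasis} for a flag through the singleton. The paper's three-line proof is exactly this, phrased as ``we may assume that $\bar a$ is an enumeration of a flag $F$''.

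The one place where you make life harder than necessary is the ``main obstacle'' you flag at the end. You do \emph{not} need to identify $\cb(v/M_0)$ precisely as some $G/A$. Take any flag $F$ through $v$ with $F\ind_v M_0$. Then $\cb(v/M_0)\subseteq\cb(F/M_0)$ because $v\in F$, and transitivity of non-forking (from $F\ind_v M_0$ and $v\ind_{\cb(v/M_0)}M_0$) gives $F\ind_{\cb(v/M_0)}M_0$, so $\cb(F/M_0)\subseteq\aclq(\cb(v/M_0))$. Now Theorem~\ref{T:Canbasis} hands you a real tuple $c$ interdefinable with $\cb(F/M_0)$, and this $c$ already has the \emph{asymmetric} property that suffices: $\cb(v/M_0)\subseteq\dcl^{\mathrm{eq}}(c)$ while $c\in\aclq(\cb(v/M_0))$. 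Plugging this back into the chain in your first paragraph (where $e\in\dcl^{\mathrm{eq}}(\cb(\bar a/B))$ and $\cb(\bar a/B)\subseteq\aclq(e)$), and taking the union over the coordinates of $\bar a$, yields $e\in\dcl^{\mathrm{eq}}(c_1,\ldots,c_n)$ and $c_1,\ldots,c_n\in\aclq(e)$ directly. Your caution that ``interalgebraicity is too weak'' is correct in general, but here one of the two containments is honest definability, and that is all weak elimination of imaginaries requires. The paper's terse reduction to a flag is precisely this shortcut; your proposed computation of $\cb(v/M_0)$ as a specific $G/A$ is extra work that the argument does not need.
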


\begin{proof}
 By Proposition \ref{P:totally_trivial}, in order to study the
canonical base of a real tuple $\bar a$ over an algebraically closed
set $B$ (in $\psn^\mathrm{eq}$), we may assume that $\bar a$ is an
enumeration of a flag $F$. Furthermore, we may suppose that $B$ is
nice. By Theorem \ref{T:Canbasis}, the canonical base is
interdefinable with a finite set, thus we get weak elimination of
imaginaries.
\end{proof}

Although the theory $\psn$ is not $1$-based, being $N$-ample by
Proposition \ref{P:ample}, it is $2$-based, i.e. the canonical base
of a type is determined by two independent realisations.

\begin{prop}\label{P:2based}
 Let $u$ be a reduced word and $X$ a nice set. The  canonical base
of $\p_u(G)|X$ is algebraic over two independent realisations.
\end{prop}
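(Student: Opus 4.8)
The plan is to produce two independent realisations of $\p_u(G)|X$ whose associated data already pin down $G/\sr(u)$, which by Theorem~\ref{T:Canbasis} is interdefinable with the canonical base. Let $F_1$ and $F_2$ be two realisations of $\p_u(G)|X$ with $F_1\ind_G F_2$; by Lemma~\ref{L:forking} this means $G$ is a basepoint of both $F_1$ and $F_2$ over any nice set containing $G$ and both flags, and by Proposition~\ref{P:indep_for_flags} the reduced word connecting $F_1$ to $F_2$ is the reduct of $u\cdot u\inv$. First I would set $X_0=\aclq(F_1F_2)$ and argue that the canonical base $\cb(F_1/X)=G/\sr(u)$ (up to interdefinability) lies in $X_0$: indeed $\cb(F_1/X)=\cb(F_1/GX)$ and $F_1\ind_G F_2$ together with $G\in X$ should force $\cb(F_1/X)$ into $\aclq(F_1)\cap$ (something computable from $F_2$), but the cleaner route is to show directly that $G/\sr(u)$ is definable over $F_1\cup F_2$.

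The key computation is the following. Write $u=w\cdot v$ where $v$ is (the reduct of) the final segment $\tilde u$ of $u$, so that $|v|=\sr(u)$ by Definition~\ref{D:Schluckindizes} and the remarks following it (since the final segment letters are exactly those in $\sr(u)$ for a reduced word). Pick flags $H_1$ and $H_2$ with $F_i\lmto{w}H_i\lmto{v}G$. By the Wobbling Lemma~\ref{L:Wob} applied to the two paths $F_1\lmto{w}H_1\lmto{v}G$ versus... — wait, these end at the same $G$ but start at different $F_i$, so instead I would use the path $F_1\lmto{u}G\lmto{u\inv}F_2$ together with its permutations. Concretely: the flag $H_1$ (the "foot" obtained after applying $w$) satisfies, by the Basepoint Lemma~\ref{L:Fusspunkt}, that $H_1/A\in X$ forces $|v|\subset A$; dually one reads off that $H_1/\sr(u)$ is the same equivalence class as $G/\sr(u)$, since the path from $H_1$ to $G$ has word $v$ with $|v|=\sr(u)$. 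So it suffices to show $H_1/\sr(u)$ is algebraic over $F_1F_2$. For this I would connect $H_1$ to $H_2$: both lie on the weak loop $F_1\lmto{u}G\lmto{u\inv}F_2\lmto{u\cdot u\inv\text{-reduct}}F_1$, and since $u\cdot u\inv\str 1$ while no splitting is forced here (the reduction of $u\cdot u\inv$ to its reduct involves only cancellation and commutation by Corollary~\ref{C:word_inverses}), the Wobbling Lemma~\ref{L:Wob} applied to this decomposition shows $H_1$ and $H_2$ are equivalent modulo $\wob(w, v\cdot v\inv\cdot w\inv)$, and the latter is contained in $\sr(w)$, which is disjoint from the letters of the final segment of $u$ — hence $H_1/\sr(u)=H_2/\sr(u)$, so this class is $F_1F_2$-invariant, i.e.\ algebraic over $F_1\cup F_2$.

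Finally I would assemble: $\cb(\p_u(G)|X)$ is interdefinable with $G/\sr(u)$ by Theorem~\ref{T:Canbasis}; $G/\sr(u)$ is interdefinable with $H_1/\sr(u)$ by the $v$-path computation above (using niceness of $X\cup P$ from Corollary~\ref{C:fusspunkttyp} and Remark~\ref{C:aclq_nice}); and $H_1/\sr(u)$ is fixed by the elementary permutation swapping $F_1$ and $F_2$, hence algebraic over the two independent realisations $F_1,F_2$. Pulling these together gives that $\cb(\p_u(G)|X)\in\aclq(F_1F_2)$, as claimed. The main obstacle I anticipate is the bookkeeping in the wobbling step: verifying that $\wob$ of the relevant sub-decomposition of the loop word $u\cdot u\inv$ really lands inside $\sr(w)=[0,N]\setminus(\text{final segment of }u)$ and does not touch $\sr(u)$, which requires carefully tracking which decomposition of $u\cdot u\inv$ Lemma~\ref{L:Wob} is applied to and invoking Remark~\ref{R:wob_echt} and Lemma~\ref{L:Wobschluck} to control the stabilisers. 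Everything else is a direct citation of the machinery already developed (Propositions~\ref{P:fusspunkt}, \ref{P:indep_for_flags}, Lemma~\ref{L:Fusspunkt}, Theorem~\ref{T:Canbasis}).
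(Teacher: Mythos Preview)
Your overall strategy coincides with the paper's: pass to the reduced word connecting $F_1$ to $F_2$, locate an intermediate flag, and use the Wobbling Lemma to show its $\sr(u)$-class is determined by $F_1,F_2$. But your execution has a genuine gap at exactly the point you flag as the obstacle.

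Two concrete problems. First, the Wobbling Lemma~\ref{L:Wob} applies only to \emph{reduced} words, and your decomposition $w\cdot v\cdot v\inv\cdot w\inv$ is not reduced; so the expression $\wob(w,v\cdot v\inv\cdot w\inv)$ has no standing in that lemma. Second, and more seriously, your inference ``$H_1/\sr(u)=H_2/\sr(u)$, so this class is $F_1F_2$-invariant'' is a non-sequitur: agreement under the single swap $F_1\leftrightarrow F_2$ does not give invariance under \emph{all} automorphisms fixing $F_1F_2$. (Also, your side claim $|\tilde u|=\sr(u)$ is false in general---take $u=[0,2]\cdot[1,2]$---and your claim that $\sr(w)$ is disjoint from $|\tilde u|$ fails for $u=[0,2]\cdot[2,3]$; only the inclusion $|\tilde u|\subset\sr(u)$ holds, which is all you need anyway.)

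What the paper does to close the gap is exactly what your sketch is groping toward. Writing $u=u_1\cdot\tilde u$, one checks that $u\cdot u\inv\to u_1\cdot\tilde u\cdot u_1\inv$ and that the latter is \emph{reduced}. One then takes the intermediate flag $H$ at position $u_1\cdot\tilde u\mid u_1\inv$ on a reduced path $F_1\to F_2$, and applies Lemma~\ref{L:Wob} there: any two such paths give flags at that position which agree modulo $\wob(u_1\cdot\tilde u,\,u_1\inv)=\sr(u)\cap\sL(u_1\inv)\subset\sr(u)$. This is what yields definability of $H/\sr(u)$ from $F_1,F_2$. Since $G$ and $H$ are connected by $\tilde u$ with $|\tilde u|\subset\sr(u)$, one concludes $G/\sr(u)=H/\sr(u)\in\dcl(F_1,F_2)$. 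Your flag $H_1$ at position $u_1\mid\tilde u\cdot u_1\inv$ could also be made to work, but you would still need to verify that $H_1$ lies on a reduced path with word $u_1\cdot\tilde u\cdot u_1\inv$ and compute that $\wob(u_1,\tilde u\cdot u_1\inv)\subset\sr(u)$; neither step is present in the sketch.
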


\begin{proof}
 Let $F$ and $F'$ be realisations of $\p_u(G)|X$, which are
$X$-independent. Since the base-point is only determined up to
$\sr(u)$-equivalence, pick a common base-point $G$ in $X$ for
both $F$ and $F'$.

As $F\ind_X F'$ and $F\ind_G X$, combining Lemmas
\ref{L:redWegtrans} and \ref{L:unab_von_Pfad}, we conclude that
$F\ind_G F'$. Therefore, the word connecting $F$ and $F'$ is the
reduction of $u\cdot u\inv$. Write $u=u_1 \tilde{u}$, where
$\tilde{u}$ is the final segment of $u$. Hence,

\[ u\cdot u\inv \to u_1 \cdot \tilde{u} \cdot u_1\inv,\]

\noindent as the diagram shows:
\begin{figure}[h]
\centering

\begin{tikzpicture}[>=latex,text height=.25ex,text depth=0.25ex]

\fill node (0,0) (G) {} circle (2pt);
\fill (1,1) node (H) {} circle (2pt);
\fill (-1,1) circle (2pt);
\fill (2,2)  node [right](F') {$F'$} circle (2pt);
\fill (-2,2) node [left] (F) {$F$} circle (2pt);
 \node (H1) [below right of=H, node distance=4mm] {$H$};
 \node (G1) [below of=G, node distance=4mm] {$G$};
%
 \draw[->] (0,0) -- (1,1) node[pos=.5, below right] {$\tilde u$};
 \draw[->] (1,1) -- (2,2) node[pos=.5, below right] {$u_1\inv $} ;

  \draw[->]  (-2,2) -- (-1,1) node[pos=.5, below left] {$u_1$};
 \draw[->] (-1,1) --(0,0) node[pos=.5, below left] {$\tilde u$} ;
 \draw[->] (-1,1)  -- (1,1) node[pos=.5, above] {$\tilde u$} ;
\end{tikzpicture}
\end{figure}

Note that $G$ and $H$ are equivalent modulo $|w|\subset \sr(u)$. By
Lemma \ref{L:Wob}, the flag $H$ is determined by $F$ and $F'$ modulo
$\sr(u)\cap \sL(u_1\inv)$ and thus, modulo $\sr(u)$. In particular,
the canonical base $G/\sr(u)$ is algebraic over $F,F'$.

\end{proof}

\section{Ample yet not wide ample}\label{S:Beweis}
This last section shows that the ample hierarchy defined in \ref{D:CM}
is proper, since the theory of the free $N$-dimensional pseudospace
$\psn$ is $N$-ample but not $(N+1)$-ample. We will furthermore show
that it is $N$-tight with respect to the family $\Sigma$ of Lascar
rank $1$ types, if $N\geq 2$.

The proof that $\psn$ is $N$-ample is a direct translation of the
proof exhibited in \cite{BP00}, which we nontheless include for the
sake of the presentation.

\begin{prop}\label{P:ample}
Consider a flag
$a_0-\cdots-a_N$. We have the following:
\renewcommand{\theenumi}{\alph{enumi}}
\begin{enumerate}
\item  $\aclq(a_0,\ldots,a_i)\cap\aclq(a_0,\ldots,a_{i-1},a_{i+1})=
\aclq(a_0,\ldots,a_{i-1})$ for every $0\leq i<N$.
\item $a_{i+1} \ind_{a_i} a_0,\ldots, a_{i-1}$ for every $1\leq i<N$.
\item $a_N \nind a_0$.
\end{enumerate}

In particular, the theory $\psn$ is $N$-ample.
\end{prop}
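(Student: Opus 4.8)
The plan is to verify conditions (a), (b), (c) of Definition~\ref{D:CM} directly for the given flag; as the text already indicates, this is essentially the argument of \cite{BP00}, and it rests on three earlier facts: all the sets that occur will be \emph{nice} (hence algebraically closed and wunderbar), $\psn$ has weak elimination of imaginaries, and the ranks of flags are known. (By Remark~\ref{R:Mod} one could instead pass to models, but the flag vertices suffice.) The basic observation is that for each $i$ the initial segment $P_i=\{a_0,\dots,a_i\}$ is a nice subset of $M$: since its vertices lie on pairwise distinct levels, the level structure forces, for any (possibly imaginary) $a,b\in P_i$, the elements of $P_i$ lying between $a$ and $b$ in $M$ to be the obvious subpath, which is the first niceness clause, and the second clause holds automatically because $P_i$ is itself a path occupying a consecutive block of levels inside any interval. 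By Corollaries~\ref{C:Niceacl} and~\ref{C:nice=wund} each $P_i$ is therefore algebraically closed and wunderbar; in particular $F=P_N$ is wunderbar, so graph distances between vertices of $F$ are realised inside $F$.

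For (a): by weak elimination of imaginaries (Corollary~\ref{C:WEI}) every imaginary $e$ satisfies $e\in\dcl(\bar e)$, where $\bar e$ is the finite set of real elements of $\acl(e)$. If $e\in\aclq(P_i)\cap\aclq(a_0,\dots,a_{i-1},a_{i+1})$ then $\bar e\subseteq\acl(P_i)\cap\acl(a_0,\dots,a_{i-1},a_{i+1})$. Now $\acl(P_i)=P_i$ because $P_i$ is nice, and $a_i\notin\acl(a_0,\dots,a_{i-1},a_{i+1})$ because by Axiom~$(\ref{S:axiome:unendlich})$ there are infinitely many level-$i$ vertices joined to both $a_{i-1}$ and $a_{i+1}$. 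Hence $\bar e\subseteq\{a_0,\dots,a_i\}\cap\acl(a_0,\dots,a_{i-1},a_{i+1})=\{a_0,\dots,a_{i-1}\}$, so $e\in\dcl(\bar e)\subseteq\aclq(a_0,\dots,a_{i-1})$. The reverse inclusion is immediate, which gives (a).

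For (b): fix $1\le i<N$ and let $a_{i+1}'$ realise the nonforking extension to the nice set $P_i$ of $\tp(a_{i+1}/a_i)$, so that $a_{i+1}'\ind_{a_i}P_i$, in particular $a_{i+1}'\ind_{a_i}a_0,\dots,a_{i-1}$. Both $P_i\cup\{a_{i+1}\}$ and $P_i\cup\{a_{i+1}'\}$ are again paths, hence nice, so by Corollary~\ref{C:complete} it is enough to show that $a_{i+1}$ and $a_{i+1}'$ have the same quantifier-free type over $P_i$: then $\tp(a_{i+1}/P_i)=\tp(a_{i+1}'/P_i)$, whence $a_{i+1}\ind_{a_i}a_0,\dots,a_{i-1}$. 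Each of $a_{i+1},a_{i+1}'$ is a level-$(i+1)$ vertex joined within $P_i$ exactly to $a_i$ --- for $a_{i+1}$ because in $F$ it meets only $a_i$ and $a_{i+2}\notin P_i$, and for $a_{i+1}'$ because the nonforking extension over $a_i$ creates no connection of $a_{i+1}'$ to $P_i$ beyond that through $a_i$ (the content of the forking analysis, Lemma~\ref{L:nice}). For an interval $t\ni i+1$ and $a_k\in\A_t(P_i)$ one has $k\le i$ and $k,\dots,i+1\in t$, and $\dd^M_t(a_{i+1},a_k)=\dd^F_t(a_{i+1},a_k)=i+1-k$ since $F$ is wunderbar, while $\dd^M_t(a_{i+1}',a_k)=1+\dd^M_t(a_i,a_k)=1+(i-k)$ since $a_{i+1}'$ reaches $P_i$ only through $a_i$ and $\dd^M_t(a_i,a_k)=\dd^F_t(a_i,a_k)=i-k$. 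These agree, so the quantifier-free types coincide.

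For (c): by Theorem~\ref{T:rankregulartypes} every vertex of $F$ has $\U$-rank $\omega^N$ and $\U(F)=\omega^N$. If $a_N\ind a_0$ held, the Lascar inequalities (whose lower and upper bounds coincide here) would give $\U(a_0a_N)=\U(a_N/a_0)+\U(a_0)=\U(a_N)+\U(a_0)=\omega^N\cdot 2$, contradicting $\U(a_0a_N)\le\U(F)=\omega^N$; hence $a_N\nind a_0$. Conditions (a)--(c) are exactly what Definition~\ref{D:CM} asks for, so $\psn$ is $N$-ample. The only delicate step is (b): one must check that the nonforking extension of $\tp(a_{i+1}/a_i)$ displays over $P_i$ precisely the distance pattern already present in the flag --- i.e. that $F$ is \emph{freely amalgamated along $a_i$} --- and this is exactly where simple connectedness and the wunderbar property of the path segments are used; (a) is algebraic-closure bookkeeping through weak elimination of imaginaries, and (c) a one-line rank computation.
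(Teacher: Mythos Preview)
Your argument is correct, but it takes a different route from the paper's at every one of the three items.

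For (a) the paper does \emph{not} invoke weak elimination of imaginaries. Instead it fixes an auxiliary nice set $X\ni a_0,\dots,a_{i-1}$ independent from $a_i,a_{i+1}$, assumes an imaginary $e$ lies in the intersection but not in $\aclq(X)$, and manufactures a long zig-zag of flags $F_r,F'_r$ (alternating the operations $\alpha_{[i+1,N]}$ and $\alpha_{\{i\}}$) all having the same type over $Xe$; this forces a bound on $\dd_{[i,N]}(a_i^m,a_i)$ that is violated by construction. Your argument is shorter and conceptually cleaner --- once Corollary~\ref{C:WEI} is available, the intersection reduces to real points and one only needs $a_i\notin\acl(a_0,\dots,a_{i-1},a_{i+1})$. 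Two small remarks: you should say explicitly that the infinitely many level-$i$ vertices joined to $a_{i-1}$ and $a_{i+1}$ are \emph{conjugate} to $a_i$ over $\{a_0,\dots,a_{i-1},a_{i+1}\}$ (this follows since each of the resulting paths is nice with the same quantifier-free data, hence the same type by Corollary~\ref{C:complete}); and your parenthetical invocation of Lemma~\ref{L:nice} in (b) is unnecessary --- the only possible edge from a level-$(i{+}1)$ vertex into $P_i$ is to $a_i$, by level considerations alone.

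For (b) and (c) the paper simply computes the canonical base via Theorem~\ref{T:Canbasis}: with $F\supset a_{i+1}$ and $G\supset a_0,\dots,a_i$ generic, the connecting word is $[0,i]\cdot[i+1,N]$, whence $\cb(F/G)=G/\sr(u)=a_i$, giving (b); and with $F\ni a_N$, $G\ni a_0$ generic the connecting word is $[0,N-1]\cdot[1,N]$, so $\cb(F/G)=a_0$, giving (c). Your (b) instead shows the type $\tp(a_{i+1}/a_i)$ has a \emph{unique} extension to $P_i$ by comparing quantifier-free types of nice paths, and your (c) uses the Lascar inequality together with $\U(F)=\U(a_0)=\U(a_N)=\omega^N$ from Theorem~\ref{T:rankregulartypes}. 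Both are valid; the paper's one-line canonical-base computations are more direct, while your arguments illustrate that (b) already follows from the back-and-forth description of types and (c) from the global rank bound, without ever writing down $\sr(u)$.
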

\begin{proof}
  In order to prove $(a)$, fix some $i<N$ and choose parameters
  $b_i,\ldots,b_N$ independently from $a_i, a_{i+1}$ such
  that $$a_0-\cdots-a_{i-1}-b_i-\cdots-b_N$$ is a flag. Set
  $X=\{a_0,\ldots,a_{i-1},b_i,\ldots,b_N\}$, which is nice.

  By Fact \ref{F:Inter}, assume for a contradiction that there is an
  element $e$ in $$\aclq(X,a_i)\cap
  \aclq(X,a_{i+1})\setminus\aclq(X).$$ Choose now $a'_i$ realising
  $\tp(a_i/X,e)$. Since the element $e$ lies also in
  $\aclq(X,a'_i)$, then $a_i\nind_{X} a'_i$. As the
  $\preceq$-minimal word connecting $a_i$ (or rather, the flag
  $a_0-\cdots-a_N$) to $X$ is $[i,N]$, it follows from Lemma
  \ref{L:forking} that $a_i$ and $a'_i$ (or rather, generic flags
  containing them) are connected through a finite product of proper
  intervals of $[i,N]$. Compactness (and Lemma
  \ref{L:abstand_flaggenpfad}) implies that there exists a natural
  number $n$ such that

  $$\tp(a_i/X,e)\models \dd_{[i,N]}(x, a_i)\leq n.$$

  Let $m$ be such that $2m>n$. Consider the reduced word
  \[u= \underbrace{[i+1,N] \cdot i \cdots [i+1,N]
    \cdot i}_{2m}.\] Corollary \ref{C:fusspunkttyp} provides us with a
  flag $F$ and a path $P$ from $G=a_0-\cdots-a_N$ to $F$ with word $u$
  \[F=F_0\lmto{[i+1,N]}F'_0\lmto{i}F_1\lmto{[i+1,N]}\dotsb\lmto{{[i+1,N]}}
  F'_{m-1}\lmto{i} F_m=G\] such that $G$ is the basepoint
  of $F$ over the nice set $G$. Since the $F_i$ and $F'_i$ are
  connected by the word $[i,N]$ to $G$, they have all the same type
over $X$. Denote
  \begin{align*}
    F_r&=a_0-\cdots-a_{i-1}-a^r_i-a^r_{i+1}-\cdots-a^r_N\\
    F'_r&=a_0-\cdots-a_{i-1}-a^r_i-a^{r+1}_{i+1}-\cdots-a^{r+1}_N.
  \end{align*}
  Since $F_0$ and $F'_0$ have the same type over $X$, they have also
  the same type over $Xa^0_i$ and therefore over $Xe$. This implies
  that $e$ belongs to $\aclq(Xa^1_{i+1})$. Similarly, the flags $F'_0$
and $F_1$ have the same type over $Xa^1_{i+1}$ and therefore over
$Xe$, which implies that $e$ belongs to $\aclq(Xa^1_i)$. Iterating, we
see that $a^m_i$ has the the same type over $Xe$ as
  $a_i$. This implies that $\dd_{[i,N]}(a^m_i, a_i)\leq n$, which
gives a contradiction since the shortest path between $a_i$ and
  $a^m_i$ in $\A_{[0,N]}$ is
$$a^0_i-a^1_{i+1}-a^1_i-\dotsb-a^m_{i+1}-a^m_i,$$
\noindent of length $2m$.\\

  For $(b)$, chose generic flags $F$ containing $a_{i+1}$ and
  $G$ containing $a_0,\ldots,a_i$. The canonical
  base $\cb(a_{i+1}/ a_0,\ldots,a_i)$ equals $\cb(F/G)$. On the other
  hand, the flags $F$ and $G$ are connected by the reduced word
  $u=[0,i][i+1,N]$. So
  \[\cb(F/G)=G/\sr(u)=G/\bigl([0,i-1]\cup[i+1,N]\bigr)=a_i\]
  by Theorem \ref{T:Canbasis}, which gives the desired independence.\\

  For $(c)$, choose a generic flag $F$ which contains $a_N$ and
  a generic flag $G$ which contains $a_0$. Then $\cb(a_N/a_0)$ equals
  $\cb(F/G)$. On the other hand the reduced word connecting $F$ to $G$ is
  $u=[0,N-1][1,N]$, So
  \[\cb(F/G)=G/\sr(u)=G/[1,N]=a_0,\]
  which is clearly not algebraic over $a_1$. Thus,
  \[a_N \nind a_0.\]

\end{proof}

Before the proof that $\psn$ is not $(N+1)$-ample, we need
some auxiliary results on the nature of the reduced words
 arising from the hypothesis on ampleness.

\begin{lemma}\label{L:v1komm}
  Consider nice sets $A$ and $B$ and a flag $F$ such that
  $\aclq(AB)\cap\aclq(A,F)= \aclq(A)$ and $F\ind_B A$. Let $u = u_B$
  (resp.\ $u_A$) be the $\preceq$-minimal word connecting $F$ to a
  flag $G_B$ in $B$ (resp.\ $G_A$ in $A$) and let $v$ be the reduced
  word connecting $G_B$ to $G_A$. If
 \begin{align*}
    u&=u_1\cdot u',&
    v'\cdot v_1&=v
  \end{align*}
 is the fine decomposition as in Theorem \ref{T:einfach_amador}, then
 $v_1$ is commuting.
\end{lemma}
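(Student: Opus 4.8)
The plan is to derive the commutativity of $v_1$ from the hypothesis $\aclq(AB)\cap\aclq(A,F)=\aclq(A)$ by a contradiction argument that uses the Basepoint Lemma \ref{L:Fusspunkt}. Since $u=u_B$ is the $\preceq$-minimal word connecting $F$ to a flag in $B$, the flag $G_B$ is a basepoint of $F$ over (any nice set containing) $B$; similarly $G_A$ is a basepoint of $F$ over a nice set containing $A$. The independence $F\ind_B A$ together with Lemma \ref{L:forking} tells us that $G_B$ is a basepoint of $F$ over a nice set $X\supseteq AB$, so by Proposition \ref{P:fusspunkt}(\ref{P:fusspunkt:indep}) the word connecting $F$ to $G_A$ is the reduct of $u\cdot v$; by $\preceq$-minimality of $u_A$ this reduct is (up to permutation) $u_A$, and the fine decomposition $u=u_1\cdot u'$, $v'\cdot v_1=v$ gives $u\cdot v\to u_1\cdot v_1$, so $u_A\approx u_1\cdot v_1$.

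The key step is then: suppose for contradiction that $v_1$ is \emph{not} commuting, i.e.\ that $v_1$ contains two non-commuting letters. I would factor $v_1 = v_1'\cdot\tilde v_1$ where $\tilde v_1$ is the (nontrivial, non-commuting-in-itself, or at least non-final) final segment; more precisely, since $v_1$ is not a commuting word, by Lemma \ref{L:segment_zerlegung} we can write $v_1\approx z\cdot\tilde{v_1}$ with $z$ nontrivial and no letter of the final segment of $z$ commuting with $\tilde{v_1}$. Now consider the flag $H$ lying on the path $F\lmto{u_1}H\lmto{v_1}G_A$ — equivalently, cut the basepoint path for $F$ over $X$ (with word $u_A\approx u_1\cdot v_1$) after $u_1$. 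Applying the Basepoint Lemma \ref{L:Fusspunkt} to $G_A$, $X$, $F$ with $u_A=u_1\cdot v_1$: if $H/A'\in\aclq(X)$ for some $A'\subset[0,N]$, then $|v_1|\subset A'$. I would choose $A'$ to be the complement of $|\tilde{v_1}|$ (or of a single non-commuting letter): the claim to establish is that $H$ and $G_B$ agree modulo $|\tilde{v_1}|$ — intuitively, because $u'$ (the part of $u$ absorbed into $v_1$) and $v'$ commute, and $v_1$ is properly right-absorbed "on the $B$ side", so the portion of $H$ that genuinely differs from $G_B$ is confined to the levels in $|\tilde{v_1}|^{\mathrm{c}}$...

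Here is where I expect the main obstacle. The delicate point is identifying exactly which flag $H$ (on which intermediate position of which path) lies in $\aclq(AB,F)$, and with respect to which index set $A'$, so that the Basepoint Lemma forces $|v_1|\subset A'$ and hence the letters of $v_1$ all commute (being contained in a set that cannot support two non-commuting letters, because a non-commuting letter of $v_1$ would force a level into $A'$ that we have excluded). The cleanest route is probably: take $H$ with $F\lmto{u_1}H\lmto{v_1}G_A$ and $F\lmto{u_1}H'\lmto{?}G_B$ via the decomposition $u=u_1\cdot u'$ and $u'$ left-absorbed by $v_1$; since $u'$ commutes with $v'$ and $v_1$ absorbs $u'$, the flag $H$ is connected to $G_B$ by a word whose letters all lie in $|v_1|$, so $H/|v_1|^{\mathrm c}=G_B/|v_1|^{\mathrm c}\in\aclq(B)\subseteq\aclq(AB)$. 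Combined with $H\in\aclq(A,F)$ (it lies on a reduced path from $F$ to $G_A\in A$, so $H\in\aclq(F,G_A)\subseteq\aclq(A,F)$), the hypothesis $\aclq(AB)\cap\aclq(A,F)=\aclq(A)$ gives $H/|v_1|^{\mathrm c}\in\aclq(A)$, i.e.\ by Remark \ref{C:aclq_nice} $H$ agrees with a flag of $A$ modulo $|v_1|^{\mathrm c}$. Feeding $A'=|v_1|^{\mathrm c}$ into the Basepoint Lemma yields $|v_1|\subset|v_1|^{\mathrm c}$, forcing $v_1=1$ — which is stronger than needed and in fact wrong in general, signalling that the correct argument must instead only push a proper \emph{sub}-word into $\aclq(A)$, namely the non-commuting final portion $\tilde{v_1}$; so the real work is to arrange the cut so that only $\tilde{v_1}$'s levels are the ones that cannot be absorbed on the $B$-side, and to check that if $v_1$ has two non-commuting letters then its final segment $\tilde{v_1}$ is nontrivial, giving the contradiction $|\tilde{v_1}|\subset|\tilde{v_1}|^{\mathrm c}$. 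That bookkeeping — ensuring the flag I extract is simultaneously in $\aclq(AB)$ and in $\aclq(A,F)$ for the right index set, which must be exactly the complement of a level appearing in a non-commuting pair of $v_1$ — is the crux, and everything else is routine manipulation of absorption via Theorem \ref{T:einfach_amador} and Lemma \ref{L:schlucktheorie}.
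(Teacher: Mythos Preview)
Your overall strategy is right --- use the intersection hypothesis together with the Basepoint Lemma --- but there is a real gap at the point you yourself flag, and your suggested fix does not close it.

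The claim ``$H\in\aclq(F,G_A)$ because it lies on a reduced path from $F$ to $G_A$'' is false. An intermediate flag on a reduced path is determined by the endpoints only \emph{modulo the wobbling}: by the Wobbling Lemma \ref{L:Wob}, two flags $H,H'$ with $F\lmto{u_1}H\lmto{v_1}G_A$ and $F\lmto{u_1}H'\lmto{v_1}G_A$ agree only modulo $\wob(u_1,v_1)$. So what lies in $\aclq(A,F)$ is $H/\wob(u_1,v_1)$, not $H$ itself. This is precisely why your first attempt overproves $v_1=1$: you fed the Basepoint Lemma a set $A'$ that ignored the wobbling, and the wobbling is not negligible.

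The paper's proof cuts \emph{inside} $v_1$, not at the junction $u_1\mid v_1$. Write $v_1=v_1^1\cdot s\cdot v_1^2$ with $v_1^2$ the final segment of $v_1$ and $s$ a letter that does \emph{not} commute with $v_1^2$ (such an $s$ exists exactly because $v_1$ is not commuting). Now work with \emph{two} flags: $K$ on the path $F\lmto{u_1\cdot v_1^1\cdot s}K\lmto{v_1^2}G_A$, and $H$ on the path $G_B\lmto{v'\cdot v_1^1\cdot s}H\lmto{v_1^2}G_A$. Splitting $u'=u'_2\cdot u'_1$ via Lemma \ref{L:schlucktheorie} (with $u'_1$ absorbed by $v_1^1\cdot s$ and $u'_2$ commuting with $v_1^1\cdot s$, absorbed by $v_1^2$), one checks that $H$ and $K$ are equivalent modulo $|u'_2|$. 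The Wobbling Lemma gives $H/\wob(v'\cdot v_1^1\cdot s,\,v_1^2)\in\aclq(AB)$ and $K/W\in\aclq(A,F)$ where $W=\wob(u_1\cdot v_1^1\cdot s,\,v_1^2)$; Lemma \ref{L:Wobschluck} gives the first wobbling contained in $W$. Hence $K/(|u'_2|\cup W)\in\aclq(AB)\cap\aclq(A,F)=\aclq(A)$, and now the Basepoint Lemma yields $|v_1^2|\subset |u'_2|\cup W$. The punchline is that $|u'_2|\subset\Cent(s)$ and $W\subset\sr(u_1\cdot v_1^1\cdot s)\subset s\cup\Cent(s)$, so every letter of $v_1^2$ lies in $s\cup\Cent(s)$; reducedness of $v_1$ rules out containment in $s$, so $v_1^2$ commutes with $s$ --- contradicting the choice of $s$.

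So the missing ingredients are: (i) track the wobbling explicitly on \emph{both} the $(F,G_A)$-path and the $(G_B,G_A)$-path, (ii) place the cut just before the final segment of $v_1$, not at $u_1\mid v_1$, and (iii) derive the contradiction from $|v_1^2|\subset s\cup\Cent(s)$ rather than from a crude containment like $|\tilde v_1|\subset|\tilde v_1|^{\mathrm c}$.
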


\begin{proof}
  By hypothesis, $F\ind_{G_B}G_A$, so the product $u_1\cdot v_1$ is
  equivalent to $u_A$. Suppose for a contradiction that $v_1$ is not
  commuting. Hence, we may decompose $v_1=v_1^1\cdot s \cdot v_1^2$,
  where $v_1^2$ is the final segment of $v_1$ and $s$ does not commute
  with $v_1^2$.

  By Lemma \ref{L:schlucktheorie}, we can write $u'=u'_2\cdot u'_1$,
  where $u'_1$ is left-absorbed by $v_1^1\cdot s$, the word $u'_2$
  commutes with $v_1^1\cdot s$ and is left-absorbed by $v_1^2$. We
  have the following diagram:

\begin{figure}[h]
\centering

\begin{tikzpicture}[>=latex,text height=.25ex,text depth=0.25ex]

\fill node [below left] (0,0) (FB) {$G_B$} circle (2pt);
\fill (1.5,1.5) circle (2pt);
\fill (-1.5,1.5) circle (2pt);
\fill (2.3,2.3)  node (H) {} circle (2pt);
\fill (0,3)  circle (2pt);
\fill (1.5,3) node (K) {}  circle (2pt);
\fill (3,3)  node (FA) {} circle (2pt);
\fill (-3,3) node [left] (F) {$F$} circle (2pt);

\node (H1) [below right of=H, node distance=4mm] {$H$};
\node (K1) [above right of=K, node distance=2mm] {$K$};
\node (FA1) [right of=FA, node distance=4mm] {$G_A$};

\draw[->] (0,0) -- (1.5,1.5) node[pos=.5, below right] {$v'$};
 \draw[->] (1.5,1.5) -- (2.3,2.3) node[pos=.5, below right]
{$v_1^1\cdot s$} ;
 \draw[->] (2.3,2.3) -- (3,3) node[pos=.5, below right] {$v_1^2$}  ;
\draw[->] (-3,3)  -- (-1.5,1.5) node[pos=.5, below left] {$u_1$} ;
 \draw[->] (-1.5,1.5) -- (0,0) node[pos=.5, below left] {$u'$} ;
\draw[->] (-3,3) -- (0,3) node[pos=.5, above right] {$u_1$};
 \draw[->] (0,3) -- (1.5,3) node[pos=.4, above] {$v_1^1\cdot s$};
 \draw[->] (1.5,3) -- (3,3) node[pos=.5, above] {$v_1^2$};
\draw[->] (1.5,3) -- (2.3,2.3) node[pos=.5, below left] {$u'_2$};
\draw[<-] (1.5,1.5) -- (0,3) node[pos=.5, right] {$u'$};
\draw[->] (-1.5,1.5) -- (0,3) node[pos=.5, left] {$v'$};
\end{tikzpicture}
\end{figure}

\noindent  where the path connecting $K$ and $H$ is given by $u'_2$.
So the flags $H$ and $K$ are equivalent modulo $|u'_2|$.

Lemma \ref{L:Wobschluck} gives that $\wob( v'\cdot v_1^1\cdot s,
v_1^2)$, the wobbling of $v$ at $H$, is contained in $W=\wob(u_1\cdot
v_1^1\cdot s, v_1^2)$. In particular, by Lemma \ref{L:Wob}, the class
$H/W$ lies in $\aclq(AB)$. So does $K/(|u'_2|\cup W)$, which also lies
$\aclq(AF)$. By assumption, $K/(|u'_2|\cup W)$ lies in $\aclq(A)$
since $\aclq(AB)\cap \aclq(AF)=\aclq(A)$, and therefore in $A$ by
Remark \ref{C:aclq_nice}.  Since $u_A$ is $\preceq$-minimal
connecting $F$ to a flag in $A$, Lemma \ref{L:Fusspunkt} implies

$$|v_1^2| \subset |u'_2| \cup W.$$

Observe that $u'_2$ centralises $s$ and $W$ is contained in $s \cup
\Cent(s)$. Hence, so does $|v_1^2|$.  Since $v_1$ is reduced and
$v_1^2$ is commuting, no letter of $v_1^2$ is contained in $s$. So
$v_1^2$ must commute with $s$, which contradicts the definition of
$v_1^2$.

\end{proof}

\begin{prop}\label{P:Endstueckwirdgroesser}
  Consider nice sets $A$ and $B$ and a flag $F$ such that
  $\aclq(AB)\cap\aclq(A,F)= \aclq(A)$ and $F\ind_B A$. Let $u = u_B$
   (resp.\ $u_A$) be the minimal word connecting $F$ to a flag $G_B$
in $B$  (resp.\ \ $G_A$ in $A$) (These
are the same  hypotheses as in Lemma \ref{L:v1komm}). Then, either
$F\ind_{A\cap B} AB$ or $u$ is nontrivial and its final segment
$\tilde u$, as a set of indices, is strictly contained in $\tilde
u_A$, the final segment of $u_A$.

  In particular, consider the reduced word $v$ which connects $G_B$ to
$G_A$ and the associated fine decomposition
\begin{align*}
    u&=u_1\cdot u',&
    v'\cdot v_1&=v,
  \end{align*} 
 as in Theorem \ref{T:einfach_amador}. If $$F\nind\limits_{A\cap B}
A,$$ 
\noindent then $\tilde u$ is nontrivial and 

$$|v'|  \nsubseteq |\tilde u| \subsetneq |\tilde u_A|.$$

\end{prop}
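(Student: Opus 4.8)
The plan is to prove the stated dichotomy by contraposition and then read off the ``in particular'' clause. So assume $F\nind_{A\cap B}AB$; I must produce that $u$ is nontrivial with $|\tilde u|\subsetneq|\tilde u_A|$. First I dispose of $u$ trivial: then $F=G_B$ is a flag with all vertices in $B$, so $\aclq(AF)\subseteq\aclq(AB)$ and the hypothesis $\aclq(AB)\cap\aclq(AF)=\aclq(A)$ collapses to $F\subseteq\aclq(A)$; since $A$ is nice, hence algebraically closed by Corollary \ref{C:Niceacl}, $F\subseteq A$, so $F\subseteq A\cap B$ and $F\ind_{A\cap B}AB$, against the assumption. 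Hence $u$ is nontrivial, and so is $\tilde u$, the last letter of a reduced word always being in its final segment.

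Next I record the word combinatorics. As $G_B$ is a basepoint of $F$ over $B$ we have $F\ind_{G_B}B$; together with $F\ind_BA$ and transitivity this gives $F\ind_{G_B}A$, hence $F\ind_{G_B}G_A$ by monotonicity, so by Proposition \ref{P:indep_for_flags} the product $u\cdot v$ reduces to $u_A$, i.e.\ $u_A\approx u_1\cdot v_1$ for the fine decomposition of Theorem \ref{T:einfach_amador}. By Lemma \ref{L:v1komm} the word $v_1$ is commuting; sitting at the right end of $u_1\cdot v_1$, all its letters lie in the final segment of $u_A$, so $|v_1|\subseteq|\tilde u_A|$, and since $u'$ is left-absorbed by the commuting word $v_1$ we get $|u'|\subseteq\sL(v_1)=|v_1|\subseteq|\tilde u_A|$. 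I will also use that the final-segment letters of a reduced word $w$ are right-absorbed by $w$, so $|\tilde w|\subseteq\sr(w)$ by Lemma \ref{L:v_in_sl}; in particular $|\tilde u|\subseteq\sr(u)$ and $|\tilde u_A|\subseteq\sr(u_A)$. Finally, since by the fine decomposition $v'$ commutes with $u'$ and is right-absorbed by $u_1$, it is right-absorbed by $u=u_1u'$, so $|v'|\subseteq\sr(u)$.

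Now the core. Because $F\ind_BA$, the extension $\tp(F/AB)$ does not fork over $B$, so $\cb(F/AB)=\cb(F/B)$, which by Theorem \ref{T:Canbasis} is interdefinable with the real tuple $G_B/\sr(u)$. Thus $F\nind_{A\cap B}AB$ says $G_B/\sr(u)\notin\aclq(A\cap B)$; since $G_B/\sr(u)$ is a real tuple contained in $B$, it must then fail to lie in $\aclq(A)$, so by Remark \ref{C:aclq_nice} not all vertices of $G_B$ at levels outside $\sr(u)$ are in $A$. As the flag $G_A\subseteq A$ agrees with $G_B$ off $|v|$, this forces $|v|\not\subseteq\sr(u)$, hence (using $|v'|\subseteq\sr(u)$) $|v_1|\not\subseteq\sr(u)$; picking $j\in|v_1|\setminus\sr(u)$ gives $j\in|\tilde u_A|\setminus|\tilde u|$, so $|\tilde u|\neq|\tilde u_A|$. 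To upgrade this to $|\tilde u|\subsetneq|\tilde u_A|$ one still needs the inclusion $|\tilde u|\subseteq|\tilde u_A|$; I would obtain it exactly as in the proof of Lemma \ref{L:v1komm}, by decomposing $u$ past a putative letter of $\tilde u$ not covered by $\tilde u_A$, producing intermediate flags whose classes modulo the relevant wobbling set lie in $\aclq(AB)\cap\aclq(AF)=\aclq(A)$ by the Wobbling Lemma \ref{L:Wob}, and then applying the Basepoint Lemma \ref{L:Fusspunkt}. I expect this last verification --- the case analysis and wobbling bookkeeping, which is where the hypothesis $\aclq(AB)\cap\aclq(AF)=\aclq(A)$ is really consumed --- to be the main obstacle.

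For the ``in particular'', assume $F\nind_{A\cap B}A$; then a fortiori $F\nind_{A\cap B}AB$, so by the dichotomy $\tilde u$ is nontrivial and $|\tilde u|\subsetneq|\tilde u_A|$. Suppose for contradiction that $|v'|\subseteq|\tilde u|$. Then $|v|=|v'|\cup|v_1|$ with $|v'|\subseteq|\tilde u|\subseteq|\tilde u_A|$ and $|v_1|\subseteq|\tilde u_A|$, so $|v|\subseteq|\tilde u_A|\subseteq\sr(u_A)$; thus every level outside $\sr(u_A)$ lies outside $|v|$, where $G_A$ agrees with $G_B$. Hence $\cb(F/A)=G_A/\sr(u_A)$ (Theorem \ref{T:Canbasis}) is a real tuple contained in both $A$ and $B$, so in $A\cap B$, giving $F\ind_{A\cap B}A$ --- a contradiction. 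Therefore $|v'|\nsubseteq|\tilde u|\subsetneq|\tilde u_A|$, as required.
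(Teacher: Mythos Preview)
Your argument for the ``in particular'' clause and for the inequality $|\tilde u|\neq|\tilde u_A|$ is correct, and your handling of the trivial-$u$ case is cleaner than the paper's. But you correctly flag the genuine gap: the inclusion $|\tilde u|\subseteq|\tilde u_A|$ is not proved, and your sketch (``decompose $u$ past a putative letter of $\tilde u$ not covered by $\tilde u_A$'') is not quite the right manoeuvre.

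The paper fills this gap by working at the junction of $u_1$ and $v_1$ in the path $F\lmto{u_A}G_A$, not by decomposing $u$. Let $K$ be the flag with $F\lmto{u_1}K\lmto{v_1}G_A$, and $H$ the corresponding flag on $G_B\lmto{v}G_A$ between $v'$ and $v_1$; then $H$ and $K$ are equivalent modulo $|u'|$. By Lemma~\ref{L:Wobschluck} the wobbling $\wob(v',v_1)$ is contained in $W=\wob(u_1,v_1)$, so by the Wobbling Lemma~\ref{L:Wob} the class $K/(W\cup|u'|)$ lies in $\aclq(AB)\cap\aclq(AF)=\aclq(A)$, hence in $A$ by Remark~\ref{C:aclq_nice}. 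The Basepoint Lemma~\ref{L:Fusspunkt} then gives
\[
|v_1|\subset|u'|\cup W.
\]
Now decompose $\tilde u=w_1\cdot w_2$ with $w_2$ the final segment of $u'$ and $w_1$ a subword of the final segment of $u_1$. Since $|u'|\subset\Cent(w_1)$ and $W\subset\sr(u_1)\subset|w_1|\cup\Cent(w_1)$, every letter of $v_1$ lies in $|w_1|\cup\Cent(w_1)$; but none can be contained in $|w_1|$ (else $u_1\cdot v_1$ is not reduced), so $v_1$ commutes with $w_1$. Hence $\tilde u_A=w_1\cdot v_1$, and since $|w_2|\subset|v_1|$ (as $u'$ is absorbed by $v_1$) you get $|\tilde u|=|w_1|\cup|w_2|\subseteq|w_1|\cup|v_1|=|\tilde u_A|$.

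Once this inclusion is in hand, your canonical-base argument for strictness (which is the contrapositive of the paper's) and your proof of $|v'|\nsubseteq|\tilde u|$ go through unchanged. So the missing idea is precisely the containment $|v_1|\subset|u'|\cup W$ obtained via wobbling at the $u_1$--$v_1$ junction; everything else in your proposal is sound.
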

\begin{proof}
  Since $F \ind_B A$ and $v$ is reduced connecting $G_B$
to $G_A$, the word $u\cdot v$ reduces to $u_A$.  If 
\begin{align*}
    u&=u_1\cdot u'&
    v'\cdot v_1&=v.
  \end{align*}
\noindent is the fine decomposition (\emph{cf.} Theorem
\ref{T:einfach_amador}) applied to $u$ and $v$, we may thus assume
that $u_A= u_1\cdot v_1$.

Let $H$ be  the flag in the path $G_B\lmto{v}G_A$ between $ v'$ and
  $v_1$. Likewise, let $K$ be the flag in the path $F\lmto{u_A}G_A$
  between $u_1$ and $v_1$. Note that $H$ and $K$ are connected through
  $u'$. Furthermore, Lemma \ref{L:Wobschluck} gives that
  $\wob(v',v_1)$ is contained in $W=\wob(u_1, v_1)$. Since $H$ and $K$
  are equivalent modulo $|u'|$ and $H/\wob( v',v_1)$ lies in
  $\aclq(AB)$ by Lemma \ref{L:Wob}, it follows that $K/(W\cup |u'|)$
  lies in $\aclq(AB)\cap \aclq(AF)=\aclq(A)$ and whence in $A$ by
Remark \ref{C:aclq_nice}. Lemma
  \ref{L:Fusspunkt} gives now

  $$|v_1| \subset |u'| \cup W.$$

  \noindent Decompose the final segment of $u$ as
  \[\tilde u=w_1\cdot w_2,\] where $w_2$ is the final
  segment of $u'$ and $w_1$ is a subword of the final segment of
  $u_1$.  In particular $u'=u''\cdot w_2$ and $w_1$ and $u''$
  commute. We
show first that $w_1$ and $v_1$ commute: since
  $u'\subset \Cent(w_1)$ and $W\subset \sr(u_1)\subset
  |w_1|\cup\Cent(w_1)$, we have $v_1\subset |w_1|\cup\Cent(w_1)$. A
  letter $s$ of $v_1$ cannot be contained in $|w_1|$, since $u_1\cdot
  v_1$ is reduced. So $s$ belongs to $\Cent(w_1)$, which gives the
desired result. Recall that $v_1$ is commuting by Lemma
\ref{L:v1komm}. Thus, the final segment of $u_A=u_1\cdot v_1$ is
  \[\tilde u_A=w_1\cdot v_1,\]
  \noindent which clearly contains $\tilde u$, as $|w_2|$ is a subset
of $|v_1|$.

Suppose the inclusion is not strict. Hence, we have $|w_2|=|v_1|$.
Then  $|v_1|\subset \sr(u)$ and hence $|v|\subset \sr(u)$.  So $G_B$
and $G_A$ are equivalent modulo $\sr(u)$. In particular, the
canonical  base $\cb(F/B)$ lies in $A$ and thus
  $$F\ind_{A\cap B} B.$$
  \noindent Since $F\ind_B A$, transitivity of non-forking implies
  that $F\ind_{A\cap B} AB$.

  Finally, assume that $\tilde u=1$, which forces $u=1$ and thus
$v'=1$. In particular, since $|v_1|\subset |\tilde u_A| \subset
\sr(u_A)$ and $G_A$ and $G_B$ are equivalent modulo $v=v_1$, they are
equivalent modulo $\sr(u_A)$, so $\cb(F/A)= G_A/\sr(u_A)$ lies in $B$
and hence $F\ind_{A\cap B}  A$.

Similarly, if $|v'|  \subset |\tilde u| \subset |\tilde u_A| \subset
\sr(u_A)$ , we conclude as before that $\cb(F/A)= G_A/\sr(u_A)$ lies
in $B$ and thus $F\ind_{A\cap B}  A$.

\end{proof}

We can now state and prove the desired result.

\begin{theorem}\label{T:NOTample}
 The theory $\psn$ is not $(N+1)$-ample and is $N$-tight with respect
to the family of Lascar rank $1$ types.
\end{theorem}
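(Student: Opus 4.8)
The plan is to iterate Proposition~\ref{P:Endstueckwirdgroesser} along the ample configuration and then count levels in the $(N+1)$-element set $[0,N]$. By Remarks~\ref{R:Mod} and~\ref{R:samedef} and Corollary~\ref{C:ampleEQ} I may assume throughout that all tuples in sight are $\omega$-saturated models, hence (by the discussion around Corollary~\ref{C:nice_modell} and Corollary~\ref{C:Niceacl}, together with weak elimination of imaginaries, Corollary~\ref{C:WEI}) algebraically closed nice subsets of the monster, and that the parameters have been absorbed so that one works over $\emptyset$. By total triviality (Proposition~\ref{P:totally_trivial}) a canonical base of a tuple is the union of the canonical bases of its singletons, and each element lies in a flag, so everything reduces to controlling, for a flag $F$ inside one of the nice sets $a_j$, the $\preceq$-minimal words $w_j$ connecting $F$ to a base-point flag in $a_j$, via the identification of $\cb(\p_w(G))$ with $G/\sr(w)$ (Theorem~\ref{T:Canbasis}, Lemma~\ref{L:forking}).

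For ``$\psn$ is not $(N+1)$-ample'': starting from $a_0,\dots,a_{N+1}$ satisfying conditions $(1)$ and $(2)$ of Definition~\ref{D:CM}, I aim to conclude $a_{N+1}\ind a_0$. The proof of Remark~\ref{R:prof} gives, from $(1)$ and $(2)$ alone, that $(a)$ $a_{N+1}\ind_{a_{i+1}}a_i$ and $(b)$ $\aclq(a_i,a_{i+1})\cap\aclq(a_i,a_{N+1})=\aclq(a_i)$ for $0\le i<N$; pushing that computation one step further yields $\aclq(a_i)\cap\aclq(a_{i+1})=\aclq(\emptyset)$ for $0\le i<N$, and a short induction on $(2)$ gives $a_{N+1}\ind_{a_i}a_0,\dots,a_{i-1}$ for $1\le i\le N$. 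Now fix a flag $F\subseteq a_{N+1}$, let $w_j$ be the $\preceq$-minimal word connecting $F$ to a flag in $a_j$, and apply Proposition~\ref{P:Endstueckwirdgroesser} for $i=0,1,\dots,N-1$ with $A=a_i$, $B=a_{i+1}$ (hypotheses verified by $(b)$ and $(a)$). Each application returns one of two outcomes: either $F\ind a_ia_{i+1}$ --- in which case $F\ind a_i$, and transitivity with $F\ind_{a_i}a_0,\dots,a_{i-1}$ delivers $F\ind a_0$ --- or $w_{i+1}$ is nontrivial and $|\tilde w_{i+1}|\subsetneq|\tilde w_i|$ as subsets of $[0,N]$. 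If the first outcome never occurs, the second produces a strictly increasing chain of nonempty subsets
\[\emptyset\neq|\tilde w_N|\subsetneq|\tilde w_{N-1}|\subsetneq\dots\subsetneq|\tilde w_0|\subseteq[0,N]\]
of a set with only $N+1$ elements, forcing $|\tilde w_0|=[0,N]$. As two consecutive levels cannot lie in two distinct commuting letters, $\tilde w_0$ is the single maximal letter $[0,N]$; since $w_0$ is reduced and $[0,N]$ absorbs every letter, $w_0=[0,N]$, so $\sr(w_0)=[0,N]$ and $\cb(F/a_0)=G/[0,N]$ is trivial, again giving $F\ind a_0$. Letting $F$ range over the flags of $a_{N+1}$ yields $a_{N+1}\ind a_0$, so condition $(3)$ fails and $\psn$ is not $(N+1)$-ample.

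For ``$\psn$ is $N$-tight with respect to the family of Lascar rank $1$ types'' (with $N\ge2$; for $N=1$ this is vacuous, $\cb(a_1/a_0)\subseteq\aclq(a_1)$ being algebraic over $a_1$): I would run the same machine on a configuration $a_0,\dots,a_N$ satisfying $(1)$ and $(2)$, applying Proposition~\ref{P:Endstueckwirdgroesser} for $i=0,\dots,N-2$ and calling on the Basepoint Lemma~\ref{L:Fusspunkt} for the finer estimates. For each flag $F\subseteq a_N$ with base-point $G$ in $a_0$ and connecting word $w$, this gives either $F\ind a_0$ (a trivial contribution to $\cb(a_N/a_0)$) or $|\tilde w|\ge N$, whence $\sr(w)\supseteq\tilde w$ omits at most one level of $[0,N]$ and, by Theorem~\ref{T:Canbasis} and Remark~\ref{C:aclq_nice}, $\cb(F/a_0)=G/\sr(w)$ is interdefinable with at most a single vertex of $G$. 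Determining which level is omitted --- here the hypotheses bearing on $a_1$, notably $a_2\ind_{a_1}a_0$ and $\aclq(a_0,a_1)\cap\aclq(a_0,a_2)=\aclq(a_0)$, are exactly what is needed --- pins that residual vertex to distance $1$ from $a_1$; since $d_1(x,\cdot)$ is strongly minimal (Remark~\ref{E:Psplane}), it realises a Lascar rank $1$ type over $a_1$. Taking the union over all $F$ then exhibits $\cb(a_N/a_0)$ as almost internal over $a_1$ to the family of Lascar rank $1$ types.

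The combinatorial counting in $[0,N]$ is the engine and is clean; the delicate point I expect to have to work hardest on is the final step of the tightness argument --- pinning the residual vertex of $G$ to distance exactly one from $a_1$, using $(1)$, $(2)$ and the Basepoint Lemma. It is this distance-one phenomenon, rather than a mere bounded distance, that forces the rank-$1$ family (a vertex at larger distance would only be \emph{analysable}, not \emph{internal}, in rank-$1$ types) to suffice, and this is precisely where $N\ge2$ and the structure of the ample conditions are used.
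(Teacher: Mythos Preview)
Your argument for non-$(N+1)$-ampleness is correct and follows the paper's line exactly: reduce the last tuple to a single flag by total triviality, iterate Proposition~\ref{P:Endstueckwirdgroesser} along the chain, and count in $[0,N]$. The only cosmetic difference is that the paper phrases the conclusion as the failure of condition~(c) in Remark~\ref{R:prof}, while you unwind that remark and go straight for $a_{N+1}\ind a_0$; the underlying deduction is identical.

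For $N$-tightness your outline is right but the part you flag as ``delicate'' is genuinely incomplete, and your plan to attack it via the Basepoint Lemma alone is not quite the mechanism that works. What actually pins the residual vertex to distance~$1$ is the \emph{last} clause of Proposition~\ref{P:Endstueckwirdgroesser}: under $F\nind_{A\cap B}A$ one gets $|v'|\nsubseteq|\tilde u|$ for the fine decomposition of $(u_B,v)$. Applied at $i=0$ (with $A=B_0$, $B=B_1$), this says some letter of $v_1'$ lies outside $|\tilde u_1|$; since that letter is properly right-absorbed by $u_1^1$ and commutes with $u_1'$, it forces $\Cent(\tilde u_1)\neq\emptyset$. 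A size-$(N-1)$ subset of $[0,N]$ with nonempty centraliser must be $[2,N]$ or $[0,N-2]$, which then determines $\sr(u_0)$ (namely $[1,N]$ or $[0,N-1]$) and the shape of $v_1$, so that the single surviving vertex of $F_0$ is adjacent to a vertex in $B_1$ and hence of Lascar rank~$1$ over $B_1$. Without this $|v'|\nsubseteq|\tilde u|$ input you cannot exclude, say, $|\tilde u_1|=[0,k-1]\cup[k+1,N]$ for some interior $k$, in which case the surviving vertex of $F_0$ need not be at distance~$1$ from $B_1$.

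One small error: your aside that the $N=1$ case of tightness is vacuous because ``$\cb(a_1/a_0)\subseteq\aclq(a_1)$'' is wrong --- canonical bases live in $\aclq(a_0)$, and $\cb(a_1/a_0)\subseteq\aclq(a_1)$ would be $1$-basedness. The paper simply does not claim $1$-tightness for $\mathrm{PS}_1$; the statement is only asserted (and proved) for $N\ge 2$.
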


\begin{proof}

  By Remark \ref{R:prof}, we need only show that given tuples
  $b_0,\ldots,b_{N+1}$ with: \renewcommand{\theenumi}{\alph{enumi}}
  \begin{enumerate}
  \item $\aclq(b_i,b_{i+1})\cap\aclq(b_i,b_{N+1})=\aclq(b_i)$ for every
    $0\leq i<N$.
  \item $b_{N+1} \ind_{b_i} b_{i-1}$ for every $1\leq i\leq N$,
  \end{enumerate}

  \noindent then there is some $i$ in $\{0,\ldots,N-1\}$ such that
  \[b_{N+1} \ind_{\aclq(b_i)\,\cap\,\aclq(b_{i+1}) } b_i.\] By Fact
  \ref{F:Inter}, it suffices to prove this for tuples $b_0,\ldots
  b_N$ which enumerate small models $B_0,\ldots B_N$, although for
the proof, we only require that each $B_i$ is nice.  Total triviality
  (\emph{cf.}\ Proposition \ref{P:totally_trivial}) allows us to
  assume that $b_{N+1}$ consists of a single flag $F$.

  Choose for every $i\leq N$ a basepoint $F_i$ for $F$ over $B_i$.
  Note that we obtain the following configuration:

  \begin{figure}[!htbp]
    \centering

\begin{tikzpicture}[->,>=stealth',shorten >=1pt]

  \node (F0) {$F_0$};

  \node (F1) [below left=8mm of F0] {$F_1$} ;
  \node (ldot1) [below left=2mm of F1] {$.$} ;
  \node (ldot2) [below left=2mm of ldot1] {$.$} ;
  \node (ldot3) [below left=2mm of ldot2] {$.$} ;
  \node (FN1) [below left=3mm of ldot3] {$F_{N-1}$} ;
  \node (FN) [below left=8mm of FN1] {$F_N$} ;
  \node (F) [left=6cm  of F0] {$F$} ;

  \path
  (F) edge node[pos=.8,left] {$u_N$} (FN)
  edge node [pos=.8, left] {$u_{N-1}$} (FN1)
  edge node [pos=.7, below right] {$u_1$} (F1)
  edge node [pos=.8, below right] {$u_0$} (F0);

  \path (FN) edge node [pos=.5, below right] {$v_N$} (FN1);
  \path (F1) edge node [pos=.5, below right] {$v_1$} (F0);

\end{tikzpicture}

  \end{figure}

  \noindent such that $u_i\cdot v_i$ reduces to $u_{i-1}$, for every
  $i$ in $\{1,\ldots, N\}$, due to $(b)$. Proposition
  \ref{P:Endstueckwirdgroesser} implies that either, for some $i<N$,

  $$F \ind_{B_i\cap B_{i+1} } B_i,$$ or the final segment $\tilde
u_{i+1}$ of $u_{i+1}$ is non-trivial and strictly contained
  in $\tilde u_i$ for all $i<N$.

  The second possibility for every $i<N$ delivers a strictly
increasing sequence of length $N+1$ of non-empty subsets of
$\{0,\ldots,N\}$, which implies that $\tilde u_0$ equals $[0,N]$ and
thus $u_0=[0,N]$. Hence

\[F\ind B_0,\]

\noindent and thus $$F\ind_{\aclq(B_0)\,\cap\,\aclq(B_1)} B_0.$$

The first possibility implies
  \[F \ind_{\aclq(B_i)\cap\aclq(B_{i+1})} B_i,\]
as desired. This proves that $\psn$ is not $(N+1)$-ample.\\

  Suppose now that $N\geq 2$. In order to show that $\psn$ is
  $N$-tight with respect to $\Sigma$, where $\Sigma$ denotes the
  collection of all Lascar rank $1$ types, assume we are given tuples
  $b_0,\ldots,b_N$ witnessing the following conditions:

\renewcommand{\theenumi}{\alph{enumi}}
\begin{enumerate}
 \item  $\aclq(b_0,\ldots,b_i)\cap\aclq(b_0,\ldots,b_{i-1},b_{i+1})=
\aclq(b_0,\ldots,b_{i-1})$ for every $0\leq i<N$.
\item\label{I:ind} $b_{i+1} \ind_{b_i} b_0,\ldots, b_{i-1}$ for every
$1\leq i<N$.
\suspend{enumerate}

As in Remark \ref{R:prof}, it follows that:

\resume{enumerate}
\item\label{I:inter} $\aclq(b_{i+1})\cap \aclq(b_i)\subset \aclq(b_0)$
for every $1\leq i<N$.
\item $b_N \ind_{b_i} b_{i-1}$ for every $1\leq i<N$.
\item\label{I:int} $\aclq(b_i,b_{i+1})\cap\aclq(b_i,b_N)= \aclq(b_i)$ for
every $0\leq i<N-1$.
\end{enumerate}

Note that (almost) internality is preserved under taking nonforking
restrictions. Furthermore, if a tuple $d$ is (almost) internal over
$C$ and $e$ is algebraic over $Cd$, then so is $e$ (almost)
internal over $C$. Thus, we may as before replace every $b_i$ by a
nice set $B_i$ by Fact \ref{F:Inter} and assume that $b_N$ is a flag
$F$ by total triviality (\emph{cf.} Proposition
\ref{P:totally_trivial}). In particular, we need to prove that
$\cb(F/B_0)$ is almost $\Sigma$-internal over $B_1$. 

As before, let $u_i$ be $\preceq$-minimal connecting $F$ to a flag
$F_i$ of $F$ in $B_i$ for $i< N$. Since $N\geq 2$, there is (at least)
one triangle to apply  Proposition \ref{P:Endstueckwirdgroesser} and
thus, either for some $0\leq i<N-1$ we have that $$F\ind_{B_i\cap
B_{i+1}}
B_i,$$ \noindent or the final segment $\tilde u_{i+1}$ of
$u_{i+1}$ is non-trivial and strictly contained in $\tilde u_i$ for
every $i< N$. The
independence $$F\ind_{B_i\cap B_{i+1}} B_i$$
\noindent  implies by properties  $(\ref{I:ind})$ and
$(\ref{I:inter})$ that $F\ind_{\aclq(B_0)\cap\aclq( B_1)} B_0.$
So $\cb(F/B_0)$ is algebraic over $B_1$, and hence internal over
$B_1$.

Otherwise, if $$F\nind_{B_i\cap B_{i+1}} B_i$$ \noindent for every
$i<N$, then the final segment $\tilde u_0$ must have length $N$.
Consider the fine decomposition $u_1=u_1^1\cdot u_1'$ and $v_1'\cdot
v_1^1=v_1$ from Theorem \ref{T:einfach_amador}. Proposition
\ref{P:Endstueckwirdgroesser} implies that $|v_1'|$ is not fully
contained in $\tilde u_1$, which must then have non-trivial
centraliser.  Since $\tilde u_1$ has size $N-1$, it must be either
$[2,N]$ or $[0,N-2]$. Let us consider the first case. The canonical
base $\cb(b_N/B_0)$ is $F_0$ modulo $\sr(u_0)=[1,N]$, which is the
$0$-vertex $f_0$ of $F_0$. Furthermore, since $v_1 =[0]\cdot [1,N]$,
the vertex $f_0$ is directly connected to $B_1$ and, by theorem
\ref{T:rankregulartypes}, it has rank $1$ over $B_1$, so the canonical
base $\cb(F/B_0)$ is $\Sigma$-internal over $B_1$, which concludes
the proof.

\end{proof}


\end{document}